\theoremstyle{plain}
\newtheorem{theorem}{Theorem}
\newtheorem{corollary}{Corollary}
\newtheorem{proposition}{Proposition}
\theoremstyle{definition}
\newtheorem{definition}{Definition}
\newtheorem{example}{Example}
\theoremstyle{remark}
\newtheorem{remark}{Remark}
\DeclareMathOperator{\dist}{dist}
\DeclareMathOperator{\interior}{int}
\DeclareMathOperator{\relint}{relint}
\DeclareMathOperator*{\esssup}{ess\,sup}
\DeclareMathOperator*{\linhull}{span}
\DeclareMathOperator{\image}{Im}
\DeclareMathOperator{\core}{core}
\DeclareMathOperator{\cl}{cl}
\DeclareMathOperator{\co}{co}
\DeclareMathOperator{\cone}{cone}
\DeclareMathOperator{\sign}{sign}
\author{M.V. Dolgopolik}
\title{Exact Penalty Functions for Optimal Control Problems {I}{I}: 
Exact Penalisation of Terminal and Pointwise State Constraints}
\begin{document}

\maketitle

\begin{abstract}
The second part of our study is devoted to an analysis of the exactness of penalty functions for
optimal control problems with terminal and pointwise state constraints. We demonstrate that with the use of the exact
penalty function method one can reduce fixed-endpoint problems for linear time-varying systems and linear evolution
equations with convex constraints on the control inputs to completely equivalent free-endpoint optimal control problems,
if the terminal state belongs to the relative interior of the reachable set. In the nonlinear case, we prove that a
local reduction of fixed-endpoint and variable-endpoint problems to equivalent free-endpoint ones is possible under the
assumption that the linearised system is completely controllable, and point out some general properties of nonlinear
systems under which a global reduction to equivalent free-endpoint problems can be achieved. In the case of problems
with pointwise state inequality constraints, we prove that such problems for linear time-varying systems and linear
evolution equations with convex state constraints can be reduced to equivalent problems without state constraints,
provided one uses the $L^{\infty}$ penalty term, and Slater's condition holds true, while for nonlinear systems a local
reduction is possible, if a natural constraint qualification is satisfied. Finally, we show that the exact
$L^p$-penalisation of state constraints with finite $p$ is possible for convex problems, if Lagrange multipliers
corresponding to the state constraints belong to $L^{p'}$, where $p'$ is the conjugate exponent of $p$, and for general
nonlinear problems, if the cost functional does not depend on the control inputs explicitly.
\end{abstract}

\section{Introduction}

The exact penalty method is an important tool for solving constrained optimisation problems. Many publications have
been devoted to its analysis from various perspectives (see, e.g. References
\cite{EvansGouldTolle,HanMangasarian,DiPilloGrippo86,DiPilloGrippo88,DiPilloGrippo89,Burke91,DiPillo94,ExactBarrierFunc,
Zaslavski,Dolgopolik_ExPen_I, Dolgopolik_ExPen_II,Strekalovsky2019}). The main idea of this method consist in the
reduction of a constrained optimisation problem, say
\begin{equation} \label{MathProgram_Intro}
  \min_{x \in \mathbb{R}^d} f(x) \quad \text{subject to} \quad g_i(x) \le 0, \quad i \in \{ 1, \ldots, n \}
\end{equation}
to the unconstrained optimisation problem of minimising the nonsmooth penalty function: 
$$
  \min_{x \in \mathbb{R}^d} \Phi_{\lambda}(x) = f(x) + \lambda \sum_{i = 1}^n \max\{ g_i(x), 0 \}.
$$
Under some natural assumptions such as the coercivity of $\Phi_{\lambda}$ and the validity of a suitable constraint
qualification one can prove that for any sufficiently large (but finite) value of the penalty parameter $\lambda$ the
penalised problem is equivalent to the original problem in the sense that these problems have the same optimal value and
the same globally optimal solutions. In this case the penalty function $\Phi_{\lambda}$ is called \textit{exact}. Under
some additional assumptions not only globally optimal solutions, but also locally optimal solutions and stationary
(critical) points of these problems coincide. In this case the penalty function $\Phi_{\lambda}$ is called
\textit{completely exact}. Finally, if a given locally optimal solution of problem \eqref{MathProgram_Intro} is a point
of local minimum of $\Phi_{\lambda}$ for any sufficiently large $\lambda$, then $\Phi_{\lambda}$ is said to be
\textit{locally exact} at this solution.

Thus, the exactness property of a penalty function allows one to reduce (locally or globally) a constrained optimisation
problem to the equivalent unconstrained problem of minimising a penalty function and, as a result, apply numerical
methods of unconstrained optimisation to constrained problems. However, note that exact penalty functions not depending
on the derivatives of the objective function and constraints are inherently nonsmooth (see, e.g. 
\cite[Remark~3]{Dolgopolik_ExPen_I}), and one has to either utilise general methods of nonsmooth optimisation to
minimise exact penalty functions or develop specific methods for minimising such functions that take into account their
structure. See~the works of M\"{a}kel\"{a} et al.\cite{Makela2002,KarmitsaBagriovMakela2012,BagirovKarmitsaMakela_book}
for a survey and comparative analysis of modern nonsmooth optimisation methods and software.

Numerical methods for solving optimal control problems based on exact penalty functions were developed by Maratos
\cite{MaratosPHD} and in a series of papers by Mayne et al.
\cite{MaynePolak80,MayneSmith83,MaynePolak85,MaynePolak87,SmithMayne88} (see also monograph
\cite{Polak_book}). An exact penalty method for optimal control problems with delay was proposed by Wong and Teo
\cite{WongTeo91}, and such method for some nonsmooth optimal control problems was studied in the works of Outrata
et al. \cite{Outrata83,OutrataSchindler,Outrata88}. A continuous numerical method for optimal control problems
based on the direct minimisation of an exact penalty function was considered in recent paper
\cite{FominyhKarelin2018}. Finally, closely related methods based on Huyer and Neumaier's exact penalty
function\cite{HuyerNeumaier,WangMaZhou,Dolgopolik_ExPen_II} were developed for optimal control problems with state
inequality constraints\cite{LiYu2011,JiangLin2012} and optimal feedback control problems\cite{LinLoxton2014}.

Despite the abundance of publications on exact penalty methods for optimal control problems, relatively little
attention has been paid to an actual analysis of the exactness of penalty functions for such problems. To the best of
author's knowledge, the possibility of the exact penalisation of pointwise state constraints for optimal control
problems was first mentioned by Luenberger \cite{Luenberger70}; however, no particular conditions ensuring exact
penalisation were given in this paper. Lasserre \cite{Lasserre} proved that a stationary point of an optimal control
problem with endpoint equality and state inequality constraints is also a stationary point of a nonsmooth penalty
function for this problem (this result is closely related to the local exactness). The local exactness of a penalty
function for problems with state inequality constraints was proved by Xing et al. \cite{Xing89,Xing94} under the
assumption that certain second order sufficient optimality conditions are satisfied. First results on the
\textit{global} exactness of penalty functions for optimal control problems were probably obtained by Demyanov et al.
for a problem of finding optimal parameters in a system described by ordinary differential equations 
\cite{DemyanovKarelin98}, free-endpoint optimal control problems 
\cite{DemyanovKarelin2000_InCollect,DemyanovKarelin2000,Karelin}, and certain optimal control problems for implicit 
control systems \cite{DemyanovTamasyan2005}. However, the main results of these papers are based on the assumptions
that the penalty function attains a global minimum in the space of piecewise continuous functions for any sufficiently
large value of the penalty parameter, and the cost functional is Lipschitz continuous on a possibly unbounded and rather
complicated set. It is unclear how to verify these assumptions in particular cases, which makes it very difficult to
apply the main results of papers
\cite{DemyanovKarelin98,DemyanovKarelin2000_InCollect,DemyanovKarelin2000,Karelin,DemyanovTamasyan2005} to real
problems. To the best of author's knowledge, the only verifiable sufficient conditions for the global exactness of
penalty functions for optimal control problems were obtained by Gugat\cite{Gugat} for an optimal control of the wave
equation, by Gugat and Zuazua\cite{Zuazua} for optimal control problems for general linear evolution equations, and by
Jayswal and Preeti\cite{Jayswal} for a PDE constrained optimal control problem with state inequality constraints. In
papers\cite{Gugat,Zuazua} only the exact penalisation of terminal constraint was analysed, while in
article\cite{Jayswal} the exact penalisation of state inequality constraints and system dynamics was proved under
some restrictive convexity assumptions.

The main goal of this two-part study is to develop a general theory of exact penalty functions for optimal control
problems containing sufficient conditions for the complete or local exactness of penalty functions that can be 
readily verified in various particular cases. In the first part of our study\cite{DolgopolikFominyh} we obtained simple
sufficient conditions for the exactness of penalty functions for free-endpoint optimal control problems. This result
allows one to apply numerical method for solving variational problems to free-endpoint optimal control problems.

In the second part of our study we analyse the exactness of penalty functions for problems with terminal and pointwise
state constraints. In the first half of this paper, we study when a penalisation of the terminal constraint is exact,
i.e. when the fixed-endpoint problem
\begin{align*}
  &\min_{(x, u)} \: \mathcal{I}(x, u) = \int_0^T \theta(x(t), u(t), t) \, dt \\
  &\text{subject to} \quad \dot{x}(t) = f(x(t), u(t), t), \quad t \in [0, T], \quad 
  x(0) = x_0, \quad x(T) = x_T, \quad u \in U
\end{align*}
is equivalent to the penalised free-endpoint one
\begin{align*}
  &\min_{(x, u)} \: \Phi_{\lambda}(x, u) = \int_0^T \theta(x(t), u(t), t) \, dt + \lambda | x(T) - x_T | \\
  &\text{subject to} \quad \dot{x}(t) = f(x(t), u(t), t), \quad t \in [0, T], \quad 
  x(0) = x_0, \quad u \in U.
\end{align*}
We prove that fixed-endpoint problems for linear time-varying systems and linear evolution equations in Hilbert spaces
with convex constraints on the control inputs (i.e. the set $U$ is convex) are equivalent to the corresponding penalised
free-endpoint problems, if the terminal state $x_T$ belongs to the relative interior of the reachable set. This result
significantly generalises the one of Gugat and Zuazua \cite{Zuazua} (see Remark~\ref{Remark_ComparisonZuazua} for a
detailed discussion). In the case of nonlinear problems, we show that the penalty function $\Phi_{\lambda}$ is locally
exact at a given locally optimal solution, if the corresponding linearised system is completely controllable, and point
our some general assumption on the system $\dot{x} = f(x, u, t)$ that ensure the complete exactness of
$\Phi_{\lambda}$. We also present an extension of these results to the case of variable-endpoint problems.

In the second half of this paper, we study the exact penalisation of pointwise state constraints, i.e. we study when
the optimal control problem with pointwise state inequality constraints
\begin{align*}
  &\min_{(x, u)} \: \mathcal{I}(x, u) = \int_0^T \theta(x(t), u(t), t) \, dt \quad
  \text{subject to} \quad \dot{x}(t) = f(x(t), u(t), t), \quad t \in [0, T], \\
  &x(0) = x_0, \quad x(T) = x_T, \quad u \in U, \quad 
  g_j(x(t), t) \le 0 \quad \forall t \in [0, T], \quad j \in \{ 1, \ldots, l \}
\end{align*}
is equivalent to the penalised problem without state constraints
\begin{align*}
  &\min_{(x, u)} \: \Phi_{\lambda}(x, u) = \int_0^T \theta(x(t), u(t), t) \, dt + 
  \lambda \| \max\{ g_1(x(\cdot), \cdot), \ldots, g_l(\cdot), \cdot), 0 \} \|_p \\
  &\text{subject to} \quad \dot{x}(t) = f(x(t), u(t), t), \quad t \in [0, T], \quad
  x(0) = x_0, \quad x(T) = x_T, \quad u \in U
\end{align*}
for some $1 \le p \le + \infty$ (here $\| \cdot \|_p$ is the standard norm in $L^p(0, T)$). In the case of problems for
linear time-varying systems and linear evolution equation with convex state constraints, we prove that the penalisation
of state constraints is exact, if $p = + \infty$, and Slater's condition holds true, i.e. there exists a feasible point
$(x, u)$ such that $g_j(x(t), t) < 0$ for all $t \in [0, T]$ and $j \in \{ 1, \ldots, l \}$. In the nonlinear case, we
prove the local exactness of $\Phi_{\lambda}$ with $p = + \infty$ under the assumption that a suitable constraint
qualification is satisfied. Finally, we demonstrate that under some additional assumptions the exact $L^p$ penalisation
of state constraints with finite $p$ is possible for convex problems, if Lagrange multipliers corresponding to state
constraints belong to $L^{p'}(0, T)$, and for nonlinear problems, if the cost functional $\mathcal{I}$ does not depend
on the control inputs $u$ explicitly.

The paper is organised as follows. Some basic definitions and results from the general theory of exact penalty
functions for optimisation problems in metric spaces are collected in Section~\ref{Sect_ExactPenaltyFunctions}, so that
the second part of the paper can be read independently of the first one. Section~\ref{Sect_ExactPen_TerminalConstraint}
is devoted to the analysis of exact penalty functions for fixed-endpoint and variable-endpoint problems, while exact
penalty functions for optimal control problems with state constraints are considered in
Section~\ref{Sect_ExactPen_StateConstraint}. Finally, a proof of a general theorem on completely exact penalty function
from Section~\ref{Sect_ExactPenaltyFunctions} is given in Appendix~A, while
Appendix~B contains some useful results on Nemytskii operators that are utilised throughout the
paper.

\section{Exact Penalty Functions in Metric Spaces}
\label{Sect_ExactPenaltyFunctions}

In this section we recall some basic definitions and results from the theory of exact penalty functions that will be
utilised throughout the article (see papers\cite{Dolgopolik_ExPen_I,Dolgopolik_ExPen_II} for more details). Let
$(X, d)$ be a metric space, $M, A \subseteq X$ be nonempty sets such that $M \cap A \ne \emptyset$, and 
$\mathcal{I} \colon X \to \mathbb{R} \cup \{ + \infty \}$ be a given function. Consider the following optimisation
problem:
$$
  \min_{x \in X} \: \mathcal{I}(x) \quad \text{subject to} \quad x \in M \cap A. \eqno{(\mathcal{P})}
$$
Here the sets $M$ and $A$ respresent two different types of constraints, e.g. pointwise and terminal constraints or
linear and nonlinear constraints, etc. In what follows, we suppose that there exists a globally optimal solution $x^*$
of the problem $(\mathcal{P})$ such that $\mathcal{I}(x^*) < + \infty$, i.e. the optimal value of this problem is finite
and is attained.

Our aim is to ``get rid'' of the constraint $x \in M$ without losing any essential information about (locally or
globally) optimal solutions of the problem $(\mathcal{P})$. To this end, we apply the exact penalty function technique.
Let $\varphi \colon X \to [0, + \infty]$ be a function such that $\varphi(x) = 0$ iff $x \in M$. For example, if $M$ is
closed, one can put $\varphi(x) = \dist(x, M) = \inf_{y \in M} d(x, y)$. For any $\lambda \ge 0$ define 
$\Phi_{\lambda}(x) = \mathcal{I}(x) + \lambda \varphi(x)$. The function $\Phi_{\lambda}$ is called \textit{a penalty
function} for the problem $(\mathcal{P})$ (corresponding to the constraint $x \in M$), $\lambda$ is called 
\textit{a penalty parameter}, and $\varphi$ is called \textit{a penalty term} for the constraint $x \in M$. 

Observe that the function $\Phi_{\lambda}(x)$ is non-decreasing in $\lambda$, and $\Phi_{\lambda}(x) \ge \mathcal{I}(x)$
for all $x \in X$ and $\lambda \ge 0$. Furthermore, for any $\lambda > 0$ one has $\Phi_{\lambda}(x) = \mathcal{I}(x)$
iff $x \in M$. Therefore, it is natural to consider \textit{the penalised problem}
\begin{equation} \label{PenalizedProblem}
  \min_{x \in X} \Phi_{\lambda}(x) \quad \text{subject to} \quad x \in A.
\end{equation}
Note that this problem has only one constraint ($x \in A$), while the constraint $x \in M$ is incorporated into 
the new objective function $\Phi_{\lambda}$. We would like to know when this problem is, in some sense, equivalent to
the problem $(\mathcal{P})$, i.e. when the penalty function $\Phi_{\lambda}$ is \textit{exact}. 

\begin{definition}
The penalty function $\Phi_{\lambda}$ is called (globally) \textit{exact}, if there exists $\lambda^* \ge 0$ such that
for any $\lambda \ge \lambda^*$ the set of globally optimal solutions of the penalised problem \eqref{PenalizedProblem}
coinsides with the set of globally optimal solutions of the problem $(\mathcal{P})$.
\end{definition}

From the fact that $\Phi_{\lambda}(x) = \mathcal{I}(x)$ for any feasible point $x$ of the problem $(\mathcal{P})$ it
follows that if $\Phi_{\lambda}$ is globally exact, then the optimal values of the problems $(\mathcal{P})$ and
\eqref{PenalizedProblem} coincide. Thus, the penalty function $\Phi_{\lambda}$ is globally exact iff 
the problems $(\mathcal{P})$ and \eqref{PenalizedProblem} are equivalent in the sense that they have the same globally
optimal solutions and the same optimal value. However, optimisation methods often can find only locally optimal
solutions (or even only stationary/critical points) of an optimisation problem. Therefore, the concept of the global
exactness of the penalty function $\Phi_{\lambda}$ is not entirely satisfactory for practical applications. One needs
to ensure that not only globally optimal solutions, but also local minimisers and stationary points of the problems
$(\mathcal{P})$ and \eqref{PenalizedProblem} coincide. To provide conditions under which such \textit{complete
exactness} takes place we need to recall the definitions of the \textit{rate of steepest
descent}\cite{Demyanov2000,Demyanov2010,Uderzo} and \textit{inf-stationary point}\cite{Demyanov2000,Demyanov2010} of a
function defined on a metric space.

Let $K \subset X$ and $f \colon X \to \mathbb{R} \cup \{ + \infty \}$ be given, and $x \in K$ be such that 
$f(x) < + \infty$. The quantity
$$
  f^{\downarrow}_K(x) = \liminf_{y \to x, y \in K} \frac{f(y) - f(x)}{d(y, x)}
$$
is called \text{the rate of steepest descent} of $f$ with respect to the set $K$ at the point $x$. If $x$ is an
isolated point of $K$, then by definition $f^{\downarrow}_K(x) = + \infty$. It should be noted that the rate of steepest
descent of $f$ at $x$ is closely connected to the so-called strong slope $|\nabla f|(x)$ of $f$ at $x$. See
papers\cite{Dolgopolik_ExPen_II,Aze,Kruger} for some calculus rules for strong slope/rate of steepest descent, and the
ways one can estimate them in various particular cases.

Let $x^* \in K$ be such that $f(x^*) < + \infty$. The point $x^*$ is called an \textit{inf-stationary} point of $f$ on
the set $K$ if $f^{\downarrow}_K(x^*) \ge 0$. Observe that the inequality $f^{\downarrow}_K(x^*) \ge 0$ is a necessary
optimality condition for the problem
$$
  \min_{x \in X} \: f(x) \quad \text{subject to} \quad x \in K.
$$
In the case when $X$ is a normed space, $K$ is convex, and $f$ is Fr\'{e}chet differentiable at $x^*$ the inequality
$f^{\downarrow}_K(x^*) \ge 0$ is reduced to the standard optimality condition: $f'(x^*)[x - x^*] \ge 0$ for all $x \in
K$, where $f'(x^*)$ is the Fr\'{e}chet derivative of $f$ at $x^*$.

Now we can formulate sufficient conditions for the complete exactness of the penalty function $\Phi_{\lambda}$.
For any $\lambda \ge 0$ and $c \in \mathbb{R}$ denote $S_{\lambda}(c) = \{ x \in A \mid \Phi_{\lambda}(x) < c \}$. Let
also $\Omega = M \cap A$ be the feasible region of $(\mathcal{P})$, and for any $\delta > 0$ define
$\Omega_{\delta} = \{ x \in A \mid \varphi(x) < \delta \}$.

\begin{theorem} \label{Theorem_CompleteExactness}
Let $X$ be a complete metric space, $A$ be closed, $\mathcal{I}$ and $\varphi$ be lower semicontinuous on $A$, and
$\varphi$ be continuous at every point of the set $\Omega$. Suppose also that there exist 
$c > \mathcal{I}^* = \inf_{x \in \Omega} \mathcal{I}(x)$, $\lambda_0 > 0$, and $\delta > 0$ such that
\begin{enumerate}
\item{there exists an open set $V$ such that $S_{\lambda_0}(c) \cap \Omega_{\delta} \subset V$ and the functional
$\mathcal{I}$ is Lipschitz continuous on $V$;
}

\item{there exists $a > 0$ such that $\varphi^{\downarrow}_A(x) \le - a$ for all 
$x \in S_{\lambda_0}(c) \cap (\Omega_{\delta} \setminus \Omega)$;
\label{NegativeDescentRateAssumpt}}

\item{$\Phi_{\lambda_0}$ is bounded below on $A$.
}
\end{enumerate}
Then there exists $\lambda^* \ge 0$ such that for any $\lambda \ge \lambda^*$ the following statements hold true:
\begin{enumerate}
\item{the optimal values of the problems $(\mathcal{P})$ and \eqref{PenalizedProblem} coincide;
}

\item{globally optimal solutions of the problems $(\mathcal{P})$ and \eqref{PenalizedProblem} coincide;
}

\item{$x^* \in S_{\lambda}(c)$ is a locally optimal solution of the penalised problem \eqref{PenalizedProblem} iff 
$x^* \in \Omega$, and it is a locally optimal solution of the problem $(\mathcal{P})$;
}

\item{$x^* \in S_{\lambda}(c)$ is an inf-stationary point of $\Phi_{\lambda}$ on $A$ iff $x^* \in \Omega$, and it is an
inf-stationary point of $\mathcal{I}$ on $\Omega$.
}
\end{enumerate}
\end{theorem}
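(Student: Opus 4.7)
The plan is to establish the theorem by applying Ekeland's variational principle to the penalty function $\Phi_\lambda$ on the closed subset $A$ of the complete metric space $X$, and exploiting hypothesis~2 to produce a uniformly negative rate of steepest descent of $\Phi_\lambda$ at every infeasible point of the sublevel set $S_\lambda(c)$ once $\lambda$ is large enough. This will force every approximate (or exact) stationary point of $\Phi_\lambda$ inside $S_\lambda(c)$ to be feasible, from which all four conclusions follow by a single contradiction scheme.

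The first step is a localisation: since $\Phi_\lambda$ is non-decreasing in $\lambda$, one has $S_\lambda(c) \subset S_{\lambda_0}(c)$ for every $\lambda \ge \lambda_0$, and for $x \in S_{\lambda_0}(c)$ with $\varphi(x) \ge \delta$ the bound
$$
\Phi_\lambda(x) \;=\; \Phi_{\lambda_0}(x) + (\lambda - \lambda_0)\,\varphi(x) \;\ge\; \inf\nolimits_A \Phi_{\lambda_0} + (\lambda - \lambda_0)\,\delta,
$$
combined with hypothesis~3, shows that such an $x$ drops out of $S_\lambda(c)$ whenever $\lambda \ge \lambda_1$ for some $\lambda_1 \ge \lambda_0$. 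Hence $S_\lambda(c) \subset \Omega_\delta \subset V$ for all $\lambda \ge \lambda_1$. Denoting by $L$ the Lipschitz constant of $\mathcal{I}$ on $V$ and combining the inequality $\mathcal{I}(y) - \mathcal{I}(x) \le L\,d(y,x)$ with hypothesis~2, a standard sum-rule calculation for the rate of steepest descent yields the slope estimate
$$
(\Phi_\lambda)^{\downarrow}_A(x) \;\le\; L - \lambda\,a \qquad \text{for every } x \in S_\lambda(c) \setminus \Omega,
$$
which is at most $-1$ for any $\lambda \ge \lambda^\star := \max\{\lambda_1,\,(L+1)/a\}$.

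With these two ingredients, each of the four claims follows. For the identity of optimal values, if $\inf_A \Phi_\lambda < \mathcal{I}^*$ I would pick $x_0 \in A$ with $\Phi_\lambda(x_0) < \mathcal{I}^* < c$ (so $x_0 \in S_\lambda(c)$) and apply Ekeland's principle on the complete set $A$ to obtain $\hat x \in A$ with $\Phi_\lambda(\hat x) \le \Phi_\lambda(x_0)$ and $(\Phi_\lambda)^{\downarrow}_A(\hat x) > -1$; the slope estimate then forces $\hat x \in \Omega$, contradicting $\Phi_\lambda(\hat x) = \mathcal{I}(\hat x) \ge \mathcal{I}^*$. The coincidence of global and local minimisers in claims~2 and~3 then follows because any such minimiser lies in $S_\lambda(c)$ and has non-negative rate of steepest descent, so must be feasible by the same slope estimate; the continuity of $\varphi$ on $\Omega$ ensures that local minimality transfers between $\Phi_\lambda$ on $A$ and $\mathcal{I}$ on $\Omega$. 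The ``$\Rightarrow$'' direction of claim~4 is immediate from the slope estimate, while the ``$\Leftarrow$'' direction rests on an auxiliary error-bound lemma, itself obtained from hypothesis~2 via an Ekeland argument on $\varphi$: every $x \in \Omega_\delta$ admits some $x' \in \Omega$ with $d(x,x') \le \varphi(x)/a$. Combined with the Lipschitz property of $\mathcal{I}$, this produces the inequality $(\Phi_\lambda)^{\downarrow}_A(x^*) \ge \mathcal{I}^{\downarrow}_\Omega(x^*)$ at any $x^* \in \Omega$ provided $\lambda \ge L/a$, which closes the equivalence.

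The main obstacle, in my view, is the careful bookkeeping needed to keep the Ekeland iterates inside $S_\lambda(c)$ (so that the slope estimate is actually available at the points where we want to use it), and the proof of the error-bound lemma just mentioned: this is the one place where the mere liminf-type hypothesis~\ref{NegativeDescentRateAssumpt} on $\varphi^{\downarrow}_A$ must be upgraded to a genuine quantitative distance estimate $d(\cdot,\Omega) \le \varphi(\cdot)/a$ before it can drive the ``$\Leftarrow$'' direction of claim~4. Once that metric regularity is in hand, all four statements drop out of the same slope-plus-Ekeland mechanism with only cosmetic variations.
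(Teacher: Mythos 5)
Your overall mechanism --- localising $S_{\lambda}(c)$ inside $S_{\lambda_0}(c) \cap \Omega_{\delta}$ for large $\lambda$ via hypothesis~3, the slope estimate $(\Phi_{\lambda})^{\downarrow}_A(x) \le L - \lambda a$ at infeasible points of the sublevel set, Ekeland's principle applied to $\Phi_{\lambda}$ on $A$ for the equality of optimal values, and a local error bound driving the reverse implications in claims~3 and~4 --- is sound and is essentially the machinery this theorem rests on (the paper defers the proof to Part~I, and its Appendix~A proof of Theorem~\ref{THEOREM_COMPLETEEXACTNESS_GLOBAL} runs on exactly this slope estimate plus two lemmas from Part~I: a local error bound $\varphi(x) \ge a \dist(x, \Omega)$ near feasible points and the induced estimate for $\mathcal{I}$ in terms of $\dist(\cdot, \Omega)$). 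Claims~1 and~2, and the forward directions of claims~3 and~4, are complete as you argue them: Ekeland applied to $\Phi_{\lambda}$ automatically keeps its output in $S_{\lambda}(c)$, so the slope estimate is available precisely where you use it, and feasibility of minimisers and inf-stationary points of $\Phi_{\lambda}$ in $S_{\lambda}(c)$ follows with no extra bookkeeping.

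The genuine gap is the error-bound lemma you invoke for the reverse directions, stated as ``every $x \in \Omega_{\delta}$ admits $x' \in \Omega$ with $d(x, x') \le \varphi(x)/a$''. That global statement is not derivable from hypothesis~\ref{NegativeDescentRateAssumpt}: the negative slope of $\varphi$ is assumed only on $S_{\lambda_0}(c) \cap (\Omega_{\delta} \setminus \Omega)$, and when you run Ekeland on $\varphi$ alone from an arbitrary $x \in \Omega_{\delta}$ nothing controls $\mathcal{I}$ along the descent, so the Ekeland point may leave $S_{\lambda_0}(c)$, where no descent of $\varphi$ is guaranteed and the contradiction argument collapses. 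What is both provable and sufficient is a localised version: for $x^* \in \Omega \cap S_{\lambda_0}(c)$ apply Ekeland to $\varphi$ on $B(x^*, r) \cap A$ with $r$ small, and use the Lipschitz continuity of $\mathcal{I}$ on the open set $V \supset S_{\lambda_0}(c) \cap \Omega_{\delta}$ together with the continuity of $\varphi$ at $x^*$ to verify that the Ekeland point stays strictly inside the ball and inside $S_{\lambda_0}(c) \cap \Omega_{\delta}$, so that hypothesis~\ref{NegativeDescentRateAssumpt} applies and yields $\varphi(x) \ge a \dist(x, \Omega)$ for all $x \in B(x^*, r') \cap A$; this local bound (with constants uniform in $x^*$ thanks to the uniform $L$ and $a$) is exactly what claims~3 and~4 need. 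This is the ``bookkeeping'' you flag as the main obstacle but leave unresolved, and it is the crux of the proof rather than a cosmetic detail. Relatedly, your justification of the backward implication in claim~3 by ``continuity of $\varphi$ on $\Omega$'' alone is insufficient: points $x \in A \setminus \Omega$ near $x^*$ must be handled by this local error bound combined with the Lipschitz estimate for $\mathcal{I}$ in terms of $\dist(x, \Omega)$, i.e.\ by the same computation you sketch only for claim~4.
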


If the penalty function $\Phi_{\lambda}$ satisfies the four statements of this theorem, then it is said to be
\textit{completely exact} on the set $S_{\lambda}(c)$. The proof of Theorem~\ref{Theorem_CompleteExactness} is given in
the first part of our study\cite{DolgopolikFominyh}.

\begin{remark} \label{Remark_OmegaDeltaEmpty}
Let us note that Theorem~\ref{Theorem_CompleteExactness} is valid even in the case when the set 
$\Omega_{\delta} \setminus \Omega$ is empty. Moreover, if $\Omega_{\delta} \setminus \Omega = \emptyset$ for some
$\delta > 0$, then the penalty function $\Phi_{\lambda}$ is completely exact on $S_{\lambda}(c)$ for any 
$c > \mathcal{I}^*$, provided there exists $\lambda_0 \ge 0$ such that $\Phi_{\lambda_0}$ is bounded below on $A$.
Indeed, in this case for any $\lambda \ge \lambda_0$ and $x \notin \Omega_{\delta}$ one has
$$
  \Phi_{\lambda}(x) = \Phi_{\lambda_0}(x) + (\lambda - \lambda_0) \varphi(x)
  \ge \eta + (\lambda - \lambda_0) \delta \ge c \quad \forall \lambda \ge \lambda^* = \lambda_0 + (c - \eta) / \delta,
$$
where $\eta = \inf_{x \in A} \Phi_{\lambda_0}(x)$, which implies that $S_{\lambda}(c) \subseteq \Omega$ for any 
$\lambda \ge \lambda^*$. Hence taking into account the fact that $\Phi_{\lambda}(x) = \mathcal{I}(x)$ for any 
$x \in \Omega$ one obtains that the first two statements of Theorem~\ref{Theorem_CompleteExactness} hold true, and if
$x^* \in S_{\lambda}(c)$ is a local minimiser/inf-stationary point of $\Phi_{\lambda}$ on $A$, then $x^* \in \Omega$ and
it is a local minimiser/inf-stationary point of $\mathcal{I}$ on $\Omega$, provided $\lambda \ge \lambda^*$. On the
other hand, if $\lambda \ge \lambda^*$ and $x^* \in S_{\lambda}(c)$ is a locally optimal solution of the problem
$(\mathcal{P})$, then for any $x$ in a neighbourhood of $x^*$, either $x \in \Omega$ and 
$\Phi_{\lambda}(x) = \mathcal{I}(x) \ge \mathcal{I}(x^*) = \Phi_{\lambda}(x^*)$ or $x \notin \Omega$ and
$\Phi_{\lambda}(x) \ge c > \Phi_{\lambda}(x^*)$, i.e. $x^*$ is a locally optimal solution of the penalised problem
\eqref{PenalizedProblem}. The analogous statement for inf-stationary points is proved in a similar way.
\end{remark}

Under the assumptions of Theorem~\ref{Theorem_CompleteExactness} nothing can be said about locally optimal solutions of
the penalised problem and inf-stationary points of $\Phi_{\lambda}$ on $A$ that do not belong to the sublevel set
$S_{\lambda}(c)$. If a numerical method for minimising the penalty function $\Phi_{\lambda}$ finds a point
$x^* \notin S_{\lambda}(c)$, then this point might even be infisible for the original problem (in this case, usually,
either constraints are degenerate in some sense at $x^*$ or $\mathcal{I}$ is not Lipschitz continuous near this point).
Under more restrictive assumptions one can exclude such possibility, i.e. prove that the penalty function
$\Phi_{\lambda}$ is \textit{completely exact on} $A$, i.e. on $S_{\lambda}(c)$ with $c = + \infty$. Namely, the
following theorem holds true.\footnote{This result as well as its applications in the following sections
were inspired by a question raised by one of the reviewers of the first part of our study. The author wishes to
express his gratitude to the reviewer for raising this question.} Its proof is given in Appendix~A.

\begin{theorem} \label{THEOREM_COMPLETEEXACTNESS_GLOBAL}
Let $X$ be a complete metric space, $A$ be closed, $\mathcal{I}$ be Lipschitz continuous on $A$, and $\varphi$ be lower
semicontinuous on $A$ and continuous at every point of the set $\Omega$. Suppose also that there exists $a > 0$ such
that $\varphi^{\downarrow}_A(x) \le - a$ for all $x \in A \setminus \Omega$, and the function $\Phi_{\lambda_0}$ is
bounded below on $A$ for some $\lambda_0 \ge 0$. Then the penalty function $\Phi_{\lambda}$ is completely exact on $A$.
\end{theorem}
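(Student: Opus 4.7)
My plan is to adapt the proof of Theorem~\ref{Theorem_CompleteExactness}, exploiting the two stronger hypotheses available here: the Lipschitz continuity of $\mathcal{I}$ holds on all of $A$ (not just on a neighbourhood of a sublevel set) and the descent-rate estimate $\varphi^{\downarrow}_A(x)\le -a$ holds on all of $A\setminus\Omega$. These strengthenings should let me replace the cutoff sublevel set $S_{\lambda_0}(c)$ by $A$ itself at every step.

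Let $L$ be a Lipschitz constant of $\mathcal{I}$ on $A$. For any $x\in A\setminus\Omega$, picking a sequence $y_n\in A\setminus\{x\}$ with $y_n\to x$ and $(\varphi(y_n)-\varphi(x))/d(y_n,x)\to \varphi^{\downarrow}_A(x)$, the Lipschitz bound $|\mathcal{I}(y_n)-\mathcal{I}(x)|\le L\,d(y_n,x)$ yields the subadditivity estimate
$$
(\Phi_\lambda)^{\downarrow}_A(x)\le L+\lambda\,\varphi^{\downarrow}_A(x)\le L-\lambda a,
$$
so for every $\lambda>L/a$ no point of $A\setminus\Omega$ can be inf-stationary (and a fortiori cannot be a local minimiser) of $\Phi_\lambda$ on $A$. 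Now $\Phi_\lambda$ is lower semicontinuous and bounded below on the complete metric space $A$, so Ekeland's variational principle produces, for every $\varepsilon>0$, a point $x_\varepsilon\in A$ with $\Phi_\lambda(x_\varepsilon)\le \inf_A\Phi_\lambda+\varepsilon$ and $(\Phi_\lambda)^{\downarrow}_A(x_\varepsilon)\ge -\varepsilon$. Choosing $\varepsilon<\lambda a-L$ forces $x_\varepsilon\in\Omega$, so $\Phi_\lambda(x_\varepsilon)=\mathcal{I}(x_\varepsilon)\ge\mathcal{I}^*$; letting $\varepsilon\downarrow 0$ gives $\inf_A\Phi_\lambda\ge\mathcal{I}^*$, the reverse inequality being immediate. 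This yields coincidence of infima and shows that every global minimiser or inf-stationary point of $\Phi_\lambda$ on $A$ lies in $\Omega$; on $\Omega$ the identity $\Phi_\lambda=\mathcal{I}$ and the fact that restricting the defining liminf to sequences in $\Omega\subseteq A$ can only increase it give $\mathcal{I}^{\downarrow}_\Omega(x^*)\ge (\Phi_\lambda)^{\downarrow}_A(x^*)\ge 0$, so inf-stationarity transfers to $\mathcal{I}$ on $\Omega$.

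The delicate step is the converse direction for local minimisers and inf-stationarity: given $x^*\in\Omega$ enjoying the property for $\mathcal{I}$ on $\Omega$, I must deduce the same property for $\Phi_\lambda$ on $A$. Here I would invoke the classical ``Ekeland distance inequality''
$$
\dist(x,\Omega)\le \frac{\varphi(x)}{a}\qquad\forall x\in A,
$$
obtained by applying Ekeland's principle to $\varphi$ alone with appropriately tuned parameters, the descent-rate hypothesis forcing the Ekeland point to land in $\Omega$. Given $x\in A\setminus\Omega$ close to $x^*$, choose $\tilde x\in\Omega$ with $d(x,\tilde x)$ only slightly larger than $\varphi(x)/a$; continuity of $\varphi$ at $x^*$ guarantees $\tilde x\to x^*$ as $x\to x^*$, so the local-minimum property on $\Omega$ gives $\mathcal{I}(\tilde x)\ge \mathcal{I}(x^*)$ for $x$ sufficiently close to $x^*$. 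Then
$$
\Phi_\lambda(x)=\mathcal{I}(x)+\lambda\varphi(x)\ge \mathcal{I}(\tilde x)-Ld(x,\tilde x)+\lambda\varphi(x)\ge \mathcal{I}(x^*)
$$
for $\lambda>L/a$ and $x$ close enough to $x^*$, which is the required local-minimum property for $\Phi_\lambda$. The inf-stationarity converse is handled by applying the same projection $x\mapsto\tilde x$ along any candidate descent sequence.

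The principal obstacle is this last step: the descent-rate hypothesis must be used not merely as a pointwise bound but as a quantitative projection-type estimate from $A$ onto $\Omega$ that preserves locality, and continuity of $\varphi$ at $\Omega$ is essential for the projected point $\tilde x$ to remain in the neighbourhood of $x^*$ where the local property of $\mathcal{I}$ on $\Omega$ is in force.
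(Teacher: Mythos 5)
Your proposal is correct and takes essentially the same route as the paper's proof: a negative descent-rate estimate for $\Phi_{\lambda}$ on $A \setminus \Omega$ forcing minimisers and inf-stationary points into $\Omega$, the Ekeland-type error bound $\varphi(x) \ge a \dist(x, \Omega)$, and the Lipschitz continuity of $\mathcal{I}$ to transfer local minimality and inf-stationarity from $\mathcal{I}$ on $\Omega$ back to $\Phi_{\lambda}$ on $A$. The only difference is presentational: the paper gets the coincidence of optimal values and global solutions by invoking Theorem~\ref{Theorem_CompleteExactness} and cites Lemmas~1 and~2 of the first part of the study for the error bound and the decrease estimate, whereas you re-derive these ingredients inline via Ekeland's variational principle and a near-nearest-point argument in $\Omega$.
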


In some important cases it might be very difficult (if at all possible) to verify the assumptions of
Theorems~\ref{Theorem_CompleteExactness} and \ref{THEOREM_COMPLETEEXACTNESS_GLOBAL} and prove the complete exactness of
the penalty function $\Phi_{\lambda}$. In these cases one can try to check whether $\Phi_{\lambda}$ is at least
\textit{locally} exact.

\begin{definition}
Let $x^*$ be a locally optimal solution of the problem $(\mathcal{P})$. The penalty function $\Phi_{\lambda}$ is said
to be \textit{locally exact} at $x^*$, if there exists $\lambda^*(x^*) \ge 0$ such that $x^*$ is a point of local
minimum of the penalised problem \eqref{PenalizedProblem} for any $\lambda \ge \lambda^*(x^*)$.
\end{definition}

Thus, if the penalty function $\Phi_{\lambda}$ is locally exact at a locally optimal solution $x^*$, then one can ``get
rid'' of the constraint $x \in M$ in a neighbourhood of $x^*$ with the use of the penalty function $\Phi_{\lambda}$,
since by definition $x^*$ is a local minimiser of $\Phi_{\lambda}$ on $A$ for any sufficiently large $\lambda$.
The following theorem, which is a particular case of \cite[Theorem~2.4 and Proposition~2.7]{Dolgopolik_ExPen_I},
contains simple sufficient conditions for the local exactness. Let $B(x, r) = \{ y \in X \mid d(x, y) \le r \}$ for any
$x \in X$ and $r > 0$.

\begin{theorem} \label{Theorem_LocalExactness}
Let $x^*$ be a locally optimal solution of the problem $(\mathcal{P})$. Suppose also that $\mathcal{I}$ is Lipschitz
continuous near $x^*$ with Lipschitz constant $L > 0$, and there exist $r > 0$ and $a > 0$ such
that
\begin{equation} \label{PenaltyTerm_LocalErrorBound}
  \varphi(x) \ge a \dist(x, \Omega) \quad \forall x \in B(x^*, r) \cap A.
\end{equation}
Then the penalty function $\Phi_{\lambda}$ is locally exact at $x^*$ with $\lambda^*(x^*) \le L / a$.
\end{theorem}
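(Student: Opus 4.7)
The plan is to show directly that, for $\lambda \geq L/a$, the point $x^*$ is a local minimiser of $\Phi_\lambda$ on $A$, by comparing each nearby point $x \in A$ with a near-projection $y \in \Omega$ and using the error bound \eqref{PenaltyTerm_LocalErrorBound} together with the Lipschitz estimate on $\mathcal{I}$.

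First I would fix a single radius $\rho > 0$ small enough to make all three hypotheses simultaneously usable. Concretely, pick $\rho \le r$ so that the error bound \eqref{PenaltyTerm_LocalErrorBound} holds on $B(x^*, \rho) \cap A$, small enough that $\mathcal{I}$ is $L$-Lipschitz on $B(x^*, \rho)$, and small enough that $\mathcal{I}(x^*) \le \mathcal{I}(z)$ for every $z \in B(x^*, \rho) \cap \Omega$. Then I would work on the smaller ball $B(x^*, \rho/3) \cap A$. The reason for the factor $1/3$ is to get a near-projection back inside $B(x^*, \rho)$: for $x \in B(x^*, \rho/3) \cap A$ one has $\dist(x, \Omega) \le d(x, x^*) < \rho/3$, so for any $\varepsilon \in (0, \rho/3)$ there exists $y \in \Omega$ with $d(x, y) < \dist(x, \Omega) + \varepsilon < 2\rho/3$, and therefore $d(x^*, y) \le d(x^*, x) + d(x, y) < \rho$.

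Given such $x$ and $y$, the three ingredients combine as follows. Lipschitz continuity on $B(x^*,\rho)$ gives $\mathcal{I}(x) \ge \mathcal{I}(y) - L\, d(x,y)$; local optimality at $x^*$ inside $\Omega$ gives $\mathcal{I}(y) \ge \mathcal{I}(x^*)$; and the error bound plus the choice of $y$ give $d(x,y) < \varphi(x)/a + \varepsilon$. Substituting,
\begin{equation*}
\Phi_\lambda(x) = \mathcal{I}(x) + \lambda\varphi(x) \ge \mathcal{I}(x^*) - \frac{L}{a}\varphi(x) - L\varepsilon + \lambda\varphi(x) = \mathcal{I}(x^*) + \Bigl(\lambda - \frac{L}{a}\Bigr)\varphi(x) - L\varepsilon.
\end{equation*}
For $\lambda \ge L/a$ the bracketed factor is nonnegative, so $\Phi_\lambda(x) \ge \mathcal{I}(x^*) - L\varepsilon$, and letting $\varepsilon \to 0^+$ yields $\Phi_\lambda(x) \ge \mathcal{I}(x^*) = \Phi_\lambda(x^*)$. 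This establishes that $x^*$ is a local minimum of $\Phi_\lambda$ on $A$ for every $\lambda \ge L/a$, which is exactly the local exactness conclusion with $\lambda^*(x^*) \le L/a$.

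The only mild obstacle is making sure the three neighbourhoods (Lipschitz continuity of $\mathcal{I}$, local optimality of $x^*$ in $\Omega$, validity of the error bound) are all simultaneously active at both $x$ and the near-projection $y$; this is handled by shrinking to $B(x^*, \rho/3)$ so that $y$ stays inside the common ball $B(x^*, \rho)$. A minor technical point is that the infimum $\dist(x,\Omega)$ need not be attained, which is why I work with a near-projection $y$ and then send $\varepsilon\to 0^+$; no closedness or completeness is needed for the argument.
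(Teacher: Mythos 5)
Your proof is correct: the near-projection argument combined with the Lipschitz bound, local optimality of $x^*$ on $\Omega$, and the error bound \eqref{PenaltyTerm_LocalErrorBound} yields $\Phi_{\lambda}(x) \ge \Phi_{\lambda}(x^*)$ on a small ball for every $\lambda \ge L/a$, and the $\varepsilon$-approximation correctly handles the non-attainment of $\dist(x,\Omega)$. This is essentially the same argument as the paper's (which is only cited from the first part of the study, but reappears in Appendix~A in the form $\mathcal{I}(x) - \mathcal{I}(x^*) \ge -L\dist(x,\Omega)$ combined with $\lambda\varphi(x) \ge \lambda a \dist(x,\Omega)$); you have simply made the near-projection step behind that inequality explicit.
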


Let us also point out a useful result \cite[Corollary~2.2]{Cominetti}) that allows one to easily verify 
inequality \eqref{PenaltyTerm_LocalErrorBound} for a large class of optimisation and optimal control problems.

\begin{theorem} \label{Theorem_LocalErrorBound}
Let $X$ and $Y$ be Banach spaces, $C \subseteq X$ and $K \subset Y$ be closed convex sets, and $F \colon X \to Y$ be a
given mapping. Suppose that $F$ is strictly differentiable at a point $x^* \in C$ such that $F(x^*) \in K$, 
$D F(x^*)$ is its Fr\'{e}chet derivative at $x^*$, and
\begin{equation} \label{MetricRegCond}
  0 \in \core\Big[ DF(x^*)(C - x^*) - (K - F(x^*)) \Big],
\end{equation}
where ``$\core$'' is the algebraic interior. Then there exist $r > 0$ and $a > 0$ such that
$$
  \dist(F(x), K) \ge a \dist( x, F^{-1}(K) \cap C) \quad \forall x \in B(x^*, r) \cap C.
$$
\end{theorem}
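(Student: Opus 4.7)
The plan is to recognize the theorem as a standard metric regularity result in the spirit of Robinson and Lyusternik--Graves: the desired inequality
$$
  \dist(F(x), K) \ge a \dist(x, F^{-1}(K) \cap C)
$$
is exactly the statement that the set-valued map $G \colon X \rightrightarrows Y$ defined by $G(x) = F(x) - K$ for $x \in C$ (and $G(x) = \emptyset$ otherwise) is metrically regular at $(x^*, 0)$, since $\dist(F(x), K) = \dist(0, F(x) - K) = \dist(0, G(x))$ and $F^{-1}(K) \cap C = G^{-1}(0)$. So the problem reduces to proving metric regularity at $(x^*, 0)$ under the condition \eqref{MetricRegCond}.

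I would proceed in two stages. First, I would handle the linearised map $H \colon X \rightrightarrows Y$,
$$
  H(x) = DF(x^*)(x - x^*) - (K - F(x^*)) \quad \text{for } x \in C, \qquad H(x) = \emptyset \text{ otherwise}.
$$
Because $C$ and $K$ are closed convex and $DF(x^*)$ is linear, the graph of $H$ is a convex subset of $X \times Y$, and its range is exactly the convex set $DF(x^*)(C - x^*) - (K - F(x^*))$. The hypothesis \eqref{MetricRegCond} says $0$ lies in the algebraic interior of this range, so by the Robinson--Ursescu open mapping theorem for convex processes between Banach spaces (which upgrades the algebraic interior to the topological interior via Baire category and completeness), the inverse $H^{-1}$ is Lipschitz-like near $(0, x^*)$; equivalently there exist $a_0 > 0$ and $r_0 > 0$ such that
$$
  \dist(x, H^{-1}(0)) \le a_0^{-1} \dist(0, H(x)) \quad \forall x \in B(x^*, r_0) \cap C.
$$

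Second, I would transfer this linear estimate to the nonlinear map $G$ using the strict differentiability of $F$ at $x^*$. Writing $F(x) = F(x^*) + DF(x^*)(x - x^*) + R(x)$ where the remainder $R$ satisfies $\|R(x) - R(x')\| \le \varepsilon \|x - x'\|$ on a small ball around $x^*$ (with $\varepsilon$ arbitrarily small by strict differentiability), one sets up a Lyusternik--Graves style iteration or contraction-mapping fixed-point argument: given $x$ close to $x^*$, use the linear surjection result to produce a correction $\Delta x_1$ with $x + \Delta x_1 \in C$ and $H(x + \Delta x_1) \ni -R(x)$; this approximately pushes $F(x + \Delta x_1)$ into $K$. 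Iterate, choosing $\varepsilon$ small enough compared to $a_0$ so that the geometric series converges, producing a limit point in $F^{-1}(K) \cap C$ whose distance from $x$ is controlled by $\dist(F(x), K)$ with a constant $a$ slightly smaller than $a_0$.

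The main obstacle is the second stage: the iterative construction must ensure simultaneously that each iterate stays in the convex set $C$ and that the cumulative displacement remains inside the neighbourhood where the strict differentiability estimate and the linear metric regularity are valid; this requires a careful quantitative bookkeeping of the constants $\varepsilon$, $a_0$, $r_0$, and $r$. The first stage, namely passing from the algebraic interior condition \eqref{MetricRegCond} to a genuine topological openness of $H$, is the other subtle point, but it is exactly what Robinson's open mapping theorem for convex multifunctions provides in the Banach setting and can therefore be invoked as a known tool.
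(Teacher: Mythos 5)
The paper itself gives no proof of this statement: it is imported verbatim from the literature (it is \cite[Corollary~2.2]{Cominetti}, and closely related to Robinson's stability theorem for nonlinear systems of inequalities), so there is no internal argument to compare yours against. Your outline reconstructs exactly the standard proof behind that citation: reduce the claim to metric regularity of $G(x)=F(x)-K$ (restricted to $C$) at $(x^*,0)$; establish regularity of the convexified linearisation $H(x)=DF(x^*)(x-x^*)-(K-F(x^*))$ on $C$ via the Robinson--Ursescu theorem, using that its graph is closed and convex so that the algebraic interior condition \eqref{MetricRegCond} upgrades to topological openness at a linear rate; then transfer to $F$ by a Lyusternik--Graves style iteration exploiting strict differentiability. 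This is the right route and, as far as the structure goes, it is the same one used in the cited source.

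Two points would need to be made explicit if you were to write the argument out in full. First, in the iteration you solve at each step an inclusion of the form $H(x+\Delta x)\ni -R(x)$ with a nonzero right-hand side, so you need the Robinson--Ursescu conclusion in its full strength, namely $\dist\bigl(x, H^{-1}(y)\bigr)\le a_0^{-1}\dist\bigl(y, H(x)\bigr)$ for all $x\in B(x^*,r_0)\cap C$ and all $y$ in a neighbourhood of $0$, not merely the estimate at $y=0$ that you state; this is available but should be invoked correctly, and one must check that the right-hand sides generated by the remainders $R$ stay in that neighbourhood. Second, the limiting step requires the closedness of $C$ and $K$ together with the local Lipschitz continuity of $F$ near $x^*$ (which strict differentiability provides) to conclude that the limit of the iterates lies in $F^{-1}(K)\cap C$, and the geometric-series bookkeeping must keep all iterates inside both $B(x^*,r_0)$ and the ball on which the remainder estimate $\|R(x)-R(x')\|\le\varepsilon\|x-x'\|$ holds, with $\varepsilon a_0^{-1}<1$. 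These are quantitative but routine details; with them filled in, your plan yields a complete and correct proof of the theorem.
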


\begin{remark}
Let $C$, $K$, and $F$ be as in the previous theorem. Suppose that $A = C$ and $M = \{ x \in X \mid F(x) \in K \}$.
Then $\Omega = F^{-1}(K) \cap C$, and one can define $\varphi(\cdot) = \dist(F(\cdot), K)$. In this case under 
the assumptions of Theorem~\ref{Theorem_LocalErrorBound} constraint qualification \eqref{MetricRegCond} guarantees that
$\varphi(x) \ge a \dist( x, F^{-1}(K) \cap C) = a \dist( x, \Omega )$ for all $x \in B(x^*, r) \cap A$,
i.e. \eqref{PenaltyTerm_LocalErrorBound} holds true.
\end{remark}

In the linear case, the following nonlocal version of Robinson-Ursescu's theorem due to Robinson 
\cite[Theorems~1 and 2]{Robinson76} (see also~\cite{Cominetti,Ioffe}) is very helpful for verifying inequality
\eqref{PenaltyTerm_LocalErrorBound} and the exactness of penalty functions.

\begin{theorem}[Robinson] \label{Theorem_Robinson_Ursescu}
Let $X$ and $Y$ be Banach spaces, $\mathcal{T} \colon X \to Y$ be a bounded linear operator, and $C \subset X$ be a
closed convex set. Suppose that $x^* \in C$ is such that the point $y^* = \mathcal{T} x^*$ belongs to the iterior
$\interior(\mathcal{T}(C))$ of the set $\mathcal{T}(C)$. Then there exist $r > 0$ and $\kappa > 0$ such that
$$
  \dist( x, \mathcal{T}^{-1}(y) \cap C ) \le \kappa \big( 1 + \| x - x^* \| \big) \| \mathcal{T} x - y \|
  \qquad \forall x \in C \quad \forall y \in B(y^*, r).
$$
\end{theorem}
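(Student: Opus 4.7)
By translating both source and target I may assume $x^* = 0$ and $y^* = 0$, so that $0 \in C$, $0 \in \interior \mathcal{T}(C)$, and the claim reduces to
\[
  \dist\bigl(x, \mathcal{T}^{-1}(y) \cap C\bigr) \le \kappa \bigl(1 + \|x\|\bigr)\|\mathcal{T} x - y\| \qquad \forall x \in C, \ y \in B(0, r).
\]
The strategy combines a local linear-rate estimate near $0$, delivered by the Ursescu--Robinson open mapping theorem for closed convex multifunctions, with a convex-combination argument that propagates it to arbitrary $x \in C$.

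\emph{Step 1 (local rate).} The image $\mathcal{T}(C)$ is convex and contains $0$ in its interior. The standard Baire category argument for closed convex multifunctions in Banach spaces (Ursescu's refinement of the classical open mapping theorem, which uses completeness of $X$ together with closedness of $C$) yields constants $\rho, r_0 > 0$ such that, for every $\tilde y \in B(0, r_0)$, there exists $w \in C$ with $\mathcal{T} w = \tilde y$ and $\|w\| \le \rho \|\tilde y\|$.

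\emph{Step 2 (convex-combination extension).} Set $r = r_0 / 4$, fix $x \in C$ and $y \in B(0, r)$, and write $R = \|\mathcal{T} x\|$, $\delta = \|\mathcal{T} x - y\|$. Look for $z$ of the form $z = (1 - t) x + t w$ with $w \in C$ and $t \in (0, 1]$; convexity gives $z \in C$, while the constraint $\mathcal{T} z = y$ forces $\mathcal{T} w = \bar y_t := \mathcal{T} x + t^{-1}(y - \mathcal{T} x)$, for which the triangle inequality gives $t\|\bar y_t\| \le \delta + t R$. In the tame regime $R \le r_0/2$ and $\delta \le r_0/2$ choose $t = 2\delta / r_0 \in (0, 1]$; then $\|\bar y_t\| \le r_0$, Step 1 supplies $w \in C$ with $\mathcal{T} w = \bar y_t$ and $\|w\| \le \rho \|\bar y_t\|$, and
\[
  \|z - x\| = t \|w - x\| \le \rho (\delta + t R) + t \|x\| \le 2 \rho \delta + \frac{2}{r_0} \delta \|x\|,
\]
which is majorised by $\max\{2\rho, 2/r_0\}(1 + \|x\|) \delta$. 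In the complementary regime $\max\{R, \delta\} > r_0/2$ one checks $\delta \ge r_0/4$ (using $\delta \ge R - \|y\|$ when $R > r_0/2$); applying Step 1 directly to $\tilde y = y$ yields $z_0 \in \mathcal{T}^{-1}(y) \cap C$ with $\|z_0\| \le \rho r$, so $\dist(x, \mathcal{T}^{-1}(y) \cap C) \le \|x\| + \rho r$, and this is absorbed into $\kappa(1 + \|x\|)\delta$ because $\delta \ge r_0/4$. A single $\kappa$ depending only on $\rho$ and $r_0$ handles both cases.

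\emph{Main obstacle.} The substantive step is Step 1: producing the local linear rate for the convex feasibility multifunction $\tilde y \mapsto \mathcal{T}^{-1}(\tilde y) \cap C$ when $C$ is only closed and convex (not a subspace). This is precisely where completeness of $X$ and the Baire category theorem enter; the classical Banach open mapping theorem does not apply directly. Once Step 1 is in hand, Step 2 is essentially convex-geometric bookkeeping that exploits the linearity of $\mathcal{T}$ together with the slack built into the factor $1 + \|x\|$ in order to accommodate points $x \in C$ lying arbitrarily far from $x^*$.
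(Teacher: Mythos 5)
The paper does not actually prove this theorem: it is quoted from Robinson's 1976 paper (the text cites its Theorems~1 and~2), so there is no in-paper argument to compare yours against. Judged on its own terms, your sketch is correct and in fact reconstructs Robinson's original two-step route: your Step~1 (openness at a linear rate of the closed convex process $x \mapsto \mathcal{T}x$, $x \in C$, at the point $(x^*, y^*)$ of its graph, coming from completeness plus a Baire category argument) is the content of Robinson's Theorem~1 (Ursescu's theorem), while your Step~2 --- the convex-combination globalization that produces the factor $1 + \| x - x^* \|$ --- is precisely the extra content of Robinson's Theorem~2, and your bookkeeping there checks out: in the tame regime $\| \mathcal{T}x \| \le r_0/2$, $\delta \le r_0/2$ the choice $t = 2\delta/r_0$ gives $\| \bar y_t \| \le r_0$ and $\| z - x \| \le 2\rho\delta + (2/r_0)\delta\|x\|$, and in the complementary regime $\delta \ge r_0/4$, so the crude bound $\|x\| + \rho r$ is indeed absorbed into $\kappa(1 + \|x\|)\delta$ with $\kappa = \max\{2\rho, 4/r_0\}$. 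Two caveats worth fixing when you write this up: (i) the degenerate case $\delta = 0$ must be dispatched separately (then $x$ itself lies in $\mathcal{T}^{-1}(y) \cap C$), since $t = 2\delta/r_0$ is not in $(0,1]$, and if the ball in Step~1 is open you should shrink a constant because $\| \bar y_t \|$ can equal $r_0$; (ii) more substantively, Step~1 is invoked rather than proved, so the analytic heart of the result --- where completeness of $X$, closedness and convexity of $C$ really enter --- is imported from the literature; this is legitimate as a derivation of the nonlocal estimate from the standard local Robinson--Ursescu openness theorem (and mirrors what the paper itself does by citing Robinson), but you should state explicitly that only openness at the single point $(x^*, y^*)$ is being cited, so that the argument is not circular.
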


In the following sections we employ Theorems~\ref{Theorem_CompleteExactness}--\ref{Theorem_Robinson_Ursescu} to verify
complete or local exactness of penalty function for optimal control problems with terminal and state constraints.

\begin{remark}
In our exposition of the theory of exact penalty functions we mainly followed
papers\cite{Dolgopolik_ExPen_I,Dolgopolik_ExPen_II}. A completely different approach to an analysis of 
the \textit{global} exactness of exact penalty functions based on the Palais-Smale condition was developed
by Zaslavski\cite{Zaslavski}. It seems possible to apply the main results of monograph\cite{Zaslavski} to obtain
sufficient conditions for the global exactness of penalty functions for some optimal control problems that
significantly differ from the ones obtained in this article. A derivation of such conditions lies beyond the scope of
this article, and we leave it as an interesting open problem for future research.
\end{remark}

\section{Exact Penalisation of Terminal Constraints}
\label{Sect_ExactPen_TerminalConstraint}

In this section we analyse exact penalty functions for fixed-endpoint optimal control problems, including such problems
for linear evolution equations in Hilbert spaces. Our aim is to convert a fixed-endpoint problem into a free-endpoint
one by penalising the terminal constraint and obtain conditions under which the penalised free-endpoint problem is
equivalent (locally or globally) to the original one. The main results of this section allow one to apply methods
for solving free-endpoint optimal control problems to fixed-endpoint problems.

\subsection{Notation}

Let us introduce notation first. Denote by $L_q^m(0, T)$ the Cartesian product of $m$ copies of $L^q(0, T)$, and let
$W_{1, p}^d(0, T)$ be the Cartesian product of $d$ copies of the Sobolev space $W^{1, p}(0, T)$. Here 
$1 \le q, p \le + \infty$. As usual (see, e.g. \cite{Leoni}), we identify the Sobolev 
space $W^{1, p}(0, T)$ with the space consisting of all those absolutely continuous functions 
$x \colon [0, T] \to \mathbb{R}$ for which $\dot{x} \in L^p(0, T)$. The space $L_q^m(0, T)$ is equipped with the norm 
$\| u \|_q = ( \int_0^T |u(t)|^q \, dt)^{1/q}$, when $1 \le q < + \infty$ (here $| \cdot |$ is the Euclidean norm),
while the space $L_{\infty}^m (0, T)$ is equipped with the norm $\| u \|_{\infty} = \esssup_{t \in [0, T]}|u(t)|$. The
Sobolev space $W_{1, p}^d(0, T)$ is endowed with the norm $\| x \|_{1, p} = \| x \|_p + \| \dot{x} \|_p$. Let us note
that by the Sobolev imbedding theorem (see, e.g. \cite[Theorem~5.4]{Adams}) for any $p \in [1, + \infty]$
there exists $C_p > 0$ such that 
\begin{equation} \label{SobolevImbedding}
  \| x \|_{\infty} \le C_p \| x \|_{1, p} \quad \forall x \in W^d_{1, p}(0, T),
\end{equation}
which, in particular, implies that any bounded set in $W^d_{1, p}(0, T)$ is also bounded in $L_{\infty}^d(0, T)$.
In what follows we suppose that the Cartesian product $X \times Y$ of normed spaces $X$ and $Y$ is endowed with the
norm $\| (x, y) \| = \| x \|_X + \| y \|_Y$. For any $r \in [1, + \infty]$ denote by $r' \in [1, + \infty]$ the
\textit{conjugate exponent} of $r$, i.e. $1 / r + 1 / r' = 1$.

Let $g \colon \mathbb{R}^d \times \mathbb{R}^m \times [0, T] \to \mathbb{R}^k$ be a given function. We say that $g$
satisfies \textit{the growth condition} of order $(l, s)$ with $0 \le l < + \infty$ and $1 \le s \le + \infty$, if
for any $R > 0$ there exist $C_R > 0$ and an a.e. nonnegative function $\omega_R \in L^s(0, T)$ such that 
$|g(x, u, t)| \le C_R |u|^l + \omega_R(t)$ for a.e. $t \in [0, T]$ and for all 
$(x, u) \in \mathbb{R}^d \times \mathbb{R}^m$ with $|x| \le R$.

Finally, if the function $g = g(x, u, t)$ is differentiable, then the gradient of the function $x \mapsto g(x, u, t)$ is
denoted by $\nabla_x g(x, u, t)$, and a similar notation is used for the gradient of the function 
$u \mapsto g(x, u, t)$.

\subsection{Linear Time-Varying Systems}

We start our analysis with the linear case, since in this case the complete exactness of the penalty function can be
obtained without any assumptions on the controllability of the system. Consider the following fixed-endpoint optimal
control problem:
\begin{equation} \label{LinearFixedEndPointProblem}
\begin{split}
  {}&\min \: \mathcal{I}(x, u) = \int_0^T \theta(x(t), u(t), t) \, dt \\
  {}&\text{subject to } \dot{x}(t) = A(t) x(t) + B(t) u(t), \quad t \in [0, T], \quad u \in U, \quad
  x(0) = x_0, \quad x(T) = x_T.
\end{split}
\end{equation}
Here $x(t) \in \mathbb{R}^d$ is the system state at time $t$, $u(\cdot)$ is a control input, 
$\theta \colon \mathbb{R}^d \times \mathbb{R}^m \times [0, T] \to \mathbb{R}$, 
$A \colon [0, T] \to \mathbb{R}^{d \times d}$, and $B \colon [0, T] \to \mathbb{R}^{d \times m}$ are given functions, 
$T > 0$ and $x_0, x_T \in \mathbb{R}^d$ are fixed. We suppose that $x \in W^d_{1,p}(0, T)$, while the control inputs $u$
belong to a closed convex subset $U$ of the space $L^m_q(0, T)$ (here $1 \le p, q \le + \infty$). 

Let us introduce a penalty function for problem \eqref{LinearFixedEndPointProblem}. We will penalise only the terminal
constraint $x(T) = x_T$. Define $X = W_{1, p}^d(0, T) \times L_q^m(0, T)$, $M = \{ (x, u) \in X \mid x(T) = x_T \}$, and
\begin{equation} \label{AddConstr_LinearCase}
  A = \Big\{ (x, u) \in X \Bigm| x(0) = x_0, \: u \in U, \:
  \dot{x}(t) = A(t) x(t) + B(t) u(t) \text{ for a.e. } t \in [0, T] \Big\}.
\end{equation}
Then problem \eqref{LinearFixedEndPointProblem} can be rewritten as the problem of minimising $\mathcal{I}(x, u)$
subject to $(x, u) \in M \cap A$. Define $\varphi(x, u) = |x(T) - x_T|$. Then 
$M = \{ (x, u) \in X \mid \varphi(x, u) = 0 \}$, and one can consider the penalised problem of minimising the penalty
function $\Phi_{\lambda}(x, u) = \mathcal{I}(x, u) + \lambda \varphi(x, u)$ subject to $(x, u) \in A$. Note that this is
a free-endpoint problem of the form:
\begin{equation} \label{FreeEndPointProblem_withPenalty}
\begin{split}
  {}&\min_{(x, u) \in X} \Phi_{\lambda}(x, u) = \int_0^T \theta(x(t), u(t), t) \, dt + \lambda \big| x(T) - x_T \big| \\
  {}&\text{subject to } \dot{x}(t) = A(t) x(t) + B(t) u(t), \quad 
  t \in [0, T], \quad u \in U, \quad x(0) = x_0.
\end{split}
\end{equation}
Our aim is to show that under some natural assumptions the penalty function $\Phi_{\lambda}$ is completely exact, i.e.
that free-endpoint problem \eqref{FreeEndPointProblem_withPenalty} is equivalent to fixed-endpoint problem
\eqref{LinearFixedEndPointProblem} for any sufficiently large $\lambda \ge 0$. 

Let $\mathcal{I}^*$ be the optimal value of problem \eqref{LinearFixedEndPointProblem}. Recall that 
$S_{\lambda}(c) = \{ (x, u) \in A \mid \Phi_{\lambda}(x, u) < c \}$ for any $c \in \mathbb{R}$ and 
$\Omega_{\delta} = \{ (x, u) \in A \mid \varphi(x, u) < \delta \}$ for any $\delta > 0$. In our case the set
$\Omega_{\delta}$ consists of all those $(x, u) \in W_{1, p}^d(0, T) \times L_q^m(0, T)$ for which $u \in U$,
\begin{equation} \label{LinearTimeVaryingSystems}
  \dot{x}(t) = A(t) x(t) + B(t) u(t) \quad \text{for a.e. } t \in [0, T], \quad x(0) = x_0,
\end{equation}
and $|x(T) - x_T| < \delta$. Finally, denote by $\mathcal{R}(x_0, T)$ the set that is reachable in time $T$, i.e. the
set of all those $\xi \in \mathbb{R}^d$ for which there exists $u \in U$ such that $x(T) = \xi$, where $x(\cdot)$ is a
solution of \eqref{LinearTimeVaryingSystems}. Observe that the reachable set $\mathcal{R}(x_0, T)$ is convex due to the
convexity of the set $U$ and the linearity of the system. Finally, recall that the \textit{relative interior} of a
convex set $C \subset \mathbb{R}^d$, denoted $\relint C$, is the interior of $C$ relative to the affine hull of $C$.

The following theorem on the complete exactness of the penalty function $\Phi_{\lambda}$ for
problem \eqref{LinearFixedEndPointProblem} can be proved with the use of state-transition matrix for
\eqref{LinearTimeVaryingSystems}. Here, we present a different and more instructive (although slightly longer) proof of
this result, since it contains several important ideas related to penalty functions for optimal control problems, which
will be utilised in the following sections.

\begin{theorem} \label{Theorem_FixedEndPointProblem_Linear}
Let $q \ge p$, and the following assumptions be valid:
\begin{enumerate}
\item{$A(\cdot) \in L_{\infty}^{d \times d}(0, T)$ and $B(\cdot) \in L_{\infty}^{d \times m}(0, T)$;
\label{Assumpt_LTI_BoundedCoef}}

\item{the function $\theta = \theta(x, u, t)$ is continuous, differentiable in $x$ and $u$, and the functions
$\nabla_x \theta$ and $\nabla_u \theta$ are continuous;
}

\item{either $q = + \infty$ or the functions $\theta$ and $\nabla_x \theta$ satisfy the growth condition of order 
$(q, 1)$, while the function $\nabla_u \theta$ satisfies the growth condition of order $(q - 1, q')$;
\label{Assumpt_LTI_DerivGrowthCond}}

\item{there exists a globally optimal solution of problem \eqref{LinearFixedEndPointProblem}, and $x_T$ belongs to 
the relative interior of the reachable set $\mathcal{R}(x_0, T)$ (in the case $U = L_q^m(0, T)$ this assumption holds
true automatically);
\label{Assumpt_LTI_EndpointRelInt}
}

\item{there exist $\lambda_0 > 0$, $c > \mathcal{I}^*$ and $\delta > 0$ such that the set 
$S_{\lambda_0}(c) \cap \Omega_{\delta}$ is bounded in $W^d_{1, p}(0, T) \times L_q^m(0, T)$, and the function
$\Phi_{\lambda_0}(x, u)$ is bounded below on $A$.
\label{Assumpt_LTI_SublevelBounded}}
\end{enumerate}
Then there exists $\lambda^* \ge 0$ such that for any $\lambda \ge \lambda^*$ the penalty function $\Phi_{\lambda}$ for
problem \eqref{LinearFixedEndPointProblem} is completely exact on $S_{\lambda}(c)$.
\end{theorem}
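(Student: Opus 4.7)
My strategy is to apply Theorem~\ref{Theorem_CompleteExactness} with $X = W_{1,p}^d(0,T) \times L_q^m(0,T)$, the set $A$ from~\eqref{AddConstr_LinearCase}, and the penalty term $\varphi(x, u) = |x(T) - x_T|$. The set $A$ is closed and \emph{convex} because it is the intersection of the closed convex set $W_{1,p}^d(0,T) \times U$ with the affine subspace cut out by the initial condition and the linear dynamics (here one uses $q \ge p$ so that the linear dynamics actually map $X$ into itself). The penalty term $\varphi$ is continuous thanks to the Sobolev imbedding~\eqref{SobolevImbedding}, and $\mathcal{I}$ is continuous on $A$ by assumption~\ref{Assumpt_LTI_DerivGrowthCond} combined with the Nemytskii-operator results of Appendix~B. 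Assumption~\ref{Assumpt_LTI_SublevelBounded} directly supplies $c > \mathcal{I}^*$, $\lambda_0 > 0$, and $\delta > 0$, together with the lower bound on $\Phi_{\lambda_0}$ and the boundedness of $S_{\lambda_0}(c) \cap \Omega_{\delta}$ in $X$. Taking $V$ to be any bounded open set containing $S_{\lambda_0}(c) \cap \Omega_{\delta}$, the Sobolev imbedding forces the states to be uniformly bounded in $L^\infty$ on $V$, and then the growth conditions in assumption~\ref{Assumpt_LTI_DerivGrowthCond} with Appendix~B yield that the Fr\'{e}chet derivative of $\mathcal{I}$ is uniformly bounded on $V$; in particular $\mathcal{I}$ is Lipschitz on $V$. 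Hypotheses~(i) and~(iii) of Theorem~\ref{Theorem_CompleteExactness} are thereby settled.

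Hypothesis~(ii)---the uniform negative descent rate of $\varphi$---is the substantive step, and it is here that Robinson's Theorem~\ref{Theorem_Robinson_Ursescu} and the relative-interior assumption~\ref{Assumpt_LTI_EndpointRelInt} enter. Let $L \subseteq \mathbb{R}^d$ be the linear span of $\mathcal{R}(x_0, T) - x_T$ and let $P_L$ denote the orthogonal projection of $\mathbb{R}^d$ onto $L$. The bounded linear operator $\mathcal{T} \colon X \to L$ defined by $\mathcal{T}(x, u) = P_L x(T)$ satisfies $\mathcal{T}(A) = P_L(x_T) + (\mathcal{R}(x_0, T) - x_T)$ because $\mathcal{R}(x_0, T) - x_T \subseteq L$; consequently assumption~\ref{Assumpt_LTI_EndpointRelInt} is precisely the statement that $y^* := P_L(x_T)$ belongs to the interior of $\mathcal{T}(A)$ relative to $L$. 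Fix any reference solution $(x^*, u^*) \in \Omega$, which exists by~\ref{Assumpt_LTI_EndpointRelInt}; then $\mathcal{T}(x^*, u^*) = y^*$. Applying Theorem~\ref{Theorem_Robinson_Ursescu} with $C = A$ and $y = y^*$, and using that for any $(x, u) \in A$ one has $x(T) - x_T \in L$ (hence $\mathcal{T}^{-1}(y^*) \cap A = \Omega$ and $|\mathcal{T}(x, u) - y^*| = |P_L(x(T) - x_T)| = \varphi(x, u)$), yields $r, \kappa > 0$ with
\begin{equation*}
  \dist\bigl((x, u), \Omega\bigr) \le \kappa \bigl(1 + \|(x, u) - (x^*, u^*)\|\bigr) \varphi(x, u) \qquad \forall (x, u) \in A.
\end{equation*}

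To turn this error bound into a descent rate I exploit the convexity of $A$ and the linearity of the dynamics. Given $(x, u) \in A \setminus \Omega$ and any $(x', u') \in \Omega$, the segment $(x_t, u_t) := (1-t)(x, u) + t(x', u')$ lies in $A$ for all $t \in [0, 1]$, and since $x'(T) = x_T$ the terminal state satisfies $x_t(T) - x_T = (1 - t)(x(T) - x_T)$, so $\varphi(x_t, u_t) = (1 - t)\varphi(x, u)$ is linear in $t$. Choosing $(x', u') \in \Omega$ to nearly attain the distance and letting $t \to 0^+$ therefore gives
\[
  \varphi^{\downarrow}_A(x, u) \le -\frac{1}{\kappa\bigl(1 + \|(x, u) - (x^*, u^*)\|\bigr)}.
\]
On the bounded set $S_{\lambda_0}(c) \cap \Omega_{\delta}$ the right-hand side is bounded above by some negative constant $-a$, so hypothesis~(ii) holds and Theorem~\ref{Theorem_CompleteExactness} delivers the complete exactness of $\Phi_{\lambda}$ on $S_{\lambda}(c)$. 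The main obstacle in this plan is the middle paragraph: to obtain a \emph{nonlocal} error bound holding across the whole of $A$ despite the reachable set having potentially empty interior in $\mathbb{R}^d$, one must reduce to the linear hull $L$ via the projection $P_L$ and verify that the resulting operator still separates $\Omega$ correctly from the rest of $A$. All other ingredients reduce to standard convexity arguments, the Sobolev imbedding, and the Nemytskii-operator calculus of Appendix~B.
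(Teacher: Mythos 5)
Your proposal is correct, and its overall skeleton (verify the hypotheses of Theorem~\ref{Theorem_CompleteExactness}; obtain the uniform negative descent rate of $\varphi$ from Robinson's theorem plus the relative-interior assumption and the convexity of the data) coincides with the paper's. The execution of the key step, however, is genuinely different. The paper first proves, via Gr\"{o}nwall--Bellman estimates, that the control-to-state map is Lipschitz on $A$ ($\| x_1 - x_2 \|_{1,p} \le L \| u_1 - u_2 \|_q$), thereby reducing the required error bound to a purely control-space inequality $\| u - \widehat{u} \|_q \le C |x(T) - x_T|$; it then applies Theorem~\ref{Theorem_Robinson_Ursescu} to the input-to-terminal-state operator restricted both in domain and codomain, namely $\mathcal{T}_0 \colon \cl\linhull(U - u_*) \to \linhull \mathcal{T}(U - u_*)$ with $C = U - u_*$, which is how the relative-interior hypothesis is converted into a genuine interiority condition. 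You instead apply Robinson's theorem once, in the product space, with $C = A$ (closed and convex thanks to the linear dynamics and convex $U$) and with the codomain projected by $P_L$ onto $L = \linhull(\mathcal{R}(x_0,T) - x_T)$ (the same subspace as the paper's $Y_0$), using that $x(T) - x_T \in L$ for all $(x,u) \in A$ to identify $\mathcal{T}^{-1}(y^*) \cap A$ with $\Omega$ and $|\mathcal{T}(x,u) - y^*|$ with $\varphi(x,u)$; this yields directly the nonlocal bound $\dist((x,u), \Omega) \le \kappa(1 + \|(x,u)-(x^*,u^*)\|)\,\varphi(x,u)$ on all of $A$, and the descent-rate estimate then follows from the segment argument towards a near-nearest point of $\Omega$, exactly as in the paper's segment computation. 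What your route buys is the elimination of the Gr\"{o}nwall reduction and of the domain restriction to $\cl\linhull(U-u_*)$: a single, cleaner error bound in the product space. What the paper's route buys is an explicit control-space estimate (reused later, e.g.\ in the variable-endpoint theorem and in Remark~\ref{Remark_SensitivityProperty}) and a formulation of the key inequality that motivates the property $(\mathcal{S})$ in the nonlinear section. Two small presentational points: the Lipschitz continuity of $\mathcal{I}$ on bounded sets comes from the growth conditions via \cite[Proposition~4]{DolgopolikFominyh} (or the dominated-convergence argument used for Theorem~\ref{Theorem_Exactness_EvolutionEquations}), not from the Appendix~B results, which concern the operator built from $f$ and the state constraints; and in your segment argument you should note that $\Omega$ is closed, so $\dist((x,u),\Omega) > 0$ for $(x,u) \in A \setminus \Omega$, which justifies picking a near-nearest $(x',u') \ne (x,u)$.
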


\begin{proof}
Our aim is to employ Theorem~\ref{Theorem_CompleteExactness}. To this end, note that from the essential boundedness of
$A(\cdot)$ and $B(\cdot)$, and the fact that $p \le q$ it follows that the function
$(x, u) \mapsto \dot{x}(\cdot) - A(\cdot) x(\cdot) - B(\cdot) u(\cdot)$ continuously maps $X$ to $L^d_p(0, T)$. Hence
taking into account \eqref{SobolevImbedding} and the fact that $U$ is closed by our assumptions one obtains that the set
$A$ is closed (see \eqref{AddConstr_LinearCase}). By applying \eqref{SobolevImbedding} one gets that
$$
  \big| \varphi(x, u) - \varphi(y, v) \big| = \big| |x(T) - x_T| - |y(T) - x_T| \big| 
  \le |x(T) - y(T)| \le C_p \| x - y \|_{1, p}
  \quad \forall (x, u), (y, v) \in X,
$$
i.e. the function $\varphi$ is continuous. By \cite[Theorem~7.3]{FonsecaLeoni} the growth condition on the function
$\theta$ guarantees that the functional $\mathcal{I}(x, u)$ is correctly defined and finite for any
$(x, u) \in X$, while by \cite[Proposition~4]{DolgopolikFominyh} the growth conditions on $\nabla_x \theta$ and
$\nabla_u \theta$ ensure that the functional $\mathcal{I}(x, u)$ is Lipschitz continuous on any bounded subset of $X$.
Hence, in particular, it is Lipschitz continuous on any bounded open set containing the set 
$S_{\lambda_0}(c) \cap \Omega_{\delta}$ (such \textit{bounded} open set exists, since 
$S_{\lambda_0}(c) \cap \Omega_{\delta}$ is bounded by our assumption). Thus, by Theorem~\ref{Theorem_CompleteExactness}
it remains to check that there exists $a > 0$ such that
$\varphi^{\downarrow}_A(x, u) \le - a$ for any $(x, u) \in S_{\lambda_0}(c) \cap (\Omega_{\delta} \setminus \Omega)$.

Let $(x, u) \in S_{\lambda_0}(c) \cap \Omega_{\delta}$ be such that $\varphi(x, u) > 0$, i.e. $x(T) \ne x_T$. Choose any
$(\widehat{x}, \widehat{u}) \in \Omega = M \cap A$ (recall that $\Omega$ is not empty, since by our assumption 
problem \eqref{LinearFixedEndPointProblem} has a globally optimal solution). By definition $\widehat{x}(T) = x_T$.
Put $\Delta x = ( \widehat{x} - x ) / \sigma$ and $\Delta u = ( \widehat{u} - u ) / \sigma$, where
$\sigma = \| \widehat{x} - x \|_{1, p} + \| \widehat{u} - u \|_q > 0$. 
Then $\| (\Delta x, \Delta u) \|_X = \| \Delta x \|_{1, p} + \| \Delta u \|_q = 1$. From the linearity of the system and
the convexity of the set $U$ it follows that for any $\alpha \in [0, \sigma]$ one has 
$(x + \alpha \Delta x, u + \alpha \Delta x) \in A$. Furthermore, note
that $(x + \alpha \Delta x)(T) = x(T) + \alpha \sigma^{-1} (x_T - x(T))$. Hence
\begin{align*}
  \varphi^{\downarrow}_A(x, u) &\le \lim_{\alpha \to +0} 
  \frac{\varphi(x + \alpha \Delta x, u + \alpha \Delta u) - \varphi(x, u)}{\alpha \| (\Delta x, \Delta u) \|_X} \\
  &= \lim_{\alpha \to +0} \frac{(1 - \alpha \sigma^{-1}) |x(T) - x_T| - |x(T) - x_T|}{\alpha}
  = - \frac{1}{\sigma} |x(T) - x_T|.
\end{align*}
Therefore, it remains to check that there exists $C > 0$ such that for any 
$(x, u) \in S_{\lambda_0}(c) \cap (\Omega_{\delta} \setminus \Omega)$ one can find 
$(\widehat{x}, \widehat{u}) \in \Omega$ satisfying the inequality
\begin{equation} \label{ErrorBound_TerminalConstraint}
  \| x - \widehat{x} \|_{1, p} + \| u - \widehat{u} \|_q \le C |x(T) - x_T|.
\end{equation}
Then $\varphi^{\downarrow}_A(x, u) \le - 1 / C$ for any 
$(x, u) \in S_{\lambda_0}(c) \cap (\Omega_{\delta} \setminus \Omega)$, and the proof is complete.

Firstly, let us check that \eqref{ErrorBound_TerminalConstraint} follows from a seemingly weaker inequality, which is
easier to prove. Let $(x_1, u_1) \in A$ and $(x_2, u_2) \in A$. Then for any $t \in [0, T]$ one has
$x_1(t) - x_2(t) = \int_0^t ( A(\tau) (x_1(\tau) - x_2(\tau)) + B(\tau) (u_1(\tau) - u_2(\tau)) ) d \tau$.
By applying H\"{o}lder's inequality one gets that for any $t \in [0, T]$
$$
  |x_1(t) - x_2(t)| \le \| B(\cdot) \|_{\infty} T^{1/q'} \| u_1 - u_2 \|_q 
  + \| A(\cdot) \|_{\infty} \int_0^t |x_1(\tau) - x_2(\tau)| \, d \tau.
$$
Hence by the Gr\"{o}nwall-Bellman inequality one obtains that 
$\| x_1 - x_2 \|_{\infty} \le L_0 \| u_1 - u_2 \|_q$ for all $(x_1, u_1) \in A$ and $(x_2, u_2) \in A$, where
$L_0 = \| B(\cdot) \|_{\infty} T^{1/q'} ( 1 + T \| A(\cdot) \|_{\infty} e^{T \| A(\cdot) \|_{\infty}} )$.
Consequently, by applying the equality
$$
  \dot{x}_1(t) - \dot{x}_2(t) = A(t) \Big( x_1(t) - x_2(t) \big) + B(t) \big( u_1(t) - u_2(t) \Big),
$$
H\"{o}lder's inequality, and the fact that $q \ge p$ one obtains
$$
  \| \dot{x}_1 - \dot{x}_2 \|_p \le T^{1/p} \| A(\cdot) \|_{\infty} \| x_1 - x_2 \|_{\infty}
  + \| B(\cdot) \|_{\infty} T^{\frac{q - p}{qp}} \| u_1 - u_2 \|_q
  \le \Big( T^{1/p} \| A(\cdot) \|_{\infty} L_0 + T^{\frac{q - p}{qp}} B(\cdot) \|_{\infty} \Big) \| u_1 - u_2 \|_q,
$$
i.e. $\| x_1 - x_2 \|_{1, p} \le L \| u_1 - u_2 \|_q$ for some $L > 0$ depending only on $A(\cdot)$, $B(\cdot)$, $T$,
$p$, and $q$. Therefore, it is sufficient to check that there exists $C > 0$ such that for any 
$(x, u) \in S_{\lambda_0}(c) \cap (\Omega_{\delta} \setminus \Omega)$ one can find 
$(\widehat{x}, \widehat{u}) \in \Omega$ satisfying the inequality 
\begin{equation} \label{LinearSystem_SensitivityCond}
  \| u - \widehat{u} \|_q \le C |x(T) - x_T|
\end{equation}
(cf.~\eqref{ErrorBound_TerminalConstraint}). Let us prove inequality \eqref{LinearSystem_SensitivityCond} with the use
of Robinson's theorem (Theorem~\ref{Theorem_Robinson_Ursescu}).

Introduce the linear operator $\mathcal{T} \colon L_q^m(0, T) \to \mathbb{R}^d$, $\mathcal{T} v = h(T)$, where 
$h \in W_{1, p}^d(0, T)$ is a solution of 
\begin{equation} \label{TimeVaryingSystem_FromZero}
  \dot{h}(t) = A(t) h(t) + B(t) v(t), \quad h(0) = 0.
\end{equation}
For any $v \in L_q^m(0, T)$ a unique absolutely continuous solution $h$ of this equation defined on $[0, T]$ exists by
\cite[Theorem~1.1.3]{Filippov}. By applying H\"{o}lder's inequality and the fact that $q \ge p$ one gets
that $\| \dot{h} \|_p \le T^{1/p} \| A(\cdot) \|_{\infty} \| h \|_{\infty} + \| B(\cdot) \|_{\infty} T^{(q - p)/qp}
\| v \|_q$, which implies that $h \in W^d_{1, p}(0, T)$, and the linear operator $\mathcal{T}$ is correctly defined. Let
us check that it is bounded. Indeed, fix any $v \in L_q^m(0, T)$ and the corresponding solution $h$ of
\eqref{TimeVaryingSystem_FromZero}. For all $t \in [0, T]$ one has
$$
  |h(t)| = \left| \int_0^t \big( A(\tau) h(\tau) + B(\tau) v(\tau) \big) \, d \tau \right|
  \le \| B(\cdot) \|_{\infty} T^{1/q'} \| v \|_q + \| A(\cdot) \| \int_0^t |h(\tau)| \, d \tau,
$$
which with the use of the Gr\"{o}nwall-Bellman inequality implies that $|h(T)| \le L_0 \| v \|_q$, i.e. the operator
$\mathcal{T}$ is bounded.

Fix any feasible point $(x_*, u_*) \in \Omega$ of problem~\eqref{LinearFixedEndPointProblem}, i.e. 
$\dot{x}_*(t) = A(t) x_*(t) + B(t) u_*(t)$ for a.e. $t \in [0, T]$, $u_* \in U$, $x(0) = x_0$, and $x(T) = x_T$.
Observe that for any $(x, u) \in A$ one has $x(0) - x_*(0) = 0$ and
$\dot{x}(t) - \dot{x}_*(t) = A(t) \big( x(t) - x_*(t) \big) + B(t) \big( u(t) - u_*(t) \big)$ 
for a.e. $t \in [0, T]$, which implies that $x(T) = (x(T) - x_T) + x_T = \mathcal{T}(u - u_*) + x_T$ 
(see \eqref{AddConstr_LinearCase} and \eqref{TimeVaryingSystem_FromZero}). Consequently, one has
\begin{equation} \label{RechableSet_AsShiftedImage}
  \mathcal{R}(x_0, T) = x_T + \mathcal{T}(U - u_*).
\end{equation}
Define $X_0 = \cl \linhull(U - u_*)$ and $Y_0 = \linhull \mathcal{T}(U - u_*)$. Note that $Y_0$ is closed as a subspace
of the finite dimensional space $\mathbb{R}^d$. Moreover, $\mathcal{T}(X_0) = Y_0$. Indeed,
it is clear that the operator $\mathcal{T}$ maps $\linhull(U - u_*)$ onto $\linhull \mathcal{T}(U - u_*)$. If 
$u \in X_0$, then there exists a sequence $\{ u_n \} \subset \linhull(U - u_*)$ converging to $u$. From the boundedness
of the operator $\mathcal{T}$ it follows that $\mathcal{T}(u_n) \to \mathcal{T}(u)$ as $n \to \infty$, which implies
that $\mathcal{T}(u) \in Y_0$ due to the closedness of $Y_0$ and the fact that $\{ \mathcal{T}(u_n) \} \subset Y_0$ by
definition. Thus, $\mathcal{T}(X_0) = Y_0$.

Finally, introduce the operator $\mathcal{T}_0 \colon X_0 \to Y_0$, $\mathcal{T}_0(u) = \mathcal{T}(u)$ for all 
$u \in X_0$. Clearly, $\mathcal{T}_0$ is a bounded linear operator between Banach spaces. Recall that by our assumption
$x_T \in \relint \mathcal{R}(x_0, T)$. By the definition of relative
interior it means that $0 \in \interior \mathcal{T}_0(U - u_*)$ (see~\eqref{RechableSet_AsShiftedImage}). Therefore by
Robinson's theorem (Theorem~\ref{Theorem_Robinson_Ursescu} with $C = U - u_*$, $x^* = 0$, and $y = 0$) there
exists $\kappa > 0$
\begin{equation} \label{Robinson_Ursescu_LTVS}
  \dist\big( u - u_*, \mathcal{T}_0^{-1}(0) \cap (U - u_*) \big) \le
  \kappa \big( 1 + \| u - u_* \|_q \big) \big| \mathcal{T}_0(u - u_*) \big|
  \quad \forall u \in U.
\end{equation}
With the use of this inequality we can easily prove \eqref{LinearSystem_SensitivityCond}. Indeed, fix any 
$(x, u) \in S_{\lambda_0}(c) \cap (\Omega_{\delta} \setminus \Omega)$. Note that 
$\mathcal{T}_0(u - u_*) = x(T) - x_T \ne 0$, since $(x, u) \notin \Omega$. By inequality \eqref{Robinson_Ursescu_LTVS}
there exists $v \in U - u_*$ such that $\mathcal{T}_0(v) = 0$ and
\begin{equation} \label{Robinson_Ursescu_LTVS_mod}
  \big\| u - u_* - v \big\|_q \le 2 \kappa \big( 1 + \| u - u_* \|_q \big) |x(T) - x_T|.
\end{equation}
Define $\widehat{u} = u_* + v$, and let $\widehat{x}$ be the corresponding solution of original system
\eqref{LinearTimeVaryingSystems}. Then $\widehat{x}(T) = x_T$, since $(x_*, u_*) \in \Omega$ by definition and
$\mathcal{T}(v) = 0$, which yields $(\widehat{x}, \widehat{u}) \in \Omega$. Furthermore, by
inequality~\eqref{Robinson_Ursescu_LTVS_mod} one has
$$
   \| u - \widehat{u} \|_q \le 2 \kappa \big( 1 + \| u - u_* \|_q \big) |x(T) - \widehat{x}(T)|.
$$
By our assumption the set $S_{\lambda_0}(c) \cap (\Omega_{\delta} \setminus \Omega)$ is bounded, which implies that
there exists $C > 0$ such that $2 \kappa (1 + \| u - u_* \|_q) \le C$ for all 
$(x, u) \in S_{\lambda_0}(c) \cap (\Omega_{\delta} \setminus \Omega)$. Thus, for all such $(x, u)$ there exists
$(\widehat{x}, \widehat{u}) \in \Omega$ satisfying the inequality 
$\| u - \widehat{u} \|_q \le C |x(T) - \widehat{x}(T)|$, i.e. \eqref{LinearSystem_SensitivityCond} holds true,
and the proof is complete.
\end{proof}

\begin{remark}
Let $1 < p \le q < + \infty$, and the function $\theta(x, u, t)$ be convex in $u$ for all $x \in \mathbb{R}^d$ and
$t \in [0, T]$. Then under assumptions \ref{Assumpt_LTI_BoundedCoef}--\ref{Assumpt_LTI_DerivGrowthCond} and
\ref{Assumpt_LTI_SublevelBounded} of Theorem~\ref{Theorem_FixedEndPointProblem_Linear} a globally optimal solution of
problem \eqref{LinearFixedEndPointProblem} exists iff $x_T \in \mathcal{R}(x_0, T)$. Indeed, if 
$x_T \in \mathcal{R}(x_0, T)$, then the sublevel set 
$\{ x \in \Omega \mid \mathcal{I}(x, u) < c \} \subset S_{\lambda_0}(c) \cap \Omega_{\delta}$ is nonempty and bounded
due to the fact that $c > \mathcal{I}^*$. Therefore there exists a bounded sequence $\{ (x_n, u_n) \} \subset \Omega$
such that $\mathcal{I}(x_n, u_n) \to \mathcal{I}^*$ as $n \to \infty$. From the fact that the spaces
$W^d_{1, p}(0, T)$ and $L^m_q(0, T)$ are reflexive, provided $1 < p, q < + \infty$, it follows that there exists a
subsequence $\{ (x_{n_k}, u_{n_k}) \}$ weakly converging to some $(x^*, u^*) \in X$. Since the imbedding of 
$W^d_{1, p}(0, T)$ into $(C[0, T])^d$ is compact (see, e.g. \cite[Theorem~6.2]{Adams}), without loss of
generality one can suppose that $x_{n_k}$ converges to $x^*$ uniformly on $[0, T]$. Utilising this result, as well as
the facts that the system is linear and the set $U$ of admissible control inputs is convex and closed, one can readily
verify that $(x^*, u^*) \in \Omega$. Furthermore, the convexity of the function $u \mapsto \theta(x, u, t)$ ensures that
$\mathcal{I}(x^*, u^*) \le \lim_{k \to \infty} \mathcal{I}(x_{n_k}, u_{n_k}) = \mathcal{I}^*$
(see~\cite[Section~7.3.2]{FonsecaLeoni} and \cite{Ioffe77}), which implies that $(x^*, u^*)$
is a globally optimal solution of \eqref{LinearFixedEndPointProblem}.
\end{remark}

\begin{remark}
Let us note that assumption~\ref{Assumpt_LTI_SublevelBounded} of Theorem~\ref{Theorem_FixedEndPointProblem_Linear} is
satisfied, in particular, if the set $U$ is bounded or there exist $C > 0$ and $\omega \in L^1(0, T)$ such that 
$\theta(x, u, t) \ge C |u|^q + \omega(t)$ for all $x \in \mathbb{R}^d$, $u \in \mathbb{R}^m$, and a.e. $t \in (0, T)$.
Indeed, with the use of this inequality one can check that for any $c > \mathcal{I}^*$ there exists $K > 0$ such that
for all $(x, u) \in S_{\lambda}(c)$ one has $\| u \|_q \le K$ (if $U$ is bounded, then this inequality is satisfied by
definition). Then by applying the Gr\"{o}nwall-Bellman inequality one can easily check that the set $S_{\lambda}(c)$ is
bounded for all $c > \mathcal{I}^*$, provided $q \ge p$. Moreover, with the use of the boundedness of the set
$S_{\lambda}(c)$ and the growth condition of order $(q, 1)$ on the function $\theta$ one can easily check that the
penalty function $\Phi_{\lambda_0}$ is bounded below on $A$ for all $\lambda_0 \ge 0$.
\end{remark}

The following example demonstrates that in the general case Theorem~\ref{Theorem_FixedEndPointProblem_Linear} is no
longer true, if the assumption that $x_T$ belongs to the relative interior of the reachable set $\mathcal{R}(x_0, T)$
is dropped.

\begin{example} \label{Example_EndPoint_NotRelInt}
Let $d = m = 2$, $p = q = 2$, and $T = 1$. Define 
$U = \{ u \in L_2^2(0, 1) \mid u(t) \in Q \text{ for a.e. } t \in (0, 1) \}$, where
$Q = \{ u = (u^1, u^2)^T \in \mathbb{R}^2 \mid u^1 + u^2 \le 1, \: (u^1 - u^2)^2 \le u^1 + u^2 \}$. Note that the set
$U$ of admissible control inputs is closed and convex, since, as is easy to see, $Q$ is a closed convex set. Consider
the
following optimal control problem:
\begin{equation} \label{Problem_NoRint_Endpoint}
  \min \: \mathcal{I}(x, u) = \int_0^1 \big( u^2(t) - u^1(t) \big) \, dt \quad
  \text{s.t.} \quad 
  \begin{cases}
  \dot{x}^1 = 0 \\
  \dot{x}^2 = u^1 + u^2
  \end{cases} 
  \quad t \in [0, 1], \quad u \in U, \quad  x(0) = x(1) = 0.
\end{equation}
Let us show at first that in this case $\mathcal{R}(x_0, T) = \{ x \in \mathbb{R}^2 \mid x^1 = 0, \: x^2 \in [0, 1] \}$
(note that $x_0 = 0$ and $T = 1$), which implies that 
$x_T = 0 \notin \relint \mathcal{R}(x_0, T) = \{ x \in \mathbb{R}^2 \mid x^1 = 0, \: x^2 \in (0, 1) \}$. Indeed, by the
definitions of the sets $U$ and $Q$ for any $u \in U$ one has
$$
  x^2(1) = \int_0^1 (u^1(t) + u^2(t)) \, dt \le \int_0^1 dt = 1, \quad
  x^2(1) = \int_0^1 (u^1(t) + u^2(t)) dt \ge \int_0^1 (u^1(t) - u^2(t))^2 \, dt \ge 0,
$$
i.e. $x^2(1) \in [0, 1]$. Furthermore, for any $s \in [0, 1]$ one has $x^2(1) = s$ for
$u_s^1(t) \equiv (s + \sqrt{s}) / 2$ and $u_s^2(t) \equiv (s - \sqrt{s}) / 2$ (note that $u_s \in U$). Thus, 
$\mathcal{R}(x_0, T) = \{ 0 \} \times [0, 1]$, and $x_T \notin \relint \mathcal{R}(x_0, T)$. Note that all other
assumptions of Theorem~\ref{Theorem_FixedEndPointProblem_Linear} are satisfied. 

Let us check that the penalty function $\Phi_{\lambda}(x, u) = \mathcal{I}(x, u) + \lambda |x(1)|$ for problem
\eqref{Problem_NoRint_Endpoint} is not globally exact. Firstly, note that the only feasible point of problem
\eqref{Problem_NoRint_Endpoint} is $(x^*, u^*)$ with $x^*(t) \equiv 0$ and $u^*(t) = 0$ for a.e. $t \in [0, 1]$.
Indeed, fix any feasible point $(x, u) \in \Omega$. From the terminal constraint $x(1) = 0$ and the definition of $Q$ it
follows that
$$
  0 = x^2(1) = \int_0^1 (u^1(t) + u^2(t)) \, dt \ge \int_0^1 (u^1(t) - u^2(t))^2 \, dt \ge 0,
$$
which implies that $u^1(t) = u^2(t)$ for a.e. $t \in [0, 1]$. Furthermore, by the definition of $Q$ one has 
$u^1(t) + u^2(t) \ge 0$ for a.e. $t \in (0, T)$, which yields $u^1(t) = - u^2(t)$ for a.e. $t \in [0, 1]$. Therefore
$u(t) = 0$ for a.e. $t \in [0, 1]$, $x(t) \equiv 0$, and $\Omega = \{ (x^*, u^*) \}$.

Arguing by reductio ad absurdum, suppose that the penalty function 
$\Phi_{\lambda}(x, u) = \mathcal{I}(x, u) + \lambda |x(1)|$ for problem \eqref{Problem_NoRint_Endpoint} is globally
exact. Then there exists $\lambda > 0$ such that $(x^*, u^*)$ is a globally optimal solution of the problem
$$
  \min \: \Phi_{\lambda}(x, u) = \int_0^1 \big( u^2(t) - u^1(t) \big) \, dt + \lambda |x(1)| \quad
  \text{s.t.} \quad 
  \begin{cases}
  \dot{x}^1 = 0 \\
  \dot{x}^2 = u^1 + u^2
  \end{cases}  
  \quad t \in [0, 1], \quad u \in U, \quad x(0) = 0.
$$
Fix any $s \in (0, 1)$, and define $u = u_s \in U$ (recall that $u_s^1(t) \equiv (s + \sqrt{s}) / 2$ and 
$u_s^2(t) \equiv (s - \sqrt{s}) / 2$). For the corresponding solution $x_s(t)$ one has $x_s^2(1) = s$, and
$$
  \Phi_{\lambda}(x_s, u_s) = - \sqrt{s} + \lambda s \ge 0 = \Phi_{\lambda}(x^*, u^*),
$$
which is impossible for any sufficiently small $s \in (0, 1)$. Thus, the penalty function $\Phi_{\lambda}$ is not
globally exact.
\end{example}

\begin{remark}
It should be noted that the assumption $x_T \in \relint \mathcal{R}(x_0, T)$ is \textit{not} necessary for the
exactness of the penalty function $\Phi_{\lambda}$ for problem \eqref{LinearFixedEndPointProblem}. For instance, 
the interested reader can check that if in Example~\ref{Example_EndPoint_NotRelInt} the system has the form 
$\dot{x}^1 = u^1$ and $\dot{x}^2 = u^2$, then the penalty function 
$\Phi_{\lambda}(x, u) = \int_0^1 (u^2(t) - u^1(t)) \, dt + \lambda |x(1)| = x^2(1) - x^1(1) + \lambda |x(1)|$
is completely exact, despite the fact that in this case $\mathcal{R}(x_0, T) = Q$ and 
$x_T = 0 \notin \relint \mathcal{R}(x_0, T)$. We pose an interesting open problem to find \textit{necessary and
sufficient} conditions for the complete exactness of the penalty function $\Phi_{\lambda}$ for problem
\eqref{LinearFixedEndPointProblem} (at least in the time-invariant case). In particular, it seems that in the case when
$U = \{ u \in L_q^m(0, T) \mid u(t) \in Q \text{ for a.e. } t \in [0, T] \}$ and $Q$ is a convex polytope, 
the assumption $x_T \in \relint \mathcal{R}(x_0, T)$ in Theorem~\ref{Theorem_FixedEndPointProblem_Linear} can be
dropped.
\end{remark}

Let us finally note that in the case when the set $U$ of admissible control inputs is bounded, one can prove the
complete exactness of the penalty function $\Phi_{\lambda}$ for problem \eqref{LinearFixedEndPointProblem} on $A$.
In other words, one can prove that free-endpoint problem \eqref{FreeEndPointProblem_withPenalty} is completely
equivalent to fixed-endpoint problem \eqref{LinearFixedEndPointProblem} in the sense that these problems have the same
optimal value, the same globally/locally optimal solutions, and the same inf-stationary points.

\begin{theorem} \label{Theorem_FixedEndPointProblem_Linear_Global}
Let $q \ge p$, assumptions \ref{Assumpt_LTI_BoundedCoef}--\ref{Assumpt_LTI_EndpointRelInt} of
Theorem~\ref{Theorem_FixedEndPointProblem_Linear} be valid, and suppose that the set $U$ is bounded. Then the penalty
function $\Phi_{\lambda}$ for problem \eqref{LinearFixedEndPointProblem} is completely exact on $A$.
\end{theorem}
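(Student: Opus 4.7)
The plan is to apply Theorem~\ref{THEOREM_COMPLETEEXACTNESS_GLOBAL} rather than Theorem~\ref{Theorem_CompleteExactness}. The additional boundedness hypothesis on $U$ lets us upgrade the local estimates of the proof of Theorem~\ref{Theorem_FixedEndPointProblem_Linear} to estimates that hold uniformly over all of $A$, so that each assumption of Theorem~\ref{THEOREM_COMPLETEEXACTNESS_GLOBAL} is verified globally.

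First I would observe that the Gr\"onwall--Bellman estimate $\| x_1 - x_2 \|_{1,p} \le L \| u_1 - u_2 \|_q$ for $(x_i, u_i) \in A$, established inside the proof of Theorem~\ref{Theorem_FixedEndPointProblem_Linear}, together with boundedness of $U$ in $L^m_q(0,T)$, implies that the whole set $A$ is bounded in $X = W^d_{1,p}(0,T) \times L^m_q(0,T)$. Closedness of $A$ and continuity of $\varphi(x,u) = |x(T) - x_T|$ are proved exactly as before (using essential boundedness of $A(\cdot)$, $B(\cdot)$, closedness of $U$, and the Sobolev imbedding \eqref{SobolevImbedding}). The growth conditions on $\nabla_x \theta$ and $\nabla_u \theta$ in assumption~\ref{Assumpt_LTI_DerivGrowthCond} give, via \cite[Proposition~4]{DolgopolikFominyh}, Lipschitz continuity of $\mathcal{I}$ on every bounded subset of $X$, hence Lipschitz continuity of $\mathcal{I}$ on $A$. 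The growth condition on $\theta$ itself, combined with boundedness of $A$, shows that $\mathcal{I}$ is bounded on $A$, so $\Phi_{\lambda_0}$ is bounded below on $A$ for every $\lambda_0 \ge 0$.

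The principal task, and the only place the argument needs genuine adaptation, is to exhibit a \emph{uniform} $a > 0$ such that $\varphi^{\downarrow}_A(x, u) \le -a$ for every $(x, u) \in A \setminus \Omega$. I would repeat verbatim the Robinson-type construction from the proof of Theorem~\ref{Theorem_FixedEndPointProblem_Linear}: for a fixed feasible $(x_*, u_*) \in \Omega$ (which exists by assumption~\ref{Assumpt_LTI_EndpointRelInt}), the identification $\mathcal{R}(x_0,T) = x_T + \mathcal{T}(U - u_*)$ together with $x_T \in \relint \mathcal{R}(x_0,T)$ gives $0 \in \interior \mathcal{T}_0(U - u_*)$, so Theorem~\ref{Theorem_Robinson_Ursescu} yields a constant $\kappa > 0$ with \eqref{Robinson_Ursescu_LTVS}. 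For each $(x, u) \in A \setminus \Omega$ this produces $(\widehat{x}, \widehat{u}) \in \Omega$ satisfying $\| u - \widehat{u} \|_q \le 2\kappa (1 + \| u - u_* \|_q) |x(T) - x_T|$. The crucial observation is that boundedness of $U$ makes the factor $2\kappa(1 + \|u - u_*\|_q)$ uniformly bounded by some constant $C > 0$ over \emph{all} of $A$, not merely on a sublevel set. Combined with $\| x - \widehat{x} \|_{1,p} \le L \| u - \widehat{u} \|_q$, this yields \eqref{ErrorBound_TerminalConstraint} uniformly on $A \setminus \Omega$, and then, as before, the convexity of $A$ lets one test $\varphi^{\downarrow}_A(x,u)$ against the direction $((\widehat{x} - x)/\sigma, (\widehat{u} - u)/\sigma)$ and obtain $\varphi^{\downarrow}_A(x, u) \le - 1/(C + L C) =: -a < 0$.

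The main obstacle is really cosmetic: making sure the Robinson constant and all growth/Gr\"onwall bounds are independent of $(x, u) \in A$, which is exactly what boundedness of $U$ secures. Once these uniform bounds are in hand, every hypothesis of Theorem~\ref{THEOREM_COMPLETEEXACTNESS_GLOBAL} is verified, and the theorem follows immediately.
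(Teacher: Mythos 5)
Your proposal is correct and follows essentially the same route as the paper: use boundedness of $U$ together with the Gr\"onwall--Bellman estimate to show $A$ itself is bounded in $X$ (hence $\mathcal{I}$ is Lipschitz on $A$ and $\Phi_{\lambda_0}$ is bounded below), then rerun the Robinson-type descent argument of Theorem~\ref{Theorem_FixedEndPointProblem_Linear} with $A$ in place of $S_{\lambda_0}(c) \cap \Omega_{\delta}$ and invoke Theorem~\ref{THEOREM_COMPLETEEXACTNESS_GLOBAL}. The uniform bound on the factor $2\kappa(1 + \|u - u_*\|_q)$ over all of $A$ is exactly the point the paper relies on as well.
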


\begin{proof}
By our assumption there exists $K > 0$ such that $\| u \|_q \le K$ for any $u \in U$. Choose any $(x, u) \in A$. Then
by definition $x(t) = x_0 + \int_0^t ( A(\tau) x(\tau) + B(\tau) u(\tau) ) \, d \tau$ for all $t \in [0, T]$, which
by H\"{o}lder's inequality implies that
$$
  |x(t)| \le |x_0| + \| B(\cdot) \|_{\infty} T^{1/q'} \| u \|_q
  + \| A(\cdot) \|_{\infty} \int_0^t |x(\tau)| \, d \tau.
$$
Consequently, by applying the Gr\"{o}nwall-Bellman inequality and the fact that $\| u \|_q \le K$ one obtains that
$\| x \|_{\infty} \le C$ for some $C > 0$ depending only on $K$, $A(\cdot)$, $B(\cdot)$, $T$, and $q$. Hence by
H\"{o}lder's inequality and the definition of the set $A$ (see~\eqref{AddConstr_LinearCase}) one obtains that
$$
  \| \dot{x} \|_p =  \big\| A(\cdot) x(\cdot) + B(\cdot) u(\cdot) \big\|_p
  \le T^{1/p} \| A(\cdot) \|_{\infty} C + \| B(\cdot) \|_{\infty} T^{\frac{q - p}{qp}} K
  \quad \forall (x, u) \in A,
$$
i.e. the set $A$ is bounded in $X$ and in $L^d_{\infty}(0, T) \times L^m_q(0, T)$. Therefore, both $\mathcal{I}$ and
$\Phi_{\lambda}$, for any $\lambda \ge 0$, are bounded below on $A$ due to the fact that the function $\theta$
satisfies the growth conditions of order $(q, 1)$ (see assumption~\ref{Assumpt_LTI_DerivGrowthCond} of
Theorem~\ref{Theorem_FixedEndPointProblem_Linear}). Now, arguing in the same way as in the proof of
Theorem~\ref{Theorem_FixedEndPointProblem_Linear}, but replacing $S_{\lambda_0}(c) \cap \Omega_{\delta}$ with $A$ and
utilising Theorem~\ref{THEOREM_COMPLETEEXACTNESS_GLOBAL} instead of Theorem~\ref{Theorem_CompleteExactness}, one obtains
the desired result.
\end{proof}

\subsection{Linear Evolution Equations}
\label{SubSec_EvolEq_TerminalConstr}

Let us demonstrate that Theorems~\ref{Theorem_FixedEndPointProblem_Linear} and
\ref{Theorem_FixedEndPointProblem_Linear_Global} can be easily extended to the case of optimal control problems
for linear evolution equations in Hilbert spaces. In this section we use standard definitions and results on control
problems for infinite dimensional systems that can be found, e.g. in monograph\cite{TucsnakWeiss}.

Let $\mathscr{H}$ and $\mathscr{U}$ be complex Hilbert spaces, $\mathbb{T}$ be a strongly continuous semigroup on
$\mathscr{H}$ with generator $\mathcal{A} \colon \mathcal{D}(\mathcal{A}) \to \mathscr{H}$, and let $\mathcal{B}$ be an
admissible control operator for $\mathbb{T}$ (see~\cite[Def.~4.2.1]{TucsnakWeiss}). Consider the following
fixed-endpoint optimal control problem:
\begin{equation} \label{EvolEqFixedEndPointProblem}
\begin{split}
  {}&\min_{(x, u)} \mathcal{I}(x, u) = \int_0^T \theta(x(t), u(t), t) \, dt \\
  {}&\text{subject to } \dot{x}(t) = \mathcal{A} x(t) + \mathcal{B} u(t), \quad t \in [0, T], \quad u \in U, \quad
  x(0) = x_0, \quad x(T) = x_T.
\end{split}
\end{equation}
Here $\theta \colon \mathscr{H} \times \mathscr{U} \times [0, T] \to \mathbb{R}$ is a given function, $T > 0$ and 
$x_0, x_T \in \mathscr{H}$ are fixed, and $U$ is a closed convex subset of the space $L^2((0, T); \mathscr{U})$
consisting of all those measurable functions $u \colon (0, T) \to \mathscr{U}$
for which $\| u \|_{L^2((0, T); \mathscr{U})} = \int_0^T \| u(t) \|_{\mathscr{U}}^2 dt < + \infty$.

Let us introduce a penalty function for problem \eqref{EvolEqFixedEndPointProblem}. As in the previous section, we only
penalise the terminal constraint $x(T) = x_T$. For any $t \ge 0$ let
$F_t u = \int_0^t \mathbb{T}_{t - \sigma} \mathcal{B} u(\sigma) \, d \sigma$ be the input map corresponding to
$(\mathcal{A}, \mathcal{B})$. By \cite[Proposition~4.2.2]{TucsnakWeiss}, $F_t$ is a bounded linear operator from
$L^2((0, T); \mathscr{U})$ to $\mathscr{H}$. Furthermore, by applying \cite[Proposition~4.2.5]{TucsnakWeiss} one obtains
that for any $u \in L^2((0, T); \mathscr{U})$ the initial value problem
\begin{equation} \label{LinearEvolEq}
  \dot{x}(t) = \mathcal{A} x(t) + \mathcal{B} u(t), \quad x(0) = x_0
\end{equation}
has a unique solution $x \in C([0, T]; \mathscr{H})$ given by
\begin{equation} \label{SolutionViaSemiGroup}
  x(t) = \mathbb{T}_t x_0 + F_t u \quad \forall t \in [0, T].
\end{equation}
Define $X = C([0, T]; \mathscr{H}) \times L^2((0, T); \mathscr{U})$, $M = \{ (x, u) \in X \mid x(T) = x_T \}$, and
$$
  A = \Big\{ (x, u) \in X \Bigm| x(0) = x_0, \: u \in U, \: \text{and $\eqref{SolutionViaSemiGroup}$ holds true} \Big\}.
$$
Then problem \eqref{EvolEqFixedEndPointProblem} can be rewritten as the problem of minimising $\mathcal{I}(x, u)$
subject to $(x, u) \in M \cap A$. Introduce the penalty term $\varphi(x, u) = \| x(T) - x_T \|_{\mathscr{H}}$. Then 
$M = \{ (x, u) \in X \mid \varphi(x, u) = 0 \}$, and one can consider the penalised problem of minimising the penalty
function $\Phi_{\lambda}(x, u) = \mathcal{I}(x, u) + \lambda \varphi(x, u)$ subject to $(x, u) \in A$, which is a
free-endpoint problem of the form:
\begin{equation} \label{EvolEqFreeEndPointProblem}
\begin{split}
  {}&\min_{(x, u)} \Phi_{\lambda}(x, u) = \mathcal{I}(x, u) + \lambda \varphi(x, u) 
  = \int_0^T \theta(x(t), u(t), t) \, dt + \lambda \| x(T) - x_T \|_{\mathscr{H}} \\
  {}&\text{subject to } \dot{x}(t) = \mathcal{A} x(t) + \mathcal{B} u(t), \quad t \in [0, T], \quad u \in U, \quad
  x(0) = x_0.
\end{split}
\end{equation}
Denote by $\mathcal{R}(x_0, T)$ the set that is reachable in time $T$, i.e. the set of all those $\xi \in \mathscr{H}$
for which there exists $u \in U$ such that $x(T) = \xi$, where $x(\cdot)$ is defined in \eqref{SolutionViaSemiGroup}.
Observe that by definition $\mathcal{R}(x_0, T) = F_T(U) + \mathbb{T}_T x_0$, which implies that the reachable set
$\mathcal{R}(x_0, T)$ is convex due to the convexity of the set $U$.

Our aim is to show that under a natural assumption on the reachable set $\mathcal{R}(x_0, T)$ the penalty function
$\Phi_{\lambda}$ is completely exact, i.e. for any sufficiently large $\lambda \ge 0$ free-endpoint problem
\eqref{EvolEqFreeEndPointProblem} is equivalent to fixed-endpoint problem \eqref{EvolEqFixedEndPointProblem}.

In this finite dimensional case we assumed that $x_T \in \relint \mathcal{R}(x_0, T)$. In the infinite dimensional
case we will use the same assumption, since to the best of author's knowledge it is the weakest assumption allowing one
to utilise Robinson's theorem. However, recall that the relative interior of a convex subset of a finite
dimensional space is always nonempty, but this statement is no longer true in infinite dimensional spaces
(see~\cite{BorweinLewis92,BorwinGoebel03}). Thus, in the finite dimensional case the condition 
$x_T \in \relint \mathcal{R}(x_0, T)$ simply restricts the location of $x_T$ in the reachable set, while in 
the infinite dimensional case it also imposes the assumption ($\relint \mathcal{R}(x_0, T) \ne \emptyset$) on the
reachable set itself. For the sake of completeness recall that the relative interior of a convex subset $C$ of a Banach
space $Y$, denoted $\relint C$, is the interior of $C$ relative to the \textit{closed} affine hull of $C$.

\begin{theorem} \label{Theorem_Exactness_EvolutionEquations}
Let the following assumptions be valid:
\begin{enumerate}
\item{$\theta$ is continuous, and for any $R > 0$ there exist $C_R > 0$ and an a.e. nonnegative function
$\omega_R \in L^1(0, T)$ such that $| \theta(x, u, t) | \le C_R \| u \|_{\mathscr{U}}^2 + \omega_R(t)$ for all 
$x \in \mathscr{H}$, $u \in \mathscr{U}$, and $t \in (0, T)$ such that $\| x \|_{\mathscr{H}} \le R$;
\label{CorrectlyDefined_EvolEq_Assumpt}
}

\item{either the set $U$ is bounded in $L^2((0, T), \mathscr{U})$ or there exist $C_1 > 0$ and $\omega \in L^1(0, T)$
such that $\theta(x, u, t) \ge C_1 \| u \|_{\mathscr{U}}^2 + \omega(t)$ for all $x \in \mathscr{H}$, 
$u \in \mathscr{U}$, and $t \in [0, T]$;
\label{GrowthCond_EvolEq_Assumpt}
}

\item{$\theta$ is differentiable in $x$ and $u$, the functions $\nabla_x \theta$ and $\nabla_u \theta$ are continuous,
and for any $R > 0$ there exist $C_R > 0$, and a.e. nonnegative functions $\omega_R \in L^1(0, T)$ and 
$\eta_R \in L^2(0, T)$ such that
\begin{equation} \label{DerivGrowthCond_EvolEq}
  \| \nabla_x \theta(x, u, t) \|_{\mathscr{H}} \le C_R \| u \|_{\mathscr{U}}^2 + \omega_R(t), \quad
  \| \nabla_u \theta(x, u, t) \|_{\mathscr{U}} \le C_R \| u \|_{\mathscr{U}} + \eta_R(t)
\end{equation}
for all $x \in \mathscr{H}$, $u \in \mathscr{U}$, and $t \in (0, T)$ such that $\| x \|_{\mathscr{H}} \le R$;
\label{DerivGrowthCond_EvolEq_Assumpt}
}

\item{there exists a globally optimal solution of problem \eqref{EvolEqFixedEndPointProblem}, 
$\relint \mathcal{R}(x_0, T) \ne \emptyset$ and $x_T \in \relint \mathcal{R}(x_0, T)$.
\label{EndPointInterior_Assumpt}
}
\end{enumerate}
Then for all $c \in \mathbb{R}$ there exists $\lambda^*(c) \ge 0$ such that for any $\lambda \ge \lambda^*(c)$ 
the penalty function $\Phi_{\lambda}$ for problem \eqref{EvolEqFixedEndPointProblem} is completely exact on the set
$S_{\lambda}(c)$.
\end{theorem}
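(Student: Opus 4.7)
The plan is to verify the hypotheses of Theorem~\ref{Theorem_CompleteExactness} in the setting of problem \eqref{EvolEqFixedEndPointProblem}, proceeding in close parallel to the proof of Theorem~\ref{Theorem_FixedEndPointProblem_Linear} but with the role of the state-transition matrix played by the input map $F_t$ and the semigroup $\mathbb{T}_t$. Once the standard regularity items are in place, the crux is to establish, for some $a > 0$, the bound $\varphi^{\downarrow}_A(x,u) \le -a$ on $S_{\lambda_0}(c) \cap (\Omega_{\delta} \setminus \Omega)$, which I will reduce to a sensitivity estimate proved via Robinson's theorem (Theorem~\ref{Theorem_Robinson_Ursescu}).

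First I would verify the easy ingredients. The set $A$ is closed in $X$ because the representation \eqref{SolutionViaSemiGroup} defines a continuous map $u \mapsto x$ from $L^2((0,T);\mathscr{U})$ to $C([0,T];\mathscr{H})$, and $U$ is closed by hypothesis. The penalty term $\varphi(x,u) = \|x(T) - x_T\|_{\mathscr{H}}$ is Lipschitz on $X$ (with the sup-norm on $C([0,T];\mathscr{H})$). Assumption~\ref{CorrectlyDefined_EvolEq_Assumpt} ensures that $\mathcal{I}$ is finite on $X$, and assumption~\ref{DerivGrowthCond_EvolEq_Assumpt} via standard Nemytskii-operator arguments (analogous to those cited in the proof of Theorem~\ref{Theorem_FixedEndPointProblem_Linear}) yields Lipschitz continuity of $\mathcal{I}$ on bounded subsets of $X$. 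Next I would establish that for any $c \in \mathbb{R}$, any $\lambda_0 > 0$, and any $\delta > 0$, the set $S_{\lambda_0}(c) \cap \Omega_{\delta}$ is bounded in $X$: under assumption~\ref{GrowthCond_EvolEq_Assumpt} this is immediate if $U$ is bounded, and in the coercive case the inequality $\theta(x,u,t) \ge C_1 \|u\|_{\mathscr{U}}^2 + \omega(t)$ forces $\|u\|_{L^2}$ to stay bounded on $S_{\lambda_0}(c)$; then \eqref{SolutionViaSemiGroup} together with the uniform boundedness of $\{\mathbb{T}_t\}_{t \in [0,T]}$ and of $F_T$ gives boundedness of $\|x\|_{C([0,T];\mathscr{H})}$, and in both cases $\Phi_{\lambda_0}$ is bounded below on $A$.

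The main step is the negative descent-rate bound. Given $(x,u) \in S_{\lambda_0}(c) \cap (\Omega_{\delta}\setminus\Omega)$ and any feasible $(x_*, u_*) \in \Omega$, the convex combination $(x + \alpha \Delta x, u + \alpha \Delta u)$ with $\sigma = \|(\widehat{x} - x, \widehat{u} - u)\|_X$ and $\Delta(\cdot) = (\widehat{\cdot} - \cdot)/\sigma$ lies in $A$ by linearity and convexity, and a direct computation gives $\varphi^{\downarrow}_A(x,u) \le -\sigma^{-1}\|x(T) - x_T\|_{\mathscr{H}}$. Thus it suffices to exhibit a constant $C > 0$, uniform over $S_{\lambda_0}(c) \cap (\Omega_\delta \setminus \Omega)$, and $(\widehat{x}, \widehat{u}) \in \Omega$ with $\|u - \widehat{u}\|_{L^2((0,T);\mathscr{U})} \le C\|x(T) - x_T\|_{\mathscr{H}}$; boundedness of $F_T$ and the bound $\|x_1 - x_2\|_{C([0,T];\mathscr{H})} \le \|F_T\|\|u_1 - u_2\|_{L^2}$ for $(x_i, u_i) \in A$ then upgrade this to the full $X$-norm estimate.

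The sensitivity estimate will be proved by mimicking the finite-dimensional argument with $\mathcal{T} = F_T \colon L^2((0,T);\mathscr{U}) \to \mathscr{H}$, noting from \eqref{SolutionViaSemiGroup} that $\mathcal{R}(x_0,T) = x_T + \mathcal{T}(U - u_*)$. Setting $X_0 = \cl\linhull(U - u_*)$ and $Y_0 = \cl\linhull \mathcal{T}(U - u_*)$ (the latter being a closed subspace of $\mathscr{H}$, hence Banach), and restricting to $\mathcal{T}_0 \colon X_0 \to Y_0$, the hypothesis $x_T \in \relint \mathcal{R}(x_0,T)$ (with the closed affine hull convention) translates into $0 \in \interior_{Y_0} \mathcal{T}_0(U - u_*)$. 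Robinson's theorem then yields $\kappa > 0$ and the inequality $\dist(u - u_*, \mathcal{T}_0^{-1}(0) \cap (U - u_*)) \le \kappa(1 + \|u - u_*\|_{L^2})\|\mathcal{T}_0(u - u_*)\|_{\mathscr{H}}$, from which, combined with the uniform bound on $\|u - u_*\|_{L^2}$ over $S_{\lambda_0}(c) \cap \Omega_\delta$, the required $C$ is extracted. The hard part is the identification of the right restricted operator and the verification that Robinson's theorem applies in the form quoted; the delicate point specific to the infinite-dimensional setting is that $Y_0$ must be taken as a \emph{closed} subspace (not merely a linear span) so as to be a Banach space, which is precisely why the convention of closed affine hull in the definition of $\relint$ is essential here. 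The rest then closes exactly as in the proof of Theorem~\ref{Theorem_FixedEndPointProblem_Linear}.
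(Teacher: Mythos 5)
Your proposal is correct and follows essentially the same route as the paper's proof: verify the regularity/boundedness hypotheses of Theorem~\ref{Theorem_CompleteExactness}, obtain $\varphi^{\downarrow}_A(x,u) \le -\sigma^{-1}\|x(T)-x_T\|_{\mathscr{H}}$ via convex combinations with a feasible point, and reduce to the sensitivity estimate \eqref{MetricReg_InputMap}, proved by applying Robinson's theorem to the restriction of $F_T$ to $\cl\linhull(U-u_*)$ with codomain the closed linear span of $F_T(U-u_*)$. Your observation that the codomain must be taken closed (so that it is Banach and matches the closed-affine-hull convention for $\relint$) is exactly the point the paper makes as well.
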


\begin{proof}
Our aim is to apply Theorem~\ref{Theorem_CompleteExactness}. It is easily seen that
assumption~\ref{CorrectlyDefined_EvolEq_Assumpt} ensures that the functional $\mathcal{I}(x, u)$ is correctly defined
and finite for any $(x, u) \in X$. In turn, from assumption~\ref{GrowthCond_EvolEq_Assumpt} it follows that for
any $c \in \mathbb{R}$ and $\lambda \ge 0$ there exists $K > 0$ such that $\| u \|_{L^2((0, T); \mathscr{U})} \le K$ for
any $(x, u) \in S_{\lambda}(c)$, and the penalty function $\Phi_{\lambda}$ is bounded below on $A$ for all 
$\lambda \ge 0$ (if $U$ is bounded, then this fact follows from assumption~\ref{CorrectlyDefined_EvolEq_Assumpt}). 
Hence taking into account \eqref{SolutionViaSemiGroup}, and the facts that $\| F_t \| \le \| F_T \|$ for any $t \le T$
(see \cite[formula $(4.2.5)$]{TucsnakWeiss}), and $\| \mathbb{T}_t \| \le M_{\omega} e^{\omega t}$ for all $t \ge 0$ and
for some $\omega \in \mathbb{R}$ and $M_{\omega} \ge 1$ by \cite[Proposition~2.1.2]{TucsnakWeiss} one obtains that
$\| x \|_{C([0, T]; \mathscr{H})} \le M_{\omega} \max_{t \in [0, T]} e^{\omega t} \| x_0 \| + \| F_T \| K$,
i.e. the set $S_{\lambda}(c)$ is bounded in $X$ for any $\lambda \ge 0$ and $c \in \mathbb{R}$. 

Observe that the penalty term $\varphi$ is continuous on $X$, since by the reverse triangle inequality one has
$$
  |\varphi(x, u) - \varphi(y, v)| = \big| \| x(T) - x_T \|_{\mathscr{H}} - \| y(T) - x_T \|_{\mathscr{H}} \big|
  \le \| x(T) - y(T) \|_{\mathscr{H}} \le \| x - y \|_{C([0, T]; \mathscr{H})}
$$
for all $(x, u), (y, v) \in X$. Furthermore from \eqref{SolutionViaSemiGroup}, the closedness of the set $U$, and 
the fact that $F_t$ continuously maps $L^2((0, T); \mathscr{U})$ to $\mathscr{H}$ by
\cite[Proposition~4.2.2]{TucsnakWeiss} it follows that the set $A$ is closed. 

Let us check that assumption~\ref{DerivGrowthCond_EvolEq_Assumpt} ensures that the functional $\mathcal{I}$ is
Lipschitz continuous on any bounded subset of $X$ (in particular, on any bounded open set containing the set
$S_{\lambda}(c)$). Indeed, fix any $(x, u) \in X$, $(h, v) \in X$, and $\alpha \in (0, 1]$. By the mean value theorem
for a.e. $t \in (0, T)$ there exists $\alpha(t) \in (0, \alpha)$ such that
\begin{multline} \label{MeanValue_Func_EvolEq}
  \frac{1}{\alpha} \Big( \theta(x(t) + \alpha h(t), u(t) + \alpha v(t), t) - \theta(x(t), u(t), t) \Big) \\
  = \langle \nabla_x \theta(x(t) + \alpha(t) h(t), u(t) + \alpha(t) v(t), t), h(t) \rangle 
  + \langle \nabla_u \theta(x(t) + \alpha(t) h(t), u(t) + \alpha(t) v(t), t), v(t) \rangle.
\end{multline}
The right-hand side of this equality converges to
$\langle \nabla_x \theta(x(t), u(t), t), h(t) \rangle + \langle \nabla_u \theta(x(t), u(t), t), v(t) \rangle$
as $\alpha \to 0$ for a.e. $t \in (0, T)$ due to the continuity of the gradients $\nabla_x \theta$ and 
$\nabla_u \theta$. Furthermore, by \eqref{DerivGrowthCond_EvolEq} there exist $C_R > 0$, and a.e. nonnegative
functions $\omega_R \in L^1(0, T)$ and $\eta_R \in L^2(0, T)$ such that 
\begin{align*}
  \big| \langle \nabla_x \theta(x(t) + \alpha h(t), u(t) + \alpha v(t), t), h(t) \rangle \big| &\le
  \big( 4 C_R ( \| u(t) \|^2_{\mathscr{U}} + \| v(t) \|^2_{\mathscr{U}} ) + \omega_R(t) \big) 
  \| h \|_{C([0, T]; \mathscr{H})} \\
  \big| \langle \nabla_u \theta(x(t) + \alpha h(t), u(t) + \alpha v(t), t), v(t) \rangle \big| &\le
  \big( C_R (\| u(t) \|_{\mathscr{U}} + \| v(t) \|_{\mathscr{U}}) + \eta_R(t) \big) \| v(t) \|_{\mathscr{U}} 
\end{align*}
for a.e. $t \in (0, T)$ and all $\alpha \in [0, 1]$. Note that the right-hand sides of these inequalities belong to
$L^1(0, T)$ and do not depend on $\alpha$. Therefore, integrating \eqref{MeanValue_Func_EvolEq} from $0$ to $T$ and
passing to the limit with the use of Lebesgue's dominated convergence theorem one obtains that the functional
$\mathcal{I}$ is G\^{a}teaux differentiable at every point $(x, u) \in X$, and its G\^{a}teaux derivative has the form
$$
  \mathcal{I}'(x, u)[h, v] = \int_0^T \Big( \langle \nabla_x \theta(x(t), u(t), t), h(t) \rangle + 
  \langle \nabla_u \theta(x(t), u(t), t), v(t) \rangle \Big) \, dt.
$$
Hence and from \eqref{DerivGrowthCond_EvolEq} it follows that for any $R > 0$ and $(x, u) \in X$ such that
$\| x \|_{C([0, T]; \mathscr{H})} \le R$ there exist $C_R > 0$, and a.e. nonnegative functions $\omega_R \in L^1(0, T)$
and $\eta_R \in L^2(0, T)$ such that
$$
  \big\| \mathcal{I}'(x, u) \big\| \le C_R \| u \|^2_{L^2((0, T); \mathscr{U})} + \| \omega_R \|_1
  + C_R \| u \|_{L^2((0, T); \mathscr{U})} + \| \eta_R \|_2.
$$
Therefore, the G\^{a}teaux derivative of $\mathcal{I}$ is bounded on bounded subsets of the space $X$, which, as is
well-known and easy to check, implies that the functional $\mathcal{I}$ is Lipschitz continuous on bounded subsets 
of $X$. 

Fix any $\lambda \ge 0$ and $c > \inf_{(x, u) \in \Omega} \mathcal{I}(x, u)$. By Theorem~\ref{Theorem_CompleteExactness}
it remains to check that there exists $a > 0$ such that $\varphi^{\downarrow}_A(x, u) \le - a$ for any 
$(x, u) \in S_{\lambda}(c)$ such that $\varphi(x, u) > 0$. Choose any such $(x, u)$ and 
$(\widehat{x}, \widehat{u}) \in \Omega$. Note that $x(T) \ne x_T$ due to the inequality $\varphi(x, u) > 0$. Define
$\Delta x = ( \widehat{x} - x ) / \sigma$ and $\Delta u = ( \widehat{u} - u ) / \sigma$, where
$\sigma = \| \widehat{x} - x \|_{C([0, T]; \mathscr{H})} + \| \widehat{u} - u \|_{L^2((0, T); \mathscr{U})} > 0$.
Then $\| (\Delta x, \Delta u) \|_X = 
\| \Delta x \|_{C([0, T]; \mathscr{H})} + \| \Delta u \|_{L^2((0, T); \mathscr{U})} = 1$. Due to the linearity of the
system and the convexity of the set $U$, for any $\alpha \in [0, \sigma]$ one has 
$(x + \alpha \Delta x, u + \alpha \Delta x) \in A$, 
$(x + \alpha \Delta x)(T) = x(T) + \alpha \sigma^{-1} (x_T - x(T))$, since $\widehat{x}(T) = x_T$ by definition.
Hence
\begin{align*}
  \varphi^{\downarrow}_A(x, u) &\le \lim_{\alpha \to +0} 
  \frac{\varphi(x + \alpha \Delta x, u + \alpha \Delta u) - \varphi(x, u)}{\alpha \| (\Delta x, \Delta u) \|_X} \\
  &= \lim_{\alpha \to +0} 
  \frac{(1 - \alpha \sigma^{-1}) \| x(T) - x_T \|_{\mathscr{H}} - \| x(T) - x_T \|_{\mathscr{H}}}{\alpha}
  = - \frac{1}{\sigma} \| x(T) - x_T \|_{\mathscr{H}}.
\end{align*}
Therefore, it remains to check that there exists $C > 0$ such that for any 
$(x, u) \in S_{\lambda}(c) \setminus \Omega$ one can find $(\widehat{x}, \widehat{u}) \in \Omega$ satisfying the
inequality
\begin{equation} \label{PropertyS_EvolutionEquation}
  \| x - \widehat{x} \|_{C([0, T]; \mathscr{H})} 
  + \| u - \widehat{u} \|_{L^2((0, T); \mathscr{U})} \le C \| x(T) - x_T \|_{\mathscr{H}}.
\end{equation}
Then $\varphi^{\downarrow}_A(x, u) \le - 1 / C$ for any such $(x, u)$, and the proof is complete.

From \eqref{SolutionViaSemiGroup} and the inequality $\| F_t \| \le \| F_T \|$, $t \in [0, T]$ 
(see \cite[formula $(4.2.5)$]{TucsnakWeiss}), it follows that for any $(x, u) \in A$ and 
$(\widehat{x}, \widehat{u}) \in A$ one has
$\| x - \widehat{x} \|_{C([0, T]; \mathscr{H})} \le \| F_T \| \| u - \widehat{u} \|_{L^2((0, T); \mathscr{U})}$.
Consequently, it is sufficient to check that there exists $C > 0$ such that for any 
$(x, u) \in S_{\lambda}(c) \setminus \Omega$ one can find $(\widehat{x}, \widehat{u}) \in \Omega$ satisfying the
inequality
\begin{equation} \label{MetricReg_InputMap}
  \| u - \widehat{u} \|_{L^2((0, T); \mathscr{U})} \le C \| x(T) - x_T \|_{\mathscr{H}}.
\end{equation}
To this end, fix any $(x_*, u_*) \in \Omega$, and denote by 
$\mathcal{T} \colon \cl \linhull (U - u_*) \to \cl \linhull F_T(U - u_*)$ the mapping such that 
$\mathcal{T}(u) = F_T (u)$ for any $u \in \cl \linhull (U - u_*)$. Note that $\mathcal{T}$ is correctly defined, since
the operator $F_T$ maps $\cl \linhull (U - u_*)$ to $\cl \linhull F_T(U - u_*)$. Indeed, by definition 
$F_T(\linhull (U - u_*)) \subseteq \cl \linhull F_T(U - u_*)$. If $u_0 \in \cl \linhull U$, then there exists a
sequence $\{ u_n \} \subset \linhull (U - u_*)$ converging to $u_0$. Due to the continuity of $F_T$ the sequence 
$\{ F_T u_n \}$ converges to $F_T u_0$, which yields $F_T u_0 \in \cl \linhull F_T(U - u_*)$.

Observe that $\mathcal{T}$ is a bounded linear operator between Banach spaces, since the operator $F_T$ is bounded.
Furthermore, by \eqref{SolutionViaSemiGroup} for any $u \in U$ one has $F_T(u - u_*) = x(T) - x_T$, which implies that
$\mathcal{T}(U - u_*) = F_T(U - u_*) = \mathcal{R}(x_0, T) - x_T$. Therefore, by
assumption~\ref{EndPointInterior_Assumpt} one has $0 \in \interior \mathcal{T}(U - u_*)$, since the closed affine hull
of $\mathcal{R}(x_0, T)$ coincides with $\cl \linhull F_T(U - u_*) + x_T$ due to the fact that 
$0 \in F_T(U - u_*)$. Hence by Robinson's theorem (Theorem~\ref{Theorem_Robinson_Ursescu} with $C = U - u_*$, 
$x^* = 0$, and $y = 0$) there exists $\kappa > 0$ such that
\begin{equation} \label{RobinsonUrsescu_InputMap_direct}
  \dist\big( u - u_*, \mathcal{T}^{-1}(0) \cap (U - u_*) \big) \le
  \kappa \big( 1 + \| u - u_* \|_{L^2((0, T); \mathscr{H})} \big) 
  \big\| \mathcal{T}(u - u_*) \big\|_{\mathscr{H}}
  \quad \forall u \in U.
\end{equation}
Fix any $(x, u) \in S_{\lambda}(c) \setminus \Omega$ (i.e. $x(T) \ne x_T$). Then taking into account the fact that
$\mathcal{T}(u - u_*) = x(T) - x_T$ and utilising inequality \eqref{RobinsonUrsescu_InputMap_direct} one obtains that
there exists $v \in U - u_*$ such that $\mathcal{T}(v) = 0$ and
\begin{equation} \label{RobinsonUrsescu_InputMap}
  \| u - u_* - v \|_{L^2((0, T); \mathscr{H})} \le 
  2 \kappa \big( 1 + \| u - u_* \|_{L^2((0, T); \mathscr{H})} \big) \big\| x(T) - x_T \big\|_{\mathscr{H}}.
\end{equation}
Define $\widehat{u} = u_* + v \in U$, and let $\widehat{x}$ be the corresponding solution of \eqref{LinearEvolEq}. Then
$\widehat{x}(T) - x_T = \mathcal{T}(\widehat{u} - u_*) = \mathcal{T}(v) = 0$, i.e. 
$(\widehat{x}, \widehat{u}) \in \Omega$. Note that 
$C := \sup_{(x, u) \in S_{\lambda}(c)} 2 \kappa ( 1 + \| u - u_* \|_{L^2((0, T); \mathscr{H})} ) < + \infty$ due to
the boundedness of the set $S_{\lambda}(c)$. Consequently, by \eqref{RobinsonUrsescu_InputMap} one 
for any $(x, u) \in S_{\lambda}(c) \setminus \Omega$ there exists $(\widehat{x}, \widehat{u}) \in \Omega$ such that
$\| u - \widehat{u} \|_{L^2((0, T); \mathscr{H})} \le C \| x(T) - x_T \|_{\mathscr{H}}$, i.e. \eqref{MetricReg_InputMap}
holds true, and the proof is complete.
\end{proof}

\begin{remark}
Let us note that for the validity of the assumption $x_T \in \relint \mathcal{R}(x_0, T)$ in the case 
$\image(F_T) = \mathscr{H}$ it is sufficient to suppose that $x_T$ belongs to the interior of $\mathcal{R}(x_0, T)$,
while in the case $U = L^2((0, T); \mathscr{U})$ this assumption is satisfied iff the image of the input map $F_T$ is
closed.
\end{remark}

\begin{remark} \label{Remark_ComparisonZuazua}
Recall that system \eqref{LinearEvolEq} is called \textit{exactly controllable} using $L^2$-controls in
time $T$, if for any initial state $x_0 \in \mathscr{H}$ and for any final state $x_T \in \mathscr{H}$ there exists 
$u \in L^2((0, T), \mathscr{U})$ such that for the corresponding solution $x$ of \eqref{LinearEvolEq} one has 
$x(T) = x_T$. It is easily seen that this system is exactly controllable using $L^2$-controls in time $T$ iff the input
map $F_T$ is surjective, i.e. $\image(F_T) = \mathscr{H}$. Thus, in particular,
in Theorem~\ref{Theorem_Exactness_EvolutionEquations} it is sufficient to suppose that system \eqref{LinearEvolEq} is
exactly controllable and $x_T \in \interior \mathcal{R}(x_0, T)$. If, in addition, $\interior U \ne \emptyset$, then it
is sufficient to suppose that system \eqref{LinearEvolEq} is exactly controllable and there exists a feasible point
$(x_*, u_*)$ of problem \eqref{EvolEqFixedEndPointProblem} such that $u_* \in \interior U$.

The exactness of the penalty function $\Phi_{\lambda}$ for problem \eqref{EvolEqFixedEndPointProblem} with
$\theta(x, u, t) = \| u \|_{\mathscr{U}}^2 / 2$ and no constraints on the control inputs (i.e. $U = L^2((0, T);
\mathscr{U})$) was proved by Gugat and Zuazua \cite{Zuazua} under the assumption that system \eqref{LinearEvolEq} is
exactly controllable, and the control $u$ from the definition of exact controllability satisfies the inequality
\begin{equation} \label{ExactControl_Unnecessary}
  \| u \|_{L^2((0, T); \mathscr{U})} \le C \big( \| x_0 \| + \| x_T \| \big)
\end{equation}
for some $C > 0$ independent of $x_0$ and $x_T$. Note that our Theorem~\ref{Theorem_Exactness_EvolutionEquations}
significantly generalises and strengthens \cite[Theorem~1]{Zuazua}, since we consider a more general objective function
and convex constraints on the control inputs, impose a less restrictive assumption on the input map $F_T$ (instead of
exact controllability it is sufficient to suppose that $\image(F_T)$ is closed), and demonstrate that inequality
\eqref{ExactControl_Unnecessary} is, in fact, redundant.
\end{remark}

Let us also extend Theorem~\ref{Theorem_FixedEndPointProblem_Linear_Global} to the case of optimal control problems for
linear evolution equations.

\begin{theorem} \label{Theorem_Exactness_EvolutionEquations_Global}
Let all assumptions of Theorem~\ref{Theorem_Exactness_EvolutionEquations} be valid, and suppose that either the set $U$
of admissible control inputs is bounded in $L^2((0, T), \mathscr{U})$ or the function $(x, u) \mapsto \theta(x, u, t)$
is convex for all $t \in [0, T]$. Then the penalty function $\Phi_{\lambda}$ for problem
\eqref{EvolEqFixedEndPointProblem} is completely exact on $A$.
\end{theorem}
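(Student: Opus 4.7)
The plan is to split into the two alternative hypotheses, handle the bounded–$U$ case by a direct adaptation of the proof of Theorem~\ref{Theorem_Exactness_EvolutionEquations} with Theorem~\ref{THEOREM_COMPLETEEXACTNESS_GLOBAL} in place of Theorem~\ref{Theorem_CompleteExactness}, and handle the convex case by combining convexity of $\Phi_{\lambda}$ with the already-proved exactness on sublevel sets.

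For the bounded-$U$ alternative, I would first show that $A$ itself is bounded in $X$. Since $F_T$ is a bounded linear operator from $L^2((0,T);\mathscr{U})$ to $\mathscr{H}$ and $\|F_t\|\le\|F_T\|$ for all $t\le T$, the solution formula \eqref{SolutionViaSemiGroup} combined with $\|\mathbb{T}_t\|\le M_{\omega}e^{\omega t}$ gives a uniform bound on $\|x\|_{C([0,T];\mathscr{H})}$ in terms of $\|x_0\|_{\mathscr{H}}$ and the $L^2$-norm of $u$, which is bounded on $U$ by assumption. Hence $A$ is bounded in $X$. Thereafter, the Gâteaux-derivative bound proved in Theorem~\ref{Theorem_Exactness_EvolutionEquations} under assumption~\ref{DerivGrowthCond_EvolEq_Assumpt} already shows that $\mathcal{I}$ is Lipschitz continuous on any bounded subset of $X$ and hence on $A$; the boundedness-from-below of $\Phi_{\lambda_0}$ on $A$ follows from assumption~\ref{CorrectlyDefined_EvolEq_Assumpt}. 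The rate-of-steepest-descent computation carried out in the proof of Theorem~\ref{Theorem_Exactness_EvolutionEquations}, based on \eqref{RobinsonUrsescu_InputMap_direct}, establishes $\varphi^{\downarrow}_A(x,u)\le-1/C$ on all of $A\setminus\Omega$ provided $\sup_{(x,u)\in A}\|u-u_*\|_{L^2((0,T);\mathscr{U})}<+\infty$, which is true because $U$ is bounded. Theorem~\ref{THEOREM_COMPLETEEXACTNESS_GLOBAL} then yields complete exactness on $A$.

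For the convex alternative, I would exploit that $\theta$ convex in $(x,u)$ makes $\mathcal{I}$ convex on $X$, that $\varphi(x,u)=\|x(T)-x_T\|_{\mathscr{H}}$ is convex as the composition of a norm with an affine map, and that $A$ is a convex subset of $X$ (linear dynamics and convex $U$). Hence $\Phi_{\lambda}$ is convex on $A$, so every locally optimal solution and every inf-stationary point of $\Phi_{\lambda}$ on $A$ is in fact a global minimiser of $\Phi_{\lambda}$ on $A$. Pick any feasible $(\widetilde{x},\widetilde{u})\in\Omega$ with $\mathcal{I}(\widetilde{x},\widetilde{u})=\mathcal{I}^*$; then for any local minimiser / inf-stationary point $(x^*,u^*)\in A$ of $\Phi_{\lambda}$ one has
\[
\Phi_{\lambda}(x^*,u^*)\le\Phi_{\lambda}(\widetilde{x},\widetilde{u})=\mathcal{I}^*<c
\]
for any fixed $c>\mathcal{I}^*$. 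Thus $(x^*,u^*)\in S_{\lambda}(c)$, and Theorem~\ref{Theorem_Exactness_EvolutionEquations} applied with this $c$ delivers, for all sufficiently large $\lambda$, the required equivalence with the original problem on $S_{\lambda}(c)$ and therefore on $A$.

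The main obstacle will be checking that the convexity reduction really brings every local minimiser and every inf-stationary point of $\Phi_{\lambda}$ on $A$ into $S_{\lambda}(c)$: one must be careful that the rate of steepest descent on a convex feasible set indeed characterises global minima in the convex setting, which follows from the standard inequality $\Phi_{\lambda}(y)\ge\Phi_{\lambda}(x^*)+\Phi_{\lambda}^{\downarrow}_A(x^*)\,d(y,x^*)$ valid for convex $\Phi_{\lambda}$ along segments in $A$, but should be written out carefully to avoid any confusion with the metric-space formulation of inf-stationarity used throughout the paper. Everything else is a straightforward transcription of the two previous proofs.
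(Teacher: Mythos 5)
Your bounded-$U$ case is exactly the paper's argument: show $A$ is bounded in $X$ via the bounds on $\|\mathbb{T}_t\|$ and $\|F_t\|\le\|F_T\|$ together with the boundedness of $U$, and then rerun the proof of Theorem~\ref{Theorem_Exactness_EvolutionEquations} with $S_{\lambda}(c)$ replaced by $A$ and Theorem~\ref{THEOREM_COMPLETEEXACTNESS_GLOBAL} in place of Theorem~\ref{Theorem_CompleteExactness}; nothing to add there. The convex case also follows the paper's route in substance: convexity of $\Phi_{\lambda}$ on the convex set $A$ turns local minimisers and inf-stationary points into global minimisers (your segment inequality is the same computation the paper performs with the rate of steepest descent), and the conclusion is then pulled from the sublevel-set exactness of Theorem~\ref{Theorem_Exactness_EvolutionEquations}.

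There is, however, one half of the required equivalences that your write-up does not cover. Complete exactness on $A$ demands an ``iff'': every $x^*\in A$ that is a local minimiser (respectively inf-stationary point) of $\Phi_{\lambda}$ on $A$ must lie in $\Omega$ and be a local minimiser (inf-stationary point) of $\mathcal{I}$ on $\Omega$, \emph{and conversely} every local minimiser (inf-stationary point) of $\mathcal{I}$ on $\Omega$ must be a local minimiser (inf-stationary point) of $\Phi_{\lambda}$ on $A$. You only push the $\Phi_{\lambda}$-side points into $S_{\lambda}(c)$; for the converse direction you must also show that the $\Omega$-side points lie in $S_{\lambda}(c)$ before Theorem~\ref{Theorem_Exactness_EvolutionEquations} can be invoked. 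This requires applying the same convexity argument a second time, now to $\mathcal{I}$ on the convex set $\Omega = M\cap A$ (note $M$ is the affine constraint $x(T)=x_T$, so $\Omega$ is convex): any local minimiser or inf-stationary point of $\mathcal{I}$ on $\Omega$ is a global minimiser, hence $\Phi_{\lambda}=\mathcal{I}=\mathcal{I}^*<c$ there, so it belongs to $S_{\lambda}(c)$ and the sublevel-set statements (3)--(4) of Theorem~\ref{Theorem_Exactness_EvolutionEquations} apply in the reverse direction as well. The paper makes this symmetric step explicit (``any point of local minimum/inf-stationary point of $\mathcal{I}$ on $\Omega$ is a globally optimal solution \dots due to the convexity of $\mathcal{I}$ and $\Omega$''), concluding that in the convex case complete exactness on $A$ is equivalent to global exactness. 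With that one additional observation your proof closes and coincides with the paper's.
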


\begin{proof}
Suppose at first that the set $U$ is bounded. Recall that the input map $F_t$ continuously maps 
$L^2((0, T); \mathscr{U})$ to $\mathscr{H}$ by \cite[Proposition~4.2.2]{TucsnakWeiss} and $\| F_t \| \le \| F_T \|$ for
any $t \le T$ (see \cite[formula $(4.2.5)$]{TucsnakWeiss}). Note also that by \cite[Proposition~2.1.2]{TucsnakWeiss}
there exist $\omega \in \mathbb{R}$ and $M_{\omega} \ge 1$ such that $\| \mathbb{T}_t \| \le M_{\omega} e^{\omega t}$
for all $t \ge 0$.

Fix any $(x, u) \in A$. By our assumption there exists $K > 0$ such that $\| u \|_{L^2((0, T), \mathscr{U})} \le K$ for
any $u \in U$. Hence 
$\| x \|_{C([0, T]; \mathscr{H})} \le M_{\omega} \max_{t \in [0, T]} e^{\omega t} \| x_0 \| + \| F_T \| K$
due to \eqref{SolutionViaSemiGroup}, and the bounds on $\| \mathbb{T}_t \|$ and $\| F_t \|$. Thus, the set $A$ is
bounded in $X$. Now, arguing in the same way as in the proof of Theorem~\ref{Theorem_Exactness_EvolutionEquations}, but
replacing $S_{\lambda}(c)$ with $A$ and utilising Theorem~\ref{THEOREM_COMPLETEEXACTNESS_GLOBAL} instead of
Theorem~\ref{Theorem_CompleteExactness}, one obtains the required result. 

Suppose now that the function $(x, u) \mapsto \theta(x, u, t)$ is convex. Then the functional $\mathcal{I}(x, u)$
and the penalty function $\Phi_{\lambda}$ are convex. Hence with the use of the fact that the set $A$ is convex one
obtains that any point of local minimum of $\Phi_{\lambda}$ on $A$ is also a point of global minimum of $\Phi_{\lambda}$
on $A$. Furthermore, any inf-stationary point of $\Phi_{\lambda}$ on $A$ is also a point of global minimum of
$\Phi_{\lambda}$ on $A$. Indeed, let $(x^*, u^*)$ be and inf-stationary point of $\Phi_{\lambda}$ on $A$. Arguing by
reductio ad absurdum, suppose that $(x^*, u^*)$ is not a point of global minimum. Then there exists $(x_0, u_0) \in A$
such that $\Phi_{\lambda}(x_0, u_0) < \Phi_{\lambda}(x^*, u^*)$. By applying the convexity of $\Phi_{\lambda}$ one gets
that
$$
  \Phi_{\lambda}(x^* + \alpha( x_0 - x^* ), u^* + \alpha (u_0 - u^*)) \le
  \Phi_{\lambda}(x^*, u^*) + \alpha(\Phi_{\lambda}(x_0, u_0) - \Phi_{\lambda}(x^*, u^*))
  \quad \forall \alpha \in [0, 1].
$$
Consequently, one has
$$
  (\Phi_{\lambda})^{\downarrow}_A(x^*, u^*) \le \liminf_{\alpha \to + 0}
  \frac{\Phi_{\lambda}(x^* + \alpha( x_0 - x^* ), u^* + \alpha (u_0 - u^*)) - \Phi_{\lambda}(x^*, u^*)}
  {\alpha \| (x^*, u^*) - (x_0, u_0) \|_X} \le 
  \frac{\Phi_{\lambda}(x_0, u_0) - \Phi_{\lambda}(x^*, u^*)}{\| (x^*, u^*) - (x_0, u_0) \|_X} < 0
$$
which is impossible, since by the definition of inf-stationary point $(\Phi_{\lambda})^{\downarrow}_A(x^*, u^*) \ge 0$.

Similarly, any point of local minimum/inf-stationary point of $\mathcal{I}$ on $\Omega$ is a globally optimal solution
of problem \eqref{EvolEqFixedEndPointProblem} due to the convexity of $\mathcal{I}$ and $\Omega$. Therefore, in the
convex case the penalty function $\Phi_{\lambda}$ is completely exact on $A$ if and only if it is globally exact. In
turn, the global exactness of this function follows from Theorem~\ref{Theorem_Exactness_EvolutionEquations}.
\end{proof}

\subsection{Nonlinear Systems: Complete Exactness}

Now we turn to the analysis of nonlinear finite dimensional fixed-endpoint optimal control problems of the form:
\begin{equation} \label{FixedEndPointProblem}
\begin{split}
  {}&\min \: \mathcal{I}(x, u) = \int_0^T \theta(x(t), u(t), t) \, dt \\
  {}&\text{subject to } \dot{x}(t) = f(x(t), u(t), t), \quad t \in [0, T], \quad
  x(0) = x_0, \quad x(T) = x_T, \quad u \in U.
\end{split}
\end{equation}
Here $\theta \colon \mathbb{R}^d \times \mathbb{R}^m \times [0, T] \to \mathbb{R}$ and
$f \colon \mathbb{R}^d \times \mathbb{R}^m \times [0, T] \to \mathbb{R}^d$ are given functions, 
$x_0, x_T \in \mathbb{R}^d$, and $T > 0$ are fixed, $x \in W^d_{1, p}(0, T)$, $U \subseteq L_q^m(0, T)$ is
a nonempty closed set, and $1 \le p, q \le + \infty$.

As in the case of linear problems, we penalise only the terminal constraint $x(T) = x_T$. To this end, 
define $X = W_{1, p}^d(0, T) \times L_q^m(0, T)$, $M = \{ (x, u) \in X \mid x(T) = x_T \}$, and
\begin{equation} \label{Set_A_Nonlinear_FixedEndPoint}
  A = \Big\{ (x, u) \in X \Bigm| x(0) = x_0, \: u \in U, \:
  \dot{x}(t) = f(x(t), u(t), t) \text{ for a.e. } t \in [0, T] \Big\}.
\end{equation}
Then problem \eqref{FixedEndPointProblem} can be rewritten as the problem of minimising $\mathcal{I}(x, u)$
subject to $(x, u) \in M \cap A$. As in the previous sections, define $\varphi(x, u) = |x(T) - x_T|$. Then 
$M = \{ (x, u) \in X \mid \varphi(x, u) = 0 \}$, and one can consider the penalised problem
\begin{equation} \label{PenProblem_NonlinearFixedEndPoint}
\begin{split}
  {}&\min \: \Phi_{\lambda}(x, u) = \mathcal{I}(x, u) + \lambda \varphi(x, u) 
  = \int_0^T \theta(x(t), u(t), t) \, dt + \lambda |x(T) - x_T| \\
  {}&\text{subject to } \dot{x}(t) = f(x(t), u(t), t), \quad t \in [0, T], \quad
  x(0) = x_0, \quad u \in U,
\end{split}
\end{equation}
which is a nonlinear free-endpoint optimal control problem.

The nonlinearity of the systems makes an analysis of the exactness of the penalty function $\Phi_{\lambda}(x, u)$ a
very challenging problem. Unlike the linear case, it does not seem possible to obtain any easily verifiable conditions
for the complete exactness of this function. Therefore, the main goal of this section is to understand what 
properties the system $\dot{x} = f(x, u, t)$ must have for the penalty function $\Phi_{\lambda}(x, u)$ to be completely
exact.

In the linear case, the main assumption ensuring the complete exactness of the penalty function was 
$x_T \in \relint \mathcal{R}(x_0, T)$. Therefore, it is natural to expect that in the nonlinear case one must also
impose some assumptions on the reachable set of the system $\dot{x} = f(x, u, t)$. Moreover, in the linear case we
utilised Robinson's theorem, but there are no nonlocal analogues of this theorem in the nonlinear case.
Consequently, we must impose an assumption that allows one to avoid the use of this theorem.

Thus, to prove the complete exactness of the penalty function $\Phi_{\lambda}$ in the nonlinear case we need to impose
two assumptions on the controlled system $\dot{x} = f(x, u, t)$. The first one does not allow the reachable set
of this system to be, roughly speaking, too ``wild'' near the point $x_T$, while the second one ensures that this system
is, in a sense, sensitive enough with respect to the control inputs. It should be mentioned that the exactness of the
penalty function $\Phi_{\lambda}$ for problem \eqref{FixedEndPointProblem} can be proved under a much weaker assumption
that imposes some restrictions on the reachable set and sensitivity with respect to the control inputs simultaneously.
However, for the sake of simplicity we split this rather complicated assumption in two assumptions that are much easier
to understand and analyse.

Denote by $\mathcal{R}(x_0, T) = \{ \xi \in \mathbb{R}^d \mid \exists (x, u) \in A \colon \xi = x(T) \}$ the
set that is reachable in time $T$. We obviously suppose that $x_T \in \mathcal{R}(x_0, T)$. Also, we exclude the trivial
case when $x_T$ is an isolated point of $\mathcal{R}(x_0, T)$, since in this case the penalty function
$\Phi_{\lambda}$ is completely exact on $S_{\lambda}(c)$ for any $c \in \mathbb{R}$ iff $\Phi_{\lambda}$ is bounded
below on $A$ due to the fact that in this case $\Omega_{\delta} \setminus \Omega = \emptyset$ for any sufficiently
small $\delta > 0$ (see~Remark~\ref{Remark_OmegaDeltaEmpty}).

\begin{definition}
One says that the set $\mathcal{R}(x_0, T)$ has \textit{the negative tangent angle property} near $x_T$, if there
exist a neighbourhood $\mathcal{O}(x_T)$ of $x_T$ and $\beta > 0$ such that for any 
$\xi \in \mathcal{O}(x_T) \cap \mathcal{R}(x_0, T)$, $\xi \ne x_T$, there exists a sequence 
$\{ \xi_n \} \subset \mathcal{R}(x_0, T)$ converging to $\xi$ and such that
\begin{equation} \label{NegativeTangentAngleCond}
  \left\langle \frac{\xi - x_T}{|\xi - x_T|}, \frac{\xi_n - \xi}{|\xi_n - \xi|} \right\rangle \le - \beta
  \quad \forall n \in \mathbb{N}.
\end{equation}
\end{definition}

One can easily see that if $x_T$ belongs to the interior of $\mathcal{R}(x_0, T)$ or if there exists a neighbourhood
$\mathcal{O}(x_T)$ of $x_T$ such that the intersection $\mathcal{O}(x_T) \cap \mathcal{R}(x_0, T)$ is convex, then the
set $\mathcal{R}(x_0, T)$ has the negative tangent angle property near $x_T$ (take as $\{ \xi_n \}$ any sequence of
points from the segment $\co\{ x_T, \xi \}$ converging to $\xi$ and put $\beta = 1$). However, this property holds true
in a much more general case. In particular, $x_T$ can be the vertex of a cusp. 

The negative tangent angle property excludes the sets that, roughly speaking, are ``very porous'' near $x_T$ (i.e. sets
having an infinite number of ``holes'' in any neighbourhood of $x_T$) or are very wiggly near this point (like the graph
of $y = x \sin(1 / x)$ near $(0, 0)$). Furthermore, bearing in mind the equality
$$
  \big\{ \xi \in \mathbb{R}^d \mid \exists (x, u) \in \Omega_{\delta} \setminus \Omega \colon \xi = x(T) \big\} 
  = \{ \xi \in \mathcal{R}(x_0, T) \mid 0 < |\xi - x_T| < \delta  \},
$$
the definition of the rate of steepest descent, and the fact that $\varphi(x, u) = |x(T) - x_T|$ one can check that for
the validity of the inequality $\varphi^{\downarrow}_A(x, u) \le - a$ for all 
$(x, u) \in \Omega_{\delta} \setminus \Omega$ and some $a, \delta > 0$ it is \textit{necessary} that there exists
$\beta > 0$ such that for any $\xi \in \mathcal{R}(x_0, T)$ lying in a neighbourhood of $x_T$ inequality
\eqref{NegativeTangentAngleCond} holds true. 

Indeed, suppose that $\varphi^{\downarrow}_A(x, u) \le - a$ for all $(x, u) \in \Omega_{\delta} \setminus \Omega$ and
some $a, \delta > 0$. Let $\xi \in \mathcal{R}(x_0, T)$ satisfy the inequalities
$0 < |\xi - x_T| < \delta$, and $(x, u) \in \Omega_{\delta} \setminus \Omega$ be such that $x(T) = \xi$. By the
definition of $\varphi^{\downarrow}_A(x, u)$ there exists a sequence $\{ (x_n, u_n) \} \subset A$ converging to 
$(x, u)$ and such that
\begin{multline*}
  - \frac{2a}{3} \ge \frac{\varphi(x_n, u_n) - \varphi(x, u)}{\| (x_n - x, u_n - u) \|_X}
  = \frac{|x_n(T) - x_T| - |x(T) - x_T|}{\| (x_n - x, u_n - u) \|_X} \\
  = \frac{1}{\| (x_n - x, u_n - u) \|_X} 
  \left( \left\langle \frac{x(T) - x_T}{|x(T) - x_T|}, x_n(T) - x(T) \right \rangle + o(|x_n(T) - x(T)| \right)
\end{multline*}
for all $n \in \mathbb{N}$. Hence with the use of inequality \eqref{SobolevImbedding} one obtains that 
$0 < |x_n(T) - x_T| < \delta$ for any sufficient large $n$, and there exists $n_0 \in \mathbb{N}$ such that inequality
\eqref{NegativeTangentAngleCond} is satisfied with $\xi_n = x_{n + n_0}(T)$, $n \in \mathbb{N}$, and 
$\beta = a / 3 C_p$. Thus, the negative tangent angle property is closely related to the validity of assumption
\ref{NegativeDescentRateAssumpt} of Theorem~\ref{Theorem_CompleteExactness}.

\begin{definition} \label{Def_SensitivityProperty}
Let $K \subset A$ be a given set. One says that the property $(\mathcal{S})$ is satisfied on the set $K$, if there
exists $C > 0$ such that for any $(x, u) \in K$ one can find a neighbourhood $\mathcal{O}(x(T)) \subset \mathbb{R}^d$ of
$x(T)$ such that for all $\widehat{x}_T \in \mathcal{O}(x(T)) \cap \mathcal{R}(x_0, T)$ there exists a control input
$\widehat{u} \in U$ that steers the system from $x(0) = x_0$ to $\widehat{x}_T$ in time $T$, and 
\begin{equation} \label{SensitivityCondition}
  \| u - \widehat{u} \|_q + \| x - \widehat{x} \|_{1, p} \le C | x(T) - \widehat{x}(T) |,
\end{equation}
where $\widehat{x}$ is a trajectory corresponding to $\widehat{u}$, i.e. $(\widehat{x}, \widehat{u}) \in A$.
\end{definition}

Let $K \subset A$ be a given set. Recall that the set $A$ consists of all those pairs $(x, u) \in X$ for which 
$u \in U$, and $x$ is a solution of $\dot{x} = f(x, u, t)$ with $x(0) = x_0$
(see~\eqref{Set_A_Nonlinear_FixedEndPoint}). Roughly speaking, the property $(\mathcal{S})$ is satisfied on $K$ iff for
any $(x, u) \in K$ and any reachable end-point $\widehat{x}_T \in \mathcal{R}(x_0, T)$ lying sufficiently close to
$x(T)$ one can reach $\widehat{x}_T$ by slightly changing the control input $u$ in such a way that the corresponding
trajectory stays in a sufficiently small neighbourhood of $x(\cdot)$ (more precisely, the magnitude of change of $u$ and
$x$ must be proportional to $| x(T) - \widehat{x}_T |$). Note that the property $(\mathcal{S})$ implicitly appeared in
the proofs of Theorems~\ref{Theorem_FixedEndPointProblem_Linear} and
\ref{Theorem_Exactness_EvolutionEquations} (cf.~\eqref{ErrorBound_TerminalConstraint} and
\eqref{PropertyS_EvolutionEquation}) and was proved with the use of Robinson's theorem.

\begin{remark} \label{Remark_SensitivityProperty}
Let the function $(x, u) \mapsto f(x, u, t)$ be locally Lipschitz continuous uniformly for all $t \in (0, T)$. Suppose
also that the set $A$ is bounded in $L_{\infty}^d(0, T) \times L_{\infty}^m(0, T)$, i.e. the control inputs and
corresponding trajectories of the system are uniformly bounded. By definition
$|x_1(t) - x_2(t)| = \int_0^t | f(x_1(\tau), u_1(\tau), \tau) - f(x_2(\tau), u_2(\tau), \tau) | \, dt$ for any
$(x_1, u_1), (x_2, u_2) \in A$. Therefore, due to the boundedness of $A$ and the Lipschitz continuity of $f$ there
exists $L > 0$ such that
$$
  |x_1(t) - x_2(t)|
  \le L \int_0^t |x_1(\tau) - x_2(\tau)| d \tau + L \int_0^t |u_1(\tau) - u_2(\tau)| \, d \tau
  \le L \int_0^t |x_1(\tau) - x_2(\tau)| d \tau + L T^{1/q'} \| u_1 - u_2 \|_q
$$
for all $t \in [0, T]$. Hence with the use of the Gr\"{o}nwall-Bellman inequality one can easily check that there exists
$L_1 > 0$ such that $\| x_1 - x_2 \|_{\infty} \le L_1 \| u_1 - u_2 \|_q$ for any $(x_1, u_1), (x_2, u_2) \in A$. Then by
applying the inequality
$$
  |\dot{x}_1(t) - \dot{x}_2(t)| \le \big| f(x_1(t), u_1(t), t) - f(x_2(t), u_2(t), t) \big|
  \le L |x_1(t) - x_2(t)| + L |u_1(t) - u_2(t)|
$$
and H\"{o}lder's inequality (here we suppose that $q \ge p$) one obtains that there exists $L_2 > 0$ such that
$\| x_1 - x_2 \|_{1, p} \le L_2 \| u_1 - u_2 \|_q$ for all $(x_1, u_1), (x_2, u_2) \in A$. In other words, the map
$u \mapsto x_u$, where $x_u$ is a solution of $\dot{x} = f(x, u, t)$ with $x(0) = x_0$, is Lipschitz continuous on $U$.
Therefore, under the assumptions of this remark inequality \eqref{SensitivityCondition} in the definition of the
property $(\mathcal{S})$ can be replaced with the inequality $\| u - \widehat{u} \|_q \le C | x(T) - \widehat{x}(T) |$.
\end{remark}

A detailed analysis of the property $(\mathcal{S})$ lies beyond the scope of this paper. Here we only note that the
property $(\mathcal{S})$ is, in essence, a reformulation of the assumption that the mapping $u \mapsto x_u(T)$ is 
\textit{metrically regular} on the set $K$ (here $x_u$ is a solution of $\dot{x} = f(x, u, t)$ with $x(0) = x_0$). Thus,
it seems possible to apply general results on metric regularity \cite{Aze,Cominetti,Ioffe,Dmitruk} to verify whether the
property $(\mathcal{S})$ is satisfied in particular cases. Our aim is to show that this property along with the negative
tangent angle property ensures that the penalty function $\Phi_{\lambda}$ for fixed-endpoint problem
\eqref{FixedEndPointProblem} is completely exact. Denote by $\mathcal{I}^*$ the optimal value of this problem.

\begin{theorem} \label{Theorem_FixedEndPointProblem_NonLinear}
Let the following assumptions be valid:
\begin{enumerate}
\item{$\theta$ is continuous and differentiable in $x$ and $u$, and the functions $\nabla_x \theta$, $\nabla_u \theta$,
and $f$ are continuous;
}

\item{either $q = + \infty$ or $\theta$ and $\nabla_x \theta$ satisfy the growth condition of order $(q, 1)$, 
$\nabla_u \theta$ satisfies the growth condition of order $(q - 1, q')$;
}

\item{there exists a globally optimal solution of problem \eqref{FixedEndPointProblem};}

\item{the set $\mathcal{R}(x_0, T)$ has the negative tangent angle property near $x_T$;}

\item{there exist $\lambda_0 > 0$, $c > \mathcal{I}^*$, and $\delta > 0$ such that the set 
$S_{\lambda_0}(c) \cap \Omega_{\delta}$ is bounded in $W^d_{1, p}(0, T) \times L_q^m(0, T)$, the property
$(\mathcal{S})$ is satisfied on $S_{\lambda_0}(c) \cap (\Omega_{\delta} \setminus \Omega)$, and the function
$\Phi_{\lambda_0}(x, u)$ is bounded below on $A$.
}
\end{enumerate}
Then there exists $\lambda^* \ge 0$ such that for any $\lambda \ge \lambda^*$ the penalty function $\Phi_{\lambda}$ for
problem \eqref{FixedEndPointProblem} is completely exact on $S_{\lambda}(c)$. 
\end{theorem}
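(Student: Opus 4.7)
The plan is to invoke Theorem~\ref{Theorem_CompleteExactness} with $X = W^d_{1,p}(0,T) \times L^m_q(0,T)$, $A$ as in \eqref{Set_A_Nonlinear_FixedEndPoint}, $M = \{(x,u) \in X \mid x(T) = x_T\}$, and $\varphi(x,u) = |x(T) - x_T|$. Continuity of $\varphi$ follows from the Sobolev imbedding \eqref{SobolevImbedding}; lower semicontinuity and local Lipschitz continuity of $\mathcal{I}$ on a bounded open set $V \supset S_{\lambda_0}(c) \cap \Omega_\delta$ follow from the growth conditions on $\theta$, $\nabla_x \theta$, $\nabla_u \theta$ together with \cite[Proposition~4]{DolgopolikFominyh}, exactly as in the proof of Theorem~\ref{Theorem_FixedEndPointProblem_Linear}; closedness of $A$ follows from the continuity of $f$, because on any bounded subset of $X$ the trajectories are uniformly bounded in $L^d_\infty$ by the Sobolev imbedding, which permits passage to the limit in $\dot x = f(x,u,t)$ by dominated convergence. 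Lower boundedness of $\Phi_{\lambda_0}$ on $A$ is directly assumed, so the only nontrivial hypothesis of Theorem~\ref{Theorem_CompleteExactness} left to check is the uniform negative descent rate $\varphi^{\downarrow}_A(x,u) \le -a$ on $S_{\lambda_0}(c) \cap (\Omega_\delta \setminus \Omega)$.

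This is the core of the argument and the main obstacle, and it is here that the negative tangent angle property and the property $(\mathcal{S})$ are used together to replace Robinson's theorem from the linear case. First I would shrink $\delta$, if necessary, so that $\{\xi \in \mathcal{R}(x_0,T) \mid |\xi - x_T| < \delta\} \subset \mathcal{O}(x_T)$, where $\mathcal{O}(x_T)$ and $\beta > 0$ are supplied by the negative tangent angle property (shrinking $\delta$ preserves the boundedness of $S_{\lambda_0}(c) \cap \Omega_\delta$ and the validity of $(\mathcal{S})$). Fix $(x,u) \in S_{\lambda_0}(c) \cap (\Omega_\delta \setminus \Omega)$ and set $\xi = x(T)$, so that $0 < |\xi - x_T| < \delta$. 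The negative tangent angle property produces a sequence $\{\xi_n\} \subset \mathcal{R}(x_0,T)$ with $\xi_n \to \xi$ and
\[
\left\langle \frac{\xi - x_T}{|\xi - x_T|},\, \frac{\xi_n - \xi}{|\xi_n - \xi|} \right\rangle \le -\beta \qquad \forall n \in \mathbb{N}.
\]
For all sufficiently large $n$ the point $\xi_n$ lies in the neighbourhood $\mathcal{O}(x(T))$ furnished by $(\mathcal{S})$, so there exist $(\widehat{x}_n, \widehat{u}_n) \in A$ with $\widehat{x}_n(T) = \xi_n$ and $\|\widehat{x}_n - x\|_{1,p} + \|\widehat{u}_n - u\|_q \le C|\xi_n - \xi|$, where $C > 0$ is the constant from $(\mathcal{S})$ and is independent of $(x,u)$. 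Combining these estimates with the expansion $|\xi_n - x_T| - |\xi - x_T| = \langle (\xi - x_T)/|\xi - x_T|, \xi_n - \xi \rangle + o(|\xi_n - \xi|)$ yields
\[
\frac{\varphi(\widehat{x}_n, \widehat{u}_n) - \varphi(x,u)}{\|(\widehat{x}_n - x,\, \widehat{u}_n - u)\|_X} \le \frac{-\beta|\xi_n - \xi| + o(|\xi_n - \xi|)}{C|\xi_n - \xi|} \longrightarrow -\frac{\beta}{C},
\]
the direction of the inequality being preserved because, for large $n$, the numerator is negative while the denominator is bounded above by $C|\xi_n - \xi|$. Consequently $\varphi^{\downarrow}_A(x,u) \le -\beta/C$, giving the required uniform estimate with $a = \beta/C$.

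With all hypotheses of Theorem~\ref{Theorem_CompleteExactness} verified, the existence of $\lambda^* \ge 0$ such that $\Phi_\lambda$ is completely exact on $S_\lambda(c)$ for every $\lambda \ge \lambda^*$ follows immediately. The critical feature that makes the proof work is that property $(\mathcal{S})$ and the negative tangent angle property supply \emph{matching} linear estimates in $|\xi_n - \xi|$: one bounds the denominator from above and the other bounds the numerator from above by a negative multiple of $|\xi_n - \xi|$. It is precisely this common scaling that produces a strictly negative upper bound on the rate of steepest descent that is uniform in $(x,u) \in S_{\lambda_0}(c) \cap (\Omega_\delta \setminus \Omega)$, and which replaces the global sensitivity estimate obtained via Robinson's theorem in the linear setting.
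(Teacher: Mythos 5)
Your proposal is correct and follows essentially the same route as the paper's proof: one verifies the hypotheses of Theorem~\ref{Theorem_CompleteExactness}, with the decisive step being the uniform estimate $\varphi^{\downarrow}_A(x,u) \le -\beta/C$ obtained by feeding the endpoints $\xi_n$ supplied by the negative tangent angle property into property $(\mathcal{S})$, exactly as in the paper (your variant of dividing by $C|\xi_n-\xi|$ rather than by $\sigma_n$ is an equivalent, equally valid bookkeeping of the $o(|\xi_n-\xi|)$ term). The one blemish is the appeal to dominated convergence for the closedness of $A$: since no growth conditions on $f$ are assumed here, no dominating function is available, and the correct (and standard) repair is the paper's argument — extract a.e.\ convergent subsequences of $\dot{x}_n$ and $u_n$, use uniform convergence of $x_n$ and pointwise continuity of $f$ to identify $\dot{x}_*(t) = f(x_*(t),u_*(t),t)$ a.e. — a routine fix that does not affect the main argument.
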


\begin{proof}
As was noted in the proof of Theorem~\ref{Theorem_FixedEndPointProblem_Linear}, the growth conditions on the function
$\theta$ and its derivatives ensure that the functional $\mathcal{I}$ is Lipschitz continuous on any bounded open set
containing the set $S_{\lambda_0}(c) \cap \Omega_{\delta}$. The continuity of the penalty term 
$\varphi(x, u) = |x(T) - x_T|$ can also be verified in the same way as in the proof of
Theorem~\ref{Theorem_FixedEndPointProblem_Linear}.

Let us check that the set $A$ is closed. Indeed, choose any sequence $\{ (x_n, u_n) \} \subset A$ converging to some
$(x_*, u_*) \in X$. Recall that the set $U$ is closed and by definition $\{ u_n \} \subset U$. Therefore $u_* \in U$.
By inequality \eqref{SobolevImbedding} the sequence $x_n$ converges to $x_*$ uniformly on $[0, T]$,
which, in particular, implies that $x_*(0) = x_0$. Note also that by definition $\{ \dot{x}_n \}$ converges to
$\dot{x}_*$ in $L_p^d(0, T)$, while $\{ u_n \}$ converges to $u_*$ in $L_q^m(0, T)$. As is well known 
(see, e.g. \cite[Theorem~2.20]{FonsecaLeoni}), one can extract subsequences $\{ \dot{x}_{n_k} \}$ and 
$\{ u_{n_k} \}$ that converge almost everywhere. From the fact that $(x_{n_k}, u_{n_k}) \in A$ it follows that
$\dot{x}_{n_k}(t) = f(x_{n_k}(t), u_{n_k}(t), t)$ for a.e. $t \in (0, T)$. Consequently, passing to the limit as 
$k \to \infty$ with the use of the continuity of $f$ one obtains that $\dot{x}_*(t) = f(x_*(t), u_*(t), t)$ for a.e. 
$t \in (0, T)$, i.e. $(x_*, u_*) \in A$, and the set $A$ is closed. Thus, by Theorem~\ref{Theorem_CompleteExactness}
it remains to check that there exists $0 < \eta \le \delta$ such that $\varphi^{\downarrow}_A(x, u) \le - a$ for any
$(x, u) \in S_{\lambda_0}(c) \cap (\Omega_{\eta} \setminus \Omega)$.

Let $0 < \eta \le \delta$ be arbitrary, and fix $(x, u) \in S_{\lambda_0}(c) \cap (\Omega_{\eta} \setminus \Omega)$. By
definition one has $0 < \varphi(x, u) = |x(T) - x_T| < \eta$. Decreasing $\eta$, if necessary, and utilising the
negative tangent angle property one obtains that there exist $\beta > 0$ (independent of $(x, u)$) and a sequence 
$\{ \xi_n \} \subset \mathcal{R}(x_0, T)$ converging to $x(T)$ such that 
\begin{equation} \label{NegativeTangentAngle}
  \left\langle \frac{x(T) - x_T}{|x(T) - x_T|}, \frac{\xi_n - x(T)}{|\xi_n - x(T)|} \right\rangle \le - \beta 
  \quad \forall n \in \mathbb{N}.
\end{equation}
By applying the property $(\mathcal{S})$ one obtains that there exists $C > 0$ (independent of $(x, u)$) such that for
any sufficiently large $n \in \mathbb{N}$ one can find $(x_n, u_n) \in A$ satisfying the inequality
\begin{equation} \label{SensitivityCond_Sequence}
  \sigma_n := \| u - u_n \|_q + \| x - x_n \|_{1, p} \le C |x(T) - x_n(T)|
\end{equation}
and such that $x_n(T) = \xi_n$. 

By the definition of rate of steepest descent one has
$$
  \varphi^{\downarrow}_A(x, u) 
  \le \liminf_{n \to \infty} \frac{\varphi(x_n, u_n) - \varphi(x, u)}{\sigma_n}.
$$
Taking into account the equality
$$
  \varphi(x_n, u_n) - \varphi(x, u) = 
  \left\langle \frac{x(T) - x_T}{|x(T) - x_T|}, \xi_n - x(T) \right\rangle + o(|\xi_n - x(T)|),
$$
where $o(|\xi_n - x(T)|) / |\xi_n - x(T)| \to 0$ as $n \to \infty$ and inequality \eqref{NegativeTangentAngle} one
obtains that
$$
  \varphi^{\downarrow}_A(x, u) \le \liminf_{n \to \infty} 
  \left( - \beta \frac{|\xi_n - x(T)|}{\sigma_n} + \frac{o(|\xi_n - x(T)|)}{\sigma_n} \right).
$$
By applying the inequality $|x(T) - x_n(T)| \le T^{1/p'} \| \dot{x} - \dot{x}_n \|_p \le T^{1/p'} \sigma_n$ one gets
that $o(|\xi_n - x(T)|) / \sigma_n \to 0$ as $n \to \infty$. Hence with the use of inequality
\eqref{SensitivityCond_Sequence} one obtains that $\varphi^{\downarrow}_A(x, u) \le - \beta / C$, and the proof is
complete.
\end{proof}

\begin{remark}
It is worth noting that in Example~\ref{Example_EndPoint_NotRelInt}, $(i)$ the functions $\theta$ and $f$ satisfy all
assumptions of Theorem~\ref{Theorem_FixedEndPointProblem_NonLinear}, $(ii)$ there exists a globally optimal solution,
$(iii)$ the set $\mathcal{R}(x_0, T)$ has the negative tangent angle property near $x_T$, and $(iv)$ the set $A$ is
bounded and the penalty function $\Phi_{\lambda}$ is bounded below on $A$ for any $\lambda \ge 0$. However, this penalty
function is not globally exact. Therefore, by Theorem~\ref{Theorem_FixedEndPointProblem_NonLinear} one can conclude
that in this example the property $(\mathcal{S})$ is not satisfied on 
$S_{\lambda_0}(c) \cap (\Omega_{\delta} \setminus \Omega)$ for any $\lambda_0 \ge 0$, $c > \mathcal{I}^*$, and 
$\delta > 0$, when $x_T = 0$. Arguing in a similar way to the proof of
Theorem~\ref{Theorem_Exactness_EvolutionEquations} and utilising Robinson's theorem one can check that 
the property $(\mathcal{S})$ is satisfied on $S_{\lambda_0}(c) \cap (\Omega_{\delta} \setminus \Omega)$ for some
$\lambda_0 \ge 0$, $c > \mathcal{I}^*$, and $\delta > 0$, provided $x_T \in \{ 0 \} \times (0, 1)$. Thus, although the
property $(\mathcal{S})$ might seem independent of the end-point $x_T$, the validity of this property on the set
$S_{\lambda_0}(c) \cap (\Omega_{\delta} \setminus \Omega)$ depends on the point $x_T$ and, in particular, its location
in the reachable set $\mathcal{R}(x_0, T)$.
\end{remark}

\subsection{Nonlinear Systems: Local Exactness}

Although Theorem~\ref{Theorem_FixedEndPointProblem_NonLinear} gives a general understanding of sufficient conditions
for the complete exactness of the penalty function $\Phi_{\lambda}$ for problem \eqref{FixedEndPointProblem}, its
assumptions cannot be readily verified for any particular problem. Therefore, it is desirable to have at least
verifiable sufficient conditions for the \textit{local} exactness of this penalty function. Our aim is to show a
connection between the local exactness of the penalty function $\Phi_{\lambda}$ for problem \eqref{FixedEndPointProblem}
and the complete controllability of the corresponding linearised system. This result serves as an illuminating example
of how one can apply Theorems~\ref{Theorem_LocalExactness} and \ref{Theorem_LocalErrorBound} to verify the local
exactness of a penalty function.

Recall that the linear system
\begin{equation} \label{ExactControllability_Def}
  \dot{x}(t) = A(t) x(t) + B(t) u(t)
\end{equation}
with $x \in \mathbb{R}^d$ and $u \in \mathbb{R}^m$ is called \textit{completely controllable} using $L^q$-controls in
time $T$, if for any initial state $x_0 \in \mathbb{R}^d$ and any finial state $x_T \in \mathbb{R}^d$ one can find 
$u \in L_q^m(0, T)$ such that there exists an absolutely continuous solution $x$ of \eqref{ExactControllability_Def}
with $x(0) = x_0$ defined on $[0, T]$ and satisfying the equality $x(T) = x_T$.

\begin{theorem} \label{Theorem_LocalExactness_TerminalConstraint}
Let $U = L_q^m(0, T)$, $q \ge p$, and $(x^*, u^*)$ be a locally optimal solution of problem
\eqref{FixedEndPointProblem}. Let also the following assumptions be valid:
\begin{enumerate}
\item{$\theta$ and $f$ are continuous, differentiable in $x$ in $u$, and the functions $\nabla_x \theta$, 
$\nabla_u \theta$, $\nabla_x f$, and $\nabla_u f$ are continuous;
\label{Assumpt_Smoothness_LocalEx_TerminConstr}}

\item{either $q = + \infty$ or $\theta$ and $\nabla_x \theta$ satisfy the growth condition of order $(q, 1)$, 
$\nabla_u \theta$ satisfies the growth condition of order $(q - 1, q')$, $f$ and $\nabla_x f$ satisfy the growth
condition of order $(q / p, p)$, and $\nabla_u f$ satisfies the growth condition of order $(q / s, s)$ with 
$s = qp / (q - p)$ in the case $q > p$, and $\nabla_u f$ does not depend on $u$ in the case $q = p$;
}

\item{the linearised system
$$
  \dot{h}(t) = A(t) h(t) + B(t) v(t)
$$
with $A(t) = \nabla_x f(x^*(t), u^*(t), t)$ and $B(t) = \nabla_u f(x^*(t), u^*(t), t)$ is completely controllable using
$L^q$-controls in time $T$. 
\label{Assumpt_LinearisedCompleteControllability}}
\end{enumerate}
Then the penalty function $\Phi_{\lambda}$ for problem \eqref{FixedEndPointProblem} is locally exact at $(x^*, u^*)$.
\end{theorem}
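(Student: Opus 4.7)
The plan is to apply Theorem~\ref{Theorem_LocalExactness}, which requires two things: Lipschitz continuity of $\mathcal{I}$ in a neighborhood of $(x^*, u^*)$, and the local error bound $\varphi(x,u) \ge a \dist((x,u), \Omega)$ on $B((x^*,u^*), r) \cap A$ for some $a, r > 0$. The Lipschitz continuity is handled exactly as in the proof of Theorem~\ref{Theorem_FixedEndPointProblem_Linear}: the growth conditions on $\nabla_x \theta$ and $\nabla_u \theta$ imply via the Nemytskii operator results of Appendix~B that $\mathcal{I}$ is Lipschitz on bounded subsets of $X = W_{1,p}^d(0,T) \times L_q^m(0,T)$, in particular on a neighborhood of $(x^*, u^*)$.

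The local error bound is the main content and I plan to derive it from Theorem~\ref{Theorem_LocalErrorBound} with the setup
\[
  Y = L_p^d(0,T) \times \mathbb{R}^d, \quad C = \{ (x,u) \in X : x(0) = x_0 \}, \quad K = \{0\}, \quad
  F(x,u) = \bigl( \dot{x}(\cdot) - f(x(\cdot), u(\cdot), \cdot), \, x(T) - x_T \bigr).
\]
Then $C$ is a closed affine (hence convex) subspace, $F^{-1}(K) \cap C = \Omega$, and $F(x^*,u^*) = 0 \in K$. For any $(x,u) \in A$ the first component of $F(x,u)$ vanishes, so $\dist(F(x,u), K) = |x(T) - x_T| = \varphi(x,u)$; hence Theorem~\ref{Theorem_LocalErrorBound} would yield $\varphi(x,u) \ge a \dist((x,u), \Omega)$ on $B((x^*,u^*), r) \cap A$. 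I have to verify two hypotheses: strict differentiability of $F$ at $(x^*, u^*)$, and Robinson's constraint qualification \eqref{MetricRegCond}. Strict differentiability follows from assumption~\ref{Assumpt_Smoothness_LocalEx_TerminConstr} together with the growth conditions on $\nabla_x f$ and $\nabla_u f$, which are precisely those required (by the results of Appendix~B) for the Nemytskii operator $(x,u) \mapsto f(x(\cdot), u(\cdot), \cdot)$ to be continuously, hence strictly, Fr\'echet differentiable from $W_{1,p}^d(0,T) \times L_q^m(0,T)$ into $L_p^d(0,T)$; the linear parts $\dot{x}$ and $x(T)$ are trivially bounded.

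Robinson's condition \eqref{MetricRegCond} amounts to showing that the bounded linear map
\[
  DF(x^*, u^*) \colon \{ (h, v) \in X : h(0) = 0 \} \to L_p^d(0,T) \times \mathbb{R}^d,
  \qquad (h,v) \mapsto \bigl( \dot{h} - A(\cdot) h - B(\cdot) v, \, h(T) \bigr)
\]
is surjective (surjectivity implies $0 \in \interior \image \subseteq \core \image$ by the open mapping theorem). Given $(y, \xi) \in L_p^d(0,T) \times \mathbb{R}^d$, I would first solve the inhomogeneous linear ODE $\dot{h}_1 = A(t) h_1 + y(t)$, $h_1(0) = 0$, obtaining a unique absolutely continuous $h_1$ with some terminal value $\eta := h_1(T)$; then, by assumption~\ref{Assumpt_LinearisedCompleteControllability}, there exists $v \in L_q^m(0,T)$ steering the linearised system $\dot{h}_2 = A(t) h_2 + B(t) v$ from $h_2(0) = 0$ to $h_2(T) = \xi - \eta$. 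The pair $(h_1 + h_2, v)$ then satisfies $DF(x^*,u^*)(h_1 + h_2, v) = (y, \xi)$, which establishes surjectivity.

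The likely main obstacle is the strict differentiability step: the growth conditions on $\nabla_u f$ (order $(q/s, s)$ with $s = qp/(q-p)$) are tailored exactly so that Hölder's inequality $1/s + 1/q = 1/p$ places $B v$ in $L_p^d$, but checking that the Nemytskii operator $(x,u) \mapsto f(x,u,\cdot)$ from $W_{1,p}^d \times L_q^m$ to $L_p^d$ is actually strictly differentiable (and not merely G\^ateaux differentiable) at $(x^*, u^*)$ requires invoking the appropriate result from Appendix~B, with the edge case $q = p$ needing the separate hypothesis that $\nabla_u f$ is independent of $u$. Once these two ingredients are in place, Theorem~\ref{Theorem_LocalExactness} yields the local exactness of $\Phi_{\lambda}$ at $(x^*, u^*)$, with $\lambda^*(x^*) \le L/a$ for the corresponding Lipschitz and error-bound constants.
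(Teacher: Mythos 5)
Your proposal is correct and follows essentially the same route as the paper: apply Theorem~\ref{Theorem_LocalExactness} after establishing the error bound via Theorem~\ref{Theorem_LocalErrorBound} with $C = \{(x,u) \in X \mid x(0) = x_0\}$, strict differentiability of the Nemytskii-type operator $F$ from Appendix~B, and the regularity condition verified by splitting $(y,\xi)$ into a solution $h_1$ of the inhomogeneous linearised equation plus a controllability correction $h_2$ (your choice of $K = \{0\}$ with $x(T) - x_T$ inside $F$, versus the paper's $K = (0, x_T)^T$, is an immaterial reformulation). The only detail worth making explicit is that $h_1, h_2 \in W^d_{1,p}(0,T)$, which uses $A(\cdot) \in L_p^{d\times d}(0,T)$ and $B(\cdot)v(\cdot) \in L_p^d(0,T)$ exactly as the growth conditions guarantee, as you indicate.
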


\begin{proof}
As was noted in the proof of Theorem~\ref{Theorem_FixedEndPointProblem_Linear}, the growth conditions on the function
$\theta$ and its derivatives ensure that the functional $\mathcal{I}$ is Lipschitz continuous on any bounded subset of
$X$ (in particular, in any bounded neighbourhood of $(x^*, u^*)$).

For any $(x, u) \in X$ define
$$
  F(x, u) = \begin{pmatrix} \dot{x}(\cdot) - f(x(\cdot), u(\cdot), \cdot) \\ x(T) \end{pmatrix},
  \quad K = \begin{pmatrix} 0 \\ x_T \end{pmatrix}.
$$
Our aim is to apply Theorem~\ref{Theorem_LocalErrorBound} with $C = \{ (x, u) \in X \mid x(0) = x_0 \}$ to the
operator $F$. Then one gets that there exists $a > 0$ such that
$$
  \dist(F(x, u), K) \ge a \dist( (x, u), F^{-1}(K) \cap C)
$$
for any $(x, u) \in C$ in a neighbourhood of $(x^*, u^*)$. Hence taking into account the facts that 
$\dist(F(x, u), K) = |x(T) - x_T| = \varphi(x, u)$ for any $(x, u) \in A$, and $F^{-1}(K) \cap C$ coincides with the
feasible set $\Omega$ of problem \eqref{FixedEndPointProblem} one obtains that
$\varphi(x) \ge a \dist((x, u), \Omega)$ for any $(x, u) \in A$ in a neighbourhood of $(x^*, u^*)$. Then by applying
Theorem~\ref{Theorem_LocalExactness} one obtains the desired result.

By Theorem~\ref{Theorem_DiffNemytskiiOperator} (see Appendix~B) the growth conditions on the
function $f$ and its derivatives guarantee that the nonlinear operator $F$ maps $X$ to 
$L^d_p(0, T) \times \mathbb{R}^d$, is strictly differentiable at $(x^*, u^*)$, and its Fr\'{e}chet derivative at this
point has the form
$$
  DF(x^*, u^*)[h, v] = \begin{pmatrix} \dot{h}(\cdot) - A(\cdot) h(\cdot) - B(\cdot) v(\cdot) \\ h(T) \end{pmatrix},
$$
where $A(t) = \nabla_x f(x^*(t), u^*(t), t)$ and $B(t) = \nabla_u f(x^*(t), u^*(t), t)$. Observe also that
$C - (x^*, u^*) = \{ (h, v) \in X \mid h(0) = 0 \}$ and $K - F(x^*, u^*) = (0, 0)^T$, since $x^*(0) = x_0$ and
$x^*(T) = x_T$ by definition. Consequently, the regularity condition \eqref{MetricRegCond} from
Theorem~\ref{Theorem_LocalErrorBound} takes the form: for any $\omega \in L^d_p(0, T)$ and $h_T \in \mathbb{R}^d$ there
exists $(h, v) \in X$ such that
\begin{equation} \label{MetricRegCond_TerminalConstraint}
  \dot{h}(t) = A(t) h(t) + B(t) v(t) + \omega(t) \quad \text{for a.e. } t \in (0, T), \quad
  h(0) = 0, \quad h(T) = h_T.
\end{equation}
Let us check that this condition holds true. Then by applying Theorem~\ref{Theorem_LocalErrorBound} we arrive at the
required result.

Fix any $\omega \in L^d_p(0, T)$ and $h_T \in \mathbb{R}^d$. Let $h_1$ be an absolutely continuous solution of the
equation $\dot{h}_1(t) = A(t) h_1(t) + \omega(t)$ with $h_1(0) = 0$ defined on $[0, T]$ (the existence of such solution
follows from \cite[Theorem~1.1.3]{Filippov}). From the fact that $\nabla_x f$ satisfies the growth condition of order 
$(q / p, p)$ in the case $q < + \infty$ it follows that $A(\cdot) \in L_p^{d \times d}(0, T)$ (in the case 
$q = + \infty$ one obviously has $A(\cdot) \in L_{\infty}^{d \times d}(0, T)$). Hence $h_1 \in W_{1, p}^d(0, T)$, since
$h_1$ is absolutely continuous and the right-hand side of the equality $\dot{h}_1(t) = A(t) h_1(t) + \omega(t)$ belongs 
to $L^d_p(0, T)$.

Let $v \in L_q^m(0, T)$ be such that an absolutely continuous solution $h_2$ of the system 
$\dot{h}_2(t) = A(t) h_2(t) + B(t) v(t)$ with $h_2(0) = 0$ satisfies the equality $h_2(T) = - h_1(T) + h_T$. Note that
such $v$ exists due to the complete controllability assumption. By applying the fact that $\nabla_u f$ satisfies the
growth condition of order $(q / s, s)$ one obtains that $B(\cdot) \in L_s^{d \times m}(0, T)$ in the case 
$p < q < + \infty$, which with the use of H\"{o}lder inequality implies that $B(\cdot) v(\cdot) \in  L_p^d(0, T)$ (in
the case $q = + \infty$ one obviously has $B(\cdot) v(\cdot) \in  L_{\infty}^d(0, T)$, while in the case 
$p = q < + \infty$ one has $B(\cdot) \in L_{\infty}^{d \times m}(0, T)$, since $\nabla_u f$ does not depend on $u$, and 
$B(\cdot) v(\cdot) \in  L_p^d(0, T)$). Therefore $h_2 \in W_{1, p}^d(0, T)$ by virtue of the fact that the right-hand
side of $\dot{h}_2(t) = A(t) h_2(t) + B(t) v(t)$ belongs to $L^d_p(0, T)$. It remains to note that the pair 
$(h_1 + h_2, v)$ belongs to $X$ and satisfies \eqref{MetricRegCond_TerminalConstraint}.
\end{proof}

\begin{remark}
From the proof of Theorem~\ref{Theorem_LocalExactness_TerminalConstraint} it follows that under the assumption of this
theorem the penalty function
$$
  \Psi_{\lambda}(x, u) = \mathcal{I}(x, u) + \lambda \bigg[ |x(T) - x_T| 
  + \Big( \int_0^T \big| \dot{x}(t) - f(x(t), u(t), t) \big|^p \, dt \Big)^{1 / p} \bigg]
$$
is locally exact at $(x^*, u^*)$, i.e. $(x^*, u^*)$ is a point of local minimum of this penalty function on the set 
$\{ (x, u) \in X \mid x(0) = x_0 \}$ for any sufficiently large $\lambda$.
\end{remark}

\begin{remark}
Let us note that Theorem~\ref{Theorem_LocalExactness_TerminalConstraint} can be extended to the case of problems with
convex constraints on control inputs, but in this case the complete controllability assumption must be replaced by a
much more restrictive assumption. Namely, let $U \subset L_q^m(0, T)$ be a closed convex set, $(x^*, u^*)$ be a locally
optimal solution of problem \eqref{FixedEndPointProblem}, and $\dot{h}(t) = A(t) h(t) + B(t) v(t)$ be the corresponding
linearised system. Define $C = \{ (x, u) \in X \mid x(0) = x_0, \: u \in U \}$ and $K = (0, x_T)^T$. 
One can easily see that in this case the regularity condition \eqref{MetricRegCond} takes the form: for any 
$\omega \in L^d_p(0, T)$ and $h_T \in \mathbb{R}^d$ there exists $(h, v) \in X$ such that $v \in \cone(U - u^*)$ and
$$
  \dot{h}(t) = A(t) h(t) + B(t) v(t) + \omega(t) \quad \text{for a.e. } t \in (0, T), \quad
  h(0) = 0, \quad h(T) = h_T,
$$
where $\cone(U - u^*) = \bigcup_{\alpha \ge 0} \alpha(U - u^*)$ is the cone generated by the set $U - u^*$. If 
$u^* \in \interior U$, then $\cone(U - u^*) = L^m_q(0, T)$, and this regularity condition is equivalent to the
complete controllability of the linearised system. However, if $u^* \notin \interior U$, then one must suppose that for
any initial state $x_0 \in \mathbb{R}^d$ and for any finial state $x_T \in \mathbb{R}^d$ one can find 
$u \in \cone(U - u^*)$ such that there exists an absolutely continuous solution $x$ of \eqref{ExactControllability_Def}
with $x(0) = x_0$ defined on $[0, T]$ and satisfying the equality $x(T) = x_T$, i.e. the linearised system must be
completely controllable using control inputs from $\cone(U - u^*)$. If this assumption is satisfied, then arguing in the
same way as in the proof of Theorem~\ref{Theorem_LocalExactness_TerminalConstraint} one can verify that the penalty
function $\Phi_{\lambda}$ for problem \eqref{FixedEndPointProblem} is locally exact at $(x^*, u^*)$. 
\end{remark}

It should be noted that the complete controllability of the linearised system is \textit{not} necessary for the local
exactness of the penalty function $\Phi_{\lambda}(x, u)$, as the following simple example shows.

\begin{example}
Let $d = m = 1$ and $p = q = 2$. Consider the following fixed-endpoint optimal control problem:
\begin{equation} \label{Ex_DegenerateLinearisation}
  \min \mathcal{I}(u) = - \int_0^T u(t)^2 dt \quad 
  \text{s.t.} \quad \dot{x}(t) = x(t) + u(t)^2, \quad t \in [0, T], \quad x(0) = x(T) = 0, \quad u \in L^2(0, T).
\end{equation}
Solving the differential equation one obtains that $x(t) = \int_0^t e^{t - \tau} u(\tau)^2 d \tau$ for all 
$t \in [0, T]$, which implies that the only feasible point of this problem is $(x^*, u^*)$ with $x^*(t) \equiv 0$ and
$u^*(t) = 0$ for a.e. $t \in [0, T]$. Thus, $(x^*, u^*)$ is a globally optimal solution of this problem. The linearised
system at this point has the form $\dot{h} = h$. Clearly, it is not completely controllable, which renders
Theorem~\ref{Theorem_LocalExactness_TerminalConstraint} inapplicable. Let us show that, nevertheless, the penalty
function $\Phi_{\lambda}$ for problem \eqref{Ex_DegenerateLinearisation} is globally exact.

Indeed, in this case the penalised problem has the form
$$
  \min \Phi_{\lambda}(x, u) = - \int_0^T u(t)^2 dt + \lambda |x(T)| \quad 
  \text{s.t.} \quad \dot{x}(t) = x(t) + u(t)^2, \quad t \in [0, T], \quad x(0) = 0, \quad u \in U.
$$
With the use of the fact that $x(t) = \int_0^t e^{t - \tau} u(\tau)^2 d \tau$ one gets that
$$
  \Phi_{\lambda}(x, u) = - \int_0^T u(t)^2 dt + \lambda \int_0^T e^{T-t} u(t)^2 \, dt
  \ge - \int_0^T u(t)^2 dt + \lambda \int_0^T u(t)^2 \, dt
$$
for any $u \in U$. Therefore, for all $\lambda \ge 1$ one has $\Phi_{\lambda}(x, u) \ge 0 = \Phi_{\lambda}(x^*, u^*)$
for any feasible point of the penalised problem, i.e. the penalty function $\Phi_{\lambda}$ for problem
\eqref{Ex_DegenerateLinearisation} is globally exact.
\end{example}

\begin{remark}
As Theorem~\ref{Theorem_LocalExactness_TerminalConstraint} demonstrates, the local exactness of the penalty function
$\Phi_{\lambda}$ for problem \eqref{FixedEndPointProblem} is implied by the complete controllability of the
corresponding linearised system. It should be noted that a similar result can be proved in the case of complete
exactness, but one must assume some sort of \textit{uniform} complete controllability of linearised systems.

A definition of uniform complete controllability can be be given in the following way. With the use of the open mapping
theorem (see~\cite[formula~$(0.2)$]{Ioffe}) one can check that if the linear system
\begin{equation} \label{LinSys_UniformExactControllability}
  \dot{x}(t) = A(t) x(t) + B(t) u(t),
\end{equation}
is completely controllable using $L^q$-controls, then there exists $C > 0$ such that for any $x_T$ one can find 
$u \in L^m_q(0, T)$ with $\| u \|_q \le C |x_T|$ such that for the corresponding solution $x(\cdot)$ of 
\eqref{LinSys_UniformExactControllability} with $x(0) = 0$ one has $x(T) = x_T$. In other words, one can steer the
state of system \eqref{LinSys_UniformExactControllability} from the origin to any point $x_T$ in time $T$ with the use
of a control input whose $L^q$-norm is proportional to $|x_T|$. Denote the greatest lower bound of all such $C$ by 
$C_T(A(\cdot), B(\cdot))$. In the case when system \eqref{LinSys_UniformExactControllability} is not completely
controllable we put $C_T(A(\cdot), B(\cdot)) = + \infty$. Then one can say that the nonlinear system
$\dot{x} = f(x, u, t)$ is \textit{uniformly completely controllable in linear approximation} on a set $K \subseteq A$,
if there exists $C > 0$ such that
$C_T( \nabla_x f(x(\cdot), u(\cdot), \cdot), \nabla_u f(x(\cdot), u(\cdot), \cdot) ) \le C$ for any $(x, u) \in K$.
With the use of general results on nonlocal metric regularity\cite{Dmitruk} one can check that under some
natural assumptions on the function $f$ uniform complete controllability in linear approximation on a set 
$K \subseteq A$ guarantees that the property $(\mathcal{S})$ is satisfied on this set, provided $U = L^m_q(0, T)$.
Hence, by applying Theorem~\ref{Theorem_FixedEndPointProblem_NonLinear} one can prove that uniform complete
controllability in linear approximation of the nonlinear system $\dot{x} = f(x, u, t)$ implies that the penalty function
$\Phi_{\lambda}$ for problem \eqref{FixedEndPointProblem} is completely exact. A detailed proof of this result lies
beyond the scope of this paper, and we leave it to the interested reader.
\end{remark}

\subsection{Variable-Endpoint Problems}

Let us briefly outline how the main results on the exact penalisation of terminal constraints from previous sections
(in particular, Theorems~\ref{Theorem_LocalExactness_TerminalConstraint} and \ref{Theorem_FixedEndPointProblem_Linear})
can be extended to the case of variable-endpoint problems of the form
\begin{equation} \label{VariableEndPointPenaltyProblem}
\begin{split}
  &\min \: \mathcal{I}(x, u) = \int_0^T \theta(x(t), u(t) t) \, dt + \zeta(x(T)) \quad 
  \text{subject to} \quad \dot{x}(t) = f(x(t), u(t), t), \quad t \in [0, T], \\
  &x(0) = x_0, \quad g_i(x(T)) \le 0 \quad \forall i \in I, \quad g_k(x(T)) = 0 \quad \forall k \in J,
  \quad u \in U.
\end{split}
\end{equation}
Here $\theta \colon \mathbb{R}^d \times \mathbb{R}^m \times [0, T] \to \mathbb{R}$, 
$\zeta \colon \mathbb{R}^d \to \mathbb{R}$, $f \colon \mathbb{R}^d \times \mathbb{R}^m \times [0, T] \to \mathbb{R}^d$,
and $g_i \colon \mathbb{R}^d \to \mathbb{R}$ are given functions, $i \in I \cup J$, $I = \{ 1, \ldots, l_1 \}$,
$J = \{ l_1 + 1, \ldots, l_2 \}$, $x_0 \in \mathbb{R}^d$, and $T > 0$ are fixed, $x \in W^d_{1, p}(0, T)$, 
$U \subseteq L_q^m(0, T)$ is a nonempty closed set, and $1 \le p, q \le + \infty$.

We penalise only the endpoint constraints. To this end, define $X = W_{1, p}^d(0, T) \times L_q^m(0, T)$, 
$M = \{ (x, u) \in X \mid g_i(x(T)) \le 0, \: i \in I, \: g_k(x(T)) = 0, \: k \in J \}$, and
$$
  A = \Big\{ (x, u) \in X \Bigm| x(0) = x_0, \: u \in U, \:
  \dot{x}(t) = f(x(t), u(t), t) \text{ for a.e. } t \in [0, T] \Big\}.
$$
Then problem \eqref{VariableEndPointPenaltyProblem} can be rewritten as the problem of minimising $\mathcal{I}(x, u)$
subject to $(x, u) \in M \cap A$. Define 
$$
  \varphi(x, u) = \sum_{i \in I} \max\{ g_i(x(T)), 0 \} + \sum_{k \in J} |g_k(x(T))|.
$$
Then $M = \{ (x, u) \in X \mid \varphi(x, u) = 0 \}$, and one can consider the penalised problem
\begin{equation*}
\begin{split}
  {}&\min \: \Phi_{\lambda}(x, u)
  = \int_0^T \theta(x(t), u(t), t) \, dt + \zeta(x(T))
  + \lambda \Big( \sum_{i \in I} \max\{ g_i(x(T)), 0 \} + \sum_{k \in J} |g_k(x(T))| \Big) \\
  {}&\text{subject to } \dot{x}(t) = f(x(t), u(t), t), \quad t \in [0, T], \quad
  x(0) = x_0, \quad u \in U,
\end{split}
\end{equation*}
which is a nonlinear free-endpoint optimal control problem.

For any $x \in \mathbb{R}^d$ denote $I(x) = \{ i \in I \mid g_i(x) = 0 \}$. Let the functions $g_i$, $i \in I \cup J$,
be differentiable. Recall that one says that the Mangasarian-Fromovitz constraint qualifications (MFCQ) holds at a point
$x_T \in \mathbb{R}^d$, if the gradients $\nabla g_k(x_T)$, $k \in J$, are linearly independent, and there exists 
$h \in \mathbb{R}^d$ such that $\langle \nabla g_k(x_T), h \rangle = 0$ for any $k \in J$, and
$\langle \nabla g_i(x_T), h \rangle < 0$ for any $i \in I(x_T)$. Let us show that the complete controllability of the
linearised system along with MFCQ guarantee the local exactness of the penalty function $\Phi_{\lambda}$ for problem
\eqref{VariableEndPointPenaltyProblem}

\begin{theorem}
Let $U = L_q^m(0, T)$, $q \ge p$, and $(x^*, u^*)$ be a locally optimal solution of problem
\eqref{VariableEndPointPenaltyProblem}. Suppose also that
assumptions~\ref{Assumpt_Smoothness_LocalEx_TerminConstr}--\ref{Assumpt_LinearisedCompleteControllability} of
Theorem~\ref{Theorem_LocalExactness_TerminalConstraint} are satisfied, $\zeta$ is locally Lipschitz continuous, 
the functions $g_i$, $i \in I \cup J$ are continuously differentiable in a neighbourhood of $x^*(T)$, and MFCQ holds
true at the point $x^*(T)$. Then the penalty function $\Phi_{\lambda}$ for problem
\eqref{VariableEndPointPenaltyProblem} is locally exact at $(x^*, u^*)$.
\end{theorem}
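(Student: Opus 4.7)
The plan is to mimic the strategy of Theorem~\ref{Theorem_LocalExactness_TerminalConstraint}: apply Theorem~\ref{Theorem_LocalExactness} after establishing (i) local Lipschitz continuity of the cost and (ii) a local error bound $\varphi(x,u) \ge a \dist((x,u),\Omega)$ near $(x^*,u^*)$, the latter produced by Theorem~\ref{Theorem_LocalErrorBound}. The Lipschitz continuity of $\int_0^T \theta(x(t),u(t),t)\,dt$ on bounded sets of $X$ is exactly the estimate already used in the proofs of Theorems~\ref{Theorem_FixedEndPointProblem_Linear} and \ref{Theorem_LocalExactness_TerminalConstraint}, obtained from the growth conditions on $\nabla_x\theta,\nabla_u\theta$ via \cite[Proposition~4]{DolgopolikFominyh}; the extra term $\zeta(x(T))$ is Lipschitz near $(x^*,u^*)$ because $\zeta$ is locally Lipschitz and the evaluation map $x\mapsto x(T)$ is bounded from $W^d_{1,p}(0,T)$ to $\mathbb{R}^d$ by \eqref{SobolevImbedding}.

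For the error bound, I would set $C=\{(x,u)\in X\mid x(0)=x_0\}$ and define
\[
  F(x,u)=\begin{pmatrix} \dot{x}(\cdot)-f(x(\cdot),u(\cdot),\cdot)\\ g_1(x(T))\\ \vdots\\ g_{l_2}(x(T))\end{pmatrix},
  \qquad K=\{0\}\times\prod_{i\in I}(-\infty,0]\times\{0\}^{|J|}\subset L^d_p(0,T)\times\mathbb{R}^{l_2}.
\]
Then $F^{-1}(K)\cap C=\Omega$, and $\dist(F(x,u),K)$ restricted to $A$ is equivalent, up to absolute constants coming from the equivalence of norms on $\mathbb{R}^{l_2}$, to $\varphi(x,u)$. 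Strict differentiability of $F$ at $(x^*,u^*)$ is obtained by combining Theorem~\ref{Theorem_DiffNemytskiiOperator} (for the dynamics component) with the chain rule applied to $x\mapsto g(x(T))$, using the continuous differentiability of the $g_i$ near $x^*(T)$ and the continuity of the evaluation $x\mapsto x(T)$.

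The substantive step, and the one I expect to be the main obstacle, is verifying the regularity condition \eqref{MetricRegCond}: given arbitrary $\omega\in L^d_p(0,T)$ and $\gamma\in\mathbb{R}^{l_2}$, I need $(h,v)\in X$ with $h(0)=0$ and some $z\in K-F(x^*,u^*)$ satisfying $\dot h=A(\cdot)h+B(\cdot)v+\omega$ together with
\[
  \langle\nabla g_i(x^*(T)),h(T)\rangle - z_i = \gamma_i,\ i\in I,\qquad
  \langle\nabla g_k(x^*(T)),h(T)\rangle = \gamma_k,\ k\in J.
\]
Here $z_i\le -g_i(x^*(T))$ for $i\in I$ and $z_k=0$ for $k\in J$. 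The idea is to first choose $h_T\in\mathbb{R}^d$ satisfying the $J$-equalities and the $I(x^*(T))$-inequalities $\langle\nabla g_i(x^*(T)),h_T\rangle\le\gamma_i$: MFCQ provides a direction $h_0$ with $\langle\nabla g_k(x^*(T)),h_0\rangle=0$ and $\langle\nabla g_i(x^*(T)),h_0\rangle<0$ on $I(x^*(T))$, and linear independence of $\{\nabla g_k(x^*(T))\}_{k\in J}$ yields some $h_1$ with $\langle\nabla g_k(x^*(T)),h_1\rangle=\gamma_k$; then $h_T=h_1+th_0$ works for $t>0$ large, with inactive $i\in I\setminus I(x^*(T))$ absorbed into $z_i$ because $-g_i(x^*(T))>0$. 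For the remaining $i\in I$ the slack $z_i$ is chosen to close the identity.

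Once $h_T$ is fixed, I would realize it through the dynamics exactly as in the proof of Theorem~\ref{Theorem_LocalExactness_TerminalConstraint}: split $h=h_1+h_2$ where $\dot h_1=Ah_1+\omega$, $h_1(0)=0$, and by assumption~\ref{Assumpt_LinearisedCompleteControllability} choose $v\in L^m_q(0,T)$ so that $\dot h_2=Ah_2+Bv$, $h_2(0)=0$, $h_2(T)=h_T-h_1(T)$; the growth conditions on $\nabla_x f$ and $\nabla_u f$ ensure $(h,v)\in X$ as in that proof. This gives \eqref{MetricRegCond}, whence Theorem~\ref{Theorem_LocalErrorBound} supplies $a,r>0$ with $\dist(F(x,u),K)\ge a\dist((x,u),\Omega)$ on $B((x^*,u^*),r)\cap C$; restricting to $A$ converts this into the local error bound for $\varphi$, and Theorem~\ref{Theorem_LocalExactness} then yields local exactness of $\Phi_\lambda$ at $(x^*,u^*)$.
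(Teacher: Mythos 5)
Your overall framework (the choice of $F$, $K$, and $C$, the Lipschitz continuity of $\mathcal{I}$ including the $\zeta(x(T))$ term via \eqref{SobolevImbedding}, strict differentiability of $F$, and the realisation of a prescribed terminal value $h_T$ through the controllability of the linearised system by splitting $h = h_1 + h_2$) is exactly the paper's. The gap is in your verification of \eqref{MetricRegCond}. You read it as requiring that $DF(x^*,u^*)(C-(x^*,u^*)) - (K - F(x^*,u^*))$ cover \emph{all} of $L^d_p(0,T)\times\mathbb{R}^{l_2}$, i.e.\ solvability for arbitrary $(\omega,\gamma)$. That was indeed what happened in Theorem~\ref{Theorem_LocalExactness_TerminalConstraint}, where there are no inequality constraints, but it is false here in general, and your construction shows where it breaks: for an inactive index $i\in I\setminus I(x^*(T))$ the requirement is $\langle\nabla g_i(x^*(T)), h_T\rangle\le\gamma_i - g_i(x^*(T))$, whose right-hand side exceeds $\gamma_i$ only by the \emph{fixed} finite amount $|g_i(x^*(T))|$; if $\langle\nabla g_i(x^*(T)),h_0\rangle>0$ for the MFCQ direction $h_0$, then taking $t$ large (which the active constraints force when the corresponding $\gamma_i$ are very negative) violates this bound, so the slack cannot ``absorb'' the inactive constraints. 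Concretely, take $d=1$, $J=\emptyset$, full controllability, $g_1(\xi)=\xi$ (active at $x^*(T)=0$) and $g_2(\xi)=-\xi-1$ (inactive): MFCQ holds, yet the target $\gamma=(-10,-10)$ forces $h_T\le-10$ and $h_T\ge 9$ simultaneously; the attainable set is the half-space $\{\gamma_1+\gamma_2\ge-1\}$, not $\mathbb{R}^2$. So the intermediate statement you set out to prove cannot be proved.

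What rescues the theorem is that \eqref{MetricRegCond} only demands $0\in\core$ of that set, i.e.\ covering an algebraic neighbourhood of the origin, and this is how the paper argues: fix a \emph{small} multiple $\alpha h_0$ of the MFCQ direction so that $\langle\nabla g_i(x^*(T)),\alpha h_0\rangle + g_i(x^*(T))<0$ for \emph{every} $i\in I$ (active indices because $\langle\nabla g_i(x^*(T)),h_0\rangle<0$, inactive ones because $g_i(x^*(T))<0$ and $\alpha$ is small); add a small correction $h_2$ with $\nabla g_J(x^*(T))h_2=y_2$ and $|h_2|\le\eta|y_2|$, supplied by the full row rank of $\nabla g_J(x^*(T))$ and the open mapping theorem; then realise $h_T=\alpha h_0+h_2$ through the dynamics exactly as you propose. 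After shrinking the radii this gives $L^d_p(0,T)\times(r_1,+\infty)^{l_1}\times B(\mathbf{0}_{l_2-l_1},r_2)$ inside the set appearing in \eqref{MetricRegCond} for some $r_1<0<r_2$, which suffices for the core condition; with this replacement of your ``large $t$'' step by the ``small $\alpha$, small targets'' step, the rest of your argument goes through unchanged.
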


\begin{proof}
For any $(x, u) \in X$ define
$$
  F(x, u) = \begin{pmatrix} \dot{x}(\cdot) - f(x(\cdot), u(\cdot), \cdot) \\ g(x(T)) \end{pmatrix},
  \quad K = \begin{pmatrix} 0 \\ \mathbb{R}_{-}^{l_1} \times \{ \mathbf{0}_{l_2 - l_1} \} \end{pmatrix},
$$
where $g(\cdot) = (g_1(\cdot), \ldots, g_{l_2}(\cdot))^T$, $\mathbb{R}_{-} = (- \infty, 0]$ and
$\mathbf{0}_{l_2 - l_1}$ is the zero vector of dimension $l_2 - l_1$. Let us apply Theorem~\ref{Theorem_LocalErrorBound}
with $C = \{ (x, u) \in X \mid x(0) = x_0 \}$ to the operator $F$. Then arguing in the same way as in the proof of
Theorem~\ref{Theorem_LocalExactness_TerminalConstraint} one arrives at the required result.

With the use of Theorem~\ref{Theorem_DiffNemytskiiOperator} (see Appendix~B) one obtains that
the nonlinear operator $F$ maps $X$ to  $L^d_p(0, T) \times \mathbb{R}^{l_2}$, is strictly differentiable at 
$(x^*, u^*)$, and its Fr\'{e}chet derivative at this point has the form
$$
  DF(x^*, u^*)[h, v] = 
  \begin{pmatrix} \dot{h}(\cdot) - A(\cdot) h(\cdot) - B(\cdot) v(\cdot) \\ \nabla g(x^*(T)) h(T) \end{pmatrix}, \quad
  A(t) = \nabla_x f(x^*(t), u^*(t), t), \quad B(t) = \nabla_u f(x^*(t), u^*(t), t).
$$
Hence bearing in mind the fact that $C - (x^*, u^*) = \{ (h, v) \in X \mid h(0) = 0 \}$, since $x^*(0) = x_0$, one gets
that the regularity condition \eqref{MetricRegCond} from Theorem~\ref{Theorem_LocalErrorBound} takes the form 
$0 \in \core K(x^*, u^*)$ with
\begin{equation} \label{VariableEndPoint_RegularityCone}
  K(x^*, u^*) = \left\{ \begin{pmatrix} \dot{h}(\cdot) - A(\cdot) h(\cdot) - B(\cdot) v(\cdot) \\ 
  \nabla g(x^*(T)) h(T) \end{pmatrix} -
  \begin{pmatrix} 0 \\ K_0 \end{pmatrix} \Biggm| (h, v) \in X, \: h(0) = 0 \right\},
\end{equation}
where $K_0 = (\mathbb{R}_{-}^{l_1} - (g_1(x^*(T)), \ldots, g_{l_1}(x^*(T)))^T \times \{ \mathbf{0}_{l_2 - l_1} \}$. Let
us check that this condition is satisfied.

Indeed, denote $g_J(\cdot) = (g_{l_1 + 1}(\cdot), \ldots g_{l_2}(\cdot))^T$. By MFCQ the matrix $\nabla g_J(x^*(T))$ has
full row rank. Therefore, by the open mapping theorem (see~\cite[formula~$(0.2)$]{Ioffe}) there exists $\eta > 0$ such
that for any $y_2 \in \mathbb{R}^{l_2 - l_1}$ one can find $h_2 \in \mathbb{R}^d$ with $|h_2| \le \eta |y_2|$
satisfying the equality $\nabla g_J(x^*(T)) h_2 = y_2$.

Fix any $\omega \in L^d_p(0, T)$, $r_2 > 0$, and $y_2 \in B(\mathbf{0}_{l_2 - l_1}, r_2)$. Then there exists 
$h_2 \in \mathbb{R}^d$ with $|h_2| \le \eta r_2$ such that $\nabla g_J(x^*(T)) h_2 = y_2$. By MFCQ there exists 
$h_1 \in \mathbb{R}^d$ such that $\nabla g_J(x^*(T)) h_1 = 0$ and 
$\langle \nabla g_i(x^*(T)), h_1 \rangle < 0$ for any $i \in I(x^*(T))$. Taking into account the fact that 
$g_i(x^*(T)) < 0$ for any $i \notin I(x^*(T))$ one obtains that there exists $\alpha > 0$ such that
$\langle \nabla g_i(x^*(T)), \alpha h_1 \rangle + g_i(x^*(T)) < 0$ for all $i \in I$. Furthermore, decreasing
$r_2$, if necessary, one can suppose that 
\begin{equation} \label{VariableEndpoint_DecayDirection}
  \max_{|h| \le \eta r_2} \langle \nabla g_i(x^*(T)), \alpha h_1 + h \rangle + g_i(x^*(T)) < 0 \quad \forall i \in I.
\end{equation}
Observe also that $\nabla g_J(x^*(T))(\alpha h_1 + h_2) = y_2$.

Denote $h_T = \alpha h_1 + h_2$. Arguing in the same way as in the proof of
Theorem~\ref{Theorem_LocalExactness_TerminalConstraint} and utilising the complete controllability assumption one can
verify that there exists $(h, v) \in X$ such that
$$
  \dot{h}(t) = A(t) h(t) + B(t) v(t) + \omega(t) \quad \text{for a.e. } t \in (0, T), \quad
  h(0) = 0, \quad h(T) = h_T.
$$
Consequently, one has 
$L^d_p(0, T) \times (r_1, + \infty)^{l_1} \times B(\mathbf{0}_{l_2 - l_1}, r_2) \subset K(x^*, u^*)$, where
$$
  r_1 = \max_{i \in I} \Big(
  \max_{|h| \le \eta r_2} \langle \nabla g_i(x^*(T)), \alpha h_1 + h \rangle + g_i(x^*(T)) \Big) < 0
$$ 
(see~\eqref{VariableEndPoint_RegularityCone} and \eqref{VariableEndpoint_DecayDirection}). 
Thus, $0 \in \core K(x^*, u^*)$, and the proof is complete.
\end{proof}

Let us now show how one can extend Theorem~\ref{Theorem_FixedEndPointProblem_Linear} to the case of variable-endpoint
problems. Theorem~\ref{Theorem_Exactness_EvolutionEquations} can be extended to the case of variable-endpoint problems
for linear evolution equations in a similar way. Let, as in the proof of the previous theorem, 
$g_J(\cdot) = (g_{l_1 + 1}(\cdot), \ldots g_{l_2}(\cdot))^T$, and denote by
$\mathcal{R}_I(x_0, T) = \{ \xi \in \mathcal{R}(x_0, T) \mid g_i(\xi) \le 0, \: i \in I \}$ the set of all those
reachable points that satisfy the terminal inequality constraints.

\begin{theorem}
Let $q \ge p$, the functions $g_i$, $i \in I$ and the set $U$ be convex, the functions $g_k$, $k \in J$ be affine, and
the following assumptions be valid:
\begin{enumerate}
\item{$f(x, u, t) = A(t) x + B(t) u$ for some $A(\cdot) \in L_{\infty}^{d \times d}(0, T)$ and 
$B(\cdot) \in L_{\infty}^{d \times m}(0, T)$;
}

\item{the function $\zeta$ is locally Lipschitz continuous, the function $\theta = \theta(x, u, t)$ is continuous,
differentiable in $x$ and $u$, and the functions $\nabla_x \theta$ and $\nabla_u \theta$ are continuous;
}

\item{either $q = + \infty$ or the functions $\theta$ and $\nabla_x \theta$ satisfy the growth condition of order 
$(q, 1)$, while the function $\nabla_u \theta$ satisfies the growth condition of order $(q - 1, q')$;
}

\item{there exists a globally optimal solution of problem \eqref{VariableEndPointPenaltyProblem}, and the following
Slater condition holds true: $0 \in \relint g_J(\mathcal{R}_I(x_0, T))$ and there exists a feasible point 
$(\widehat{x}, \widehat{u}) \in \Omega$ such that $g_i(\widehat{x}(T)) < 0$ for all $i \in I$;
}

\item{there exist $\lambda_0 > 0$, $c > \mathcal{I}^*$ and $\delta > 0$ such that the set 
$S_{\lambda_0}(c) \cap \Omega_{\delta}$ is bounded in $W^d_{1, p}(0, T) \times L_q^m(0, T)$, and the function
$\Phi_{\lambda_0}(x, u)$ is bounded below on $A$.
}
\end{enumerate}
Then there exists $\lambda^* \ge 0$ such that for any $\lambda \ge \lambda^*$ the penalty function $\Phi_{\lambda}$ for
problem \eqref{VariableEndPointPenaltyProblem} is completely exact on $S_{\lambda}(c)$.
\end{theorem}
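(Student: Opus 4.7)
The plan is to verify the hypotheses of Theorem~\ref{Theorem_CompleteExactness} along the same general scheme as the proof of Theorem~\ref{Theorem_FixedEndPointProblem_Linear}, with the principal novelty being the construction of suitable feasible comparison points via Robinson's theorem adapted to inequality-plus-equality endpoint constraints. First I would verify that $A$ is closed, that the penalty term $\varphi$ is continuous on $X$, and that the cost functional $\mathcal{I}(x, u) = \int_0^T \theta \, dt + \zeta(x(T))$ is Lipschitz continuous on a bounded open neighbourhood of $S_{\lambda_0}(c) \cap \Omega_\delta$; this uses essential boundedness of $A(\cdot), B(\cdot)$, the growth conditions on $\theta, \nabla_x\theta, \nabla_u\theta$ (as in the fixed-endpoint case), and local Lipschitz continuity of $\zeta$ combined with the Sobolev embedding \eqref{SobolevImbedding}. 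By Theorem~\ref{Theorem_CompleteExactness}, the only nontrivial step is to exhibit $a > 0$ with $\varphi^{\downarrow}_A(x, u) \le -a$ on $S_{\lambda_0}(c) \cap (\Omega_\delta \setminus \Omega)$.

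Fix such an $(x, u)$ and any $(\widehat x, \widehat u) \in \Omega$, and set $\Delta x = (\widehat x - x)/\sigma$, $\Delta u = (\widehat u - u)/\sigma$, $\sigma = \|\widehat x - x\|_{1, p} + \|\widehat u - u\|_q$. Linearity of the system and convexity of $U$ imply $(x + \alpha \Delta x, u + \alpha \Delta u) \in A$ for $\alpha \in [0, \sigma]$. Convexity of each $g_i$, $i \in I$, combined with $g_i(\widehat x(T)) \le 0$, gives $\max\{g_i((x + \alpha \Delta x)(T)), 0\} \le (1 - \alpha/\sigma) \max\{g_i(x(T)), 0\}$, while affinity of $g_k$, $k \in J$, combined with $g_k(\widehat x(T)) = 0$, gives $|g_k((x + \alpha \Delta x)(T))| = (1 - \alpha/\sigma) |g_k(x(T))|$. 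Summing yields $\varphi(x + \alpha \Delta x, u + \alpha \Delta u) \le (1 - \alpha/\sigma)\varphi(x, u)$ on $[0, \sigma]$, so $\varphi^{\downarrow}_A(x, u) \le -\varphi(x, u)/\sigma$. It therefore suffices to find $C > 0$ such that, for every such $(x, u)$, some $(\widehat x, \widehat u) \in \Omega$ exists with $\sigma \le C \varphi(x, u)$. The Grönwall--Hölder calculation from the proof of Theorem~\ref{Theorem_FixedEndPointProblem_Linear} reduces this to producing $\widehat u \in U$ with feasible trajectory and $\|\widehat u - u\|_q \le C \varphi(x, u)$.

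The construction of $\widehat u$ proceeds in two stages using the Slater data. Fix a Slater point $(x_*, u_*) \in \Omega$ with $g_i(x_*(T)) < 0$ for all $i \in I$ provided by the assumption, and let $\mathcal{T}\colon \cl\linhull(U - u_*) \to \mathbb{R}^d$, $\mathcal{T}v = h(T)$ with $\dot h = Ah + Bv$, $h(0) = 0$, be the bounded linear input-to-endpoint map, so that $x(T) - x_*(T) = \mathcal{T}(u - u_*)$ for every $(x, u) \in A$ and $\mathcal{T}(U - u_*) = \mathcal{R}(x_0, T) - x_*(T)$. The hypothesis $0 \in \relint g_J(\mathcal{R}_I(x_0, T))$, translated through $\mathcal{T}$, says exactly that $0$ lies in the interior, relative to $\linhull g_J\mathcal{T}(U - u_*)$, of the convex set $g_J(\mathcal{T}(U - u_*) + x_*(T))$. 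Applying Theorem~\ref{Theorem_Robinson_Ursescu} to the composition $v \mapsto g_J \circ \mathcal{T}(v) + g_J(x_*(T))$ restricted to $\cl\linhull(U - u_*)$ (noting $g_J$ is affine) produces, for every $u \in U$ near $u_*$, some $v \in U - u_*$ with $g_J(x_*(T) + \mathcal{T}v) = 0$ and $\|u - u_* - v\|_q \le \kappa(1 + \|u - u_*\|_q) \sum_{k \in J} |g_k(x(T))|$. This gives an intermediate control $\widetilde u = u_* + v$ whose trajectory $\widetilde x$ satisfies the equality constraint but possibly not the inequality constraint; one then convexly combines $\widetilde u$ with the Slater point $u_*$ via $\widehat u = (1 - \tau) \widetilde u + \tau u_*$ for an appropriate $\tau \in [0, 1]$: convexity of $g_i$ and affinity of $g_k$ preserve $g_k(\widehat x(T)) = 0$, and because $g_i(x_*(T))$ has a uniform strict negative slack $-\eta < 0$, the correction $\tau$ needed to push $g_i(\widehat x(T)) \le 0$ is at most $\eta^{-1} \sum_{i \in I} \max\{g_i(\widetilde x(T)), 0\}$, which by Lipschitz continuity of $g_i$ near $x_*(T)$ is bounded by $C' (\varphi(x, u) + \sum_k |g_k(x(T))|)$.

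The main obstacle I anticipate is packaging these two corrections into a single bound $\|\widehat u - u\|_q \le C \varphi(x, u)$ with a constant $C$ valid \emph{uniformly} on $S_{\lambda_0}(c) \cap (\Omega_\delta \setminus \Omega)$. Uniformity in the Robinson step follows from boundedness of $S_{\lambda_0}(c) \cap \Omega_\delta$, precisely as at the end of the proof of Theorem~\ref{Theorem_FixedEndPointProblem_Linear}; uniformity in the convex-combination step requires shrinking $\delta$ so that $\widetilde x(T)$ stays in a neighbourhood of $x_*(T)$ where the strict slack $\eta$ is preserved, together with the uniform Lipschitz estimate $\|\widetilde x - x\|_\infty \le L\|\widetilde u - u\|_q$ for linear systems. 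Combining both estimates gives the sought bound $\|\widehat u - u\|_q \le C\varphi(x, u)$, hence $\varphi^{\downarrow}_A(x, u) \le -1/C$, and Theorem~\ref{Theorem_CompleteExactness} yields the complete exactness of $\Phi_\lambda$ on $S_\lambda(c)$.
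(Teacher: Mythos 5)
Your strategy is viable but genuinely different from the paper's. The paper splits $S_{\lambda_0}(c)\cap(\Omega_{\delta}\setminus\Omega)$ into two cases according to $I_+(x,u)=\{i\in I\mid g_i(x(T))>0\}$: when $I_+(x,u)\ne\emptyset$, the direction towards the Slater point alone gives $\varphi^{\downarrow}_A(x,u)\le\eta/C<0$ (with $\eta=\max_{i\in I}g_i(\widehat{x}(T))<0$), because the violated inequality terms decrease at rate at least $|\eta|$ while the equality terms do not increase -- no error bound and no Robinson's theorem are needed there; Robinson's theorem is invoked only when $I_+(x,u)=\emptyset$, and it is applied over the smaller set $U_I-u_*$ of controls whose trajectories already satisfy the inequality constraints, so the hypothesis $0\in\relint g_J(\mathcal{R}_I(x_0,T))$ is used exactly as stated and no subsequent correction is required. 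You instead prove the stronger uniform error bound $\|u-\widehat{u}\|_q\le C\varphi(x,u)$ for \emph{all} points of the sublevel set, via a two-stage construction (Robinson for the equality constraints over all of $U-u_*$, then a convex combination with the Slater point to restore the inequalities). Your unified descent estimate $\varphi^{\downarrow}_A(x,u)\le-\varphi(x,u)/\sigma$, the bookkeeping of the correction parameter $\tau$, and the uniformity arguments via boundedness of $S_{\lambda_0}(c)\cap\Omega_{\delta}$ (shrinking $\delta$ if necessary) are all correct; the trade-off is that your route needs a stronger metric-subregularity statement than the paper ever establishes, in exchange for avoiding the case analysis.

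The one genuine gap is in your Robinson step. Applying Theorem~\ref{Theorem_Robinson_Ursescu} with $C=U-u_*$ requires $0\in\relint g_J(\mathcal{R}(x_0,T))$, i.e.\ relative interiority with respect to the image of the \emph{full} reachable set, whereas the assumption is $0\in\relint g_J(\mathcal{R}_I(x_0,T))$, which concerns only the reachable points satisfying the inequality constraints. Your claim that the hypothesis, ``translated through $\mathcal{T}$, says exactly'' the former conflates $\mathcal{R}_I(x_0,T)$ with $\mathcal{R}(x_0,T)$; for nested convex sets relative interiority does not transfer to the larger set in general, since the affine hulls may differ. The statement you need is nevertheless derivable from the assumptions and should be proved: since $g_i(\widehat{x}(T))<0$ for all $i\in I$ and the $g_i$ are continuous, the inequality constraints are inactive on a ball $B(\widehat{x}(T),\varepsilon)$, so for every $\xi\in\mathcal{R}(x_0,T)$ the points $(1-t)\widehat{x}(T)+t\xi$ belong to $\mathcal{R}_I(x_0,T)$ for all small $t>0$; as $g_J$ is affine and $g_J(\widehat{x}(T))=0$, this yields $t\,g_J(\xi)\in g_J(\mathcal{R}_I(x_0,T))$, hence $\linhull g_J(\mathcal{R}(x_0,T))=\linhull g_J(\mathcal{R}_I(x_0,T))$ and therefore $0\in\relint g_J(\mathcal{R}(x_0,T))$. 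With this lemma supplied your argument goes through and the conclusion follows from Theorem~\ref{Theorem_CompleteExactness} as in the proof of Theorem~\ref{Theorem_FixedEndPointProblem_Linear}; alternatively, adopting the paper's case split lets one apply Robinson over $U_I-u_*$ only to points with $I_+(x,u)=\emptyset$ and dispenses with your second stage entirely.
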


\begin{proof}
Arguing in almost the same way as in the proof of Theorem~\ref{Theorem_FixedEndPointProblem_Linear} and utilising
Theorem~\ref{Theorem_CompleteExactness} one obtains that it is sufficient to check that there exists $a > 0$ such that
$\varphi^{\downarrow}_A(x, u) \le - a$ for all $(x, u) \in S_{\lambda_0}(c) \cap (\Omega_{\delta} \setminus \Omega)$.
Fix any such $(x, u)$, and define $I_+(x, u) = \{ i \in I \mid g_i(x(T)) > 0 \}$. Let us consider two cases.

\textbf{Case~I.} Suppose that $I_+(x, u) \ne \emptyset$. Define 
$(\Delta x, \Delta u) = (\widehat{x} - x, \widehat{u} - u)$, where $(\widehat{x}, \widehat{u})$ is from Slater's
condition. Observe that $(x + \alpha \Delta x, u + \alpha \Delta u) \in A$ for any $\alpha \in [0, 1]$ due to the
convexity of the set $U$ and the linearity of the system. Furthermore, by virtue of the convexity of the functions
$g_i$, $i \in I$, for any $\alpha \in [0, 1]$ one has
\begin{equation} \label{ConvexityTerminalConstr}
  g_i(x(T) + \alpha \Delta x(T)) \le \alpha g_i(\widehat{x}(T)) + (1 - \alpha) g_i(x(T))
  \le \alpha \eta + (1 - \alpha) g_i(x(T)), \quad
  \eta = \max_{i \in I} g_i(\widehat{x}(T)) < 0.
\end{equation}
Consequently, for any $i \notin I_+(x, u)$ one has $g_i(x(T) + \alpha \Delta x(T)) < 0$ for all $\alpha \in [0, 1]$ by
\eqref{ConvexityTerminalConstr}, while for any $i \in I_+(x, u)$ one has $g_i(x(T) + \alpha \Delta x(T)) \ge 0$ for any
sufficiently small $\alpha$ due to the fact that a convex function defined on a finite dimensional space is continuous
in the interior of its effective domain (see, e.g. \cite[Theorem~3.5.3]{IoffeTihomirov}). Moreover,
$g_J(x(T) + \alpha \Delta x(T)) = (1 - \alpha) g_J(x(T))$, since the functions $g_k$, $k \in J$, are affine and 
$g_J(\widehat{x}(T)) = 0$ (recall that $(\widehat{x}, \widehat{u}) \in \Omega$). Hence with the use of
\eqref{ConvexityTerminalConstr} one obtains that
\begin{multline*}
  \varphi^{\downarrow}_A(x, u) \le \liminf_{\alpha \to +0} 
  \frac{\varphi(x + \alpha \Delta x, u + \alpha \Delta u)}{\alpha \| (\Delta x, \Delta u) \|_X} \\
  = \frac{1}{\| (\Delta u, \Delta x) \|_X} \liminf_{\alpha \to + 0}
  \frac{1}{\alpha} \bigg( \sum_{i \in I_+(x(T))} \big( g_i(x(T) + \alpha \Delta x(T)) - g_i(x(T)) \big) 
  + \sum_{k \in J} \big( (1 - \alpha)|g_k(x(T))| - |g_k(x(T))| \big) \bigg) \\
  \le \frac{1}{\| (\Delta u, \Delta x) \|_X} \Big( \sum_{i \in I_+(x(T))} \big( \eta - g_i(x(T)) \big)
  - \sum_{k \in J} |g_k(x(T))| \Big)
  \le \frac{\eta}{\| (\Delta u, \Delta x) \|_X}.
\end{multline*}
From the fact that the set $S_{\lambda_0}(c) \cap \Omega_{\delta}$ is bounded it follows that there exists $C > 0$
(independent of $(x, u)$) such that $\| (\Delta u, \Delta x \|_X = \| (\widehat{x} - x, \widehat{u} - u) \|_X \le C$.
Thus, $\varphi^{\downarrow}_A(x, u) \le \eta / C < 0$, and the proof of the first case is complete.

\textbf{Case~II.} Let now $I_+(x, u) = \emptyset$. Note that $g_J(x(T)) \ne 0$, since $(x, u) \notin \Omega$. Choose
any  $(\widetilde{x}, \widetilde{u}) \in \Omega$ and define 
$(\Delta x, \Delta u) = (\widetilde{x} - x, \widetilde{u} - u)$. Then, as in the first case, for any 
$\alpha \in [0, 1]$ one has $(x + \alpha \Delta x, u + \alpha \Delta u) \in A$, and
$$
  g_i(x(T) + \alpha \Delta x(T)) \le \alpha g_i(\widetilde{x}(T)) + (1 - \alpha) g_i(x(T)) \le 0
  \quad \forall i \in I
$$
due to the convexity of the functions $g_i$ and the fact that $I_+(x, u) = \emptyset$. In addition,
$g_J(x(T) + \alpha \Delta x(T)) = (1 - \alpha) g_J(x(T))$ for all $\alpha \in [0, 1]$, since the functions $g_k$, 
$k \in J$, are affine and $(\widetilde{x}, \widetilde{u}) \in \Omega$. Therefore
\begin{align*}
  \varphi^{\downarrow}_A(x, u) \le \liminf_{\alpha \to +0}
  \frac{\varphi(x + \alpha \Delta x, u + \alpha \Delta u)}{\alpha \| (\Delta x, \Delta u) \|_X}
  &= \frac{1}{\| (\Delta u, \Delta x) \|_X} \liminf_{\alpha \to + 0}
  \sum_{k \in J} \Big( (1 - \alpha)|g_k(x(T))| - |g_k(x(T))| \Big) \\
  &= - \frac{1}{\| (\Delta u, \Delta x) \|_X} \sum_{k \in J} |g_k(x(T))| \le 
  - \frac{|g_J(x(T))|}{\| (\Delta u, \Delta x) \|_X} < 0.
\end{align*}
Thus, it remains to show that there exists $C > 0$ such that for any 
$(x, u) \in S_{\lambda_0}(c) \cap (\Omega_{\delta} \setminus \Omega)$ such that $I_+(x, u) = \emptyset$ one can find
$(\widetilde{x}, \widetilde{u}) \in \Omega$ satisfying the inequality 
\begin{equation} \label{ErrorBoundAffineTerminalConstrWeak}
  \| (\Delta x, \Delta u) \|_X = \| x - \widetilde{x} \|_{1, p} + \| u - \widetilde{u} \|_q
  \le C |g_J(x(T))|.
\end{equation}
Then $\varphi^{\downarrow}_A(x, u) \le - 1 / C$, and the proof is complete.

As was shown in the proof of Theorem~\ref{Theorem_FixedEndPointProblem_Linear}, there exists $L > 0$ (depending only on
$A(\cdot)$, $B(\cdot)$, $T$, $p$, and $q$) such that $\| x_1 - x_2 \|_{1, p} \le L \| u_1 - u_2 \|_q$ for any 
$(x_1, u_1), (x_2, u_2) \in A$. Therefore, instead of \eqref{ErrorBoundAffineTerminalConstrWeak} it sufficient to prove
the validity of the inequality 
\begin{equation} \label{ErrorBoundAffineTerminalConstr}
  \| u - \widetilde{u} \|_q \le C |g_J(x(T))|.
\end{equation}
Moreover, from \eqref{SobolevImbedding} and the inequality $\| x_1 - x_2 \|_{1, p} \le L \| u_1 - u_2 \|_q$ it
follows that the map $u \mapsto x_u(T)$ is Lipschitz continuous, where $x_u$ is a solution of the system 
$\dot{x}_u(\cdot) = A(\cdot) x_u(\cdot) + B(\cdot) u(\cdot)$ such that $x_u(0) = x_0$. Hence with the use of the fact
that the functions $g_i$ are convex and continuous one obtains that the set 
$U_I = \{ u \in U \mid g_i(x_u(T)) \le 0, \: i \in I \}$ is closed and convex.

Let us prove inequality \eqref{ErrorBoundAffineTerminalConstr} with the use of Robinson's theorem. Note that the
function $g_J(\cdot) - g_J(0)$ is linear, since the functions $g_k$, $k \in J$, are affine. Define the linear operator
$\mathcal{T} \colon L^m_q(0, T) \to \mathbb{R}^{l_2 - l_1}$, $\mathcal{T} v = g_J(h(T)) - g_J(0)$, where $h$ is a
solution of the differential equation
$$
  \dot{h}(t) = A(t) h(t) + B(t) v(t), \quad h(0) = 0, \quad t \in [0, T].
$$
As was shown in the proof of Theorem~\ref{Theorem_FixedEndPointProblem_Linear}, the mapping $v \mapsto h(T)$ is
continuous, which implies that the linear operator $\mathcal{T}$ is bounded.

Fix any $(x_*, u_*) \in \Omega$. By definition for all $(x, u) \in A$ one has 
$\dot{x}(t) - \dot{x}_*(T) = A(t) (x(t) - x_*(t)) + B(t) (u(t) - u_*(t))$ for a.e. $t \in [0, T]$, $x(0) - x_*(0) = 0$,
$g_J(x_*(T)) = 0$, and 
$$
  \mathcal{T}(u - u_*) = g_J(x(T) - x_*(T)) - g_J(0)
  = g_J(x(T)) - g_J(0) - (g_J(x_*(T)) - g_J(0)) = g_J(x(T)). 
$$
Therefore $g_J(\mathcal{R}_I(x_0, T)) = \mathcal{T}(U_I - u_*)$. Define $X_0 = \cl\linhull(U_I - u_*)$ and
$Y_0 = \linhull \mathcal{T}(U_I - u_*)$. Note that $Y_0$ is a closed subspace of $\mathbb{R}^{l_2 - l_1}$ and
$\mathcal{T}(X_0) = Y_0$. Finally, introduce the operator $\mathcal{T}_0 \colon X_0 \to Y_0$,
$\mathcal{T}_0(v) = \mathcal{T}(v)$ for all $v \in X_0$. Clearly, $\mathcal{T}_0$ is a bounded linear operator
between Banach spaces. Moreover, by Slater's condition 
$0 \in \relint g_J(\mathcal{R}_I(x_0, T)) = \interior \mathcal{T}(U_I - u_*)$.
Consequently, by Robinson's theorem (Theorem~\ref{Theorem_Robinson_Ursescu} with $C = U_I - u_*$, $x^* = 0$, and 
$y = y^* = 0$) there exists $\kappa > 0$ such that
$$
  \dist\big( u - u_*, \mathcal{T}_0^{-1}(0) \cap (U_I - u_*) \big) \le 
  \kappa \big( 1 + \| u - u_* \|_q \big) \Big| \mathcal{T}_0(u - u_*) \Big|
  \quad \forall u \in U_I.
$$
Recall that $\mathcal{T}_0(u - u_*) = g_J(x(T))$, since $(x_*, u_*) \in \Omega$. Thus, for any 
$(x, u) \in S_{\lambda_0}(c) \cap (\Omega_{\delta} \setminus \Omega)$ such that $I_+(x, u) = \emptyset$ one can find
$v \in U_I - u_*$ such that $\mathcal{T}_0(v) = 0$ and
$$
  \big\| u - u_* - v \big\|_q \le 2 \kappa \big( 1 + \| u - u_* \|_q \big) \big| g_J(x(T)) \big|.
$$
Define $\widetilde{u} = u_* + v$, and let $\widetilde{x}$ be the corresponding solution of the original system, i.e.
$(\widetilde{x}, \widetilde{u}) \in A$. Then $g_i(\widetilde{x}(T)) \le 0$ for all $i \in I$, since 
$\widetilde{u} \in U_I$, and $g_J(\widetilde{x}(T)) = \mathcal{T} (\widetilde{u} - u_*) = \mathcal{T}(v) = 0$,
i.e. $g_J(\widetilde{x}(T)) = 0$ and $(\widetilde{x}, \widetilde{u}) \in \Omega$. Moreover, one has
$\| u - \widetilde{u} \|_q \le 2 \kappa ( 1 + \| u - u_* \|_q ) | g_J(x(T))|$. By our assumption the set
$S_{\lambda_0}(c) \cap (\Omega_{\delta} \setminus \Omega)$ is bounded. Therefore there exists
$C > 0$ such that $2 \kappa ( 1 + \| u - u_* \|_q ) \le C$ for any 
$(x, u) \in S_{\lambda_0}(c) \cap (\Omega_{\delta} \setminus \Omega)$ such that $I_+(x, u) = \emptyset$. Thus, for all
such $(x, u)$ there exists $(\widetilde{x}, \widetilde{u}) \in \Omega$ satisfying
\eqref{ErrorBoundAffineTerminalConstr}, and the proof is complete.
\end{proof}

\begin{remark}
Note that in the case when there are no equality constraints Slater's condition takes an especially simple form.
Namely, it is sufficient suppose that there exists a feasible point $(\widehat{x}, \widehat{u}) \in \Omega$ such that
$g_i(\widehat{x}(T)) < 0$ for all $i \in I$.
\end{remark}

\begin{remark}
Let us briefly discuss how one can extend Theorem~\ref{Theorem_FixedEndPointProblem_NonLinear} to the case of nonlinear
variable-endpoint problems. In the case when there are no equality constraints and the inequality constraints are
differentiable, one has to replace the negative tangent angle property with the assumption that there exist 
$\delta > 0$ and $\beta > 0$ such that for any $\xi \in \mathcal{R}(x_0, T)$ satisfying the inequalities 
$0 < \sum_{i \in I} \max\{ g_i(\xi), 0 \} < \delta$ one can find a sequence $\{ \xi_n \} \subset \mathcal{R}(x_0, T)$
converging to $\xi$ such that
$$
  \left\langle \nabla g_i(\xi), \frac{\xi_n - \xi}{|\xi_n - \xi|} \right\rangle \le - \beta
  \quad \forall n \in \mathbb{N} \quad \forall i \in I \colon g_i(\xi) \ge 0.
$$
Then arguing in essentially the same way as in the proof of Theorem~\ref{Theorem_FixedEndPointProblem_NonLinear} one
can show that the penalty function $\Phi_{\lambda}$ for problem \eqref{VariableEndPointPenaltyProblem} is completely
exact on $S_{\lambda}(c)$ for any sufficiently large $\lambda$. In the general case, a similar but more cumbersome
assumption must be imposed on both equality and inequality constraints.
\end{remark}

\section{Exact Penalisation of Pointwise State Constraints}
\label{Sect_ExactPen_StateConstraint}

Let us now turn to the analysis of the exactness of penalty functions for optimal control problems with
pointwise state constraints. In this case the situation is even more complicated than in the case of problems with
terminal constraints. It seems that verifiable sufficient conditions for the complete exactness of a penalty function
for problems with state constraints can be obtained either under very stringent assumptions on the controllability of
the system or in the case of linear systems and convex state constraints. Furthermore, a penalty term for state
constraints can be designed with the use of the $L^p$-norm with any $1 \le p \le + \infty$. The smooth norms with 
$1 < p < + \infty$ and the $L^1$-norm are more appealing for practical applications, while, often, one can guarantee
exact penalisation of state constraints only in the case $p = + \infty$.

\subsection{A Counterexample}

We start our analysis of state constrained problems with a simple counterexample that illuminates the difficulties of
designing exact penalty functions for state constraints. It also demonstrates that in the case when the functional
$\mathcal{I}(x, u)$ explicitly depends on control it is apparently impossible to define an exact penalty function
for problems with state \textit{equality} constraints within the framework adopted in our study.

\begin{example} \label{CounterExample_StateEqConstr}
Let $d = 2$, $m = 1$, and $p = q = 2$. Define 
$U = \{ u \in L^2(0, T) \mid u(t) \in [-1, 1] \text{ for a.e. }  t \in (0, T) \}$, and consider the following
fixed-endpoint optimal control problem with state equality constraint:
\begin{align*}
  &\min \: \mathcal{I}(u) = - \int_0^T u(t)^2 dt \\
  &\text{s.t.} \quad \begin{cases} \dot{x}^1 = 1 \\ \dot{x}^2 = u \end{cases} \quad t \in [0, T], \quad 
  x(0) = \begin{pmatrix} 0 \\ 0 \end{pmatrix}, \quad 
  x(T) = \begin{pmatrix} T \\ 0 \end{pmatrix}, \quad 
  u \in U, \quad g(x(t)) \equiv 0,
\end{align*}
where $g(x^1, x^2) = x^2$. The only feasible point of this problem is $(x^*, u^*)$ with $x^*(t) \equiv (t, 0)^T$ and 
$u^*(t) = 0$ for a.e. $t \in [0, T]$. Thus, $(x^*, u^*)$ is a globally optimal solution of this problem.

We would like to penalise the state equality constraint $g(x(t)) = x^2(t) = 0$. One can define the penalty term in one
of the following ways:
$$
  \varphi(x) = \Big( \int_0^T |g(x(t))|^r \, dt \Big)^{1/r}, \quad 1 \le r < + \infty, \quad
  \varphi(x) = \max_{t \in [0, T]} |g(x(t))|, \quad
  \varphi(x) = \int_0^T |g(x(t))|^{\alpha} \, dt, \quad 0 < \alpha < 1.
$$
Clearly, all these functions are continuous with respect to the uniform metric, which by inequality
\eqref{SobolevImbedding} implies that they are continuous on $W^d_{1,p}(0, T)$. Therefore, instead of choosing a
particular function $\varphi$, we simply suppose that $\varphi \colon W^d_{1,p}(0, T) \to [0, + \infty)$ is an arbitrary
function, continuous with respect to the uniform metric, and such that $\varphi(x) = 0$ if and only if 
$g(x(t)) \equiv 0$. One can consider the penalised problem
\begin{equation} \label{Ex_PenalizedStatEqConstr}
\begin{split}
  &\min \: \Phi_{\lambda}(x, u) = - \int_0^T u(t)^2 dt + \lambda \varphi(x) \\
  &\text{s.t.} \quad \begin{cases} \dot{x}^1 = 1 \\ \dot{x}^2 = u \end{cases} \quad t \in [0, T], \quad 
  x(0) = \begin{pmatrix} 0 \\ 0 \end{pmatrix}, \quad 
  x(T) = \begin{pmatrix} T \\ 0 \end{pmatrix}, \quad 
  u \in U.
\end{split}
\end{equation}
Observe that the goal function $\mathcal{I}$ is Lipschitz continuous on any bounded subset of $L^2(0, T)$ by
\cite[Proposition~4]{DolgopolikFominyh}, and the set 
$$
  A = \big\{ (x, u) \in X \mid x(0) = (0, 0)^T, \: x(T) = (T, 0)^T, \: u \in U, \: 
  \dot{x}^1 = 1, \: \dot{x}^2 = u \text{ for a.e. } t \in [0, T] \big\}
$$
is obviously closed in $X = W^2_{1, 2}(0, T) \times L^2(0, T)$. Consequently, penalised problem
\eqref{Ex_PenalizedStatEqConstr} fits the framework of Section~\ref{Sect_ExactPenaltyFunctions}. However, the penalty
function $\Phi_{\lambda}$ is not exact regardless of the choice of the penalty term $\varphi$.

Indeed, arguing by reductio ad absurdum, suppose that $\Phi_{\lambda}$ is globally exact. Then there exists 
$\lambda \ge 0$ such that $\Phi_{\lambda}(x, u) \ge \Phi_{\lambda}(x^*, u^*)$ for all $(x, u) \in A$. For any 
$n \in \mathbb{N}$ define
$$
  u_n(t) = \begin{cases}
    1, & \text{if } t \in \left[\frac{T(2k - 2)}{2n}, \frac{T(2k - 1)}{2n} \right), \: k \in \{ 1, 2, \ldots, n \}, \\
    -1, & \text{if } t \in \left[ \frac{T(2k - 1)}{2n}, \frac{Tk}{n} \right), \: k \in \{ 1, 2, \ldots, n \},
  \end{cases}
$$
i.e. $u_n$ takes alternating values $\pm 1$ on the segments of length $T / 2n$. For the corresponding trajectory $x_n$
one has $x(0) = (0, 0)^T$, $x(T) = (T, 0)^T$ (i.e. $(x_n, u_n) \in A$), and 
$\| x_n^2 \|_{\infty} = T/2n$. Therefore, $\varphi(x_n) \to 0$ as $n \to \infty$ due to the continuity of the function
$\varphi$ with respect to the uniform metric. On the other hand, $\mathcal{I}(u_n) = -T$ for all $n \in \mathbb{N}$,
which implies that $\Phi_{\lambda}(x_n, u_n) \to - T$ as $n \to \infty$. Consequently, 
$\Phi_{\lambda}(x_n, u_n) < 0 = \Phi_{\lambda}(x^*, u^*)$ for any sufficiently large $n \in \mathbb{N}$, which
contradicts our assumption. Thus, the penalty function $\Phi_{\lambda}$ is not globally exact for any penalty term
$\varphi$ that is continuous with respect to the uniform metric.
\end{example}

The previous example might lead one to think that linear penalty functions for state constrained optimal control
problems cannot be exact. Our aim is to show that in some cases exact penalisation of state constraints (especially,
state inequality constraints) is nevertheless possible, but one must utilise the highly nonsmooth $L^{\infty}$-norm to
achieve exactness. Furthermore, we demonstrate that exact $L^p$-penalisation with finite $p$ is possible in the case
when either the problem is convex and Lagrange multipliers corresponding to state constraints belong to $L^{p'}(0, T)$
or the functional $\mathcal{I}(x, u)$ does not depend on the control inputs explicitly.

\subsection{Linear Evolution Equations}

We start with the convex case, i.e. with the case when the controlled system is linear and state inequality constraints
are convex. The convexity of constraints, along with widely known Slater's conditions from convex optimisation, allows
one to prove the complete exactness of $L^{\infty}$-penalty function under relatively mild assumptions. The main results
on exact penalty functions in this case can be obtained for both linear time varying systems and linear evolution
equations in Hilbert spaces. For the sake of shortness, we consider only evolution equations.

Let, as in Section~\ref{SubSec_EvolEq_TerminalConstr}, $\mathscr{H}$ and $\mathscr{U}$ be complex Hilbert spaces,
$\mathbb{T}$ be a strongly continuous semigroup on $\mathscr{H}$ with generator 
$\mathcal{A} \colon \mathcal{D}(\mathcal{A}) \to \mathscr{H}$, and let $\mathcal{B}$ be an admissible control operator
for $\mathbb{T}$. For any $t \ge 0$ denote by 
$F_t u = \int_0^t \mathbb{T}_{t - \sigma} \mathcal{B} u(\sigma) \, d \sigma$ the input map corresponding 
to $(\mathcal{A}, \mathcal{B})$. Then, as was pointed out in Section~\ref{SubSec_EvolEq_TerminalConstr}, for any 
$u \in L^2((0, T); \mathscr{U})$ the initial value problem $\dot{x}(t) = \mathcal{A} x(t) + \mathcal{B} u(t)$, 
$x(0) = x_0$ with $x_0 \in \mathscr{H}$ has a unique solution $x \in C([0, T]; \mathscr{H})$ given by
\begin{equation} \label{SolutionViaSemiGroup_SC}
  x(t) = \mathbb{T}_t x_0 + F_t u \quad \forall t \in [0, T].
\end{equation}
Consider the following fixed-endpoint optimal control problem with state constraints:
\begin{equation} \label{EvolEqStateConstrainedProblem}
\begin{split}
  {}&\min_{(x, u)} \, \mathcal{I}(x, u) = \int_0^T \theta(x(t), u(t), t) \, dt \quad
  \text{subject to} \quad \dot{x}(t) = \mathcal{A} x(t) + \mathcal{B} u(t), \quad t \in [0, T], \\
  {}&x(0) = x_0, \quad x(T) = x_T, \quad u \in U, \quad
  g_j(x(t), t) \le 0 \quad  \forall t \in [0, T], \quad j \in J.
\end{split}
\end{equation}
Here $\theta \colon \mathscr{H} \times \mathscr{U} \times [0, T] \to \mathbb{R}$ and 
$g_j \colon \mathscr{H} \times [0, T] \to \mathbb{R}$, $j \in J = \{ 1, \ldots, l \}$, are given
functions, $T > 0$ and $x_0, x_T \in \mathscr{H}$ are fixed, and $U \subseteq L^2((0, T); \mathscr{U})$ is a
closed convex set.

Let us introduce a penalty function for problem \eqref{EvolEqStateConstrainedProblem}. Our aim is to penalise the state
inequality constraints $g_j(x(t), t) \le 0$. To this end, define 
$X = C([0, T]; \mathscr{H}) \times L^2((0, T); \mathscr{U})$, 
$M = \{ (x, u) \in X \mid g_j(x(t), t) \le 0 \quad \forall t \in [0, T], \, j \in J \}$, and
$$
  A = \Big\{ (x, u) \in X \Bigm| x(0) = x_0, \: x(T) = x_T, \: u \in U, \: 
  \text{and $\eqref{SolutionViaSemiGroup_SC}$ holds true} \Big\}.
$$
Then problem \eqref{EvolEqStateConstrainedProblem} can be rewritten as the problem of minimizing $\mathcal{I}(x, u)$
subject to $(x, u) \in M \cap A$. Introduce the penalty term 
$\varphi(x, u) = \sup_{t \in [0, T]} \{ g_1(x(t), t), \ldots, g_l(x(t), t), 0 \}$.
Then $M = \{ (x, u) \in X \mid \varphi(x, u) = 0 \}$, and one can consider the penalised problem of minimising 
$\Phi_{\lambda}$ over the set $A$, which is a fixed-endpoint problem without state constraints of the form:
\begin{equation*}
\begin{split}
  {}&\min_{(x, u)} \, \int_0^T \theta(x(t), u(t), t) \, dt 
  + \lambda \sup_{t \in [0, T]} \big\{ g_1(x(t), t), \ldots, g_l(x(t), t), 0 \big\} \\
  {}&\text{subject to } \dot{x}(t) = \mathcal{A} x(t) + \mathcal{B} u(t), \quad t \in [0, T], 
  \quad u \in U, \quad x(0) = x_0, \quad x(T) = x_T.
\end{split}  
\end{equation*}
Note that after discretisation in $t$ this problem becomes a standard minimax problem with convex constraints, which can
be solved via a wide variety of existing numerical methods of minimax optimisation or nonsmooth convex optimisation in
the case when the function $(x, u) \mapsto \theta(x, u, t)$ is convex. Our aim is to show that this fixed-endpoint
problem is equivalent to problem \eqref{EvolEqStateConstrainedProblem}, provided Slater's condition holds true, i.e.
provided there exists a control input $\widehat{u} \in U$ such that for the corresponding solution $\widehat{x}$
(see~\eqref{SolutionViaSemiGroup_SC}) one has $\widehat{x}(T) = x_T$ and $g_j(\widehat{x}(t), t) < 0$ for all 
$t \in [0, T]$ and $j \in J$.

\begin{theorem} \label{Theorem_LinEvolEq_StateConstr}
Let the following assumptions be valid:
\begin{enumerate}
\item{$\theta$ is continuous, and for any $R > 0$ there exist $C_R > 0$ and an a.e. nonnegative function
$\omega_R \in L^1(0, T)$ such that $| \theta(x, u, t) | \le C_R \| u \|_{\mathscr{U}}^2 + \omega_R(t)$ for all 
$x \in \mathscr{H}$, $u \in \mathscr{U}$, and $t \in (0, T)$ such that $\| x \|_{\mathscr{H}} \le R$;
\label{Assumpt_LinEvolEq_SC_ThetaGrowth}}

\item{either the set $U$ is bounded in $L^2((0, T), \mathscr{U})$ or there exist $C_1 > 0$ and $\omega \in L^1(0, T)$
such that $\theta(x, u, t) \ge C_1 \| u \|_{\mathscr{U}}^2 + \omega(t)$ for all $x \in \mathscr{H}$, 
$u \in \mathscr{U}$, and for a.e. $t \in [0, T]$;
}

\item{$\theta$ is differentiable in $x$ and $u$, the functions $\nabla_x \theta$ and $\nabla_u \theta$ are continuous,
and for any $R > 0$ there exist $C_R > 0$, and a.e. nonnegative functions $\omega_R \in L^1(0, T)$ and 
$\eta_R \in L^2(0, T)$ such that
$$
  \| \nabla_x \theta(x, u, t) \|_{\mathscr{H}} \le C_R \| u \|_{\mathscr{U}}^2 + \omega_R(t), \quad
  \| \nabla_u \theta(x, u, t) \|_{\mathscr{U}} \le C_R \| u \|_{\mathscr{U}} + \eta_R(t)
$$
for all $x \in \mathscr{H}$, $u \in \mathscr{U}$ and $t \in (0, T)$ such that $\| x \|_{\mathscr{H}} \le R$;
}

\item{there exists a globally optimal solution of problem \eqref{EvolEqStateConstrainedProblem};
\label{Assumpt_LinEvolEq_SC_GlobSol}}

\item{the functions $g_j(x, t)$, $j \in J$, are convex in $x$, continuous jointly in $x$ and $t$, and Slater's condition
holds true.
\label{Assumpt_LinEvolEq_StateConstr}}
\end{enumerate}
Then for all $c \in \mathbb{R}$ there exists $\lambda^*(c) \ge 0$ such that for any $\lambda \ge \lambda^*(c)$ the
penalty function $\Phi_{\lambda}$ for problem \eqref{EvolEqStateConstrainedProblem} is completely exact on the set
$S_{\lambda}(c)$.
\end{theorem}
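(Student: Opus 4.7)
The plan is to verify the hypotheses of Theorem~\ref{Theorem_CompleteExactness} for this new penalty term, reusing most of the infrastructure established in the proof of Theorem~\ref{Theorem_Exactness_EvolutionEquations}. First I would observe that the added constraint $x(T) = x_T$ is affine and preserves closedness of $A$ as well as the boundedness of sublevel sets $S_\lambda(c)$ in $X = C([0,T];\mathscr{H}) \times L^2((0,T);\mathscr{U})$ (the semigroup bound on $\|\mathbb{T}_t\|$ and on $\|F_t\| \le \|F_T\|$ together with the growth condition still yield this). From the same earlier proof one also has that $\mathcal{I}$ is Lipschitz continuous on bounded subsets of $X$ and that $\Phi_{\lambda_0}$ is bounded below on $A$. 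Continuity of the new penalty term $\varphi(x,u) = \sup_{t\in[0,T]} \max\{g_1(x(t),t),\ldots,g_l(x(t),t),0\}$ on $X$ follows from joint continuity of each $g_j$ and compactness of $[0,T]$: uniformly on compact subsets of $\mathscr{H}$, $g_j(\cdot,\cdot)$ is uniformly continuous, so $(x,u) \mapsto g_j(x(\cdot),\cdot) \in C[0,T]$ is continuous, and the $\sup$ then produces a continuous real-valued map.

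The heart of the argument is to produce, for any $c > \mathcal{I}^*$, $\lambda_0 \ge 0$ and any fixed $\delta > 0$, a constant $a > 0$ with $\varphi_A^\downarrow(x,u) \le -a$ whenever $(x,u) \in S_{\lambda_0}(c) \cap (\Omega_\delta \setminus \Omega)$. For this, let $(\widehat{x},\widehat{u})$ be the Slater point supplied by assumption~\ref{Assumpt_LinEvolEq_StateConstr}. By compactness of $[0,T]$, joint continuity of each $g_j$ and strict negativity, the quantity
\[
  \eta := \max_{j\in J}\ \max_{t\in[0,T]} g_j(\widehat{x}(t),t)
\]
is finite and strictly negative. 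Fix any $(x,u) \in S_{\lambda_0}(c)$ with $\varphi(x,u) > 0$ and set $(\Delta x,\Delta u) = (\widehat{x} - x, \widehat{u} - u)$. By linearity of $\dot x = \mathcal{A}x + \mathcal{B}u$, convexity of $U$, and the fact that both endpoint conditions $x(0)=x_0$, $x(T)=x_T$ are affine and satisfied at both $(x,u)$ and $(\widehat{x},\widehat{u})$, the convex combination $(x_\alpha,u_\alpha) := (x + \alpha\Delta x, u + \alpha \Delta u)$ lies in $A$ for every $\alpha \in [0,1]$.

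By convexity of each $g_j(\cdot,t)$,
\[
  g_j(x_\alpha(t),t) \le (1-\alpha)\,g_j(x(t),t) + \alpha\, g_j(\widehat{x}(t),t) \le (1-\alpha)\,\varphi(x,u) + \alpha\,\eta
\]
for all $t\in[0,T]$ and $j\in J$. Taking the pointwise-in-$(t,j)$ supremum and then the maximum with $0$, and choosing $\alpha > 0$ small enough that $(1-\alpha)\varphi(x,u) + \alpha\eta > 0$, I get $\varphi(x_\alpha, u_\alpha) \le (1-\alpha)\varphi(x,u) + \alpha\eta$, hence $\varphi(x_\alpha,u_\alpha) - \varphi(x,u) \le \alpha(\eta - \varphi(x,u)) \le \alpha\eta$. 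Dividing by $\alpha \|(\Delta x,\Delta u)\|_X$ and letting $\alpha \to 0^+$ yields
\[
  \varphi_A^\downarrow(x,u) \le \frac{\eta}{\|(\widehat{x}-x,\widehat{u}-u)\|_X}.
\]
Since $S_{\lambda_0}(c)$ is bounded in $X$, there is a constant $C_0 > 0$ (independent of $(x,u)$) with $\|(\widehat{x}-x,\widehat{u}-u)\|_X \le C_0$, so $\varphi_A^\downarrow(x,u) \le \eta/C_0 =: -a < 0$. All hypotheses of Theorem~\ref{Theorem_CompleteExactness} are then met and the conclusion follows; the case $c \le \mathcal{I}^*$ is subsumed by choosing any larger $c'$ and noting $S_\lambda(c) \subseteq S_\lambda(c')$.

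The main obstacle I expect is not any single step but rather the accounting at the end of the chain: one must ensure that the linear path stays in $A$ (using affineness of all endpoint constraints), that the convex-combination inequality propagates through the $\sup$-and-$\max$-with-zero operation of $\varphi$ without losing the strict Slater gap $\eta$, and that the denominator $\|(\widehat{x}-x,\widehat{u}-u)\|_X$ is uniformly controlled by boundedness of $S_{\lambda_0}(c)$. Everything else reduces to already-proved properties from the terminal-constraint theorem.
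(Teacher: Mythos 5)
Your proposal is correct and follows essentially the same route as the paper's proof: verify the hypotheses of Theorem~\ref{Theorem_CompleteExactness} by reusing the boundedness, closedness and Lipschitz-continuity facts from the proof of Theorem~\ref{Theorem_Exactness_EvolutionEquations}, and establish the uniform negative descent rate of $\varphi$ by moving along the segment towards the Slater point, using convexity of the $g_j$ and boundedness of $S_{\lambda}(c)$ to get $\varphi^{\downarrow}_A(x,u) \le \eta/C_0 < 0$. The only difference is cosmetic (you use the unnormalised direction $(\widehat{x}-x,\widehat{u}-u)$ instead of the paper's normalised one), which does not affect the argument.
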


\begin{proof}
Almost literally repeating the first part of the proof of Theorem~\ref{Theorem_Exactness_EvolutionEquations} one
obtains that the assumptions on the function $\theta$ and its derivatives ensure that the functional
$\mathcal{I}(x, u)$ is Lipschitz continuous on any bounded subset of $X$, the set $S_{\lambda}(c)$ is bounded in $X$
for all $c \in \mathbb{R}$ and $\lambda \ge 0$, and the penalty function $\Phi_{\lambda}$ is bounded below on $A$. In
addition, the set $A$ is closed by virtue of the closedness of the set $U$ and the fact that the input map $F_t$ is
a bounded linear operator from $L^2((0, T); \mathscr{U})$ to $\mathscr{H}$ (see~\eqref{SolutionViaSemiGroup_SC}). 
Finally, the mappings $x \mapsto g_j(x(\cdot), \cdot)$, $j \in J$, and the penalty term $\varphi$ are continuous
by Proposition~\ref{Prop_ContNonlinearMap_in_C} and Corollary~\ref{Corollary_StateConstrPenTerm_Contin} (see
Appendix~B). 

Fix any $\lambda \ge 0$ and $c \in \mathbb{R}$. By applying Theorem~\ref{Theorem_CompleteExactness} one gets that it is
remains to verify that there exists $a > 0$ such that $\varphi^{\downarrow}_A(x, u) \le - a$ for any 
$(x, u) \in S_{\lambda}(c) \cap (\Omega_{\delta} \setminus \Omega)$ 
(i.e. $(x, u) \in S_{\lambda}(c)$ and $0 < \varphi(x, u) < \delta$).

Fix any $\delta > 0$ and $(x, u) \in S_{\lambda}(c) \cap (\Omega_{\delta} \setminus \Omega)$, and let a pair
$(\widehat{x}, \widehat{u})$ be from Slater's condition. Denote 
$\sigma = \| (\widehat{x}, \widehat{u}) - (x, u) \|_X = 
\| \widehat{x} - x \|_{C([0, T]; \mathscr{H})} + \| \widehat{u} - u \|_{L^2((0, T); \mathscr{U})}$.
Note that there exists $R > 0$ (independent of $(x, u)$) such that $\sigma \le R$, since the set $S_{\lambda}(c)$ is
bounded. Furthermore, $\sigma > 0$, since $(\widehat{x}, \widehat{u}) \in \Omega$ by definition.

Define $\Delta x = (\widehat{x} - x) / \sigma$ and $\Delta u = (\widehat{u} - u) / \sigma$. Observe that
$\| (\Delta x, \Delta u) \|_X = 1$, and $(x + \alpha \Delta x, u + \alpha \Delta u) \in A$ for any 
$\alpha \in [0, \sigma]$ due to the convexity of the set $U$ and the linearity of the system 
$\dot{x} = \mathcal{A} x + \mathcal{B} u$. With the use of the convexity of the functions $g_j(x, t)$ in $x$ one obtains
that
$$
  g_j(x(t) + \alpha \Delta x(t), t) \le \frac{\alpha}{\sigma} g_j(\widehat{x}(t), t) 
  + \left( 1 - \frac{\alpha}{\sigma} \right) g_j(x(t), t) 
  \le \frac{\alpha \eta}{\sigma} + \left( 1 - \frac{\alpha}{\sigma} \right) \varphi(x, u)
  \le \frac{\alpha \eta}{\sigma} + \varphi(x, u).
$$
for any $\alpha \in [0, \sigma]$, $t \in [0, T]$, and $j \in J$, where $\eta = \max_{t \in [0, T]}\{ g_j(\widehat{x}(t),
t) \mid j \in J \}$ (note that $\eta < 0$ due to Slater's condition). Therefore, one has
$$
  \max_{t \in [0, T]} \big\{ g_1(x(t) + \alpha \Delta x(t), t), \ldots, g_l(x(t) + \alpha \Delta x(t), t) \big\}
  \le \frac{\alpha \eta}{\sigma} + \varphi(x, u)
  \quad \forall \alpha \in [0, \sigma].
$$
Recall that $\varphi(x, u) > 0$, since $(x, u) \notin \Omega$. Consequently, bearing in mind the fact that 
the right-hand side of the inequality above is positive for any $\alpha < \varphi(x, u) \sigma / |\eta|$ one obtains
that
$$
  \varphi(x + \alpha \Delta x, u + \alpha \Delta u) = 
  \max_{t \in [0, T], j \in J} \big\{ g_j(x(t) + \alpha \Delta x(t), t), 0 \big\}
  \le \frac{\alpha \eta}{\sigma} + \varphi(x, u)
  \quad \forall \alpha \in \left[ 0, \min\left\{ \sigma, \frac{\varphi(x, u) \sigma}{|\eta|} \right\} \right).
$$
Dividing this inequality by $\alpha$ and passing to the limit superior as $\alpha \to + 0$ one finally gets that
$$
  \varphi^{\downarrow}_A(x, u) 
  \le \limsup_{\alpha \to +0} \frac{\varphi(x + \alpha \Delta x, u + \alpha \Delta u)}{\alpha}
  \le \frac{\eta}{\sigma} \le \frac{\eta}{R} < 0
$$
where both $\eta$ and $R$ are independent of $(x, u) \in S_{\lambda}(c) \cap (\Omega_{\delta} \setminus \Omega)$. Thus, 
$\varphi^{\downarrow}_A(x, u) \le - \eta / R$ for any 
$(x, u) \in S_{\lambda}(c) \cap (\Omega_{\delta} \setminus \Omega)$, and the proof is complete.
\end{proof}

\begin{corollary} \label{Corollary_LinEvolEq_StateConstr}
Let all assumptions of Theorem~\ref{Theorem_LinEvolEq_StateConstr} be valid. Suppose also that either the set $U$ is
bounded in $L^2((0, T); \mathscr{U})$ or the function $(x, u) \mapsto \theta(x, u, t)$ is convex for all $t \in [0, T]$.
Then the penalty function $\Phi_{\lambda}$ for problem \eqref{EvolEqStateConstrainedProblem} is completely exact on $A$.
\end{corollary}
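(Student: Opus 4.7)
My plan is to mirror the two-case structure of Theorem~\ref{Theorem_Exactness_EvolutionEquations_Global}, leveraging the descent-rate calculation already carried out in the proof of Theorem~\ref{Theorem_LinEvolEq_StateConstr}. In both cases the hard analytic work (Lipschitz continuity of $\mathcal{I}$ on bounded sets, continuity of $\varphi$, closedness of $A$, and the bound $\varphi^{\downarrow}_A(x, u) \le \eta / R$ driven by Slater's condition) has already been carried out; what remains is to route it through either Theorem~\ref{THEOREM_COMPLETEEXACTNESS_GLOBAL} or a convexity shortcut.

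First I would handle the case when $U$ is bounded in $L^2((0, T); \mathscr{U})$. As at the start of the proof of Theorem~\ref{Theorem_Exactness_EvolutionEquations_Global}, the uniform bound $\| u \|_{L^2((0, T); \mathscr{U})} \le K$ on $U$, together with $\| F_t \| \le \| F_T \|$ and $\| \mathbb{T}_t \| \le M_{\omega} e^{\omega t}$, propagates through \eqref{SolutionViaSemiGroup_SC} to a uniform bound on $\| x \|_{C([0, T]; \mathscr{H})}$; hence $A$ is bounded in $X$. The Lipschitz continuity of $\mathcal{I}$ on bounded subsets of $X$ then upgrades to Lipschitz continuity on $A$. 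I would then repeat the argument of Theorem~\ref{Theorem_LinEvolEq_StateConstr} verbatim, replacing $S_{\lambda}(c) \cap (\Omega_{\delta} \setminus \Omega)$ by $A \setminus \Omega$ and invoking Theorem~\ref{THEOREM_COMPLETEEXACTNESS_GLOBAL} in place of Theorem~\ref{Theorem_CompleteExactness}. The descent-rate estimate transfers without change: the constant $R$ bounding $\sigma = \| (\widehat{x} - x, \widehat{u} - u) \|_X$ uses only the $X$-diameter of the ambient set (now $A$ itself), while Slater's point $(\widehat{x}, \widehat{u})$ and the negative constant $\eta = \max_{t, j} g_j(\widehat{x}(t), t) < 0$ are unchanged, yielding $\varphi^{\downarrow}_A(x, u) \le \eta / R < 0$ uniformly on $A \setminus \Omega$.

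In the convex case, $\mathcal{I}$ and $\Phi_{\lambda}$ are convex, the set $A$ is convex by linearity of the evolution equation and convexity of $U$, the set $\Omega$ is convex because each constraint $g_j(x(t), t) \le 0$ is convex in $x$ for every fixed $t$, and $\varphi$ is convex as a supremum of the convex maps $(x, u) \mapsto g_j(x(t), t)$ together with the constant $0$. The standard argument reproduced at the end of the proof of Theorem~\ref{Theorem_Exactness_EvolutionEquations_Global} then shows that every local minimiser and every inf-stationary point of $\Phi_{\lambda}$ on $A$ (respectively of $\mathcal{I}$ on $\Omega$) is automatically a global minimiser. Consequently, in the convex case complete exactness on $A$ reduces to mere global exactness, and the latter is already supplied by Theorem~\ref{Theorem_LinEvolEq_StateConstr} applied with any $c > \mathcal{I}^*$. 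The only (very minor) obstacle is verifying that the descent-rate argument of Theorem~\ref{Theorem_LinEvolEq_StateConstr} enters the sublevel set only through a uniform bound on $\sigma$; a glance at that proof confirms this, so no new analytical ingredient beyond Theorems~\ref{Theorem_LinEvolEq_StateConstr}, \ref{THEOREM_COMPLETEEXACTNESS_GLOBAL}, and \ref{Theorem_Exactness_EvolutionEquations_Global} is needed.
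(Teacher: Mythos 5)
Your proposal is correct and follows essentially the same route as the paper: in the bounded case the paper likewise borrows the boundedness of $A$ from the first part of the proof of Theorem~\ref{Theorem_Exactness_EvolutionEquations_Global} and reruns the descent-rate argument of Theorem~\ref{Theorem_LinEvolEq_StateConstr} with $S_{\lambda}(c)$ replaced by $A$ and Theorem~\ref{THEOREM_COMPLETEEXACTNESS_GLOBAL} in place of Theorem~\ref{Theorem_CompleteExactness}, and in the convex case it likewise reduces complete exactness on $A$ to global exactness via the convexity argument from Theorem~\ref{Theorem_Exactness_EvolutionEquations_Global} and then invokes Theorem~\ref{Theorem_LinEvolEq_StateConstr}.
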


\begin{proof}
If the set $U$ is bounded, then by the first part of the proof of
Theorem~\ref{Theorem_Exactness_EvolutionEquations_Global} the set $A$ is bounded in $X$. Therefore, arguing in
the same way as in the proof of Theorem~\ref{Theorem_LinEvolEq_StateConstr}, but replasing the set $S_{\lambda}(c)$ with
$A$ and utilising Theorem~\ref{THEOREM_COMPLETEEXACTNESS_GLOBAL} instead of Theorem~\ref{Theorem_CompleteExactness} we
arrive at the required result.

If the function $(x, u) \mapsto \theta(x, u, t)$ is convex, then, as was shown in the proof of 
Theorem~\ref{Theorem_Exactness_EvolutionEquations_Global}, the penalty function $\Phi_{\lambda}$ for problem
\eqref{EvolEqStateConstrainedProblem} is completely exact on $A$ if and only if it is globally exact. It remains to
note that its global exactness follows from Theorem~\ref{Theorem_LinEvolEq_StateConstr}.
\end{proof}

\begin{remark}
Theorem~\ref{Theorem_LinEvolEq_StateConstr} and corollary~\ref{Corollary_LinEvolEq_StateConstr} can be easily extended
to the case of problems with inequality constraints of the form $g_j(x, u) \le 0$, where 
$g_j \colon C([0, T]; \mathscr{H}) \times L^2((0, T); \mathscr{U}) \to \mathbb{R}$ are continuous convex functions. In
particular, one can consider the integral constraint  $\| u \|_{L^2((0, T); \mathscr{U})} \le C$ for some $C > 0$. In
this case one can define $\varphi(x, u) = \max\{ g_1(x, u), \ldots g_l(x, u), 0 \}$, while Slater's condition takes the
form: there exists a feasible point $(\widehat{x}, \widehat{u})$ such that $g_j(\widehat{x}, \widehat{u}) < 0$ for 
all $j$.
\end{remark}

\begin{remark}
It should be noted that Theorem~\ref{Theorem_LinEvolEq_StateConstr} and Corollary~\ref{Corollary_LinEvolEq_StateConstr}
can be applied to problems with distributed $L^{\infty}$ state constraints. For instance, suppose that
$\mathscr{H} = W^{1, 2}(0, 1)$, and let the constraints have the form $b_1 \le x(t, r) \le b_2$ for all $t \in [0, T]$
and a.e. $r \in (0, 1)$. Then one can define $g_1(x(t)) = \esssup_{r \in (0, 1)} x(t, r) - b_2$ and 
$g_2(x(t)) = \esssup_{r \in (0, 1)} (- x(t, r)) + b_1$ and consider the state constraints $g_1(x(\cdot)) \le 0$ and
$g_2(x(\cdot)) \le 0$. One can easily check that both functions $g_1$ and $g_2$ are convex and continuous. Slater's
condition in this case takes the form: there exists $(\widehat{x}, \widehat{u}) \in \Omega$ such that 
$b_1 + \varepsilon \le \widehat{x}(t, r) \le b_2 - \varepsilon$ for some $\varepsilon > 0$, for all $t \in [0, T]$, and 
a.e. $r \in (0, 1)$.
\end{remark}

Observe that Slater's condition imposes some restriction on the initial and final states. Namely, Slater's conditions
implies that $g_j(x_0, 0) < 0$ and $g_j(x_T, T) < 0$ for all $j \in J$ (in the general case only the inequalities
$g_j(x_0, 0) \le 0$ and $g_j(x_T, T) \le 0$ hold true). Let us give an example demonstrating that in the case when
the strict inequalities are not satisfied, the penalty function $\Phi_{\lambda}$ for problem
\eqref{EvolEqStateConstrainedProblem} need not be exact. For the sake of simplicity, we consider a free-endpoint
finite dimensional problem. As one can readily verify, Theorem~\ref{Theorem_LinEvolEq_StateConstr} remains valid in the
case of free-endpoint problems.

\begin{example} \label{CounterExample_StateInEqConstr}
Let $d = 2$, $m = 1$, $p = q = 2$. Define 
$U = \{ u \in L^2(0, T) \mid u(t) \ge 0 \text{ for a.e. }  t \in (0, T), \: \| u \|_2 \le 1 \}$, and consider the
following free-endpoint optimal control problem with the state inequality constraint:
$$
  \min \: \mathcal{I}(u) = - \int_0^T u(t)^2 dt \quad
  \text{s.t.} \quad \begin{cases} \dot{x}^1 = 1 \\ \dot{x}^2 = u \end{cases} \quad t \in [0, T], \quad 
  x(0) = \begin{pmatrix} 0 \\ 0 \end{pmatrix}, \quad u \in U, \quad g(x(t)) \le 0,
$$
where $g(x^1, x^2) = x^2$. The only feasible point of this problem is $(x^*, u^*)$ with 
$x^*(t) \equiv (t, 0)^T$ and $u^*(t) = 0$ for a.e. $t \in [0, T]$. Thus, $(x^*, u^*)$ is a globally optimal solution of
this problem. Note also that the function $\theta(x, u, t) = - (u)^2$ satisfies the assumptions of
Theorem~\ref{Theorem_LinEvolEq_StateConstr}. Furthermore, in this case the set $A$ is obviously bounded in $X$, and 
the penalty function $\Phi_{\lambda}(x, u) = \mathcal{I}(u) + \lambda \varphi(x)$ with
$\varphi(x) = \max_{t \in [0, T]} \{ g(x(t)), 0 \}$ is bounded below on $A$.

Observe that $g(x(0)) = 0$, which implies that Slater's condition does not hold true. Let us check that the penalty
function $\Phi_{\lambda}$ is not exact. Arguing by reductio ad absurdum, suppose that $\Phi_{\lambda}$ is globally
exact. Then there exists $\lambda^* \ge 0$ such that for any $\lambda \ge \lambda^*$ and $(x, u) \in A$ one has
$\Phi_{\lambda}(x, u) \ge \Phi_{\lambda}(x^*, u^*)$. For any $n \in \mathbb{N}$ define $u_n(t) = n$, if 
$t \in [0, 1 / n^2]$, and $u_n(t) = 0$, if $t > 1 / n^2$. Then $\| u_n \|_2 = 1$, and $(x_n, u_n) \in A$, where 
$x_n(t) = (t, \min\{ nt, 1 / n \})^T$ is the corresponding trajectory of the system. Observe that 
$\mathcal{I}(u_n) = -1$ and $\varphi(x_n) = 1 / n$ for any $n \in \mathbb{N}$. Consequently, 
$\Phi_{\lambda}(x_n, u_n) < 0 = \Phi_{\lambda}(x^*, u^*)$ for any sufficiently large $n \in \mathbb{N}$, which
contradicts our assumption. Thus, the penalty function $\Phi_{\lambda}$ is not globally exact. Moreover, one can easily
see that the penalty function $\Phi_{\lambda}(x) = \mathcal{I}(u) + \lambda \varphi(x)$ is not globally exact for any
penalty term $\varphi$ that is continuous with respect to the uniform metric. 
\end{example}

In the case when not only the state constraints but also the cost functional $\mathcal{I}$ are convex, one can utilise
the convexity of the problem to prove that the exact $L^p$-penalisation of state constraints with any 
$1 \le p < + \infty$ is possible, provided Lagrange multipliers corresponding to the state constraints are sufficiently
regular. Indeed, let $(x^*, u^*)$ be a globally optimal solution of problem \eqref{EvolEqStateConstrainedProblem}, and
let $E(x^*, u^*) \subset \mathbb{R} \times (C[0, T])^l$ be a set of all those vectors $(y_0, y_1, \ldots, y_l)$ for
which one can find $(x, u) \in A$ such that $\mathcal{I}(x, u) - \mathcal{I}(x^*, u^*) < y_0$ and 
$g_j(x(t), t) \le y_j(t)$ for all $t \in [0, T]$ and $j \in J$. The set $E(x^*, u^*)$ has nonempty interior due to 
the fact that $(0, + \infty) \times (C_+[0, T])^l \subset E(x^*, u^*)$ (put $(x, u) = (x^*, u^*)$), where $C_+[0, T]$ is
the cone of nonnegative functions. Observe also that $0 \notin E(x^*, u^*)$, since otherwise one can find a feasible
point $(x, u)$ of problem \eqref{EvolEqStateConstrainedProblem} such that $\mathcal{I}(x, u) < \mathcal{I}(x^*, u^*)$,
which contradicts the definition of $(x^*, u^*)$. Furthermore, with the use of the convexity of $\mathcal{I}$ and $g_j$
one can easily check that the set $E(x^*, u^*)$ is convex. Therefore, by applying the separation theorem (see, e.g.
\cite[Theorem~V.2.8]{DunfordSchwartz}) one obtains that there exist $\mu_0 \in \mathbb{R}$ and continuous linear
functionals $\psi_j$ on $C[0, T]$, $j \in J$, not all zero, such that
$\mu_0 y_0 + \sum_{j = 1}^l \psi_j(y_j) \ge 0$ for all $(y_0, y_1, \ldots, y_l) \in E(x^*, u^*)$. Taking into account
the fact that  $(0, + \infty) \times (C_+[0, T])^l \subset E(x^*, u^*)$ one obtains that $\mu_0 \ge 0$ and
$\psi_j(y) \ge 0$ for any $y \in C_+[0, T]$ and $j \in J$. Consequently, utilising the Riesz-Markov-Kakutani
representation theorem (see~\cite[Theorem~IV.6.3]{DunfordSchwartz}) and bearing in mind the definition of $E(x^*, u^*)$
one gets that there exist regular Borel measures $\mu_j$ on $[0, T]$, $j \in J$, such that
\begin{equation} \label{LagrangeMultiplier_StateConstr}
  \mu_0 \mathcal{I}(x, u) + \sum_{j = 1}^l \int_{[0, T]} g_j(x(t), t) \, d \mu_j(t) \ge \mu_0 \mathcal{I}(x^*, u^*)
  \quad \forall (x, u) \in A.
\end{equation}
If Slater's condition holds true, then obviously $\mu_0 > 0$, and we suppose that $\mu_0 = 1$. Any collection 
$(\mu_1, \ldots, \mu_l)$ of regular Borel measures on $[0, T]$ satisfying \eqref{LagrangeMultiplier_StateConstr} with
$\mu_0 = 1$ is called \textit{Lagrange multipliers} corresponding to the state constraints of problem
\eqref{EvolEqStateConstrainedProblem}. Let us note that one has to suppose that Lagrange multipliers are Borel measures,
since if one replaces $C[0, T]$ in the definition of $E(x^*, u^*)$ with $L^p[0, T]$, $1 \le p < + \infty$, then the set
$E(x^*, u^*)$, in the general case, has empty interior, which makes the separation theorem inapplicable.

\begin{theorem}
Let assumptions \ref{Assumpt_LinEvolEq_SC_ThetaGrowth}, \ref{Assumpt_LinEvolEq_SC_GlobSol}, and
\ref{Assumpt_LinEvolEq_StateConstr} of Theorem~\ref{Theorem_LinEvolEq_StateConstr} be valid, and let the function 
$(x, u) \mapsto \theta(x, u, t)$ be convex for any $t \in [0, T]$. Suppose, in addition, that for some 
$1 \le p < + \infty$ there exist Lagrange multipliers $(\mu_1, \ldots, \mu_l)$ such that the Borel measures $\mu_j$
are absolutely continuous with respect to the Lebesgue measure, and their Radon-Nikodym derivatives belong to 
$L^{p'}[0, T]$. Then there exists $\lambda^* \ge 0$ such that for any $\lambda \ge \lambda^*$ the penalty function
$$
  \Phi_{\lambda}(x, u) = \mathcal{I}(x, u) 
  + \lambda \sum_{j = 1}^l \bigg( \int_0^T \max\{ g_j(x(t), t), 0 \}^p \, dt \bigg)^{1/p}
$$
for problem \eqref{EvolEqStateConstrainedProblem} is completely exact on $A$.
\end{theorem}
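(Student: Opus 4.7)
The plan is to exploit the two convex-analytic ingredients provided by the hypotheses: on one hand, the existence of Lagrange multipliers with $L^{p'}$-densities, which produces a global lower bound on $\mathcal{I}$ in terms of the $L^p$ penalty term via Hölder's inequality; on the other hand, the convexity of $\Phi_{\lambda}$ on the convex set $A$, which, exactly as in the proof of Theorem~\ref{Theorem_Exactness_EvolutionEquations_Global}, collapses the four conditions defining complete exactness on $A$ to the single task of showing that for sufficiently large $\lambda$ the set of global minimisers of $\Phi_{\lambda}$ over $A$ coincides with the set of global minimisers of $\mathcal{I}$ over $\Omega$.

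First, I would fix any globally optimal solution $(x^*, u^*)$ of \eqref{EvolEqStateConstrainedProblem}, and let $\rho_j \in L^{p'}(0, T)$ denote the Radon--Nikodym densities of the hypothesised Lagrange measures. Since the $\mu_j$ arise from separating hyperplanes that must be nonnegative on $(C_+[0, T])^l$, the densities $\rho_j$ are nonnegative a.e.; absolute continuity then turns \eqref{LagrangeMultiplier_StateConstr} (with $\mu_0 = 1$ thanks to Slater's condition) into
\[
\mathcal{I}(x, u) + \sum_{j=1}^{l} \int_0^T g_j(x(t), t) \rho_j(t) \, dt \ge \mathcal{I}(x^*, u^*) \qquad \forall (x, u) \in A.
\]
Since $\rho_j \ge 0$, we can bound each integral from above by $\int_0^T \max\{g_j(x(t), t), 0\} \rho_j(t) \, dt$ and then, by Hölder's inequality, by $\|\rho_j\|_{p'} \bigl( \int_0^T \max\{g_j(x(t), t), 0\}^p \, dt \bigr)^{1/p}$. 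Setting $L = \max_{j \in J} \|\rho_j\|_{p'}$ and collecting terms gives
\[
\mathcal{I}(x^*, u^*) \le \mathcal{I}(x, u) + L \sum_{j=1}^{l} \bigl( \textstyle\int_0^T \max\{g_j(x(t), t), 0\}^p \, dt \bigr)^{1/p} \qquad \forall (x, u) \in A.
\]
Thus, for any $\lambda \ge L$, $\Phi_{\lambda}(x, u) \ge \mathcal{I}(x^*, u^*) = \Phi_{\lambda}(x^*, u^*)$ on $A$, so $(x^*, u^*)$ is a global minimiser of $\Phi_{\lambda}$ on $A$.

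Next I would fix $\lambda^* > L$ and, for $\lambda \ge \lambda^*$, take an arbitrary global minimiser $(\bar{x}, \bar{u}) \in A$ of $\Phi_{\lambda}$. Combining $\Phi_{\lambda}(\bar{x}, \bar{u}) \le \mathcal{I}(x^*, u^*)$ with the bound just established yields $(\lambda - L) \sum_j \bigl( \int_0^T \max\{g_j(\bar{x}(t), t), 0\}^p \, dt \bigr)^{1/p} \le 0$, so $\max\{g_j(\bar{x}(t), t), 0\} = 0$ a.e. on $[0, T]$ for each $j$; continuity of $t \mapsto g_j(\bar{x}(t), t)$ (via Proposition~\ref{Prop_ContNonlinearMap_in_C}) upgrades this to the pointwise inequality $g_j(\bar{x}(t), t) \le 0$ on $[0, T]$, whence $(\bar{x}, \bar{u}) \in \Omega$ and $\Phi_{\lambda}(\bar{x}, \bar{u}) = \mathcal{I}(\bar{x}, \bar{u}) = \mathcal{I}(x^*, u^*)$. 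Conversely, any globally optimal $(x, u) \in \Omega$ satisfies $\Phi_{\lambda}(x, u) = \mathcal{I}(x, u) = \mathcal{I}(x^*, u^*)$, so the two sets of global minimisers coincide and the optimal values agree.

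Finally, I would promote this coincidence of global minimisers to the full complete exactness on $A$ by invoking convexity, exactly as in the second half of the proof of Theorem~\ref{Theorem_Exactness_EvolutionEquations_Global}. The set $A$ is convex (linearity of the evolution equation and convexity of $U$); the functional $\mathcal{I}$ is convex by the convexity of $\theta$ in $(x, u)$; and the penalty term is convex on $X$ because $x \mapsto \max\{g_j(x(t), t), 0\}$ is convex and nonnegative for each $t$, and the mapping $h \mapsto \|h\|_p$ is convex and nondecreasing on $L^p_+(0, T)$. Hence $\Phi_{\lambda}$ is convex on $A$, so every local minimiser and every inf-stationary point of $\Phi_{\lambda}$ on $A$ is a global minimiser; analogously every local minimiser and every inf-stationary point of $\mathcal{I}$ on $\Omega$ is a global minimiser. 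The coincidence of global minimisers established above therefore upgrades to statements (1)--(4) of Theorem~\ref{Theorem_CompleteExactness} with $S_{\lambda}(c) = A$, proving complete exactness on $A$ for all $\lambda \ge \lambda^*$. The only delicate point is the passage from "a.e." to "everywhere" for the state constraints, but this is immediate from the $C([0, T]; \mathscr{H})$-regularity of trajectories and the joint continuity of $g_j$.
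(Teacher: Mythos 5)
Your proposal is correct and follows essentially the same route as the paper: the Lagrange-multiplier inequality \eqref{LagrangeMultiplier_StateConstr} combined with H\"{o}lder's inequality yields global exactness for $\lambda$ beyond $\max_j \| h_j \|_{p'}$, and convexity of $\Phi_{\lambda}$, $\mathcal{I}$, $A$, and $\Omega$ upgrades this to complete exactness on $A$ exactly as in the proof of Theorem~\ref{Theorem_Exactness_EvolutionEquations_Global}. You merely spell out details the paper leaves implicit (nonnegativity of the densities, taking $\lambda^*$ strictly larger to force feasibility of minimisers, and the a.e.-to-everywhere upgrade via continuity), which is fine.
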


\begin{proof}
Let $(x^*, u^*)$ be a globally optimal solution of problem \eqref{EvolEqStateConstrainedProblem}, and $h_j$ be 
the Radon-Nikodym derivative of $\mu_j$ with respect to the Lebesgue measure, $j \in J$. Denote 
$\lambda_0 = \max_{j \in J} \| h_j \|_{p'}$. Then by applying \eqref{LagrangeMultiplier_StateConstr} and
H\"{o}lder's inequality one obtains that
\begin{align*}
  \Phi_{\lambda}(x^*, u^*) = \mathcal{I}(x^*, u^*) 
  &\le \mathcal{I}(x, u) + \sum_{j = 1}^l \int_0^T g_j(x(t), t) h_j(t) \, dt 
  \le \mathcal{I}(x, u) + \sum_{j = 1}^l \int_0^T \max\{ g_j(x(t), t), 0 \} |h_j(t)| \, dt \\
  &\le \mathcal{I}(x, u) + \lambda_0 \sum_{j = 1}^l \bigg( \int_0^T \max\{ g_j(x(t), t), 0 \}^p \, dt \bigg)^{1/p} 
  \le \Phi_{\lambda}(x, u)
\end{align*}
for any $(x, u) \in A$ and $\lambda \ge \lambda_0$. Hence, as is easy to see, the penalty function $\Phi_{\lambda}$ is
globally exact. Now, bearing in mind the convexity of $\Phi_{\lambda}$ and arguing in the same way as in the proof of
Theorem~\ref{Theorem_Exactness_EvolutionEquations_Global} one arrives at the required result.
\end{proof}

\subsection{Nonlinear Systems: Local Exactness}

Let us now turn to general nonlinear optimal control problems with state constraints of the form:
\begin{equation} \label{StateConstrainedProblem}
\begin{split}
  &\min \: \mathcal{I}(x, u) = \int_0^T \theta(x(t), u(t), t) \, dt \quad
  \text{subject to } \dot{x}(t) = f(x(t), u(t), t), \quad t \in [0, T], \\
  &x(0) = x_0, \quad x(T) = x_T, \quad u \in U, \quad
  g_j(x(t), t) \le 0 \quad \forall t \in [0, T], \: j \in J.
\end{split}
\end{equation}
Here $\theta \colon \mathbb{R}^d \times \mathbb{R}^m \times [0, T] \to \mathbb{R}$,
$f \colon \mathbb{R}^d \times \mathbb{R}^m \times [0, T] \to \mathbb{R}^d$, 
$g_j \colon \mathbb{R}^d \times [0, T] \to \mathbb{R}$, $j \in J = \{ 1, \ldots, l \}$, are given functions, 
$x_0, x_T \in \mathbb{R}^d$, and $T > 0$ are fixed, $x \in W^d_{1, p}(0, T)$, and $U \subseteq L_q^m(0, T)$ is a closed
set of admissible control inputs.

Let $X = W^d_{1, p}(0, T) \times L_q^m(0, T)$. Define 
$M = \{ (x, u) \in X \mid g_j(x(t), t) \le 0 \text{ for all } t \in [0, T], \: j \in J \}$ and
$$
  A = \Big\{ (x, u) \in X \mid u \in U, \: x(0) = x_0, \: x(T) = x_T, \: 
  \dot{x}(t) = f(x(t), u(t), t) \text{ for a.e. } t \in (0, T) \Big\}.
$$
Then problem \eqref{StateConstrainedProblem} can be rewritten as the problem of minimising $\mathcal{I}(x, u)$ over the
set $M \cap A$. Define 
$\varphi(x, u) = \sup_{t \in [0, T]} \big\{ g_1(x(t), t), \ldots, g_l(x(t), t), 0 \big\}$.
Then $M = \{ (x, u) \in X \mid \varphi(x, u) = 0 \}$, and one can consider the penalised problem of minimising the
penalty function $\Phi_{\lambda}$ over the set $A$. Our first goal is to obtain simple sufficient conditions for 
the local exactness of the penalty function $\Phi_{\lambda}$.

\begin{theorem} \label{Theorem_StateConstr_LocalExact}
Let $U = L_q^m(0, T)$, $q \ge p$, and $(x^*, u^*)$ be a locally optimal solution of problem
\eqref{StateConstrainedProblem}. Let also the following assumptions be valid:
\begin{enumerate}
\item{$\theta$ and $f$ are continuous, differentiable in $x$ in $u$, and the functions $\nabla_x \theta$, 
$\nabla_u \theta$, $\nabla_x f$, and $\nabla_u f$ are continuous;
}

\item{either $q = + \infty$ or $\theta$ and $\nabla_x \theta$ satisfy the growth condition of order $(q, 1)$, 
$\nabla_u \theta$ satisfies the growth condition of order $(q - 1, q')$, $f$ and $\nabla_x f$ satisfy the growth
condition of order $(q / p, p)$, and $\nabla_u f$ satisfies the growth condition of order $(q / s, s)$ with 
$s = qp / (q - p)$ in the case $q > p$, and $\nabla_u f$ does not depend on $u$ in the case $q = p$;
}

\item{$g_j$, $j \in J$, are continuous, differentiable in $x$, and the functions $\nabla_x g_j$, $j \in J$, are
continuous.
}
\end{enumerate}
Suppose finally that the linearised system
\begin{equation} \label{LinearizedSystem}
  \dot{h}(t) = A(t) h(t) + B(t) v(t), 
  \quad A(t) = \nabla_x f(x^*(t), u^*(t), t), \quad B(t) = \nabla_u f(x^*(t), u^*(t), t),
\end{equation}
is completely controllable using $L^q$-controls in time $T$, $A(\cdot) \in L_{\infty}^{d \times d}(0, T)$, and there
exists $v \in L^q(0, T)$ such that the corresponding solution $h$ of \eqref{LinearizedSystem} with $h(0) = 0$ satisfies
the condition $h(T) = 0$, and for any $j \in J$ one has
\begin{equation} \label{MFCQ_StateConstr}
  \langle \nabla_x g_j(x^*(t), t), h(t) \rangle < 0
  \quad \forall t \in [0, T] \colon g_j(x^*(t), t) = 0.
\end{equation}
Then the penalty function $\Phi_{\lambda}$ for problem \eqref{StateConstrainedProblem} is locally exact at 
$(x^*, u^*)$.
\end{theorem}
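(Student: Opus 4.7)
The plan is to mirror the proof of Theorem~\ref{Theorem_LocalExactness_TerminalConstraint}: reformulate the constraints as an inclusion $F(x,u) \in K$ for a strictly differentiable operator $F$, verify the regularity condition \eqref{MetricRegCond}, apply Theorem~\ref{Theorem_LocalErrorBound} to obtain the local error bound $\varphi(x, u) \ge a \dist((x, u), \Omega)$ on a neighbourhood of $(x^*, u^*)$ in $A$, and then conclude via Theorem~\ref{Theorem_LocalExactness}, where Lipschitz continuity of $\mathcal{I}$ near $(x^*, u^*)$ follows from the growth conditions on $\theta$ and its derivatives as in Theorem~\ref{Theorem_FixedEndPointProblem_Linear}.

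Concretely, I would set $C = \{(x,u) \in X \mid x(0) = x_0\}$ and define
$$
  F(x,u) = \Big( \dot{x}(\cdot) - f(x(\cdot), u(\cdot), \cdot), \: x(T), \: g_1(x(\cdot), \cdot), \ldots, g_l(x(\cdot), \cdot) \Big)
$$
into $Y = L^d_p(0,T) \times \mathbb{R}^d \times C[0,T]^l$, together with the closed convex set $K = \{0\} \times \{x_T\} \times C_-[0,T]^l$, where $C_-[0,T] = \{y \in C[0,T] \mid y(t) \le 0 \text{ for all } t\}$. By construction $F^{-1}(K) \cap C = \Omega$ and $\dist(F(x,u), K)$ is equivalent to $\varphi(x,u)$ for $(x,u) \in A$. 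The growth conditions together with Theorem~\ref{Theorem_DiffNemytskiiOperator} ensure that $F$ maps $X$ into $Y$ and is strictly differentiable at $(x^*, u^*)$ with
$$
  DF(x^*, u^*)[h, v] = \Big( \dot{h} - A(\cdot) h - B(\cdot) v, \: h(T), \: \langle \nabla_x g_1(x^*(\cdot), \cdot), h(\cdot) \rangle, \ldots, \langle \nabla_x g_l(x^*(\cdot), \cdot), h(\cdot) \rangle \Big).
$$

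The main obstacle will be verifying that $0$ lies in the algebraic core of $S := DF(x^*, u^*)(C - (x^*, u^*)) - (K - F(x^*, u^*))$. Since $S$ is convex and contains $0$, it suffices to check that for every $(\omega, y_T, z_1, \ldots, z_l) \in Y$ there exist $\epsilon > 0$, $(h,v) \in X$ with $h(0) = 0$, and $k_j \in C_-[0,T]$ such that $\dot{h} - Ah - Bv = \epsilon \omega$, $h(T) = \epsilon y_T$, and $\langle \nabla_x g_j(x^*(\cdot), \cdot), h(\cdot) \rangle + g_j(x^*(\cdot), \cdot) - \epsilon z_j = k_j$. I would seek $h$ of the form $h = \epsilon h_0 + \alpha h_{\mathrm{MF}} + \epsilon h_1$, where $h_0$ is the solution of $\dot{h}_0 = A h_0 + B v_0 + \omega$, $h_0(0) = 0$ (with any fixed $v_0 \in L^q$); $h_{\mathrm{MF}}$ is the distinguished direction with $h_{\mathrm{MF}}(0) = h_{\mathrm{MF}}(T) = 0$ and strict descent on each active set given by hypothesis \eqref{MFCQ_StateConstr}; and $h_1$ is produced by the complete controllability of the linearised system to correct the endpoint to $y_T - h_0(T)$ at time $T$. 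The assumption $A(\cdot) \in L^{d\times d}_\infty$ and the growth conditions on $\nabla_u f$ guarantee $(h, v) \in X$ as in the proof of Theorem~\ref{Theorem_LocalExactness_TerminalConstraint}.

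The delicate verification is the state-constraint component. On each active set $E_j = \{t : g_j(x^*(t), t) = 0\}$, which is compact, the continuity of $\nabla_x g_j$ and $h_{\mathrm{MF}}$ together with \eqref{MFCQ_StateConstr} yield $\beta > 0$ and an open neighbourhood $N_j$ of $E_j$ with $\langle \nabla_x g_j(x^*(\cdot), \cdot), h_{\mathrm{MF}}(\cdot) \rangle \le -\beta$ on $N_j$; on the complement of $N_j$, $g_j(x^*(\cdot), \cdot)$ is bounded above by some $-\delta_j < 0$. Choosing $\alpha > 0$ small enough that $\alpha \|\langle \nabla_x g_j, h_{\mathrm{MF}}\rangle\|_\infty < \delta_j$ for all $j$, one obtains $\alpha \langle \nabla_x g_j, h_{\mathrm{MF}} \rangle + g_j(x^*, \cdot) \le -\mu$ on $[0,T]$ for some $\mu > 0$. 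Finally choosing $\epsilon$ small enough to absorb $\epsilon \langle \nabla_x g_j, h_0 + h_1\rangle - \epsilon z_j$ (a quantity bounded in the uniform norm) into $\mu$, the required $k_j \le 0$ is obtained, establishing \eqref{MetricRegCond} and completing the plan.
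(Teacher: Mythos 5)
Your proposal is correct and follows essentially the same route as the paper: the same operator $F$ and cone $K$, strict differentiability via the appendix results (the $g$-component actually needs Theorem~\ref{Theorem_DiffStateConstr} rather than Theorem~\ref{Theorem_DiffNemytskiiOperator}, a citation detail only), the same three-part construction of $(h,v)$ combining a solution of the perturbed equation, a controllability correction of the endpoint, and a small multiple of the MFCQ-type direction, and the same compactness argument on the active sets. The only cosmetic difference is that you verify the absorbing (core) property direction by direction, whereas the paper shows a whole product of balls lies in the set $K(x^*,u^*)$; both establish \eqref{MetricRegCond} and the conclusion then follows from Theorems~\ref{Theorem_LocalErrorBound} and \ref{Theorem_LocalExactness} exactly as you describe.
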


\begin{proof}
By \cite[Propositions~3 and 4]{DolgopolikFominyh} the growth conditions on the function $\theta$ and its derivatives
ensure that the functional $\mathcal{I}$ is Lipschitz continuous in any bounded neighbourhood of $(x^*, u^*)$. Introduce
a nonlinear operator $F \colon X \to L_p^d(0, T) \times \mathbb{R}^d \times (C[0, T])^l$ and a closed convex set 
$K \subset L_p^d(0, T) \times \mathbb{R}^d \times (C[0, T])^l$ as follows:
$$
  F(x, u) = \begin{pmatrix} \dot{x}(\cdot) - f(x(\cdot), u(\cdot), \cdot) \\ x(T) \\ g(x(\cdot), \cdot) \end{pmatrix},
  \quad K = \begin{pmatrix} 0 \\ x_T \\ (C_-[0, T])^l \end{pmatrix}
$$
Here $(C[0, T])^l$ is the Cartesian product of $l$ copies of the space $C[0, T]$ of real-valued continuous functions
defined on $[0, T]$ endowed with the uniform norm, $g(\cdot) = (g_1(\cdot), \ldots g_l(\cdot))^T$, and 
$C_-[0, T] \subset C[0, T]$ is the cone of nonpositive functions. Our aim is to apply
Theorem~\ref{Theorem_LocalErrorBound} with $C = \{ (x, u) \in X \mid x(0) = x_0 \}$ to the
operator $F$. Then one obtains that there exists $a > 0$ such that
$\dist(F(x, u), K) \ge a \dist( (x, u), F^{-1}(K) \cap C)$
for any $(x, u) \in C$ in a neighbourhood of $(x^*, u^*)$. Consequently, taking into account the facts that the set
$F^{-1}(K) \cap C$ coincides with the feasible region of problem \eqref{StateConstrainedProblem}, and
$$
  \dist(F(x, u), K) = \sum_{j = 1}^l \max_{t \in [0, T]}\big\{ g_j(x(t), t), 0 \big\} \le l \varphi(x, u)
  \quad \forall (x, u) \in A
$$
one obtains that $\varphi(x, u) \ge (a / l) \dist((x, u), \Omega)$ for any $(x, u) \in A$ in a neighbourhood of 
$(x^*, u^*)$. Hence by applying Theorem~\ref{Theorem_LocalExactness} we arrive at the required result.

By Theorems~\ref{Theorem_DiffNemytskiiOperator} and \ref{Theorem_DiffStateConstr} (see
Appendix~B) the growth conditions on the function $f$ and its derivative guarantee that the
mapping $F$ is strictly differentiable at $(x^*, u^*)$, and its Fr\'{e}chet derivative at this point has the form
$$
  DF(x^*, u^*)[h, v] = 
  \begin{pmatrix} \dot{h}(\cdot) - A(\cdot) h(\cdot) - B(\cdot) v(\cdot) \\ h(T) \\ 
  \nabla_x g(x^*(\cdot), \cdot)h(\cdot) \end{pmatrix},
$$
where $A(\cdot)$ and $B(\cdot)$ are defined in \eqref{LinearizedSystem}. Observe also that
$C - (x^*, u^*) = \{ (h, v) \in X \mid h(0) = 0 \}$, since $x^*(0) = x_0$. Consequently, the regularity condition
\eqref{MetricRegCond} from Theorem~\ref{Theorem_LocalErrorBound} takes the form $0 \in \core K(x^*, u^*)$ with
\begin{equation} \label{RegularityCone_StateConstr}
  K(x^*, u^*) = \left\{ \begin{pmatrix} \dot{h}(\cdot) - A(\cdot) h(\cdot) - B(\cdot) v(\cdot) \\ h(T) \\
  \nabla_x g(x^*(\cdot), \cdot) h(\cdot) \end{pmatrix} -
  \begin{pmatrix} 0 \\ 0 \\ (C_-[0, T])^l - g(x^*(\cdot), \cdot) \end{pmatrix} \Biggm| (h, v) \in X, \: h(0) = 0
  \right\},
\end{equation}
Let us check that this condition is satisfied. Indeed, define $X_0 = \{ (h, v) \in X \mid h(0) = 0 \}$, and introduce
the linear operator $E \colon X_0 \to L_p^d(0, T)$, $E(h, v) = \dot{h}(\cdot) - A(\cdot) h(\cdot) - B(\cdot) v(\cdot)$.
This operator is surjective and bounded, since the linear differential equation $E(h, 0) = w$ has a unique solution
for any $w \in L^d_p(0, T)$ by \cite[Theorem~1.1.3]{Filippov}, and by H\"{o}lder's inequality one has
$$
  \| E(h, v) \|_p \le \| \dot{h} \|_p + \| A(\cdot) \|_\infty \| h \|_{p} + \| B(\cdot) \|_s \| v \|_q
  \le C \| (h, v) \|_X,
$$
where $C = \max\{ 1 + \| A(\cdot) \|_{\infty}, \| B(\cdot) \|_s \}$, and $s = + \infty$ in the case $q = p$
(note that $\| B(\cdot) \|_s$ is finite due to the growth condition on $\nabla_u f$; see the proof of
Theorem~\ref{Theorem_DiffNemytskiiOperator}). Consequently, by the open mapping theorem there exists $\eta_1 > 0$ such
that
$$
  \dist((h, v), E^{-1}(w)) \le \eta_1 \| w - E(h, v) \|_p
  \quad \forall (h, v) \in X_0, \: w \in L_p^d(0, T)
$$
(see~\cite[formula~$(0.2)$]{Ioffe}). Taking $(h, v) = (0, 0)$ in the previous inequality one gets that for any 
$w \in L_p^d(0, T)$ there exists $v_1 \in L_q^m(0, T)$ such that the solution $h_1$ of the pertubed linearised equation
\begin{equation} \label{PerturbedLinearizedEquation}
  \dot{h}_1(t) = A(t) h_1(t) + B(t) v_1(t) + w(t), \quad h(0) = 0, \quad t \in [0, T]
\end{equation}
satisfies the inequality $\| (h_1, v_1) \|_X \le (\eta_1 + 1) \| w \|_p$.

Introduce the operator $\mathcal{T} \colon L_q^m(0, T) \to \mathbb{R}^d$, $\mathcal{T} v = h(T)$, where $h$ is a
solution of \eqref{LinearizedSystem} with the initial condition $h(0) = 0$. Arguing in a similar way to the proof of
Theorem~\ref{Theorem_FixedEndPointProblem_Linear} (recall that $A(\cdot) \in L_{\infty}^{d \times d}(0, T)$) one can
check that the operator $\mathcal{T}$ is bounded, while the complete controllability assumption implies that it is
surjective. Hence by the open mapping theorem there exists $\eta_2 > 0$ such that
$$
  \dist(v, \mathcal{T}^{-1}(h_T)) \le \eta_2 | h_T - \mathcal{T}(v) |
  \quad \forall v \in L_q^m(0, T), \: h_T \in \mathbb{R}^d.
$$
Taking $v = 0$ one obtains that for any $h_T \in \mathbb{R}^d$ there exists $v_2 \in L_q^m(0, T)$ such that
$\| v_2 \|_q \le (\eta_2 + 1) |h_2(T)|$, where $h_2$ is a solution of \eqref{LinearizedSystem} with $v = v_2$ satisfying
the conditions $h_2(0) = 0$ and $h_2(T) = h_T$. Furthermore, by applying the Gr\"{o}nwall-Bellman and H\"{o}lder's
inequalities, and the fact that
$$
  |h_2(t)| \le \| B(\cdot) \|_s \| v_2 \|_q + \| A(\cdot) \|_{\infty} \int_0^t |h_2(\tau)| \, d \tau 
  \quad \forall t \in [0, T]
$$
one can verify that $\| h_2 \|_{1, p} \le L \| v_2 \|_q$ for some $L > 0$ (see Remark~\ref{Remark_SensitivityProperty}).
Therefore there exists $\eta_3 > 0$ such that for any $h_T \in \mathbb{R}^d$ one can find $v_2 \in L_q^m(0, T)$
satisfying the inequality $\| (h_2, v_2) \|_X \le \eta_3 |h(T)|$, where $h_2$ is a solution of \eqref{LinearizedSystem}
with $v = v_2$ such that $h_2(0) = 0$ and $h_2(T) = h_T$.

Choose $r_1, r_2 > 0$, $w \in L_p^d(0, T)$ with $\| w \|_p \le r_1$, and $h_T \in \mathbb{R}^d$ with $|h_T| \le r_2$.
As we proved earlier, there exists $(h_1, v_1) \in X$ satisfying \eqref{PerturbedLinearizedEquation} and such that
$\| (h_1, v_1) \|_X \le (\eta_1 + 1) \| w \|_p \le (\eta_1 + 1) r_1$. By inequality~\eqref{SobolevImbedding} one has 
$\| h_1 \|_{\infty} \le C_p \| h_1 \|_{1, p} \le C_p (\eta_1 + 1) r_1$ for some $C_p > 0$ independent of $h_1$.
Furthermore, there exists $(h_2, v_2) \in X_0$ satisfying \eqref{LinearizedSystem}, and such that $h(0) = 0$,
$h_2(T) = h_T - h_1(T)$, and $\| (h_2, v_2) \|_X \le \eta_3 |h_T - h_1(T)|$. Hence, in particular, one gets that
$$
  \| h_2 \|_{\infty} \le C_p \eta_3 |h_T - h_1(T)| \le C_p \eta_3 |h_T| + C_p \eta_3 \| h_1 \|_{\infty}
  \le C_p \eta_3 r_2 + C_p^2 \eta_3 (\eta_1 + 1) r_1.
$$
Finally, by our assumption there exists $(h_3, v_3) \in X_0$ satisfying \eqref{LinearizedSystem},
\eqref{MFCQ_StateConstr} and such that  $h_3(T) = 0$. For any $j \in J$ denote 
$T_j = \{ t \in [0, T] \mid g_j(x^*(t), t) = 0 \}$. Clearly, the sets $T_j$ are compact, which implies that for
any $j \in J$ there exists $\beta_j > 0$ such that $\langle \nabla_x g_j(x^*(t), t), h_3(t) \rangle \le - \beta_j$ for
all $t \in T_j$ due to \eqref{MFCQ_StateConstr} and the continuity of the functions $\nabla_x g_j$, $x^*$, and $h_3$.
With the use of the compactness of the sets $T_j$ one obtains that for any $j \in J$ there exists a set 
$\mathcal{O}_j \subset [0, T]$ such that $\mathcal{O}_j$ is open in $[0, T]$, $T_j \subset \mathcal{O}_j$, and 
$\langle \nabla_x g_j(x^*(t), t), h_3(t) \rangle \le - \beta_j / 2$ for all $t \in \mathcal{O}_j$. On the other hand,
for any $j \in J$ there exists $\gamma_j > 0$ such that $g_j(x^*(t), t) \le - \gamma_j$ for any 
$t \in [0, T] \setminus \mathcal{O}_j$, since by definition $g_j(x^*(t), t) < 0$ for all $t \notin T_j$ and the set 
$[0, T] \setminus \mathcal{O}_j$ is compact.

Note that for any $\alpha > 0$ the pair $(\alpha h_3, \alpha v_3)$ belongs to $X_0$ and satisfies
\eqref{LinearizedSystem} and the equality $\alpha h_3(T) = 0$. Choosing a sufficiently small $\alpha > 0$ one can
suppose that $\langle \nabla_x g_j(x^*(t), t), \alpha h_3(t) \rangle < \gamma_j$ for all 
$t \in [0, T] \setminus \mathcal{O}_j$ and $j \in J$, while for any $t \in \mathcal{O}_j$ one has
$\langle \nabla_x g_j(x^*(t), t), \alpha h_3(t) \rangle \le - \alpha \beta_j / 2$. Thus, replacing $(h_3, v_3)$ with
$(\alpha h_3, \alpha v_3)$, where $\alpha > 0$ is small enough, one can suppose that
$$
  \langle \nabla_x g_j(x^*(t), t), h_3(t) \rangle + g_j(x^*(t), t) < 0
  \quad  \forall t \in [0, T] \quad \forall j \in J.
$$
With the use of the continuity of $g_j$, $\nabla_x g_j$, $x^*$, and $h_3$ one obtains that there exists $r_3 > 0$ such
that
$$
  \langle \nabla_x g_j(x^*(t), t), h_3(t) \rangle + g_j(x^*(t), t) \le - r_3
  \quad \forall t \in [0, T] \quad \forall j \in J.
$$
Choosing $r_1 > 0$ and $r_2 > 0$ sufficiently small one gets that for any $j \in J$
\begin{equation} \label{NegConstrShift}
  \Big\langle \nabla_x g_j(x^*(t), t), h_1(t) + h_2(t) + h_3(t) \Big\rangle 
  + g_j(x^*(t), t) \le - \frac{r_3}{2}
  \quad \forall t \in [0, T],
\end{equation}
since $\| h_1 \|_{\infty}$ and $\| h_2 \|_{\infty}$ can be made arbitrarily small by a proper choice of $r_1$ and $r_2$.

Define $h = h_1 + h_2 + h_3$ and $v = v_1 + v_2 + v_3$. Then $(h, v) \in X$, $h(0) = 0$, $h(T) = h_T$, $(h, v)$
satisfies \eqref{PerturbedLinearizedEquation}, and \eqref{NegConstrShift} holds true. Therefore,
$(w, h_T, y)^T \in K(x^*, u^*)$ for any $y = (y_1, \ldots, y_l)^T \in (C[0, T])^l$ such that 
$\| y_j \|_{\infty} \le r_3 / 2$ for all $j \in J$ (see \eqref{RegularityCone_StateConstr}). In other words,
$B(0, r_1) \times B(0, r_2) \times B(0, r_3 / 2) \subset K(x^*, u^*)$, i.e. $0 \in \interior K(x^*, u^*)$, and the proof
is complete.
\end{proof}

\begin{remark}
{(i)~From \eqref{MFCQ_StateConstr} it follows that $g_j(x_0, 0) < 0$ and $g_j(x_T, T) < 0$ for all $j \in J$, since
$h(0) = h(T) = 0$ in \eqref{MFCQ_StateConstr}. Furthermore, the assumption that there exists a control input $v$ such
that the corresponding solution $h$ of the linearised system satisfies \eqref{MFCQ_StateConstr} is, roughly speaking,
equivalent to the assumption that there exists $(h, v) \in X$ such that for any sufficiently small $\alpha \ge 0$ the
point $(x_{\alpha}, u_{\alpha}) = (x + \alpha h + r_1(\alpha), u + \alpha v + r_2(\alpha))$ is feasible for problem
\eqref{StateConstrainedProblem} for some $(r_1(\alpha), r_2(\alpha)) \in X$ such that 
$\| (r_1(\alpha), r_2(\alpha)) \|_X / \alpha \to 0$ as $\alpha \to +0$, and $g_j(x_{\alpha}(t), t) < 0$ for all
$t \in [0, T]$, $j \in J$ and for any sufficiently small $\alpha$. Thus, assumption \eqref{MFCQ_StateConstr} is, in
essence, a local version of Slater's condition in the nonconvex case.
}

\noindent{(ii)~It should be noted that in the case when there is no terminal constraint the complete controllability
assumption and the assumptions that the equality $h(T) = 0$ holds true for $h$ satisfying \eqref{MFCQ_StateConstr}
can be dropped from Theorem~\ref{Theorem_StateConstr_LocalExact}.
}

\noindent{(iii)~One might want to use the cone 
$L^r(0, T)_- = \{ x \in L^r(0, T) \mid x(t) \le 0 \text{ for a.e. } t \in (0, T) \}$ instead of $C_-[0, T]$ in the proof
of Theorem~\ref{Theorem_StateConstr_LocalExact} in order verify the local exactness of the penalty function for problem
\eqref{StateConstrainedProblem} with the penalty term
$\varphi(x, u) = \sum_{i = 1}^l ( \int_0^T \max\{ g_j(x(t), t), 0 \}^r \, dt )^{1/r}$, $1 \le r < + \infty$. However,
note that the cone $L^r(0, T)_-$ has empty algebraic interior, and for that reason an attempt to apply
Theorem~\ref{Theorem_LocalErrorBound} leads to incompatible assumptions on the state constraints and the linearised
system. Indeed, in this case the regularity condition \eqref{MetricRegCond} from Theorem~\ref{Theorem_LocalErrorBound}
takes the form
$$
  0 \in \core \left\{ \begin{pmatrix} \dot{h}(\cdot) - A(\cdot) h(\cdot) - B(\cdot) v(\cdot) \\ h(T) \\
  \nabla_x g(x^*(\cdot), \cdot) h(\cdot) \end{pmatrix} -
  \begin{pmatrix} 0 \\ 0 \\ (L^r(0, T)_-)^l - g(x^*(\cdot), \cdot) \end{pmatrix} \Biggm| (h, v) \in X, \: h(0) = 0
  \right\}.
$$
Hence, in particular, $0 \in \core K_0(x^*)$, where 
$K_0(x^*)$ is the union of the cones 
$\{ \nabla_x g(x^*(\cdot), \cdot) h(\cdot) + g(x^*(\cdot), \cdot) \} - (L^r(0, T)_-)^l$ with $h$ being a solution of
\eqref{LinearizedSystem} for some $v \in L^m_q(0, T)$ such that $h(0) = h(T) = 0$. However, for the function 
$y(t) = - t^{1/2r}$ one obviously has $y \in L^r(0, T)$ and $\alpha y \notin K_0(x^*)$ for any $\alpha > 0$ (for the
sake of simplicity we assume that $l = 1$), since the function  
$\nabla_x g(x^*(\cdot), \cdot) h(\cdot) + g(x^*(\cdot), \cdot)$ is continuous and $h(0) = 0$. Thus, 
$0 \notin \core K_0(x^*)$, and Theorem~\ref{Theorem_LocalErrorBound} cannot be applied.
}
\end{remark}

\subsection{Nonlinear Systems: Complete Exactness}

Now we turn to the derivation of sufficient conditions for the complete exactness of the penalty function 
$\Phi_{\lambda}$ for problem \eqref{StateConstrainedProblem}. As in the case of terminal constraints, the derivation of
easily verifiable conditions for exact penalisation of pointwise state constraints does not seem possible in the
nonlinear case. Therefore, our main goal, once again, is not to obtain easily verifiable conditions, but to
understand what kind of general properties the nonlinear system and state constraints must possess to ensure exact
penalisation. To this end, we directly apply Theorem~\ref{Theorem_CompleteExactness} in order to obtain general
sufficient conditions for complete exactness. Then we consider a particular case in which one can obtain more readily
verifiable sufficient conditions for the complete exactness of the penalty function.

Recall that \textit{the contingent cone} to a subset $C$ of a normed space $Y$ at a point $x \in C$, denoted by
$K_C(x)$, consists of all those vectors $v \in Y$ for which there exist sequences $\{ v_n \} \subset Y$ and 
$\{ \alpha_n \} \subset (0, + \infty)$ such that $v_n \to v$ and $\alpha_n \to 0$ as $n \to \infty$, and 
$x + \alpha_n v_n \in C$ for all $n \in \mathbb{N}$. It should be noted that in the case $U = L^m_q(0, T)$ by
the Lyusternik-Graves theorem (see, e.g. \cite{Ioffe}) for any $(x, u) \in A$ one has
\begin{align*}
  K_A(x, u) = \Big\{ &(h, v) \in X \Bigm| \\
  &\dot{h}(t) = \nabla_x f(x(t), u(t), t) h(t) + \nabla_u f(x(t), u(t), t) v(t) \text{ for a.e. } t \in (0, T),
  h(0) = h(T) = 0 \Big\}
\end{align*}
provided the linearised system is completely controllable, and the assumptions of
Theorem~\ref{Theorem_DiffNemytskiiOperator} hold true. In the case when there is no terminal constraint, the complete
controllability assumption and the condition $h(T) = 0$ are redundant.

For any $(x, u) \in X$ denote $\phi(x, t) = \max_{j \in J} \max\{ g_j(x, t), 0 \}$. 
Then $\varphi(x, u) = \max_{t \in [0, T]} \phi(x(t), t)$. Define 
$T(x) = \{ t \in [0, T] \mid \phi(x(t), t) = \varphi(x, u) \}$ and 
$J(x, t) = \{ j \in J \mid g_j(x(t), t) = \varphi(x, u) \}$. Clearly, $J(x, t) \ne \emptyset$ iff $t \in T(x)$.

Let $\mathcal{I}^*$ be the optimal value of problem \eqref{StateConstrainedProblem}. Note that the set 
$\Omega_{\delta} = \{ (x, u) \in A \mid \varphi(x, u) < \delta \}$ consists of all those trajectories $x(\cdot)$ of the
system $\dot{x} = f(x, u, t)$, $u \in U$, $x(0) = x_0$, $x(T) = x_T$, which satisfy the perturbed state constraints
$g_j(x(t), t) < \delta$ for all $t \in [0, T]$ and $j \in J$.

\begin{theorem} \label{Theorem_StateConstrainedProblem_Nonlinear}
Let the following assumptions be valid:
\begin{enumerate}
\item{$\theta$ is continuous and differentiable in $x$ and $u$, the functions $g_j$, $j \in J$, are continuous,
differentiable in $x$, and the functions $\nabla_x \theta$, $\nabla_u \theta$, $\nabla_x g_j$, and $f$ are continuous;
}

\item{either $q = + \infty$ or $\theta$ and $\nabla_x \theta$ satisfy the growth condition of order $(q, 1)$, while
$\nabla_u \theta$ satisfies the growth condition of order $(q - 1, q')$;
}

\item{there exists a globally optimal solution of problem \eqref{StateConstrainedProblem};}

\item{there exist $\lambda_0 > 0$, $c > \mathcal{I}^*$, and $\delta > 0$ such that the set 
$S_{\lambda_0}(c) \cap \Omega_{\delta}$ is bounded in $X$, and the function $\Phi_{\lambda_0}$ is bounded below on $A$;
\label{Assumpt_StateConstr_SublevelBounded}
}

\item{there exists $a > 0$ such that for any $(x, u) \in S_{\lambda_0}(c) \cap (\Omega_{\delta} \setminus \Omega)$
one can find $(h, v) \in K_A(x, u)$ such that for any $t \in T(x)$ one has
\begin{equation} \label{MFCQ_StateConstr_Nonlocal}
  \langle \nabla_x g_j(x(t), t), h(t) \rangle \le - a \| (h, v) \|_X \quad \forall j \in J(x, t).
\end{equation}
\label{Assumpt_StateConstr_Decay}
}
\end{enumerate}
Then there exists $\lambda^* \ge 0$ such that for any $\lambda \ge \lambda^*$ the penalty function $\Phi_{\lambda}$
for problem \eqref{StateConstrainedProblem} is completely exact on $S_{\lambda}(c)$. 
\end{theorem}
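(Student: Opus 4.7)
The plan is to invoke Theorem~\ref{Theorem_CompleteExactness}, most of whose hypotheses can be verified by arguments essentially identical to those already used in the proofs of Theorems~\ref{Theorem_FixedEndPointProblem_Linear} and \ref{Theorem_FixedEndPointProblem_NonLinear}. The closedness of the set $A$ follows, exactly as in the proof of Theorem~\ref{Theorem_FixedEndPointProblem_NonLinear}, by extracting subsequences of $\{ \dot{x}_n \}$ and $\{ u_n \}$ that converge almost everywhere and passing to the limit in the equality $\dot{x}_n(t) = f(x_n(t), u_n(t), t)$ using the continuity of $f$ and the closedness of $U$. The growth conditions of assumption~2 ensure, via \cite[Proposition~4]{DolgopolikFominyh}, that $\mathcal{I}$ is Lipschitz continuous on any bounded open set containing the bounded set $S_{\lambda_0}(c) \cap \Omega_{\delta}$. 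The continuity of the penalty term $\varphi(x, u) = \max_{t \in [0, T]} \phi(x(t), t)$ follows from the continuity of $g_j$, the compactness of $[0, T]$, and the Sobolev embedding \eqref{SobolevImbedding}.

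It therefore remains to establish the key inequality $\varphi^{\downarrow}_A(x, u) \le - a$ for every $(x, u) \in S_{\lambda_0}(c) \cap (\Omega_{\delta} \setminus \Omega)$. Fix such a pair, and let $(h, v) \in K_A(x, u)$ be the element provided by assumption~\ref{Assumpt_StateConstr_Decay}. By the definition of the contingent cone, there exist sequences $\{ (h_n, v_n) \} \subset X$ with $(h_n, v_n) \to (h, v)$ in $X$ and $\{ \alpha_n \} \subset (0, + \infty)$ with $\alpha_n \to 0$ such that $(x + \alpha_n h_n, u + \alpha_n v_n) \in A$ for every $n$. By \eqref{SobolevImbedding} one has $h_n \to h$ uniformly on $[0, T]$, so in particular $\{ h_n \}$ is bounded in $L^d_{\infty}(0, T)$.

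The crux is to show that, for every $\varepsilon > 0$, for all sufficiently large $n$ and every $t \in [0, T]$, $j \in J$,
\begin{equation*}
  g_j(x(t) + \alpha_n h_n(t), t) \le \varphi(x, u) - \alpha_n (a - \varepsilon) \| (h, v) \|_X.
\end{equation*}
Writing the first-order expansion
\begin{equation*}
  g_j(x(t) + \alpha_n h_n(t), t) = g_j(x(t), t) + \alpha_n \langle \nabla_x g_j(x(t), t), h_n(t) \rangle + r_{j, n}(t),
\end{equation*}
the uniform continuity of $\nabla_x g_j$ on the compact tube $\{ x(t) + \tau h_n(t) \mid t \in [0, T],\, \tau \in [0, \alpha_n],\, n \in \mathbb{N} \}$ (itself bounded by the uniform convergence of $h_n$) gives $\sup_{t, j} |r_{j, n}(t)| / \alpha_n \to 0$ and $\langle \nabla_x g_j(x(t), t), h_n(t) - h(t) \rangle \to 0$ uniformly in $t, j$. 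For pairs $(t, j)$ with $t \in T(x)$ and $j \in J(x, t)$ the estimate then follows from \eqref{MFCQ_StateConstr_Nonlocal}. For the remaining pairs one uses the continuity of $(t, j) \mapsto g_j(x(t), t) - \varphi(x, u)$ and the compactness of $[0, T] \times J$: outside a small closed neighbourhood of $\{ (t, j) \mid t \in T(x),\, j \in J(x, t) \}$ there is a uniform gap $g_j(x(t), t) \le \varphi(x, u) - \eta$ that absorbs the $O(\alpha_n)$ perturbation, while on the neighbourhood itself the linear term still dominates by the uniform continuity of $\nabla_x g_j$ and $h$. Taking the maximum over $t$ and $j$ and dividing by $\alpha_n \| (h_n, v_n) \|_X$ yields $\varphi^{\downarrow}_A(x, u) \le - a$ (after letting $\varepsilon \to 0$), uniformly in $(x, u)$.

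The main obstacle is this last uniformity in $t$: the set $T(x)$ depends on $(x, u)$ and can be arbitrarily complicated, so one cannot proceed pointwise. The proof hinges on controlling the first-order remainders uniformly, which itself rests on the boundedness of $S_{\lambda_0}(c) \cap \Omega_{\delta}$ granted by assumption~\ref{Assumpt_StateConstr_SublevelBounded} and on the compactness of $[0, T]$ together with the continuity assumptions on $g_j$ and $\nabla_x g_j$.
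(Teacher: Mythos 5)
Your proposal is correct, and its skeleton is the same as the paper's: verify the hypotheses of Theorem~\ref{Theorem_CompleteExactness} (closedness of $A$, Lipschitz continuity of $\mathcal{I}$ on a bounded open set containing $S_{\lambda_0}(c)\cap\Omega_{\delta}$, continuity of $\varphi$), then bound $\varphi^{\downarrow}_A(x,u)$ along the contingent-cone sequences supplied by assumption~\ref{Assumpt_StateConstr_Decay}. Where you differ is in the crucial step: the paper computes the exact limit of $\bigl(\varphi(x+\alpha_n h_n,u+\alpha_n v_n)-\varphi(x,u)\bigr)/\alpha_n$ by invoking the Hadamard directional differentiability of $\phi_j(x)=\max_{t\in[0,T]}g_j(x(t),t)$ together with the Danskin--Demyanov theorem, obtaining $\max_{j}\max_{t}\langle\nabla_x g_j(x(t),t),h(t)\rangle$ over the active indices and then applying \eqref{MFCQ_StateConstr_Nonlocal}; you instead prove only the one-sided estimate that is actually needed, via an elementary splitting of $[0,T]\times J$ into a closed neighbourhood of the active set $\{(t,j)\mid g_j(x(t),t)=\varphi(x,u)\}$, where continuity of $\nabla_x g_j(x(\cdot),\cdot)$ and of $h$ propagates \eqref{MFCQ_StateConstr_Nonlocal} up to $\varepsilon$ and $g_j(x(t),t)\le\varphi(x,u)$ supplies the zeroth-order term, and its complement, where a compactness gap $\eta>0$ absorbs the $O(\alpha_n)$ perturbation; the uniform-in-$t$ remainder control indeed follows from uniform continuity of $\nabla_x g_j$ on a compact tube. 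This buys a self-contained argument at the cost of a page of $\varepsilon$-bookkeeping, whereas the paper's citation route is shorter and gives the exact directional derivative. Two small remarks: both your proof and the paper's implicitly need $(h,v)\neq(0,0)$ in assumption~\ref{Assumpt_StateConstr_Decay} (otherwise \eqref{MFCQ_StateConstr_Nonlocal} is vacuous and one cannot divide by $\|(h,v)\|_X$), so this is a shared reading of the hypothesis rather than a defect of yours; and your closing comment misplaces the role of assumption~\ref{Assumpt_StateConstr_SublevelBounded}: no uniformity of the remainders across different $(x,u)$ is required, since the rate of steepest descent is estimated pointwise and the uniform constant $a$ comes directly from assumption~\ref{Assumpt_StateConstr_Decay}, while the boundedness of $S_{\lambda_0}(c)\cap\Omega_{\delta}$ is needed to provide a bounded open set on which $\mathcal{I}$ is Lipschitz continuous, as Theorem~\ref{Theorem_CompleteExactness} demands. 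Also note, when taking the maximum over $t$ and $j$, that the extra $0$ inside $\varphi$ is harmless because $\varphi(x,u)-\alpha_n(a-\varepsilon)\|(h,v)\|_X>0$ for large $n$, since $\varphi(x,u)>0$ on $\Omega_{\delta}\setminus\Omega$.
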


\begin{proof}
By \cite[Propositions~3 and 4]{DolgopolikFominyh} the functional $\mathcal{I}$ is Lipschitz continuous on any bounded
open set containing the set $S_{\lambda_0}(c) \cap \Omega_{\delta}$ due to the growth conditions on the function
$\theta$ and its derivatives. Arguing in the same way as in the proof of
Theorem~\ref{Theorem_FixedEndPointProblem_NonLinear} one can easily verify that the continuity of the function $f$ along
with the closedness of the set $U$ ensure that the set $A$ is closed. The continuity of the penalty term $\varphi$ on
$X$ follows from Corollary~\ref{Corollary_StateConstrPenTerm_Contin} (see Appendix~B).

Thus, by Theorem~\ref{Theorem_CompleteExactness} it is sufficient to check that there exists $a > 0$ such that
$\varphi^{\downarrow}_A(x, u) \le - a$ for all 
$(x, u) \in S_{\lambda_0}(c) \cap (\Omega_{\delta} \setminus \Omega)$. Our aim is to show that
assumption~\ref{Assumpt_StateConstr_Decay} ensures the validity of this inequality. 

Fix any $(x, u) \in S_{\lambda_0}(c) \cap (\Omega_{\delta} \setminus \Omega)$, and let $(h, v) \in K_A(x, u)$ be from
assumption~\ref{Assumpt_StateConstr_Decay}. Then by the definition of contingent cone there exist sequences 
$(h_n, v_n) \subset X$ and $\{ \alpha_n \} \subset (0, + \infty)$ such that $(h_n, v_n) \to (h, v)$ and 
$\alpha_n \to 0$ as $n \to \infty$, and for all $n \in \mathbb{N}$ one has $(x + \alpha_n h_n, u + \alpha_n v_n) \in A$.

Denote $\phi_j(x) = \max_{t \in [0, T]} g_j(x(t), t)$. Observe that $\varphi(x, u) = \max_{j \in J} \phi_j(x)$, since
$\varphi(x, u) > 0$ (recall that $(x, u) \notin \Omega$). It is well-known (see, e.g. 
\cite[Sects.~4.4 and 4.5]{IoffeTihomirov}) that the following equality holds true:
$$
  \lim_{n \to \infty} \frac{\phi_j(x + \alpha_n h_n) - \phi_j(x)}{\alpha_n} 
  = \max_{t \in T_j(x)} \langle \nabla_x g_j(x(t), t), h(t) \rangle, \quad
  T_j(x) = \{ t \in [0, T] \mid g_j(x(t), t) = \phi_j(x) \}.
$$
Hence by the Danskin-Demyanov theorem (see, e.g. \cite[Theorem~4.4.3]{IoffeTihomirov}) one has
$$
  \lim_{n \to \infty} \frac{\varphi(x + \alpha_n h_n, u + \alpha_n v_n) - \varphi(x, u)}{\alpha_n}
  = \max_{j \in J(x)} \max_{t \in T_j(x)} \langle \nabla_x g_j(x(t), t), h(t) \rangle, \quad
  J(x) = \{ j \in J \mid \phi_j(x) = \varphi(x, u) \}.
$$
Observe that
$T(x) = \cup_{j \in J(x)} T_j(x)$, and $j \in J(x, t)$ for some $t \in T(x)$ iff $j \in J(x)$ and $t \in T_j(x)$.
Consequently, by applying assumption~\ref{Assumpt_StateConstr_Decay} one obtains that
$$
  \varphi^{\downarrow}_A(x, u) \le 
  \lim_{n \to \infty} \frac{\varphi(x + \alpha_n h_n, u + \alpha_n v_n) - \varphi(x, u)}{\alpha_n \| (h_n, v_n) \|_X}
  = \frac{1}{\| (h, v) \|_X} \max_{j \in J(x)} \max_{t \in T_j(x)} \langle \nabla_x g_j(x(t), t), h(t) \rangle
  \le -a,
$$
and the proof is complete.
\end{proof}

Clearly, the main assumption ensuring that the penalty function $\Phi_{\lambda}$ for problem
\eqref{StateConstrainedProblem} is completely exact is assumption~\ref{Assumpt_StateConstr_Decay}. This assumption can
be easily explained in the case $l = 1$, i.e. when there is only one state constraint. Roughly speaking, in this case
assumption~\ref{Assumpt_StateConstr_Decay} means that if a trajectory $x(\cdot)$ of the system $\dot{x} = f(x, u, t)$
with $x(0) = x_0$ and $x(T) = x_T$ slighly violates the constraints (i.e. $g_1(x(t), t) < \delta$ for all $t$), then by
changing the control input $u$ in such a way that the endpoint conditions $x(0) = x_0$ and $x(t) = x_T$ remain to hold
true one must be able to slighly shift the trajectory $x(t)$ in a direction close to $- \nabla_x g_1(x(t), t)$ at those
points $t$ for which the constraint violation measure $\phi(x(t), t) = \max\{ 0, g_1(x(t), t) \}$ is the largest.
However, note that this shift must be uniform for all 
$(x, u) \in S_{\lambda_0}(c) \cap (\Omega_{\delta} \setminus \Omega)$ in the sense that inequality 
\eqref{MFCQ_StateConstr_Nonlocal} must hold true for all those $(x, u)$. The validity of this inequality in a
neighbourhood of a given point can be verified with the use of the same technique as in the proof of
Theorem~\ref{Theorem_StateConstr_LocalExact}. Namely, one can check that in the case $U = L^m_q(0, T)$ inequality
\eqref{MFCQ_StateConstr_Nonlocal} holds true in a neighbourhood of a given point $(\widehat{x}, \widehat{u})$, provided
there exists a solution $(h, v)$ of the corresponding linearised system such that $h(0) = h(T) = 0$ and 
$\langle \nabla_x g_j(\widehat{x}(t), t), h(t) \rangle < 0$ for all $t \in T(x)$ and $j \in J(x, t)$. Consequently, the
main difficulty in verifying assumption~\ref{Assumpt_StateConstr_Decay} stems from the fact that the validity of
inequality \eqref{MFCQ_StateConstr_Nonlocal} must be checked not locally, but on the set
$S_{\lambda_0}(c) \cap (\Omega_{\delta} \setminus \Omega)$. Let us briefly discuss a particular case in which one can
easily verify that assumption~\ref{Assumpt_StateConstr_Decay} holds true.

\begin{example} \label{Example_LTV_SlaterImpliesExactPen}
Suppose that the system is linear, i.e. $f(x, u, t) = A(t) x + B(t) u$, the set $U$ of admissible control inputs is
convex, the functions $g_j(x, t)$ are convex in $x$, and Slater's condition holds true, i.e. there exists 
$(\widehat{x}, \widehat{u}) \in A$ such that $g_j(\widehat{x}(t), t) < 0$ for all $ t \in [0, T]$ and $j \in J$. Choose
any $(x, u) \in S_{\lambda_0}(c) \cap (\Omega_{\delta} \setminus \Omega)$. For any $n \in \mathbb{N}$ define 
$\alpha_n = 1 / n$, $(h, v) = (\widehat{x} - x, \widehat{u} - u)$, and 
$(x_n, u_n) = \alpha_n (\widehat{x}, \widehat{u}) + (1 - \alpha_n) (x, u) = (x, u) + \alpha_n (h, v)$. 
Then $(x_n, u_n) \in A$ for all $n \in \mathbb{N}$ and $(h, v) \in K_A(x, u)$ due to the convexity of the set $U$ and
the linearity of the system. Fix any $j \in J$ and $t \in [0, T]$ such that $g_j(x(t), t) \ge 0$. Due to the
convexity of $g_j(x, t)$ in $x$ one has
\begin{align*}
  \langle \nabla_x g_j(x(t), t), \alpha_n h(t) \rangle 
  &\le g_j(x(t) + \alpha_n h(t), t) - g_j(x(t), t) \\
  &\le \alpha_n g_j(\widehat{x}(t), t) + (1 - \alpha_n) g_j(x(t), t) - g_j(x(t), t)
  \le \alpha_n g_j(\widehat{x}(t), t) \le \alpha_n \eta,
\end{align*}
where $\eta = \max_{j \in J} \max_{t \in [0, T]} g_j(\widehat{x}(t), t)$. Note that $\eta < 0$ by Slater's condition.
Consequently, for any $t \in T(x)$ and $j \in J(x, t)$ one has
$$
  \langle \nabla_x g_j(x(t), t), h(t) \rangle \le \frac{\eta}{\| (h, v) \|_X} \| (h, v) \|_X.
$$
By assumption~\ref{Assumpt_StateConstr_SublevelBounded} of Theorem~\ref{Theorem_StateConstrainedProblem_Nonlinear} the
set $S_{\lambda_0}(c) \cap (\Omega_{\delta} \setminus \Omega)$ is bounded. Therefore, there exists $C > 0$ such that
$\| (h, v) \|_X = \| (\widehat{x} - x, \widehat{u} - u) \|_X \le C$ for all 
$(x, u) \in S_{\lambda_0}(c) \cap (\Omega_{\delta} \setminus \Omega)$. Hence assumption~\ref{Assumpt_StateConstr_Decay}
of Theorem~\ref{Theorem_StateConstrainedProblem_Nonlinear} is satisfied with $a = |\eta| / C$.
\end{example}

\begin{remark}
Apparently, assumption~\ref{Assumpt_StateConstr_Decay} of Theorem~\ref{Theorem_StateConstrainedProblem_Nonlinear} holds
true in a much more general case than the case of optimal control problems for linear systems with convex state
constraints. In particular, it seems that in the case when $j = 1$ (i.e. there is only one state constraint), 
$g_1(x_0, 0) < 0$, $g_1(x_T, T) < 0$, and $\inf |\nabla_x g_1(x, t)| > 0$, where the infimum is taken over all those 
$t \in [0, T]$ and $x \in \mathbb{R}^d$ for which $0 < g_1(x, t) < \delta$, assumption~\ref{Assumpt_StateConstr_Decay}
of Theorem~\ref{Theorem_StateConstrainedProblem_Nonlinear} is satisfied under very mild assumptions on the system. On
the other hand, if either initial or terminal states lie on the boundary of the feasible region (i.e. either 
$g_1(x_0, 0) = 0$ or $g_1(x_T, T) = 0$), then assumptions~\ref{Assumpt_StateConstr_Decay} cannot be satisfied. A
detailed analysis of these conditions lies outside the scope of this article, and we leave it as a challenging open
problem for future research.
\end{remark}

\begin{remark}
One can easily extend the proof of Theorem~\ref{Theorem_StateConstrainedProblem_Nonlinear} to the case when 
the penalty term $\varphi$ is defined as $\varphi(x, u) = \| \phi(x(\cdot), \cdot) \|_r$ for some $r \in (1, + \infty)$,
where $\phi(x, t) = \max_{j \in J} \max\{ g_j(x, t), 0 \}$ (i.e. the state constraints are penalised via the
$L^r$-norm). In this case assumption~\ref{Assumpt_StateConstr_Decay} takes the following form: there exists $a > 0$
such that for any $(x, u) \in S_{\lambda_0}(c) \cap (\Omega_{\delta} \setminus \Omega)$ one can find 
$(h, v) \in K_A(x, u)$ satisfying the inequality 
\begin{equation} \label{StateConstr_Decay_Impossible}
  \frac{1}{\varphi(x, u)^{r - 1}} \int_0^T \phi(x(t), t)^{r - 1} 
  \max_{j \in J_0(x(t), t)} \langle \nabla_x g_j(x(t), t), h(t) \rangle \, dt \le - a \| (h, v) \|_X,
\end{equation}
where $J_0(x, t) = \{ j \in J \cup \{ 0 \} \mid g_j(x, t) = \phi(x, t) \}$ and $g_0(x, t) \equiv 0$. However, the author
failed to find any optimal control problems for which this assumptions can be verified. 
\end{remark}

\subsection{Nonlinear Systems: A Different View}

As Examples~\ref{CounterExample_StateEqConstr} and \ref{CounterExample_StateInEqConstr} demonstrate, penalty functions
for problems with state constraints may fail to be exact due to the fact that the penalty term $\varphi$, unlike the
cost functional $\mathcal{I}(x, u)$, does not depend on the control inputs $u$ explicitly. In the case when
$\mathcal{I}$ does not explicitly depend on $u$, one can utilise a somewhat different approach and obtain stronger
results on the exactness of penalty functions for state constrained problems. Furthermore, this approach serves as a
proper motivation to consider a general theory of exact penalty functions in the \textit{metric} space setting (as it is
done in Section~\ref{Sect_ExactPenaltyFunctions}), but not in the normed space setting.

Consider the following variable-endpoint optimal control problem with state inequality constraints:
\begin{equation} \label{VariableEndpointProblem}
\begin{split}
  &\min \: \mathcal{I}(x) = \int_0^T \theta(x(t), t) \, dt + \zeta(x(T)) \quad 
  \text{subject to} \quad \dot{x}(t) = f(x(t), u(t), t), \quad t \in [0, T], \\
  &x(0) = x_0, \quad x(T) \in S_T, \quad u \in U, \quad
  g_j(x(t), t) \le 0 \quad \forall t \in [0, T], \: j \in J.
\end{split}
\end{equation}
Here $\theta \colon \mathbb{R}^d \times [0, T] \to \mathbb{R}$, $\zeta \colon \mathbb{R}^d \to \mathbb{R}$, 
$f \colon \mathbb{R}^d \times \mathbb{R}^m \times [0, T] \to \mathbb{R}^d$, and $g_j \colon \mathbb{R}^d \times [0, T]
\to \mathbb{R}$, $j \in J = \{ 1, \ldots, l \}$, are given functions, 
$x_0 \in \mathbb{R}^d$ and $T > 0$ are fixed, while $S_T \subseteq \mathbb{R}^d$ and
$U \subseteq L_q^m(0, T)$ are closed sets. It should be noted that with the use of the standard time scaling
transformation time-optimal control problems can be recast as problems of the form \eqref{VariableEndpointProblem}.

We will treat problem \eqref{VariableEndpointProblem} as a variational problem, not as an optimal control one. To this
end, fix some $p \in (1, + \infty]$, and define
$$
  X = \Big\{ x \in (C[0, T])^d \Bigm| \exists u \in U \colon 
  x(t) = x_0 + \int_0^t f(x(\tau), u(\tau), \tau) \, d \tau \quad \forall t \in [0, T] \Big\},
$$
i.e. $X$ is the set of trajectories of the controlled system under consideration. We equip $X$ with the metric 
$d_X(x, y) = \| x - y \|_p + |x(T) - y(T)|$. Define $A = \{ x \in X \mid x(T) \in S_T \}$ and 
$M = \{ x \in X \mid g_j(x(t), t) \le 0 \: \forall t \in [0, T], \: j \in J \}$. Then problem
\eqref{StateConstrainedProblem} can be rewritten as the problem of minimising $\mathcal{I}(x)$, $x \in X$, over the set 
$M \cap A$. Observe that the set $A$ is closed in $X$ due to the facts that the set $S_T$ is closed, and if a sequence
$\{ x_n \}$ converges to some $x$ in the metric space $X$, then $\{ x_n(T) \}$ converges to $x(T)$. Let us also point
out simple sufficient conditions for the metric space $X$ to be complete.

\begin{proposition} \label{Prop_CompleteMetricSpace}
Let the function $f$ be continuous and 
$U = \{ u \in L^m_{\infty}(0, T) \mid u(t) \in Q \text{ for a.e. } t \in (0, T) \}$ for some compact convex set
$Q \subset \mathbb{R}^m$. Suppose also that the set $f(x, Q, t)$ is convex for all $x \in \mathbb{R}^d$ and 
$t \in [0, T]$, and for any $u \in U$ a solution of $\dot{x} = f(x, u, t)$ with $x(0) = x_0$ is defined on $[0, T]$.
Then $X$ is a complete metric space and a compact subset of $(C[0, T])^d$.
\end{proposition}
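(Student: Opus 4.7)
The plan is to establish that $X$ is a compact subset of $(C[0, T])^d$ with the uniform topology, and then deduce that $(X, d_X)$ is compact, hence complete, since uniform convergence on $[0, T]$ implies convergence in $d_X$ (because $\| x_n - x \|_p \le T^{1/p} \| x_n - x \|_{\infty}$ and pointwise evaluation at $T$ is continuous for the uniform metric).

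To show compactness of $X$ in $(C[0, T])^d$ via Arzel\`a-Ascoli, I would first obtain uniform bounds. Since $Q$ is compact there exists $R > 0$ such that $|u(t)| \le R$ a.e. for any $u \in U$. Together with continuity of $f$, this bound and Gr\"onwall's inequality applied to $|x(t)| \le |x_0| + \int_0^t |f(x(\tau), u(\tau), \tau)| d\tau$ yield a constant $C > 0$ such that $\| x \|_{\infty} \le C$ for every $x \in X$. Once trajectories are uniformly bounded, continuity of $f$ on the compact set $\{ |x| \le C \} \times Q \times [0, T]$ gives a uniform bound $|f| \le M$, whence $|x(t) - x(s)| \le M|t - s|$ for all $x \in X$, establishing equicontinuity.

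The main obstacle is proving that $X$ is \emph{closed} in $(C[0, T])^d$; this is where the convexity hypothesis on $f(x, Q, t)$ enters in an essential way. Given $x_n \in X$ with associated controls $u_n \in U$ and $x_n \to x^*$ uniformly, one must show $x^* \in X$. I would observe that the derivatives $\dot{x}_n(\cdot) = f(x_n(\cdot), u_n(\cdot), \cdot)$ are uniformly bounded in $L^{\infty}((0, T); \mathbb{R}^d)$, so after extracting a subsequence they converge weakly-$*$ to some $v \in L^{\infty}((0, T); \mathbb{R}^d)$; uniform convergence of $x_n$ then forces $\dot{x}^* = v$ a.e. The critical step is to verify $v(t) \in f(x^*(t), Q, t)$ a.e. Using the convexity and closedness of $f(x^*(t), Q, t)$ (the latter follows from compactness of $Q$ and continuity of $f$), together with Mazur's lemma applied to the weakly convergent sequence $\dot{x}_n$ and the uniform convergence $x_n \to x^*$ combined with continuity of $f$, one concludes the inclusion pointwise a.e. A measurable selection theorem of Filippov type then produces $u^* \in U$ with $f(x^*(t), u^*(t), t) = v(t)$ a.e., yielding $x^* \in X$.

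Combining closedness with the Arzel\`a-Ascoli argument shows $X$ is compact in $(C[0, T])^d$. Any sequence $\{ x_n \} \subset X$ admits a uniformly convergent subsequence $x_{n_k} \to x^* \in X$, which in particular satisfies $d_X(x_{n_k}, x^*) \to 0$. Hence $(X, d_X)$ is sequentially compact, and therefore both compact and complete.
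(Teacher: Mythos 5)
Your overall architecture is sound and in fact goes further than the paper: the paper simply observes (via Filippov's implicit-function lemma) that $X$ coincides with the solution set of the differential inclusion $\dot{x} \in F(x,t)$, $F(x,t) = f(x,Q,t)$, cites the theorem from Filippov's monograph that this solution set is compact in $(C[0,T])^d$ when all solutions exist on $[0,T]$, and then deduces completeness exactly as you do (a Cauchy sequence with a uniformly convergent subsequence converges in $d_X$). Your closedness argument — weak-$*$ compactness of the derivatives in $L^\infty$, Mazur's lemma, uniform continuity of $f$ on a compact set to place the convex combinations in an $\varepsilon$-neighbourhood of the convex compact set $f(x^*(t),Q,t)$, and then a Filippov-type measurable selection to recover $u^* \in U$ — is the standard closure theorem for inclusions with convex compact right-hand sides and is correct in outline; it is essentially a proof of the result the paper cites.

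There is, however, a genuine gap at the very first step: the uniform bound $\|x\|_\infty \le C$ for all $x \in X$ does not follow from Gr\"onwall's inequality as you claim. Gr\"onwall requires an estimate of the form $|f(x,u,t)| \le a + b|x|$, and the proposition assumes only that $f$ is continuous and $Q$ is compact; this gives boundedness of $f$ only on sets where $x$ is already confined to a bounded region, which makes your argument circular. The hypothesis that every solution is defined on $[0,T]$ rules out blow-up of each individual trajectory, but it does not feed into Gr\"onwall at all, and without it uniform boundedness over all $u \in U$ is exactly the nontrivial content of the compactness theorem the paper invokes. To close the gap you must either cite that theorem (compactness in $C$ of the solution set of $\dot{x} \in f(x,Q,t)$, $x(0)=x_0$, under the continuability hypothesis), or prove uniform boundedness by a contradiction/limiting argument: if $\|x_n\|_\infty \to \infty$, truncate the trajectories at the first exit times from balls of increasing radius, use your Arzel\`a--Ascoli plus closure argument on these bounded pieces (where it is valid) and a diagonal extraction to produce a solution of the inclusion that leaves every ball before time $T$, contradicting the assumption that it extends to $[0,T]$. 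With that repair, the rest of your proof (equicontinuity from the resulting bound on $f$, closedness, and compactness hence completeness of $(X,d_X)$) goes through.
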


\begin{proof}
Under our assumptions the space $X$ consists of all solutions of the differential inclusion $\dot{x} \in F(x, t)$,
$x(0) = x_0$, with $F(x, t) = f(x, Q, t)$ by Filippov's theorem (see, e.g. \cite[Theorem~8.2.10]{AubinFrankowska}).
Furthermore, by \cite[Theorem~2.7.6]{Filippov} the set $X$ is compact in $(C[0, T])^d$.

Let $\{ x_n \} \subset X$ be a Cauchy sequence in $X$. Since $X$ is compact in $(C[0, T])^d$, there exists a
subsequence $\{ x_{n_k} \}$ uniformly converging to some some $x^* \in X$, which obviously implies that $\{ x_{n_k} \}$
converges to $x^*$ in $X$. Hence with the use of the fact that $x_n$ is a Cauchy sequence in $X$ one can easily check
that $x_n$ converges to $x^*$ in $X$. Thus, $X$ is a complete metric space.
\end{proof}

Formally introduce the penalty term 
$$
  \varphi(x) = \| \phi(x(\cdot), \cdot) \|_p \quad \forall x \in X, \quad
  \phi(x, t) = \max\{ g_1(x, t), \ldots, g_l(x, t), 0 \} \quad \forall x \in \mathbb{R}^d, \: t \in [0, T]
$$
(note that here $p$ is the same as in the definition of metric in $X$). Then $M = \{ x \in X \mid \varphi(x) = 0 \}$,
and one can consider the penalised problem of minimising the penalty function 
$\Phi_{\lambda}(x) = \mathcal{I}(x) + \lambda \varphi(x)$ over the set $A$, which in the case $p < + \infty$
can be formally written as follows:
\begin{equation} \label{PenalizedProblem_StateConstraints}
\begin{split}
  {}&\min \: \Phi_{\lambda}(x) = \int_0^T \theta(x(t), t) \, dt +
  \lambda \bigg( \int_0^T \max\{ g_1(x(t), t), \ldots, g_l(x(t), t), 0 \}^p \, dt \bigg)^{1/p} + \zeta(x(T)) \\
  {}&\text{subject to } x(t) = x_0 + \int_0^t f(x(\tau), u(\tau), \tau) \, d \tau, \quad t \in [0, T], \quad
  x(T) \in S_T, \quad u \in U, \quad x \in (C[0, T])^d.
\end{split}
\end{equation}
Note, however, that due to our choice of the space $X$ and the metric in this space the notions of locally optimal
solutions/inf-stationary points of this problem (and problem \eqref{VariableEndpointProblem}) are understood in a rather
specific sense. In particular, $(x^*, u^*)$ is a locally optimal solution of this problem iff for any feasible point
$(x, u)$ satisfying the inequality $\| x - x^* \|_p + |x(T) - x^*(T)| < r$ for some $r > 0$ one has 
$\Phi_{\lambda}(x) \ge \Phi_{\lambda}(x^*)$. It should be mentioned that any locally optimal solution/inf-stationary
point of problem \eqref{VariableEndpointProblem} (or \eqref{PenalizedProblem_StateConstraints}) in $X$ is also a locally
optimal solution/inf-stationary point of problem \eqref{VariableEndpointProblem} (or
\eqref{PenalizedProblem_StateConstraints}) in the space $W^d_{1, p}(0, T) \times L^m_q(0, T)$, but the converse
statement is not true. In a sense, one can say that our choice of the underlying space $X$ in this section reduces 
the number of locally optimal solutions/inf-stationary points (and, as a result, leads to the weaker notion of the
complete exactness of $\Phi_{\lambda}$ than in the previous section). 

Let us derive sufficient conditions for the complete exactness of the penalty function $\Phi_{\lambda}$ for problem
\eqref{VariableEndpointProblem}. To conveniently formulate these conditions, define $g_0(x, t) \equiv 0$. 
Then $\phi(x, t) \equiv \max\{ g_j(x, t) \mid j \in J \cup \{ 0 \} \}$. For any $x \in \mathbb{R}^d$ and $t \in [0, T]$
let $J(x, t) = \{ j \in J \cup \{ 0 \} \mid \phi(x, t) = g_j(x, t) \}$. Finally, suppose that the functions $g_j$ are
differentiable in $x$, and define the subdifferential $\partial_x \phi(x, t)$ of the function $x \mapsto \phi(x, t)$ as
follows:
\begin{equation} \label{SubdiffOfMaxStateConstr}
  \partial_x \phi(x, t) = \co\big\{ \nabla_x g_j(x, t) \bigm| j \in J(x, t) \big\}.
\end{equation}
Let us point out that $\partial_x \phi(x, t)$ is a convex compact set, and $\partial_x \phi(x, t) = \{ 0 \}$, if 
$g_j(x, t) < 0$ for all $j \in J$.

Denote by $\mathcal{I}^*$ the optimal value of problem \eqref{VariableEndpointProblem}, and recall that
$\Omega_{\delta} = \{ x \in A \mid \varphi(x) < \delta \}$. Observe that in the case $p = + \infty$ the set 
$\Omega_{\delta}$ consists of all those trajectories $x(\cdot)$ of the system that satisfy the perturbed constraints
$g_j(x(t), t) < \delta$ for all $t \in [0, T]$ and $j \in J$. In the case $p < + \infty$ the set $\Omega_{\delta}$
consists of all those trajectories $x(\cdot)$ for which there exists $w \in L^p(0, T)$ with $\| w \|_p < \delta$ such
that $g_j(x(t), t) \le w(t)$ for all $t \in [0, T]$ and $j \in J$, which implies that at every point $t \in [0, T]$ the
violation of the state constraints can be arbitrarily large, i.e. $\phi(x(t), t)$ can be arbitrarily large as long as 
$\| \phi(x(\cdot), \cdot) \|_p < \delta$. 

To avoid the usage of some complicated and restrictive assumptions on the problem data, we prove the following theorem
in the simplest case when the set $X$ is compact in $(C[0, T])^d$. This assumption holds true, in particular, if
the assumptions of Proposition~\ref{Prop_CompleteMetricSpace} are satisfied.

\begin{theorem} \label{Theorem_StateConstr_Nonlinear_Alternative}
Let $p \in (1, + \infty]$ and the following assumptions be valid:
\begin{enumerate}
\item{$\zeta$ is locally Lipschitz continuous, $\theta$ and $g_j$, $j \in J$ are continuous,
differentiable in $x$, and the functions $\nabla_x \theta$ and $\nabla_x g_j$, $j \in J$, are continuous;
}

\item{the set $X$ is compact in $(C[0, T])^d$, and there exists a feasible point of problem
\eqref{VariableEndpointProblem};
}

\item{there exist $a > 0$ and $\eta > 0$ such that for any $x \in A \setminus \Omega$ one can find a sequence of
trajectories $\{ x_n \} \subseteq A$ converging to $x$ in the space $X$ such that 
$|x_n(T) - x(T)| \le \eta \| x_n - x \|_p$ for all $n \in \mathbb{N}$, the sequence
$\{ (x_n - x) / \| x_n - x \|_p \}$ converges to some $h \in L^d_p(0, T)$, and
\begin{equation} \label{StateConstr_Decay_Finite_P}
  \int_0^T \phi(x(t), t)^{p - 1} \max_{v \in \partial_x \phi(x(t), t)} \langle v, h(t) \rangle \, dt
  \le - a \varphi(x)^{p - 1}
\end{equation}
in the case $1 < p < + \infty$, while
\begin{equation} \label{StateConstr_Decay_Weakened_P_Infty}
  \langle \nabla_x g_j(x(t), t), h(t) \rangle \le - a \quad 
  \forall t \in [0, T], \: j \in J \colon \varphi(x) = g_j(x(t), t)
\end{equation}
in the case $p = + \infty$.
\label{Assumpt_StateConstr_Decay_Finite_p}
}
\end{enumerate}
Then the penalty function $\Phi_{\lambda}$ for problem \eqref{VariableEndpointProblem} is completely exact on $A$. 
\end{theorem}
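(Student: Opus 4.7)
The plan is to verify the hypotheses of Theorem~\ref{THEOREM_COMPLETEEXACTNESS_GLOBAL} for the metric space $(X, d_X)$, the feasible set $A$, the functional $\mathcal{I}$, and the penalty term $\varphi$ defined above. The nontrivial items to check are: completeness of $X$, closedness of $A$, Lipschitz continuity of $\mathcal{I}$ on $A$, lower semicontinuity of $\varphi$ on $A$ together with its continuity on $\Omega$, boundedness below of $\Phi_{\lambda_0}$ for some $\lambda_0 \ge 0$, and a uniform rate of steepest descent bound $\varphi^{\downarrow}_A(x) \le -a'$ on $A \setminus \Omega$. The first five items follow easily from the compactness of $X$ in $(C[0, T])^d$ and from the regularity hypotheses on $\theta$, $\zeta$, and $g_j$; the entire difficulty lies in the last one, where we must extract from assumption~\ref{Assumpt_StateConstr_Decay_Finite_p} a directional derivative estimate for $\varphi$.

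For the routine items: a compact metrisable space is complete, and $A$ is closed in $X$ as already observed in the text. Because $X$ is compact in $(C[0,T])^d$, any $d_X$-convergent sequence has a uniformly convergent subsequence whose uniform limit must agree with its $L^p$-limit, so $d_X$-convergence on $X$ implies uniform convergence. This yields continuity of $x \mapsto \phi(x(\cdot), \cdot)$ from $X$ into $C[0, T]$, hence continuity of $\varphi$ on $X$, and, via the mean value theorem applied on the bounded image of $X$ together with the local Lipschitz continuity of $\zeta$, a single Lipschitz constant for $\mathcal{I}$ on $A$ with respect to $d_X$. Since $\mathcal{I}$ is continuous on the compact set $X$, $\Phi_0 = \mathcal{I}$ is bounded below.

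For the rate of steepest descent bound, first observe that the uniform continuity of $\nabla_x g_j$ on the compact image of trajectories in $X$ yields a common Lipschitz constant $L$ with $|\phi(x, t) - \phi(y, t)| \le L|x - y|$ on that range, whence $\varphi$ is $L$-Lipschitz with respect to the $L^p$-metric on $X$. For fixed $x \in A \setminus \Omega$ and $h$ from the assumption, the one-sided directional derivative $\varphi'(x; h)$ satisfies $\varphi'(x; h) \le -a$: in the case $p = +\infty$ this is the Danskin-Demyanov theorem applied to $\varphi(y) = \max_{t, j} g_j(y(t), t)$ (as in the proof of Theorem~\ref{Theorem_StateConstrainedProblem_Nonlinear}) combined with \eqref{StateConstr_Decay_Weakened_P_Infty}, giving
\[
  \varphi'(x; h) = \max_{(t, j) \colon g_j(x(t), t) = \varphi(x)} \langle \nabla_x g_j(x(t), t), h(t) \rangle \le -a;
\]
in the case $1 < p < +\infty$, one computes the directional derivative of $F(y) := \varphi(y)^p = \int_0^T \phi(y(t), t)^p\,dt$ via the chain rule for $s \mapsto s^p$ and the pointwise Danskin formula $\phi'(x(t), t; h(t)) = \max_{v \in \partial_x \phi(x(t), t)}\langle v, h(t)\rangle$ from \eqref{SubdiffOfMaxStateConstr}, then divides by $p\,\varphi(x)^{p-1} > 0$, and \eqref{StateConstr_Decay_Finite_P} directly yields $\varphi'(x; h) \le -a$.

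The main obstacle is passing from the directional derivative along the straight-line direction $h$ to the analogous statement along the specific sequence $x_n = x + \alpha_n h_n$ from the assumption, where $\alpha_n := \|x_n - x\|_p \to 0$ and $h_n \to h$ in $L_p^d(0, T)$. Here the $L^p$-Lipschitz property of $\varphi$ plays a decisive role: combined with directional differentiability it upgrades $\varphi$ to being Hadamard directionally differentiable in the $L^p$-topology, which implies $(\varphi(x_n) - \varphi(x))/\alpha_n \to \varphi'(x; h) \le -a$ (in the case $1 < p < +\infty$ one may equivalently pass to an a.e.\ convergent subsequence of $\{h_n\}$ and use Lebesgue's dominated convergence theorem, with domination supplied by the sup-norm bound on $X$). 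Finally, from $|x_n(T) - x(T)| \le \eta\alpha_n$ we obtain $d_X(x_n, x) \le (1 + \eta)\alpha_n$, and eventual negativity of the numerator gives
\[
  \frac{\varphi(x_n) - \varphi(x)}{d_X(x_n, x)} \le \frac{1}{1 + \eta} \cdot \frac{\varphi(x_n) - \varphi(x)}{\alpha_n},
\]
so that $\varphi^{\downarrow}_A(x) \le -a/(1 + \eta)$, with this constant independent of $x \in A \setminus \Omega$ as required; Theorem~\ref{THEOREM_COMPLETEEXACTNESS_GLOBAL} then yields the complete exactness of $\Phi_{\lambda}$ on $A$.
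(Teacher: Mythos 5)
Your overall scaffold (verify the hypotheses of Theorem~\ref{THEOREM_COMPLETEEXACTNESS_GLOBAL}; the routine items via compactness of $X$ in $(C[0,T])^d$; the final passage from $|x_n(T)-x(T)|\le\eta\|x_n-x\|_p$ to $\varphi^{\downarrow}_A(x)\le -a/(1+\eta)$) is exactly the paper's, and your treatment of the case $p=+\infty$ is sound, since there $h$ is a uniform limit of continuous functions and $x+\alpha h$ stays in a fixed compact range where your Lipschitz constant is valid. The genuine gap is in the case $1<p<+\infty$, in the detour through the straight-line directional derivative $\varphi'(x;h)$. The Lipschitz estimate you establish for $\varphi$ holds only between points of $X$ (trajectories, uniformly bounded in the sup-norm by some $R$), because the constant $L$ comes from bounding $\nabla_x g_j$ on that compact range. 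But the comparison points $x+\alpha_n h$ needed for the ``Lipschitz $+$ directional differentiability $\Rightarrow$ Hadamard'' upgrade are not trajectories and need not be uniformly bounded: $h$ is only an $L^p$-limit of $h_n=(x_n-x)/\|x_n-x\|_p$, and since $\|x_n-x\|_\infty/\|x_n-x\|_p$ can blow up, $h$ may be an unbounded function. As the theorem assumes no growth conditions on $g_j$ or $\nabla_x g_j$ (only continuity), $\varphi$ need not be Lipschitz — nor even finite — on an $L^p$-neighbourhood of $x$; e.g.\ with $g_1(x,t)=e^{|x|^2}-2$ and an unbounded $h\in L^p$ one can have $\varphi(x+\alpha h)=+\infty$ for every $\alpha>0$. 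For the same reason your computation of $\varphi'(x;h)$ via ``chain rule plus pointwise Danskin formula'' is unjustified: the interchange of limit and integral takes place at the possibly unbounded points $x(t)+\alpha h(t)$, outside the set where any uniform bounds are available.

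Your parenthetical alternative (work directly with the quotients $(\varphi(x_n)^p-\varphi(x)^p)/\alpha_n$, pass to an a.e.\ convergent subsequence of $\{h_n\}$, and pass to the limit under the integral) is in fact the paper's route, and it stays inside the compact range where the pointwise bound $\bigl|\phi(x_n(t),t)^p-\phi(x(t),t)^p\bigr|/\alpha_n\le C|h_n(t)|$ is valid — but the domination you invoke is wrong: the sup-norm bound on $X$ only bounds the numerator, giving the useless estimate $C'/\alpha_n$, and the bound $C|h_n(t)|$ is not a fixed dominating function. The correct tool is either the uniform integrability of $\{h_n\}$ coming from its $L^1$-convergence (Vitali's theorem, which is what the paper uses, after an additional nonsmooth mean value theorem yielding $|\omega_k(t)|\le C(|h_{n_k}(t)|+|h(t)|)$), or the standard fact that an $L^p$-convergent sequence has a subsequence dominated a.e.\ by a fixed $L^p$ function; either fix makes your alternative route rigorous, and passing to subsequences is harmless since the rate of steepest descent is a liminf. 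Finally, note that the general framework of Section~\ref{Sect_ExactPenaltyFunctions} presupposes that the optimal value of the constrained problem is attained; the theorem only assumes a feasible point, so you should add the (easy) compactness argument, as the paper does, showing that problem \eqref{VariableEndpointProblem} has a globally optimal solution.
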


\begin{proof}
The functional $\mathcal{I}$ is obviously continuous with respect to the uniform metric, which with the use of the fact
that $X$ is compact in $(C[0, T])^d$ implies that the penalty function $\Phi_{\lambda}$ is bounded below on $X$ for any
$\lambda \ge 0$. Moreover, the set $X$ is bounded in $(C[0, T])^d$. Hence by applying the mean value theorem and the
fact that the function $\zeta$ is locally Lipschitz continuous one obtains that there exist $L_{\zeta} > 0$ such that
\begin{align*}
  |\mathcal{I}(x) - \mathcal{I}(y)| &\le \Big| \int_0^T \theta(x(t), t) \, dt - \int_0^T \theta(y(t), t) \, dt \Big| 
  + |\zeta(x(T)) - \zeta(y(T))| \\
  &\le \int_0^T \sup_{\alpha \in [0, 1]} \big|\nabla_x \theta(x(t) + \alpha (y(t) - x(t)), t)\big| |x(t) - y(t)| \, dt
  + L_{\zeta} |x(T) - y(T)| \\
  &\le \max\{ T^{1 / p'} K, L_{\zeta} \} d_X(x, y)
\end{align*}
for all $x, y \in X$, where $K = \max\{ |\nabla_x \theta(z, t)| \colon |z| \le R, \: t \in [0, T] \}$ and
$R > 0$ is such that $\| x \|_{\infty} \le R$ for all $x \in X$. Thus, the functional $\mathcal{I}$ is Lipschitz
continuous on $X$.

Let a sequence $\{ (x_n, u_n) \}$ of feasible points of problem \eqref{VariableEndpointProblem} be such that
$\mathcal{I}(x_n)$ converges to the optimal value $\mathcal{I}^*$ of this problem (recall that we assume that at least
one feasible point exists). Since the set $X$ is compact in $(C[0, T])^d$, one can extract a subsequence $\{ x_{n_k} \}$
uniformly converging to some $x^* \in X$. From the uniform convergence, the continuity of the functions $g_j$, and the
closedness of the set $S_T$ it follows $x^*(T) \in S_T$ and $g_j(x^*(\cdot), \cdot) \le 0$ for all $j \in J$.
Furthermore, by the definition of $X$ there exists $u^* \in U$ such that 
$x^*(t) = x_0 + \int_0^t f(x^*(\tau), u^*(\tau), \tau) \, d \tau$ for all $t \in [0, T]$, which implies that $(x^*,
u^*)$ is a feasible point of problem \eqref{VariableEndpointProblem}. Taking into account the fact that the functional
$\mathcal{I}$ is continuous with respect to the uniform metric one obtains that $\mathcal{I}(x^*) = \mathcal{I}^*$.
Thus, $(x^*, u^*)$ is a globally optimal solution of problem \eqref{VariableEndpointProblem}, i.e. this problem has a
globally optimal solution.

Let us check that the penalty term $\varphi$ is continuous on $X$. Indeed, arguing by reductio ad absurdum, suppose
that $\varphi$ is not continuous at some point $x \in X$. Then there exists $\varepsilon > 0$ and a sequence 
$\{ x_n \} \subset X$ converging to $x$ in the space $X$ such that $|\varphi(x_n) - \varphi(x)| \ge \varepsilon$ for all
$n \in \mathbb{N}$. By applying the compactness of $X$ one obtains that there exists a subsequence $\{ x_{n_k} \}$
uniformly converging to some $\overline{x} \in X$. Clearly, $\{ x_{n_k} \}$ also converges to $\overline{x}$ in 
the space $X$, which implies that $\overline{x} = x$. Utilising the uniform convergence of $\{ x_{n_k} \}$ to $x$ and
the continuity of the functions $g_j$ one can easily prove that $\varphi(x_{n_k}) \to \varphi(x)$ as $k \to \infty$
(see~Corollary~\ref{Corollary_StateConstrPenTerm_Contin}), which contradicts our assumption. Therefore, the penalty term
$\varphi$ is continuous on $X$.

Thus, by Theorem~\ref{THEOREM_COMPLETEEXACTNESS_GLOBAL} it remains to check that there exists $a > 0$ such that
$\varphi^{\downarrow}_A(x) \le - a$ for all $x \in A \setminus \Omega$. Our aim is to show that this inequality is
implied by assumption~\ref{Assumpt_StateConstr_Decay_Finite_p}.

\textbf{The case $p < + \infty$.} Fix any $x \in A \setminus \Omega$, and let $\{ x_n \} \subset A$ and $h$ be from
assumption~\ref{Assumpt_StateConstr_Decay_Finite_p}. Define $\alpha_n = \| x_n - x \|_p$ and 
$h_n = (x_n - x) / \| x_n - x \|_p$. Then $x_n = x + \alpha_n h_n$. Let us verify that
\begin{equation} \label{PenTerm_SC_Hadamard_DD}
  \lim_{n \to \infty} \frac{\varphi(x + \alpha_n h_n) - \varphi(x)}{\alpha_n}
  = \frac{1}{\varphi(x)^{p - 1}} 
  \int_0^T \phi(x(t), t)^{p - 1} \max_{v \in \partial_x \phi(x(t), t)} \langle v, h(t) \rangle \, dt.
\end{equation}
Then by applying \eqref{StateConstr_Decay_Finite_P} and the inequality $|x_n(T) - x(T)| \le \eta \| x_n - x \|_p$
one gets that
$$
  \varphi^{\downarrow}_A(x) \le \liminf_{n \to \infty} \frac{\varphi(x_n) - \varphi(x)}{d_X(x_n, x)}
  = \liminf_{n \to \infty} \frac{\alpha_n}{d_X(x_n, x)} \frac{\varphi(x + \alpha_n h_n) - \varphi(x)}{\alpha_n}
  \le - \frac{a}{1 + \eta},
$$
and the proof is complete.

Instead of proving \eqref{PenTerm_SC_Hadamard_DD}, let us check that 
\begin{equation} \label{PenTerm_SC_in_pth_Hadamard_DD}
  \lim_{n \to \infty} \frac{\varphi(x + \alpha_n h_n)^p - \varphi(x)^p}{\alpha_n}
  = \int_0^T \phi(x(t), t)^{p - 1} \max_{v \in \partial_x \phi(x(t), t)} \langle v, h(t) \rangle \, dt.
\end{equation}
Then taking into account the facts that $\varphi(x) > 0$ (recall that $x \notin \Omega$), and the function 
$\omega(s) = s^{1/p}$ is differentiable at any point $s > 0$ one obtains that \eqref{PenTerm_SC_Hadamard_DD} holds true.

To prove \eqref{PenTerm_SC_in_pth_Hadamard_DD}, note at first that the multifunction 
$t \mapsto \partial_x \phi(x(t), t)$ is upper semicontinuous and thus measurable by
\cite[Proposition~8.2.1]{AubinFrankowska}, which by \cite[Theorem~8.2.11]{AubinFrankowska} implies that the function 
$t \mapsto \max_{v \in \partial_x \phi(x(t), t)} \langle v, h(t) \rangle$ is measurable. Arguing by reductio ad
absurdum, suppose that \eqref{PenTerm_SC_in_pth_Hadamard_DD} does not hold true. Then there exist $\varepsilon > 0$ and
a subsequence $\{ n_k \}$, $k \in \mathbb{N}$, such that
\begin{equation} \label{PenTerm_SC_not_HDD}
  \left| \frac{\varphi(x + \alpha_{n_k} h_{n_k})^p - \varphi(x)^p}{\alpha_{n_k}} -
  \int_0^T \phi(x(t), t)^{p - 1} \max_{v \in \partial_x \phi(x(t), t)} \langle v, h(t) \rangle \, dt \right| 
  \ge \varepsilon
\end{equation}
for all $k \in \mathbb{N}$. Since $h_n$ converges to $h$ in $L^d_p(0, T)$, one can find a subsequence of 
the sequence $\{ h_{n_k} \}$, which we denote once again by $\{ h_{n_k} \}$, that converges to $h$ almost everywhere.
Hence by the Danskin-Demyanov theorem (see, e.g. \cite[Theorem~4.4.3]{IoffeTihomirov}) for a.e. $t \in (0, T)$ one has
$\lim_{k \to \infty} \omega_k(t) = 0$, where
$$
  \omega_k(t) = \frac{\phi(x(t) + \alpha_{n_k} h_{n_k}(t), t)^p - \phi(x(t), t)^p}{\alpha_{n_k}} -
  \phi(x(t), t)^{p - 1} \max_{v \in \partial_x \phi(x(t), t)} \langle v, h(t) \rangle.
$$
With the use of a nonsmooth version of the mean value theorem (see, e.g. \cite[Proposition~2]{Dolgopolik_MCD}) one
obtains that for any $k \in \mathbb{N}$ and a.e. $t \in (0, T)$ there exist $\beta_k(t) \in (0, 1)$ and
$v_k(t) \in \partial_x \phi(x(t) + \beta_k(t) (x_{n_k}(t) - x(t)), t)$ such that
$$
  \omega_k(t) = \Big( \phi(x(t) + \beta_k(t) (x_{n_k}(t) - x(t)), t) \Big)^{p - 1} \langle v_k(t), h_{n_k}(t) \rangle
  - \phi(x(t), t)^{p - 1} \max_{v \in \partial_x \phi(x(t), t)} \langle v, h(t) \rangle.
$$
Consequently, bearing in mind the facts that the set $X$ is compact in $(C[0, T])^d$ and the functions $g_j$ and
$\nabla_x g_j$ are continuous (see~\eqref{SubdiffOfMaxStateConstr}) one obtains that there exists $C > 0$ such
that $|\omega_k(t)| \le C |h_{n_k}(t)| + C |h(t)|$ for all $k \in \mathbb{N}$ and a.e. $t \in (0, T)$. 

The sequence $\{ h_{n_k} \}$ converges to $h$ in $L^d_p(0, T)$, which by H\"{o}lder's inequality implies that it
converges to $h$ in $L^d_1(0, T)$. By the ``only if'' part of Vitali's theorem characterising convergence in
$L^p$-spaces (see, e.g. \cite[Theorem~III.6.15]{DunfordSchwartz}) and the absolute continuity of the Lebesgue integral
for any $\varepsilon > 0$ one can find $\delta(\varepsilon) > 0$ such that for any
Lebesgue measurable set $E \subset [0, T]$ with $\mu(E) < \delta(\varepsilon)$ (here $\mu$ is the Lebesgue measure) one
has $\int_E h_{n_k} d \mu < \varepsilon / 2 C$ and $\int_E h d \mu < \varepsilon / 2 C$. Consequently,
$\int_E \omega_k d \mu < \varepsilon$, provided $\mu(E) < \delta(\varepsilon)$. Hence bearing in mind the fact
$\omega_k(t) \to 0$ for a.e. $t \in (0, T)$ and passing to the limit with the use of ``if'' part of the Vitali theorem
one obtains that $\lim_{k \to \infty} \int_0^T |\omega_k(t)| \, dt = 0$, which contradicts \eqref{PenTerm_SC_not_HDD}.
Thus, \eqref{PenTerm_SC_in_pth_Hadamard_DD} holds true, and the proof of the case $p < + \infty$ is complete.

\textbf{The case $p = + \infty$.} The proof of this case coincides with the derivation of the inequality 
$\varphi^{\downarrow}_A(x, u) \le - a$ within the proof of Theorem~\ref{Theorem_StateConstrainedProblem_Nonlinear}.
\end{proof}

\begin{remark}
{(i)~Let us note that one can define
$$
  \varphi(x) = \bigg( \sum_{j = 1}^l \int_0^T \max\{ g_j(x(t), t), 0 \}^p \, dt \bigg)^{1/p}
  \quad \text{or} \quad
  \varphi(x) = \sum_{j = 1}^l \bigg(\int_0^T \max\{ g_j(x(t), t), 0 \}^p \, dt \bigg)^{1/p},
  \quad 1 < p < + \infty,
$$
and easily obtain corresponding sufficient conditions for the complete exactness of the penalty function
$\Phi_{\lambda}$, which are very similar, but not identical, in all three cases.
}

\noindent{(ii)~It should be mentioned that the term $|x(T) - y(T)|$ was introduced into the definition of the metric
$d_X(x, y) = \| x - y \|_p + |x(T) - y(T)|$ in $X$ to ensure that the functional $\mathcal{I}$ is Lipschitz continuous
on $A$. In the case of problems with the cost functional of the form $\mathcal{I}(x) = \int_0^T \theta(x(t), t) \, dt$
one can define $d_X(x, y) = \| x - y \|_p$ and drop the inequality $|x_n(T) - x(T)| \le \eta \| x_n - x \|_p$ from
assumption~\ref{Assumpt_StateConstr_Decay_Finite_p} of Theorem~\ref{Theorem_StateConstr_Nonlinear_Alternative}. Note
that the closedness of the set $A = \{ x \in X \mid x(T) \in S_T \}$ in the case when $X$ is equipped with the metric
$d_X(x, y) = \| x - y \|_p$ can be easily proved under the assumption that $X$ is compact in $(C[0, T])^d$, since in
this case the topologies on $X$ generated by the metrics $d_X(x, y) = \| x - y \|_p$ and 
$d_X(x, y) = \| x - y \|_{\infty}$ coincide.
}
\end{remark}

At first glance, assumption~\ref{Assumpt_StateConstr_Decay_Finite_p} of the
Theorem~\ref{Theorem_StateConstr_Nonlinear_Alternative} might seem very similar to
assumption~\ref{Assumpt_StateConstr_Decay} of Theorem~\ref{Theorem_StateConstrainedProblem_Nonlinear} and inequality
\eqref{StateConstr_Decay_Impossible}. In particular, arguing in the same way as in
Example~\ref{Example_LTV_SlaterImpliesExactPen} one can check that in the case 
$p = + \infty$ inequality \eqref{StateConstr_Decay_Weakened_P_Infty} is satisfied, provided the system is linear, the
state constraints are convex, and Slater's condition holds true. However, there is one important difference. 
In assumption~\ref{Assumpt_StateConstr_Decay_Finite_p} of Theorem~\ref{Theorem_StateConstr_Nonlinear_Alternative} one
does not need to care about control inputs corresponding to the sequence $\{ x_n \}$, as well as the derivatives
$\dot{x}_n$, which makes this assumption significantly less restrictive, then
assumption~\ref{Assumpt_StateConstr_Decay} of Theorem~\ref{Theorem_StateConstrainedProblem_Nonlinear}. 

\begin{remark} \label{Remark_ShiftTowardsFeasibleRegion}
Let us point out a particular case in which assumption~\ref{Assumpt_StateConstr_Decay_Finite_p} of
Theorem~\ref{Theorem_StateConstr_Nonlinear_Alternative} can be reformulated in a more convenient form. Suppose that 
$p < + \infty$, $l = 1$ (i.e. there is only one state constraint), and there exist $a_1, a_2 > 0$ such that
$a_1 \le |\nabla_x g_1(x, t)| \le a_2$ for all $t \in [0, T]$ and $x \in \mathbb{R}^d$ satisfying the inequality 
$g_1(x, t) > 0$. In this case assumption~\ref{Assumpt_StateConstr_Decay_Finite_p} of
Theorem~\ref{Theorem_StateConstr_Nonlinear_Alternative} is satisfied, if there exists $\eta > 0$ such that 
for any $x \in A \setminus \Omega$ one can find a sequence of trajectories $\{ x_n \} \subset A$ converging to $x$
such that $|x_n(T) - x(T)| \le \eta \| x_n - x \|_p$ for all $n \in \mathbb{N}$, and the sequence
$\{ (x_n - x) / \| x_n - x \|_p \}$ converges to $h = y / \| y \|_p$ with 
$y(t) = - \phi(x(t), t) \nabla_x g_1(x(t), t)$ for all $t \in [0, T]$. Indeed, by applying the inequalities
$a_1 \le |\nabla_x g_1(x, t)| \le a_2$ and the fact that $\varphi(x) = \| \phi(x(\cdot), \cdot) \|_p$ one obtains
$$
  \int_0^T \phi(x(t), t)^{p - 1} \max_{v \in \partial_x \phi(x(t), t)} \langle v, h(t) \rangle \, dt
  = - \frac{\int_0^T \phi(x(t), t)^p | \nabla_x g_1(x(t), t) |^2 \, dt}{\left( \int_0^T \phi(x(t), t)^p 
  | \nabla_x g_1(x(t), t) |^p \, dt \right)^{1/p}} 
  \le - \frac{a_1^2 \varphi(x)^p }{a_2 \varphi(x)} = - \frac{a_1^2}{a_2} \varphi(x)^{p - 1},
$$
i.e. inequality \eqref{StateConstr_Decay_Finite_P} holds true. This assumption, in essence, means that for any
trajectory $x$ violating the state constraint $g_1(x(t), t) \le 0$ one has to be able to find a sequence of control
inputs that shift the trajectory $x$ along the ray 
$x_{\alpha}(t) = x(t) - \alpha \phi(x(t), t) \nabla_x g_1(x(t), t)$, $\alpha \ge 0$ 
(recall that $\phi(x(t), t) = \max\{ g_1(x(t), t), 0 \}$, i.e. the trajectory is shifted only at those points where the
state constraint is violated). It is easily seen that for any $t \in [0, T]$ satisfying the inequality
$g_1(x(t), t) > 0$ and for any sufficiently small $\alpha > 0$ one has 
$g_1(x_{\alpha}(t), t) < g_1(x(t), t)$, i.e. $x$ is shifted towards the feasible region. Thus, one can say that
assumption~\ref{Assumpt_StateConstr_Decay_Finite_p} of Theorem~\ref{Theorem_StateConstr_Nonlinear_Alternative} is an
assumption on the controllability of the system $\dot{x} = f(x, u, t)$ with respect to the state constraints.
\end{remark}

It should be noted that Theorem~\ref{Theorem_StateConstr_Nonlinear_Alternative} is mainly of theoretical interest,
since it does not seem possible to verify assumption~\ref{Assumpt_StateConstr_Decay_Finite_p} for any particular classes
of optimal control problems appearing in practice. Nevertheless, let us give a simple and illuminating example of a
problem in which this assumptions is satisfied.

\begin{example} \label{Example_StateInequalConstr_Exact}
Let $d = 2$ and $m = 1$. Define $U = \{ u \in L^{\infty}(0, T) \mid \| u \|_{\infty} \le 1 \}$, and
consider the following variable-endpoint optimal control problem with the state inequality constraint:
\begin{equation} \label{Ex_StateInequalConstr_Exact}
\begin{split}
  &\min \: \mathcal{I}(x) = \int_0^T \theta(x(t), t) dt + \zeta(x(T)) \\
  &\text{s.t.} \quad \begin{cases} \dot{x}^1 = 1 \\ \dot{x}^2 = u \end{cases} \quad
  x(0) = \begin{pmatrix} 0 \\ 0 \end{pmatrix}, \quad x(T) \in S_T, \quad u \in U, \quad g(x(t)) \le 0,
\end{split}
\end{equation}
where $g(x^1, x^2) = x^2$, the functions $\theta$ and $\zeta$ satisfy the assumptions of
Theorem~\ref{Theorem_StateConstr_Nonlinear_Alternative}, and $S_T = \{ T \} \times [ - \beta, 0 ]$ for some 
$\beta \ge 0$. Then $(x, u)$ with $x(t) \equiv (t, 0)^T$ and $u(t) \equiv 0$ is a feasible point of this problem.
Furthermore, by Proposition~\ref{Prop_CompleteMetricSpace} the space $X$ of trajectories of the system under
consideration is compact in $(C[0, T])^d$. Thus, it remains to check that
assumption~\ref{Assumpt_StateConstr_Decay_Finite_p} of Theorem~\ref{Theorem_StateConstr_Nonlinear_Alternative} holds
true. We will verify this assumptions with the use of the idea discussed in
Remark~\ref{Remark_ShiftTowardsFeasibleRegion}.

Note that $g(x(0)) = 0$, i.e. Slater's condition is not satisifed. Fix any $x \in A \setminus \Omega$, and let $x$
corresponds to a control input $u \in U$. For any $n \in \mathbb{N}$ define 
$$
  u_n(t) = \begin{cases}
    u(t), & \text{if } x^2(t) \le 0, \\
    \left(1 - \frac{1}{n} \right) u(t), & \text{if } x^2(t) > 0,
  \end{cases}
  \qquad
  x_n(t) = \begin{pmatrix} t \\ x^2(t) - \frac{1}{n} \max\{ x^2(t), 0 \}  \end{pmatrix}
$$
Observe that $x_n$ is a trajectory of the system corresponding to the control input $u_n$, and for any
$n \in \mathbb{N}$ one has $u_n \in U$, $x_n(0) = x(0) = (0, 0)^T$, $x(T) \in S_T$ by the definition of $A$, and
$x_n(T) \in S_T$, since by the definition of $S_T$ one has $x^2(T) \le 0$, which implies that $x_n(T) = x(T)$. Hence, in
particular, $x_n \in A$ and $|x_n(T) - x(T)| = 0 \le \| x_n - x \|_p$ for all $n \in \mathbb{N}$. The sequence 
$\{ x_n \}$ obviously converges to $x$ in $X$. Furthermore, note that $(x_n - x) / \| x_n - x \|_p = h$ with 
$h(\cdot) = (0, - \max\{ x^2(\cdot), 0 \} / \varphi(x))^T$ for all $n$, which obviously implies that the sequence 
$\{ (x_n - x) / \| x_n - x \|_p \}$ converges to $h$, and 
$$
  \int_0^T \phi(x(t), t)^{p - 1} \max_{v \in \partial_x \phi(x(t), t)} \langle v, h(t) \rangle \, dt
  = - \frac{1}{\varphi(x)} \int_0^T \max\{ x^2(t), 0 \}^p \, dt = - \varphi(x)^{p - 1},
$$
i.e. assumption~\ref{Assumpt_StateConstr_Decay_Finite_p} of Theorem~\ref{Theorem_StateConstr_Nonlinear_Alternative} is
satisfied with $a = 1$ and any $\eta > 0$. Thus, by Theorem~\ref{Theorem_StateConstr_Nonlinear_Alternative} one can
conclude that for any $1 < p < + \infty$ there exists $\lambda^* \ge 0$ such that for any $\lambda \ge \lambda^*$ the
penalised problem
\begin{align*}
  &\min \: \Phi_{\lambda}(x) = \int_0^T \theta(x(t), t) dt
  + \lambda \bigg( \int_0^T \max\{ x^2(t), 0 \}^p \, dt \bigg)^{1 / p} + \zeta(x(T)) \\
  &\text{s.t.} \quad \begin{cases} \dot{x}^1 = 1 \\ \dot{x}^2 = u \end{cases}
  x(0) = \begin{pmatrix} 0 \\ 0 \end{pmatrix}, \: x(T) \in S_T, \: u \in U
\end{align*}
is equivalent to problem \eqref{Ex_StateInequalConstr_Exact} in the sense that these problems have the same optimal
value, the same globally optimal solutions, as well as the same locally optimal solutions and inf-stationary points
with respect to the pseudometric $d((x_1, u_1), (x_2, u_2)) = \| x_1 - x_2 \|_p + |x_1(T) - x_2(T)|$ in
$W^2_{1, \infty}(0, T) \times L^{\infty}(0, T)$.
\end{example}

Theorem~\ref{Theorem_StateConstr_Nonlinear_Alternative} can be easily extended to the case of problems with
state equality constraints. Namely, suppose that there is a single state equality constraint: $g(x(t), t) = 0$ for all 
$t \in [0, T]$. Then one can define $\varphi(x) = \| g(x(\cdot), \cdot)\|_p$ for $1 < p < + \infty$. Arguing in a
similar way to the proof of Theorem~\ref{Theorem_StateConstr_Nonlinear_Alternative} one can verify that this theorem
remains to hold true for problems with one state equality constraint, if one replaces inequality
\eqref{StateConstr_Decay_Finite_P} with the following one:
\begin{equation} \label{StateEqualConstr_Decay_Finite_P}
  \int_0^T |g(x(t), t)|^{p - 1} \sign(g(x(t), t)) \langle \nabla_x g(x(t), t), h(t) \rangle \, dt 
  \le - a \varphi(x)^{p - 1}.
\end{equation}
As in Remark~\ref{Remark_ShiftTowardsFeasibleRegion}, one can verify that this inequality is satisfied for 
$h = y / \| y \|_p$ with $y = - g(x(t), t) \nabla_x g(x(t), t)$, provided $0 < a_1 \le | \nabla_x g(x, t) | \le a_2$ for
all $x$ and $t$. Let us utilise this result to demonstrate that exact penalisation of state equality constraints is
possible, if the cost functional $\mathcal{I}$ does not depend on the control inputs explicitly
(cf.~Example~\ref{CounterExample_StateEqConstr} with which we started our analysis of state constrained problems).

\begin{example}
Let $d = 2$ and $m = 2$. Define $U = \{ u = (u^1, u^2)^T \in L^2_{\infty}(0, T) \mid \| u \|_{\infty} \le 1 \}$, and
consider the following variable-endpoint optimal control problem with state equality constraint:
\begin{equation} \label{Ex_StateEqualConstr_Exact}
\begin{split}
  &\min \: \mathcal{I}(x) = \int_0^T \theta(x(t), t) dt + \zeta(x(T)) \\
  &\text{s.t.} \quad 
  \begin{cases}
    \dot{x}^1 = u^1 \\
    \dot{x}^2 = u^2
  \end{cases}
  x(0) = 0, \quad x(T) \in S_T, \quad u \in U, \quad g(x(t)) = x^1(t) + x^2(t) = 0 \quad \forall t \in [0, T].
\end{split}
\end{equation}
Here $S_T$ is a closed subset of the set $\{ x \in \mathbb{R}^2 \mid x^1 + x^2 = 0 \}$ such that $0 \in S_T$, while
$\theta$ and $\zeta$ satisfy the assumptions of Theorem~\ref{Theorem_StateConstr_Nonlinear_Alternative}. Note that $(x,
u)$ with $x(t) \equiv 0$ and $u(t) \equiv 0$ is a feasible point of problem \eqref{Ex_StateEqualConstr_Exact}.
Furthermore, by Proposition~\ref{Prop_CompleteMetricSpace} the space $X$ of trajectories of the system under
consideration is compact in
$(C[0, T])^d$. Thus, as one can easily verify, there exists a globally optimal solution of
\eqref{Ex_StateEqualConstr_Exact}.

Let us check that inequality \eqref{StateEqualConstr_Decay_Finite_P} holds true for any $x \in A \setminus \Omega$,
i.e. for any trajectory violating the state equality constraint. Indeed, fix any such $x$, and let $x$ corresponds to a
control input $u \in U$. For any $n \in \mathbb{N}$ define
$$
  u_n = u - \frac{g(u)}{n} \begin{pmatrix} 1 \\ 1 \end{pmatrix}, \qquad
  x_n = x - \frac{g(x)}{n} \begin{pmatrix} 1 \\ 1 \end{pmatrix}.
$$
Clearly, $x_n$ is a trajectory of the system corresponding to $u_n$, and for any $n \in \mathbb{N}$ one has 
$x_n(0) = 0$ and $x_n(T) = x(T) \in S_T$, since by our assumptions $x \in A = \{ x \in X \mid x(T) \in S_T \}$ and
$g(\xi) = 0$ for any $\xi \in S_T$. Note also that $u_n \in U$ for any $n \in \mathbb{N}$ due to the facts that
$$
  |u_n^1(t)| = \left| u^1(t) - \frac{u^1(t) + u^2(t)}{n} \right| 
  \le \frac{n - 1}{n} |u^1(t)| + \frac{1}{n} |u^2(t)| \le \frac{n - 1}{n} + \frac{1}{n} = 1
  \quad \text{for a.e. } t \in (0, T),
$$
and the same inequality holds true for $u_n^2$. Thus, $x_n \in A$ and $|x_n(T) - x(T)| = 0 \le \| x_n - x \|_p$
for all $n$. Moreover, for any $n \in \mathbb{N}$ one has $(x_n - x) / \| x_n - x \|_p = h$ with
$h = ( - g(x) / \sqrt{2} \varphi(x), - g(x) / \sqrt{2} \varphi(x))^T$, which obviously implies that the sequence 
$\{ (x_n - x) / \| x_n - x \|_p \}$ converges to $h$, and
$$
  \int_0^T |g(x(t))|^{p - 1} \sign(g(x(t))) \langle \nabla_x g(x(t)), h(t) \rangle \, dt
  = - \frac{2}{\sqrt{2} \varphi(x)} \int_0^T |g(x(t))|^p \, dt = - \sqrt{2} \varphi(x)^{p - 1}
$$
i.e. \eqref{StateEqualConstr_Decay_Finite_P} is satisfied with $a = \sqrt{2}$. Thus, one can conclude that for any 
$1 < p < + \infty$ the penalised problem
\begin{align*}
  &\min \: \Phi_{\lambda}(x) = \int_0^T \theta(x(t), t) dt
  + \lambda \bigg( \int_0^T |x^1(t) + x^2(t)|^p \, dt \bigg)^{1 / p} + \zeta(x(T)) \\
  &\text{s.t.} \quad
  \begin{cases}
    \dot{x}^1 = u^1 \\ 
    \dot{x}^2 = u^2
  \end{cases}
  x(0) = 0, \quad x(T) \in S_T, \quad u \in U
\end{align*}
is equivalent to problem \eqref{Ex_StateEqualConstr_Exact} for any sufficiently large $\lambda$.
\end{example}

\section{Conclusions}

In the second paper of our study we analysed the exactness of penalty functions for optimal control problems with
terminal and pointwise state constraints. We proved that penalty functions for fixed-endpoint optimal control problems
for linear time-varying systems and linear evolution equations are completely exact, if the terminal state belongs to
the relative interior of the reachable set. In the nonlinear case, the local exactness of the penalty function can be
ensured under the assumption that the linearised system is completely controllable, while the complete exactness of the
penalty function can be achieved under certain assumptions on the reachable set and the controllability of the system,
which require further investigation. 

We also proved that penalty functions for variable-endpoint optimal control problems for linear time-varying systems
with convex terminal constraints are completely exact, if Slater's condition holds true. In the case of nonlinear
variable-endpoint problems, the local exactness of a penalty function  was proved under the assumptions that the
linearised system is completely controllable, and the well-known Mangasarian-Fromovitz constraint qualification (MFCQ)
holds true for terminal constraints.

In the case of problems with pointwise state inequality constraints, we showed that penalty functions for such problems
for linear time-varying systems and linear evolution equations with convex state constraints are completely exact, if
the $L^{\infty}$ penalty term is used, and Slater's condition holds true. In the nonlinear case we proved the local
exactness of the $L^{\infty}$ penalty function under the assumption that a suitable constraint qualification is
satisfied, which resembles MFCQ. We also proved that the exact $L^p$ penalisation of pointwise state constraints with
finite $p$ is possible for convex problems, if Lagrange multipliers corresponding to state constraints belong to
$L^{p'}(0, T)$, and for nonlinear problems, if the cost functional does not depend on the control inputs explicitly and
some additional assumptions are satisfied.

A reason that the exact $L^p$ penalisation of state constraints with finite $p$ requires more restrictive assumption is
indirectly connected to the Pontryagin maximum principle. Indeed, if the penalty function with $L^p$ penalty term is
locally exact at a locally optimal solution $(x^*, u^*)$, then by definition for any sufficiently large $\lambda \ge 0$
the pair $(x^*, u^*)$ is a locally optimal solution of the penalised problem without state constraints:
\begin{align*}
  {}&\min \: \Phi_{\lambda}(x) = \int_0^T \theta(x(t), u(t), t) \, dt +
  \lambda \bigg( \int_0^T \max\{ g_1(x(t), t), \ldots, g_l(x(t), t), 0 \}^p \, dt \bigg)^{1/p} \\
  {}&\text{subject to } \dot{x}(t) = f(x(t), u(t), t) \, d \tau, \quad t \in [0, T], \quad 
  x(0) = x_0, \quad x(T) = x_T, \quad u \in U.
\end{align*}
It is possible to derive optimality conditions for this problem in the form of the Pontryagin maximum principle for the
original problem, in which Lagrange multipliers corresponding to state constraints necessarily belong to 
$L^{p'}[0, T]$, if $p < + \infty$. Therefore, for the exactness of the $L^p$ penalty function for state constraints with
finite $p$ it is necessary that there exists Lagrange multipliers corresponding to state constraints that belong to
$L^{p'}[0, T]$. If no such multipliers exist, then the exact $L^p$-penalisation with finite $p$ is impossible. 

Although we obtained a number of results on exact penalty functions for optimal control problems with terminal and
pointwise state constraints, a further research in this area is needed. In particular, it is interesting to find
verifiable sufficient conditions under which assumptions of Theorems~\ref{Theorem_FixedEndPointProblem_NonLinear},
\ref{Theorem_StateConstrainedProblem_Nonlinear}, and \ref{Theorem_StateConstr_Nonlinear_Alternative} on the complete
exactness of corresponding penalty functions hold true in the nonlinear case. Moreover, the main results of our study
can be easily extended to nonsmooth optimal control problems. In particular, one can suppose that the integrand
$\theta$ is only locally Lipschitz continuous in $x$ and $u$, and impose the same growth conditions on the Clarke
subdifferential (or some other suitable subdifferential), as we did on the derivatives of this functions. Also, it
seems worthwhile to analyse connections between necessary/sufficient optimality conditions and the local exactness of
penalty functions (cf.~the papers of Xing et al. \cite{Xing89,Xing94}, and Sections~4.6.2 and 4.7.2 in 
monograph~\cite{Polak_book}).

It should be noted that the general results on exact penalty functions that we utilised throughout our study are based
on completely independent assumptions on the cost functional and constraints
(see~Theorems~\ref{Theorem_CompleteExactness} and \ref{THEOREM_COMPLETEEXACTNESS_GLOBAL}). This approach allowed us to
consider counterexamples in which the cost functionals were unrealistic from a practical point of view
(cf.~Example~\ref{CounterExample_StateEqConstr}). Therefore, it seems profitable to obtain new general results on the
exactness of penalty functions with the use of assumptions that are based on the interplay between the cost functional
and constraints (cf.~such conditions for Huyer and Neumaier's penalty function in the finite dimensional case in
Reference\cite{WangMaZhou}).

Finally, for obvious reasons in this two-part study we restricted our consideration to several ``classical'' problems.
Our goal was not to apply the theory of exact penalty functions to as many optimal control problems as possible, but to
demonstrate the main tools, as well as merits and limitations, of this theory on several standard problems, in the hope
that it will help the interested reader to apply the exact penalty function method to the optimal control problem at
hand.

\section*{Acknowledgments}

The author wishes to express his thanks to the coauthor of the first part of this study A.V. Fominyh for many useful
discussions on exact penalty functions and optimal control problems that led to the development of this paper.

\bibliographystyle{abbrv}  
\bibliography{ExactPen_OptControl}

\section*{Appendix A. Proof of Theorem~\ref{THEOREM_COMPLETEEXACTNESS_GLOBAL}}

Observe that under the assumptions of Theorem~\ref{THEOREM_COMPLETEEXACTNESS_GLOBAL} assumptions of
Theorem~\ref{Theorem_CompleteExactness} are satisfied for any $c \in \mathbb{R}$ and $\delta > 0$. Therefore, by this
theorem there exists $\lambda^* \ge 0$ such that for any $\lambda \ge \lambda^*$ the optimal values and globally optimal
solutions of the problems $(\mathcal{P})$ and \eqref{PenalizedProblem} coincide.

Let $L > 0$ be a Lipschitz constant of $\mathcal{I}$ on $A$, and fix any $x \in A \setminus \Omega$. By our assumption 
$\varphi^{\downarrow}_A(x) \le - a < 0$. By the definition of the rate of steepest descent there exists a sequence 
$\{ x_n \} \subset A$ converging to $x$ and such that $\varphi(x_n) - \varphi(x) < - a d(x_n, x) / 2$ for all 
$n \in \mathbb{N}$. Therefore
$$
  \Phi_{\lambda}(x_n) - \Phi_{\lambda}(x) = \mathcal{I}(x_n) - \mathcal{I}(x) 
  + \lambda\big( \varphi(x_n) - \varphi(x) \big) \le \left( L  - \lambda \frac{a}{2} \right) d(x_n, x)
$$
for any $n \in \mathbb{N}$, which implies that $(\Phi_{\lambda})^{\downarrow}_A(x) < 0$ for all $\lambda > 2 L / a$ and
$x \in A \setminus \Omega$. Thus, if $x^* \in A$ is an inf-stationary point/point of local minimum of $\Phi_{\lambda}$
on $A$ and $\lambda > 2 L / a$, then $x^* \in \Omega$. Here we used the fact that any point of local minimum of
$\Phi_{\lambda}$ on $A$ is also an inf-stationary point of $\Phi_{\lambda}$ on $A$, since
$(\Phi_{\lambda})^{\downarrow}_A(x) \ge 0$ is a necessary condition for local minimum.

Fix any $\lambda > 2 L / a$. Let $x^* \in A$ be a point of local minimum of the penalised problem
\eqref{PenalizedProblem}. Then $x^* \in \Omega$. Hence bearing in mind the fact that by definition 
$\Phi_{\lambda}(x) = \mathcal{I}(x)$ for any $x \in \Omega$ one obtains that $x^*$ is a locally optimal solution of 
the problem $(\mathcal{P})$.

Let now $x^* \in \Omega$ be a locally optimal solution of $(\mathcal{P})$. Clearly, $x^* \in S_{\lambda}(c)$ for any 
$c > \Phi_{\lambda}(x^*)$. Hence by \cite[Lemma~1]{DolgopolikFominyh} there exists $r_1 > 0$ such that
$\varphi(x) \ge a \dist(x, \Omega)$ for all $x \in B(x^*, r_1) \cap A$. Furthermore, by 
\cite[Lemma~2 and Remark~11]{DolgopolikFominyh} there exists $r_2 > 0$ such that 
$\mathcal{I}(x) - \mathcal{I}(x^*) \ge - L \dist(x, \Omega)$ for any $x \in B(x^*, r_2) \cap A$. Consequently, for any
$x \in B(x^*, r) \cap A$ with $r = \min\{ r_1, r_2 \}$ one has
$$
  \Phi_{\lambda}(x) - \Phi_{\lambda}(x^*) = \mathcal{I}(x) - \mathcal{I}(x^*) 
  + \lambda \big( \varphi(x) - \varphi(x^*) \big) 
  \ge \big( - L + \lambda a \big) \dist(x, \Omega) \ge 0,
$$
i.e. $x^*$ is a locally optimal solution of the penalised problem \eqref{PenalizedProblem}. Thus, locally optimal
solutions of the problems $(\mathcal{P})$ and \eqref{PenalizedProblem} coincide for any $\lambda > 2 L / a$.

Let now $x^* \in A$ be an inf-stationary point of $\Phi_{\lambda}$ on $A$. Then $x^* \in \Omega$. By definition
$\Phi_{\lambda}(x) = \mathcal{I}(x)$ for any $x \in \Omega$, which yields
$\mathcal{I}^{\downarrow}_{\Omega}(x^*) = (\Phi_{\lambda})^{\downarrow}_{\Omega}(x^*) \ge
(\Phi_{\lambda})^{\downarrow}_A(x^*) \ge 0$, i.e. $x^*$ is an inf-stationary point of $\mathcal{I}$ on $\Omega$.

Let finally $x^* \in \Omega$ be an inf-stationary point of $\mathcal{I}$ on $\Omega$. By the definition of the rate of
steepest descent there exists a sequence $\{ x_n \} \subset A$ converging to $x^*$ such that
$$
  (\Phi_{\lambda})^{\downarrow}_A(x^*) 
  = \lim_{n \to \infty} \frac{\Phi_{\lambda}(x_n) - \Phi_{\lambda}(x^*)}{d(x_n, x^*)}.
$$
If there exists a subsequence $\{ x_{n_k} \} \subset \Omega$, then by the fact that $\varphi(x) = 0$ for all 
$x \in \Omega$ one gets that
$$
  (\Phi_{\lambda})^{\downarrow}_A(x^*) 
  = \lim_{k \to \infty} \frac{\Phi_{\lambda}(x_{n_k}) - \Phi_{\lambda}(x^*)}{d(x_{n_k}, x^*)}
  = \lim_{k \to \infty} \frac{\mathcal{I}(x_{n_k}) - \mathcal{I}(x^*)}{d(x_{n_k}, x^*)} 
  \ge \mathcal{I}^{\downarrow}_{\Omega}(x^*) \ge 0.
$$
Thus, one can suppose that $\{ x_n \} \subset A \setminus \Omega$.

Choose any $L' \in (L, \lambda a)$. By applying \cite[Lemmas~1 and 2]{DolgopolikFominyh} one obtains that
\begin{align*}
  \Phi_{\lambda}(x_n) - \Phi_{\lambda}(x^*) 
  &= \mathcal{I}(x_n) - \mathcal{I}(x^*) + \lambda \big( \varphi(x_n) - \varphi(x^*) \big) \\
  &\ge - L' \dist(x_n, \Omega) - (L' - L)d(x_n, x^*) + \lambda a \dist(x_n, \Omega)
  \ge - (L' - L)d(x_n, x^*)
\end{align*}
for any sufficiently large $n$. Dividing this inequality by $d(x_n, x^*)$, and passing to the limit as $n \to \infty$
one obtains that $(\Phi_{\lambda})^{\downarrow}_A(x^*) \ge - (L' - L)$, which implies that
$(\Phi_{\lambda})^{\downarrow}_A(x^*) \ge 0$ due to the fact that $L' \in (L, \lambda a)$ was chosen arbitrarily.
Consequently, $x^*$ is an inf-stationary point of $\Phi_{\lambda}$ on $A$. Thus, inf-stationary points of
$\Phi_{\lambda}$ on $A$ coincide with inf-stationary points of $\mathcal{I}$ on $\Omega$ for any $\lambda > 2 L / a$,
and the proof is complete.

\section*{Appendix B. Some Properties of Nemytskii Operators}

For the sake of completeness, in this appendix we give complete proofs of several well-known results on continuity and 
differentiability of Nemytskii operators (cf.~monograph~\cite{AppellZabrejko}). Firstly, we prove some auxiliary
results related to state constraints of optimal control problems.

\begin{proposition} \label{Prop_ContNonlinearMap_in_C}
Let $(Y, d)$ be a metric space, and $g \colon Y \times [0, T] \to \mathbb{R}$ be a continuous function. Then the
operator $G(x)(\cdot) = g(x(\cdot), \cdot)$ continuously maps $C([0, T]; Y)$ to $C[0, T]$.
\end{proposition}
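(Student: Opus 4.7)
The plan is to reduce the continuity of the Nemytskii operator $G$ to the uniform continuity of $g$ on a suitable compact subset of $Y \times [0, T]$. First I would verify that $G$ actually maps into $C[0, T]$: for any $x \in C([0, T]; Y)$ the map $t \mapsto (x(t), t)$ is continuous from $[0, T]$ to $Y \times [0, T]$, and composing with the continuous function $g$ yields continuity of $G(x)$.

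The heart of the argument is the following. Fix $x \in C([0, T]; Y)$ and let $\{x_n\} \subset C([0, T]; Y)$ converge uniformly to $x$. Put
\[
  K = x([0, T]) \cup \bigcup_{n \in \mathbb{N}} x_n([0, T]) \subset Y.
\]
I would first establish that $K$ is compact in $Y$. To this end, take an arbitrary sequence $\{y_k\} \subset K$. If some $x_n$ or $x$ contributes infinitely many terms, compactness of the image of a continuous function on $[0, T]$ yields a convergent subsequence. Otherwise one can extract $y_k = x_{n_k}(t_k)$ with $n_k \to \infty$ and, by compactness of $[0, T]$, $t_k \to t_* \in [0, T]$; then the triangle inequality
\[
  d(x_{n_k}(t_k), x(t_*)) \le d(x_{n_k}(t_k), x(t_k)) + d(x(t_k), x(t_*))
\]
together with uniform convergence $x_{n_k} \to x$ and continuity of $x$ gives $y_k \to x(t_*) \in K$. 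Thus every sequence in $K$ has a convergent subsequence with limit in $K$, so $K$ is compact.

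Once $K$ is compact, the product $K \times [0, T]$ is compact, and $g$ is uniformly continuous on it. Given $\varepsilon > 0$, I choose $\delta > 0$ such that $|g(y, t) - g(y', t)| < \varepsilon$ whenever $y, y' \in K$ with $d(y, y') < \delta$. Uniform convergence $x_n \to x$ gives $N \in \mathbb{N}$ with $\sup_{t \in [0, T]} d(x_n(t), x(t)) < \delta$ for all $n \ge N$, so $\|G(x_n) - G(x)\|_\infty \le \varepsilon$ for all $n \ge N$. This proves the sequential continuity of $G$ at $x$, and since $C([0, T]; Y)$ is metrisable, continuity follows.

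The only nonroutine step is the compactness of $K$; in a general metric space one cannot rely on a closed tubular neighbourhood of $x([0, T])$ being compact, so extracting a convergent subsequence of $\{y_k\}$ by the diagonal/triangle-inequality argument above is the point requiring care. Everything else is a direct application of uniform continuity on a compact set.
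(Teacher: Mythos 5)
Your proof is correct. You verify the compactness of $K = x([0,T]) \cup \bigcup_n x_n([0,T])$ carefully (the case split between a single index contributing infinitely many terms and indices $n_k \to \infty$, handled by the triangle inequality and uniform convergence, is exactly the point that needs care), and then uniform continuity of $g$ on the compact set $K \times [0,T]$ finishes the sequential continuity argument; since $C([0,T];Y)$ is a metric space this yields continuity. The paper takes a related but mechanically different route: instead of working along a sequence, it fixes $x$, uses continuity of $g$ at each point of the compact graph $\{(x(t),t) : t \in [0,T]\}$ to produce radii $\delta(t)$, extracts a finite subcover by balls of half these radii, and sets $\delta = \min_k \delta(t_k)/2$; any $\bar{x}$ with $\|\bar{x}-x\|_{C([0,T];Y)} < \delta$ is then compared to $x$ through the common centres $(x(t_k),t_k)$, giving a direct $\varepsilon$--$\delta$ bound valid on a whole ball around $x$ rather than only along sequences. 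Your approach buys a clean reduction to the standard fact that a continuous function is uniformly continuous on a compact set, at the cost of constructing a sequence-dependent compact set and invoking metrisability to pass from sequential continuity to continuity; the paper's covering (Lebesgue-number-type) trick avoids any sequence-dependent construction and, like yours, never needs a tubular neighbourhood of the graph to be compact, which, as you correctly note, may fail in a general metric space $Y$.
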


\begin{proof}
Choose any $x \in C([0, T]; Y)$. Due to the continuity of $g$, for any $t \in [0, T]$ and $\varepsilon > 0$ there
exists $\delta(t) > 0$ such that for all $y \in Y$ and $\tau \in [0, T]$ with $d(y, x(t)) + |t - \tau| < \delta(t)$ one
has $|g(y, \tau) - g(x(t), t)| < \varepsilon / 2$. The set 
$K = \{ (x(t), t) \in Y \times \mathbb{R} \mid t \in [0, T] \}$ is compact as the image of the compact set 
$[0, T]$ under a continuous map. Therefore, there exist $N \in \mathbb{N}$ and $\{ t_1, \ldots, t_N \} \subset [0, T]$
such that $K \subset \cup_{k = 1}^N B( (x(t_k), t_k), \delta(t_k) / 2 )$. Define $\delta = \min_k \delta(t_k) / 2$.

Now, choose any $t \in [0, T]$ and $\overline{x} \in C([0, T]; Y)$ such that 
$\| \overline{x} - x \|_{C([0, T]; Y)} < \delta$. By definition one has $d(\overline{x}(t), x(t)) < \delta$.
Furthermore, there exists $k \in \{ 1, \ldots, N \}$ such that $(x(t), t) \in B( (x(t_k), t_k), \delta(t_k) / 2 )$,
which due to the definition of $\delta$ implies that $(\overline{x}(t), t) \in B( (x(t_k), t_k), \delta(t_k) )$. Hence
by the definition of $\delta(t_k)$ one has
$$
  \big| g(\overline{x}(t), t) - g(x(t), t) \big| \le 
  \big| g(\overline{x}(t), t) - g(x(t_k), t_k) \big| + \big| g(x(t_k), t_k) - g(x(t), t) \big|
  < \frac{\varepsilon}{2} + \frac{\varepsilon}{2} = \varepsilon,
$$
which yields $\| g(\overline{x}(\cdot), \cdot) - g(x(\cdot), \cdot) \|_{\infty} < \varepsilon$ due to the fact
that $t \in [0, T]$ is arbitrary. Thus, the operator $G$ continuously maps $C([0, T]; Y)$ to $C[0, T]$.
\end{proof}

\begin{corollary} \label{Corollary_StateConstrPenTerm_Contin}
Let $(Y, d)$ be a metric space, and $g_j \colon Y \times [0, T] \to \mathbb{R}$, $j \in J = \{ 1, \ldots, l \}$, be
continuous functions. Then the function $\varphi \colon C([0, T]; Y) \to \mathbb{R}$,
$\varphi(x) = \max_{t \in [0, T]} \max_{j \in J} g_j(x(t), t)$ is continuous.
\end{corollary}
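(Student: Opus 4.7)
The plan is to deduce the corollary directly from Proposition~\ref{Prop_ContNonlinearMap_in_C} by expressing $\varphi$ as a composition of continuous maps. First, I would apply Proposition~\ref{Prop_ContNonlinearMap_in_C} to each function $g_j$, $j \in J$, which yields that the operator $G_j \colon C([0,T]; Y) \to C[0,T]$ defined by $G_j(x)(\cdot) = g_j(x(\cdot), \cdot)$ is continuous.

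Next, I would introduce the functional $\Psi \colon C[0,T] \to \mathbb{R}$, $\Psi(y) = \max_{t \in [0,T]} y(t)$. Since $|\Psi(y_1) - \Psi(y_2)| \le \|y_1 - y_2\|_{\infty}$ for any $y_1, y_2 \in C[0,T]$, the functional $\Psi$ is (Lipschitz) continuous. Consequently, each composition $x \mapsto \Psi(G_j(x)) = \max_{t \in [0,T]} g_j(x(t), t)$ is a continuous real-valued function on $C([0,T]; Y)$.

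Finally, since $\varphi(x) = \max_{j \in J} \Psi(G_j(x))$ is the finite maximum of continuous real-valued functions (and $J = \{1, \ldots, l\}$ is finite), $\varphi$ is continuous on $C([0,T]; Y)$, which is what we needed. No obstacle is anticipated here; the only subtlety is the interchange $\max_{t} \max_{j} = \max_{j} \max_{t}$, which is trivial for a finite family and lets us cast $\varphi$ in the composite form above.
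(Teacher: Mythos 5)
Your proposal is correct and takes essentially the same route as the paper: both arguments rest on Proposition~\ref{Prop_ContNonlinearMap_in_C}, and your Lipschitz estimate for $\Psi(y)=\max_{t\in[0,T]}y(t)$ is exactly the two-sided inequality $\varphi(\overline{x})\le\varphi(x)+\varepsilon$, $\varphi(x)\le\varphi(\overline{x})+\varepsilon$ that the paper carries out inline. Writing $\varphi=\max_{j\in J}\Psi\circ G_j$ is merely a cleaner packaging of that same argument, with the finite interchange of maxima being harmless as you note.
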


\begin{proof}
Fix any $x \in C([0, T]; Y)$. By Proposition~\ref{Prop_ContNonlinearMap_in_C} for any $\varepsilon > 0$ there exists 
$\delta > 0$ such that for any $\overline{x} \in B(x, \delta)$ one has 
$\| g_j(\overline{x}(\cdot), \cdot) - g_j(x(\cdot), \cdot) \|_{\infty} < \varepsilon$ for all $j \in J$.
Consequently, for any such $\overline{x}$ one has
$$
  g_j(\overline{x}(t), t) \le g_j(x(t), t) + \varepsilon \le \varphi(x) + \varepsilon \quad 
  \forall t \in [0, T], j \in J.
$$
Taking the supremum, at first, over all $j \in J$, and then over all $t \in [0, T]$ one obtains that 
$\varphi(\overline{x}) \le \varphi(x) + \varepsilon$. Arguing in the same way but swapping $\overline{x}$ with $x$ one
obtains that $\varphi(x) \le \varphi(\overline{x}) + \varepsilon$. Therefore, 
$|\varphi(x) - \varphi(\overline{x})| < \varepsilon$, provided $\| x - \overline{x} \|_{C([0, T]; Y)} \le \delta$, i.e.
$\varphi$ is continuous.
\end{proof}

\begin{theorem} \label{Theorem_DiffStateConstr}
Let a function $g \colon \mathbb{R}^d \times [0, T] \to \mathbb{R}$, $g = g(x, t)$, be continuous, differentiable in
$x$, and the function $\nabla_x g$ be continuous. Then for any $p \in [1, + \infty]$ the Nemytskii operator 
$G(x) = g(x(\cdot), \cdot)$ maps $W^d_{1, p}(0, T)$ to $C[0, T]$, is continuously Fr\'{e}chet differentiable on 
$W^d_{1, p}(0, T)$, and its Fr\'{e}chet derivative has the form $D G(x)[h] = \nabla_x g(x(\cdot), \cdot) h(\cdot)$ for
all $x, h \in W^{1, p}(0, T)$.
\end{theorem}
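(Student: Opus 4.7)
The plan is to verify the three claims of the theorem in sequence, leveraging the Sobolev embedding $W^d_{1,p}(0,T) \hookrightarrow C([0,T]; \mathbb{R}^d)$ (inequality \eqref{SobolevImbedding}) together with Proposition~\ref{Prop_ContNonlinearMap_in_C} applied to both $g$ and $\nabla_x g$. First, for any $x \in W^d_{1,p}(0,T)$ the Sobolev embedding gives $x \in C([0,T]; \mathbb{R}^d)$, so by Proposition~\ref{Prop_ContNonlinearMap_in_C} (with $Y = \mathbb{R}^d$) the function $G(x) = g(x(\cdot), \cdot)$ belongs to $C[0,T]$, which establishes that $G$ maps $W^d_{1,p}(0,T)$ into $C[0,T]$.

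Next, I would identify the candidate Fr\'{e}chet derivative as the linear map $L_x \colon W^d_{1,p}(0,T) \to C[0,T]$ defined by $L_x[h] = \nabla_x g(x(\cdot), \cdot) h(\cdot)$. Its boundedness follows from the estimate $\| L_x[h] \|_{\infty} \le \| \nabla_x g(x(\cdot), \cdot) \|_{\infty} \| h \|_{\infty} \le C_p \| \nabla_x g(x(\cdot), \cdot) \|_{\infty} \| h \|_{1, p}$, where the first norm is finite because $\nabla_x g(x(\cdot), \cdot) \in C[0,T]$ by Proposition~\ref{Prop_ContNonlinearMap_in_C} applied to $\nabla_x g$. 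To verify that $L_x$ is indeed the Fr\'{e}chet derivative, I would apply the classical mean value theorem pointwise: for any $h \in W^d_{1,p}(0,T)$ and any $t \in [0,T]$ there exists $\theta(t) \in (0,1)$ such that
\begin{equation*}
  g(x(t) + h(t), t) - g(x(t), t) - \langle \nabla_x g(x(t), t), h(t) \rangle
  = \langle \nabla_x g(x(t) + \theta(t) h(t), t) - \nabla_x g(x(t), t), h(t) \rangle.
\end{equation*}
Taking absolute values, bounding $|h(t)| \le \| h \|_{\infty} \le C_p \| h \|_{1,p}$, and passing to the supremum over $t$ yields
\begin{equation*}
  \| G(x + h) - G(x) - L_x[h] \|_{\infty} \le C_p \| h \|_{1, p}
  \sup_{t \in [0, T]} \big| \nabla_x g(x(t) + \theta(t) h(t), t) - \nabla_x g(x(t), t) \big|.
\end{equation*}
Since $\| h \|_{\infty} \to 0$ as $\| h \|_{1,p} \to 0$, the points $(x(t) + \theta(t) h(t), t)$ stay in a fixed compact subset of $\mathbb{R}^d \times [0,T]$ containing the graph $\{ (x(t), t) \}$, and the uniform continuity of $\nabla_x g$ on that compact set forces the last supremum to tend to zero. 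Dividing by $\| h \|_{1, p}$ establishes Fr\'{e}chet differentiability with derivative $L_x$.

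Finally, for continuous differentiability I would estimate, for $x, y \in W^d_{1, p}(0, T)$ and any $h$,
\begin{equation*}
  \big\| L_{x + y}[h] - L_x[h] \big\|_{\infty}
  \le \big\| \nabla_x g(x(\cdot) + y(\cdot), \cdot) - \nabla_x g(x(\cdot), \cdot) \big\|_{\infty} \| h \|_{\infty}
  \le C_p \big\| \nabla_x g(x + y, \cdot) - \nabla_x g(x, \cdot) \big\|_{\infty} \| h \|_{1, p},
\end{equation*}
so that $\| L_{x + y} - L_x \| \le C_p \| \nabla_x g(x + y, \cdot) - \nabla_x g(x, \cdot) \|_{\infty}$. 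Since $y \to 0$ in $W^d_{1, p}(0, T)$ implies $x + y \to x$ in $C([0, T]; \mathbb{R}^d)$ by \eqref{SobolevImbedding}, Proposition~\ref{Prop_ContNonlinearMap_in_C} applied to $\nabla_x g$ gives that the right-hand side tends to zero, establishing continuity of $x \mapsto L_x$. The main technical point, and essentially the only nontrivial one, is the passage to the supremum in the mean value argument: it hinges on the compactness of the graph of $x$ together with the uniform continuity of $\nabla_x g$ on compacta, both of which are available precisely because the Sobolev embedding places every admissible $x$ in $C([0,T]; \mathbb{R}^d)$ and makes the perturbation $h$ uniformly small.
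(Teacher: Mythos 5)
Your proof is correct. It relies on the same ingredients as the paper's proof — the Sobolev embedding \eqref{SobolevImbedding}, Proposition~\ref{Prop_ContNonlinearMap_in_C}, and a pointwise mean value argument — but reaches Fr\'{e}chet differentiability by a slightly different route. The paper first proves only G\^{a}teaux differentiability (mean value estimate along $x + \alpha h$ for a fixed direction $h$, with the remainder killed by the continuity of the Nemytskii operator $x \mapsto \nabla_x g(x(\cdot), \cdot)$ from $W^d_{1,p}(0,T)$ to $(C[0,T])^d$), then shows that $x \mapsto DG(x)$ is continuous and invokes the standard fact that a continuous G\^{a}teaux derivative yields continuous Fr\'{e}chet differentiability. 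You instead verify the Fr\'{e}chet property directly: the mean value theorem applied with the full increment $h$, together with the uniform continuity of $\nabla_x g$ on a fixed compact neighbourhood of the graph of $x$, gives the bound $\| G(x+h) - G(x) - L_x[h] \|_{\infty} \le C_p \| h \|_{1,p} \, \omega(\| h \|_{\infty}) = o(\| h \|_{1,p})$, where $\omega$ is a modulus of continuity, and this estimate is uniform over $h$ in a ball, so no abstract upgrade step is needed; your continuity argument for $x \mapsto L_x$ then coincides with the paper's. The trade-off is minor: your version is more self-contained (it never appeals to the ``well-known'' G\^{a}teaux-to-Fr\'{e}chet theorem), at the cost of redoing by hand the compactness/uniform-continuity argument that the paper reuses in packaged form through Proposition~\ref{Prop_ContNonlinearMap_in_C}, which makes the paper's route marginally shorter since continuity of the derivative map has to be proved in either case.
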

 
\begin{proof}
Recall that we identify $W^d_{1, p}(0, T)$ with the space of all those absolutely continuous function 
$x \colon [0, T] \to \mathbb{R}^d$ for which $\dot{x} \in L^d_p(0, T)$ (see, e.g. \cite{Leoni}). Hence bearing
in mind the fact the function $g$ is continuous one obtains that for any $x \in W^d_{1, p}(0, T)$ one has
$g(x(\cdot), \cdot) \in C[0, T]$, i.e. the operator $G$ maps $W^d_{1, p}(0, T)$ to $C[0, T]$. Let us check that this
operator is Fr\'{e}chet differentiable.

Fix any $x, h \in W^d_{1, p}(0, T)$. By the mean value theorem for any $t \in [0, T]$ one has
\begin{equation} \label{StateConstr_MeanValue}
  \Big| \frac{1}{\alpha} \big( g(x(t) + \alpha h(t), t) - g(x(t), t) \big) 
  - \langle \nabla_x g(x(t), t), h(t) \rangle \Big| \le
  \sup_{\eta \in (0, \alpha)} | \nabla_x g(x(t) + \eta h(t), t) - \nabla_x g(x(t), t) | \| h \|_{\infty}.
\end{equation}
By Proposition~\ref{Prop_ContNonlinearMap_in_C} the function $x \mapsto \nabla_x g(x(\cdot), \cdot)$ continuously maps
$C[0, T]$ to $(C[0, T])^d$, which by inequality \eqref{SobolevImbedding} implies that it continuously maps 
$W^d_{1, p}(0, T)$ to $(C[0, T])^d$. Consequently, the right-hand side of \eqref{StateConstr_MeanValue} converges to
zero uniformly on $[0, T]$ as $\alpha \to +0$. Thus, one has
$$
  \lim_{\alpha \to + 0} 
  \left\| \frac{G(x + \alpha h) - G(x)}{\alpha} - \nabla_x g(x(\cdot), \cdot)) h(\cdot) \right\|_{\infty} = 0,
$$
i.e. the operator $G$ is G\^{a}teaux differentiable, and its G\^{a}teaux derivative has the form
$D G(x)[h] = \nabla_x g(x(\cdot), \cdot) h(\cdot)$. Note that the map $D G(\cdot)$ is continuous, since the nonlinear
operator $x \mapsto \nabla_x g(x(\cdot), \cdot)$ continuously maps $W^d_{1, p}(0, T)$ to $(C[0, T])^d$ by
Proposition~\ref{Prop_ContNonlinearMap_in_C} and inequality \eqref{SobolevImbedding}
Hence, as is well known, the operator $G$ is continuously Fr\'{e}chet differentiable, and its Fr\'{e}chet derivative
coincides with the G\^{a}teaux one.
\end{proof}

Let us also prove the differentiability of the Nemytskii operator 
$F(x, u) = \dot{x}(\cdot) - f(x(\cdot), u(\cdot), \cdot)$ associated with the nonlinear differential equation
$\dot{x} = f(x, u, t)$.

\begin{theorem} \label{Theorem_DiffNemytskiiOperator}
Let a function $f \colon \mathbb{R}^d \times \mathbb{R}^m \times [0, T] \to \mathbb{R}^d$, $f = f(x, u, t)$, be
continuous, differentiable in $x$ in $u$, and the functions $\nabla_x f$ and $\nabla_u f$ be continuous. Suppose also
that $q \ge p \ge 1$, and either $q = + \infty$ or $f$ and $\nabla_x f$ satisfy the growth 
condition of order $(q / p, p)$, while $\nabla_u f$ satisfies the growth condition of order $(q / s, s)$ with 
$s = qp / (q - p)$ in the case $q > p$, and $\nabla_u f$ does not depend on $u$ in the case $q = p$. Then the nonlinear
operator $F(x, u) = (\dot{x}(\cdot) - f(x(\cdot), u(\cdot), \cdot), x(T))$ maps 
$X = W^d_{1, p}(0, T) \times L^m_q(0, T)$ to $L^d_p(0, T) \times \mathbb{R}^d$, is continuously Fr\'{e}chet
differentiable, and its Fr\'{e}chet derivative has the form
$$
  D F(x, u)[h, v] = \begin{pmatrix} \dot{h}(\cdot) - A(\cdot) h(\cdot) - B(\cdot) v(\cdot) \\ h(T) \end{pmatrix},
  \quad A(t) = \nabla_x f(x(t), u(t), t), \quad B(t) = \nabla_u f(x(t), u(t), t)
$$
for any $(x, u) \in X$.
\end{theorem}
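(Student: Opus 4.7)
The plan is to proceed in four steps, closely modelled on the proof of Theorem~\ref{Theorem_DiffStateConstr}, the additional complications coming from the presence of the control component and the need to use H\"older's inequality rather than uniform bounds. First, I would verify that $F$ maps $X$ into $L^d_p(0,T) \times \mathbb{R}^d$. The linear part $(x,u) \mapsto (\dot{x}(\cdot), x(T))$ is trivially bounded (the second component via \eqref{SobolevImbedding}), so it suffices to show that the Nemytskii operator $N(x,u) = f(x(\cdot), u(\cdot), \cdot)$ sends $X$ into $L^d_p(0,T)$. For $(x,u) \in X$, \eqref{SobolevImbedding} gives $\| x \|_{\infty} \le R$ for some $R$, and then the growth condition of order $(q/p, p)$ on $f$ yields $|f(x(t), u(t), t)|^p \le 2^{p-1}(C_R^p |u(t)|^q + \omega_R(t)^p)$, whose integral is finite since $u \in L^m_q(0,T)$ and $\omega_R \in L^p(0,T)$. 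The case $q = +\infty$ is handled by uniform bounds on any bounded subset of $X$.

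Second, for fixed $(x,u), (h,v) \in X$, the mean value theorem gives, for a.e. $t$ and any $\alpha \in (0,1]$, an intermediate point $(\xi_\alpha(t), \eta_\alpha(t))$ lying on the segment between $(x(t), u(t))$ and $(x(t) + \alpha h(t), u(t) + \alpha v(t))$ such that
$$
  \frac{f(x(t) + \alpha h(t), u(t) + \alpha v(t), t) - f(x(t), u(t), t)}{\alpha}
  = \nabla_x f(\xi_\alpha(t), \eta_\alpha(t), t) h(t) + \nabla_u f(\xi_\alpha(t), \eta_\alpha(t), t) v(t).
$$
By continuity of $\nabla_x f, \nabla_u f$, the right-hand side converges a.e. to $\nabla_x f(x(t), u(t), t) h(t) + \nabla_u f(x(t), u(t), t) v(t)$ as $\alpha \to +0$. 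To promote this to convergence in $L^d_p(0,T)$, I set $R = \sup_{\alpha \in [0,1]} \| x + \alpha h \|_{\infty} < \infty$ and use the growth conditions to dominate uniformly: $|\nabla_x f(\xi_\alpha, \eta_\alpha, t) h(t)| \le \| h \|_{\infty}(C_R(|u(t)| + |v(t)|)^{q/p} + \omega_R(t))$, which lies in $L^p(0,T)$, and $|\nabla_u f(\xi_\alpha, \eta_\alpha, t) v(t)| \le (C_R(|u(t)| + |v(t)|)^{q/s} + \eta_R(t))|v(t)|$, which lies in $L^p(0,T)$ by H\"older's inequality with $1/s + 1/q = 1/p$. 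Lebesgue's dominated convergence theorem then gives the G\^ateaux differentiability of $N$ with the stated formula; differentiability of the $x(T)$ component is trivial. The cases $q = + \infty$ and $q = p$ (where $\nabla_u f$ does not depend on $u$, and so $\nabla_u f(x(\cdot), \cdot) \in L^\infty$) are handled analogously with simpler dominations.

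Third, I would verify that the G\^ateaux derivative $(x,u) \mapsto DF(x,u)$ is continuous from $X$ into $\mathcal{L}(X, L^d_p(0,T) \times \mathbb{R}^d)$. Given a sequence $(x_n, u_n) \to (x,u)$ in $X$, by \eqref{SobolevImbedding} one has $x_n \to x$ uniformly, and passing to a subsequence one may also assume $u_n \to u$ a.e. Then $\nabla_x f(x_n(t), u_n(t), t) \to \nabla_x f(x(t), u(t), t)$ a.e., and the growth condition together with Vitali's convergence theorem (applied using the equi-integrability furnished by the growth of order $(q/p, p)$ and the convergence of $\| u_n \|_q$) yields $\nabla_x f(x_n, u_n, \cdot) \to \nabla_x f(x, u, \cdot)$ in $L^p$, hence convergence of the corresponding multiplication operators on $W^d_{1,p}(0,T) \hookrightarrow L^\infty$. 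The analogous argument gives $\nabla_u f(x_n, u_n, \cdot) \to \nabla_u f(x, u, \cdot)$ in $L^s$, hence convergence of the associated operators $L^m_q \to L^d_p$ by H\"older. A standard theorem (continuity of the G\^ateaux derivative in operator norm implies Fr\'echet differentiability with the same derivative) concludes the proof. The main obstacle will be the third step: establishing $L^p$- and $L^s$-convergence of $\nabla_x f(x_n, u_n, \cdot)$ and $\nabla_u f(x_n, u_n, \cdot)$ along a subsequence extracted from an arbitrary convergent sequence in $X$, and in particular justifying equi-integrability from the growth conditions alone, rather than from uniform boundedness.
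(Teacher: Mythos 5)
Your proposal follows essentially the same route as the paper's proof: the Sobolev imbedding \eqref{SobolevImbedding} plus the growth conditions give the mapping property, mean value estimates with Lebesgue's dominated convergence theorem (using H\"older's inequality with the exponent triple $p, q, s$) give G\^ateaux differentiability, a.e.\ convergence combined with Vitali's theorem and the equi-integrability supplied by the growth conditions gives continuity of the derivative, and the standard G\^ateaux-to-Fr\'echet result concludes. Two routine repairs are needed: since $f$ is $\mathbb{R}^d$-valued there is no single intermediate point giving an exact mean value equality, so one should use a componentwise or sup-over-the-segment estimate as the paper does; and the convergence of the derivatives, obtained only along an a.e.-convergent subsequence of $\{u_n\}$, must be upgraded to the whole sequence by the usual sub-subsequence argument (the paper achieves the same via reductio ad absurdum).
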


\begin{proof}
Let us prove that the Nemytskii operator $F_0(x, u) = f(x(\cdot), u(\cdot), \cdot)$ maps $X$ to $L^p_d(0, T)$, is
continuously Fr\'{e}chet differentiable, and its Fr\'{e}chet derivative has the form 
\begin{equation} \label{NemytskiiOperatorDerivative}
  D F_0(x, u)[h, v] = A(\cdot) h(\cdot) + B(\cdot) v(\cdot),
  \quad A(t) = \nabla_x f(x(t), u(t), t), \quad B(t) = \nabla_u f(x(t), u(t), t)
\end{equation}
for any $(x, u) \in X$ and $(h, v) \in X$. With the use of this result one can easily prove that the conclusion of the
theorem holds true.

Fix any $(x, u) \in X$. By inequality \eqref{SobolevImbedding} there exists $R > 0$ such that $\| x \|_{\infty} \le R$.
Then by the growth condition on the function $f$ there exist $C_R > 0$ and an a.e. nonnegative function 
$\omega_R \in L^p(0, T)$ such that 
$$
  |f(x(t), u(t), t)|^p \le \big( C_R |u(t)|^{q/p} + \omega_R(t) \big)^p 
  \le 2^p \big( C_R^p |u(t)|^q + \omega_R(t)^p \big).
$$
for a.e. $t \in (0, T)$. Observe that the right-hand side of this inequality belongs to $L^1(0, T)$. Therefore, 
$F_0(x, u) = f(x(\cdot), u(\cdot), \cdot) \in L_p^d(0, T)$, i.e. the operator $F_0$ maps $X$ to $L^p_d(0, T)$.
Now we turn to the proof of the Fr\'{e}chet differentiability of this operator. Let us consider two cases.

\textbf{Case $q = + \infty$.} Fix any $(x, u) \in X$, $(h, v) \in X$ and $\alpha \in (0, 1]$. By the mean value theorem
for a.e. $t \in (0, T)$ one has
\begin{multline} \label{MeanValue_NemytskiiOperator_Infty}
  \frac{1}{\alpha} \big| f(x(t) + \alpha h(t), u(t) + \alpha v(t), t) - f(x(t), u(t), t) 
  - \alpha \nabla_x f(x(t), u(t), t) h(t) - \alpha \nabla_u f(x(t), u(t), t) v(t) \big| \\
  \le \sup_{\eta \in (0, \alpha)} \esssup_{t \in [0, T]}
  \big| \nabla_x f(x(t) + \eta h(t), u(t) + \eta v(t), t) - \nabla_x f(x(t), u(t), t) \big| |h(t)| \\
  + \sup_{\eta \in (0, \alpha)} \esssup_{t \in [0, T]}
  \big| \nabla_u f(x(t) + \eta h(t), u(t) + \eta v(t), t) - \nabla_u f(x(t), u(t), t) \big| |v(t)|.
\end{multline}
With the use of the facts that all functions $x$, $h$, $u$, and $v$ are essentially bounded on
$[0, T]$, and the functions $\nabla_x f$ and $\nabla_u f$ are uniformly continuous one the compact set 
$B(\mathbf{0}_d, \| x \|_{\infty} + \| h \|_{\infty}) \times B(\mathbf{0}_m, \| u \|_{\infty} + \| v \|_{\infty})
\times [0, T]$ (here $\mathbf{0}_d$ is the zero vector from $\mathbb{R}^d$) one can verify that the right-hand
side of \eqref{MeanValue_NemytskiiOperator_Infty} converges to zero as $\alpha \to +0$. Observe also that
$A(\cdot) = \nabla_x f(x(\cdot), u(\cdot), \cdot) \in L^{d \times d}_{\infty}(0, T)$ and 
$B(\cdot) = \nabla_u f(x(\cdot), u(\cdot), \cdot) \in L^{d \times m}_{\infty}(0, T)$ due to the continuity of
$\nabla_x f$ and $\nabla_u f$ and the essential boundedness of $x$ and $u$. Hence, as is easy to check, the mapping 
$(h, v) \mapsto A(\cdot) h(\cdot) + B(\cdot) v(\cdot)$ is a bounded linear operator from $X$ to $L^d_{\infty}(0, T)$
(and, therefore, to $L^d_p(0, T)$). Thus, one has
\begin{align*}
  \lim_{\alpha \to 0} 
  &\left\| \frac{1}{\alpha} \big( F_0(x + \alpha h, u + \alpha v) - F_0(x, u) \big) - D F_0(x, u) [h, v] \right\|_p \\
  &\le T^{1/p} \lim_{\alpha \to 0} 
  \left\| \frac{1}{\alpha} \big( F_0(x + \alpha h, u + \alpha v) - F_0(x, u) \big) - D F_0(x, u) [h, v]
  \right\|_{\infty} = 0,
\end{align*}
where $D F_0(x, u)[h, v]$ is defined as in \eqref{NemytskiiOperatorDerivative} (here $1 / p = 0$, if $p = + \infty$).
Consequently, the Nemytskii operator $F_0$ is G\^{a}teaux differentiable at every point  $(x, u) \in X$, and its
G\^{a}teaux derivative has the form \eqref{NemytskiiOperatorDerivative}.

Let us check that the G\^{a}teaux derivative $D F_0(\cdot)$ is continuous on $X$. Then, as is well-known, $F_0$ is
continuously Fr\'{e}chet differentiable on $X$, and its Fr\'{e}chet derivative coincides with $D F_0(\cdot)$. Fix any
$(x, u) \in X$ and $(x', u') \in X$. For any $(h, v) \in X$ one has
\begin{align*}
  \| D F_0(x, u)[h, v] - D F_0(x', u')[h, v] \|_p 
  &\le T^{1/p} \| D F_0(x, u)[h, v] - D F_0(x', u')[h, v] \|_{\infty} \\
  &\le T^{1/p} \esssup_{t \in [0, T]}
  \big| \nabla_x f(x(t), u(t), t) - \nabla_x f(x'(t), u'(t), t) \big| \| h \|_{\infty} \\
  &+ T^{1/p} \esssup_{t \in [0, T]}
  \big| \nabla_u f(x(t), u(t), t) - \nabla_u f(x'(t), u'(t), t) \big| \| v \|_{\infty}.
\end{align*}
Hence with the use of \eqref{SobolevImbedding} one obtains that there exists $C_p > 0$ (depending only on $p$ and $T$)
such that
\begin{align*}
  \| D F_0(x, u) - D F_0(x', u') \| &\le T^{1/p} C_p \esssup_{t \in [0, T]}
  \big| \nabla_x f(x(t), u(t), t) - \nabla_x f(x'(t), u'(t), t) \big| \\
  &+ T^{1/p} \esssup_{t \in [0, T]} \big| \nabla_u f(x(t), u(t), t) - \nabla_u f(x'(t), u'(t), t) \big|.
\end{align*}
Utilising this inequality and taking into account the fact that the functions $\nabla_x f$ and $\nabla_u f$ are
continuous one can verify via a simple $\varepsilon-\delta$ argument that $\| D F_0(x, u) - D F_0(x', u') \| \to 0$ as 
$(x', u') \to (x, u)$ in $X$ (cf. the proof of Proposition~\ref{Prop_ContNonlinearMap_in_C}). Thus, the mapping 
$D F_0(\cdot)$ is continuous, and the proof of the case $q = + \infty$ is complete.

\textbf{Case $q < + \infty$}. Fix any $(x, u) \in X$, $(h, v) \in X$ and $\alpha \in (0, 1]$. By the mean value theorem
\begin{multline} \label{MeanValue_NemytskiiOperator}
  \frac{1}{\alpha} \big| f(x(t) + \alpha h(t), u(t) + \alpha v(t), t) - f(x(t), u(t), t) 
  - \alpha \nabla_x f(x(t), u(t), t) h(t) - \alpha \nabla_u f(x(t), u(t), t) v(t) \big|^p \\
  \le 2^p \sup_{\eta \in (0, \alpha)}
  \big| \nabla_x f(x(t) + \eta h(t), u(t) + \eta v(t), t) - \nabla_x f(x(t), u(t), t) \big|^p |h(t)|^p \\
  + 2^p \sup_{\eta \in (0, \alpha)}
  \big| \nabla_u f(x(t) + \eta h(t), u(t) + \eta v(t), t) - \nabla_u f(x(t), u(t), t) \big|^p |v(t)|^p
\end{multline}
for a.e. $t \in (0, T)$. Our aim is to apply Lebesgue's dominated convergence theorem.

The right-hand side of \eqref{MeanValue_NemytskiiOperator} converges to zero as $\alpha \to 0$ for a.e. 
$t \in (0, T)$ due to the continuity of $\nabla_x f$ and $\nabla_u f$. By applying \eqref{SobolevImbedding}, and the
facts that $\alpha \in (0, 1]$ and $\nabla_x f$ satisfies the growth condition of order $(q / p, p)$ one obtains that
there exist $C_R > 0$ and an a.e. nonnegative function $\omega_R \in L^p(0, T)$ such that
\begin{multline*}
  \sup_{\eta \in (0, \alpha)} 
  \big| \nabla_x f(x(t) + \eta h(t), u(t) + \eta v(t), t) - \nabla_x f(x(t), u(t), t) \big|^p |h(t)|^p \\
  \le 2^p \sup_{\eta \in (0, \alpha)} \big| \nabla_x f(x(t) + \eta h(t), u(t) + \eta v(t), t) \big|^p 
  C_p \| h \|_{1, p}^p
  + 2^p \sup_{\eta \in (0, \alpha)} \big| \nabla_x f(x(t), u(t), t) \big|^p C_p \| h \|_{1, p}^p \\
  \le 2^{2p} \Big( \big( C_R^p 2^q (|u(t)|^q + |v(t)|^q) + \omega_R(t)^p \big) 
  + \big( C_R^p |u(t)|^q + \omega_R(t)^p \big) \Big) C_p \| h \|_{1, p}^p
\end{multline*}
for a.e. $t \in (0, T)$. Observe that the right-hand side of this inequality belongs to $L^1(0, T)$ and does not depend
on $\alpha$, i.e. the first term in the right-hand side of \eqref{MeanValue_NemytskiiOperator} can be bounded above by a
function from $L^1(0, T)$ that is independent of $\alpha$.

Let us now estimate the second term in the right-hand side of \eqref{MeanValue_NemytskiiOperator}. Let $q > p$. Bearing
in mind the fact that $\nabla_u f$ satisfies the growth condition of order $(q/s, s)$ one obtains that there exists 
$C_R > 0$ and an a.e. nonnegative function $\omega_R \in L^s(0, T)$ such that
\begin{multline*}
  \sup_{\eta \in (0, \alpha)}
  \big| \nabla_u f(x(t) + \eta h(t), u(t) + \eta v(t), t) - \nabla_u f(x(t), u(t), t) \big|^p |v(t)|^p \\
  \le 2^p \Big( \big| C_R 2^{q/s} (|u(t)|^{q/s} + |v(t)|^{q/s}) + \omega_R(t) \big|^p 
  + \big| C_R |u(t)|^{q/s} + \omega_R(t) \big|^p \Big) |v(t)|^p
\end{multline*}
for a.e. $t \in (0, T)$. Let us check that the right-hand side of this inequality belongs to $L^1(0, T)$. Indeed, 
by applying H\"{o}lder's inequality of the form 
\begin{equation} \label{HolderInequality_p_to_s_q}
  \Big(\int_0^T |y_1(t)|^p |y_2(t)|^p \, dt \Big)^{1/p} \le \| y_1 \|_s \| y_2 \|_q
\end{equation}
(here we used the fact that $(q/p)' = s/p$) one gets that 
\begin{multline*}
  \Big( \int_0^T \Big| C_R 2^{q/s} (|u(t)|^{q/s} + |v(t)|^{q/s}) + \omega_R(t) \Big|^p |v(t)|^p \, dt \Big)^{1/p} \\
  \le \Big\| C_R 2^{q/s} (|u(\cdot)|^{q/s} + |v(\cdot)|^{q/s}) + \omega_R(\cdot) \Big\|_s \| v \|_q
  \le \Big( C_R 2^{q/s} \big( \| u \|_q^{q/s} + \| v \|_q^{q/s} \big) + \| \omega_R \|_s \Big) \| v \|_q < + \infty.
\end{multline*}
Thus, the last term in the right-hand side of inequality \eqref{MeanValue_NemytskiiOperator} can also be bounded above
by a function from $L^1(0, T)$ that does not depend on $\alpha$.

Finally, recall that in the case $q = p$ the function $\nabla_u f$ does not depend on $u$, which implies that it
satisfies the growth condition of order $(0, + \infty)$, i.e. for any $R > 0$ there exists $C_R > 0$ such that
$|\nabla_u f(x, u, t)| \le C_R$ for a.e. $t \in (0, T)$ and for all $(x, u) \in \mathbb{R}^d \times \mathbb{R}^m$ with
$|x| \le R$. Therefore, as is easy to check, in this case there exists $C > 0$ (that does not depend on $\alpha$)
such that
$$
  \sup_{\eta \in (0, \alpha)}
  \big| \nabla_u f(x(t) + \eta h(t), u(t) + \eta v(t), t) - \nabla_u f(x(t), u(t), t) \big|^p |v(t)|^p
  \le C |v(t)|^q
$$
for a.e. $t \in (0, T)$. The right-hand side of this inequality obviously belongs to $L^1(0, T)$.

Thus, the right-hand side of \eqref{MeanValue_NemytskiiOperator} can be bounded above by a function from $L^1(0, T)$
that does not depend on $\alpha$. Furthermore, from the growth conditions on $\nabla_x f$ and $\nabla_u f$ it follows
that $A(\cdot) = \nabla_x f(x(\cdot), u(\cdot), \cdot) \in L^{d \times d}_p(0, T)$ and 
$B(\cdot) = \nabla_u f(x(\cdot), u(\cdot), \cdot) \in L^{d \times m}_s(0, T)$, which, as is easily seen, implies that
the mapping $(h, v) \mapsto A(\cdot) h(\cdot) + B(\cdot) v(\cdot)$ is a bounded linear operator from 
$X$ to $L^d_p(0, T)$ (here $s = + \infty$ in the case $p = q$). Therefore, integrating
\eqref{MeanValue_NemytskiiOperator} from $0$ to $T$ and passing to the limit as $\alpha \to 0$ with the use of
Lebesgue's dominated convergence theorem one obtains that
$$
  \lim_{\alpha \to 0} 
  \left\| \frac{1}{\alpha} \big( F_0(x + \alpha h, u + \alpha v) - F_0(x, u) \big) - D F_0(x, u) [h, v] \right\|_p = 0,
$$ 
where $D F_0(x, u)[h, v]$ is defined as in \eqref{NemytskiiOperatorDerivative}. Thus, the Nemytskii operator $F_0$ is
G\^{a}teaux differentiable at every point $(x, u) \in X$, and its G\^{a}teaux derivative has the form
\eqref{NemytskiiOperatorDerivative}. Let us check that this derivative is continuous. Then one can conclude that $F_0$
is continuously Fr\'{e}chet differentiable on $X$, and its Fr\'{e}chet derivative coincides with $D F_0(\cdot)$.

Indeed, choose any $(x, u) \in X$ and $(x', u') \in X$. With the use of \eqref{NemytskiiOperatorDerivative} and
H\"{o}lder's inequality of the form \eqref{HolderInequality_p_to_s_q} one obtains
\begin{align*}
  \| D F_0(x, u)[h, v] - D F_0(x', u')[h, v] \|_p 
  &\le \| \nabla_x f(x(\cdot), u(\cdot), \cdot) - \nabla_x f(x'(\cdot), u'(\cdot), \cdot) \|_p \| h \|_{\infty} \\
  &+ \| \nabla_u f(x(\cdot), u(\cdot), \cdot) - \nabla_u f(x'(\cdot), u'(\cdot), \cdot) \|_s \| v \|_q
\end{align*}
for any $(h, v) \in X$ (in the case $q = p$ we put $s = \infty$). Hence taking into account \eqref{SobolevImbedding} one
gets
$$
  \| D F_0(x, u) - D F_0(x', u') \| 
  \le C_p \| \nabla_x f(x(\cdot), u(\cdot), \cdot) - \nabla_x f(x'(\cdot), u'(\cdot), \cdot) \|_p
  + \| \nabla_u f(x(\cdot), u(\cdot), \cdot) - \nabla_u f(x'(\cdot), u'(\cdot), \cdot) \|_s
$$
Therefore, the mapping $(x, u) \mapsto D F_0(x, u)$ is continuous in the operator norm, if for any sequence
$\{ (x_n, u_n) \}$ in $X$ converging to $(x, u)$ the sequence $\{ \nabla_x f(x_n(\cdot), u_n(\cdot), \cdot) \}$
converges to $\nabla_x f(x(\cdot), u(\cdot), \cdot)$ in $L^{d \times d}_p(0, T)$, while the sequence 
$\{ \nabla_u f(x_n(\cdot), u_n(\cdot), \cdot) \}$ converges to $\nabla_u f(x(\cdot), u(\cdot), \cdot)$ in 
$L^{d \times m}_s(0, T)$.\footnote{Note that in the case $p = q < + \infty$ one must prove that the sequence 
$\{ \nabla_u f(x_n(\cdot), u_n(\cdot), \cdot) \}$ converges in $L^{d \times m}_{\infty}(0, T)$, while $\{ u_n \}$
converges only in $L^m_q(0, T)$ with $q < + \infty$. That is why in this case one must assume that $\nabla_u f$ does not
depend on $u$, i.e. $f$ is affine in control} 

Let us prove the convergence of the sequence $\{ \nabla_x f(x_n(\cdot), u_n(\cdot), \cdot) \}$. The convergence of the
sequence $\{ \nabla_u f(x_n(\cdot), u_n(\cdot), \cdot) \}$ can be proved in the same way. Arguing by reductio ad
absurdum, suppose that there exists a sequence $\{ (x_n, u_n) \} \subset X$ converging to $(x, u)$ such that the
sequence $\{ \nabla_x f(x_n(\cdot), u_n(\cdot), \cdot) \}$ does not converge to $\nabla_x f(x(\cdot), u(\cdot), \cdot)$
in $L^{d \times d}_p(0, T)$. Then there exist $\varepsilon > 0$ and a subsequence $\{ (x_{n_k}, u_{n_k}) \}$ such that 
\begin{equation} \label{NonConvergenceInLp}
  \big\| \nabla_x f(x_{n_k}(\cdot), u_{n_k}(\cdot), \cdot) - \nabla_x f(x(\cdot), u(\cdot), \cdot) \big\|_p 
  \ge \varepsilon
  \quad \forall k \in \mathbb{N}
\end{equation}
It should be noted that all functions $\{ \nabla_x f(x_n(\cdot), u_n(\cdot), \cdot) \}$ belong
to $L^{d \times d}_p(0, T)$ due to the fact that $\nabla_x f$ satisfies the growth condition of order $(q/p, p)$.

By \eqref{SobolevImbedding} the sequence $\{ x_{n_k} \}$ converges to $x$ uniformly on $[0, T]$, which implies that 
$\| x_{n_k} \|_{\infty} \le R$ for all $k \in \mathbb{N}$ and some $R > 0$. The sequence $\{ u_{n_k} \}$ converges to
$u$ in $L^m_q(0, T)$. Hence, as is well-known, there exists a subsequence, which we denote again by $\{ u_{n_k} \}$,
that converges to $u$ almost everywhere. Consequently, by the continuity of $\nabla_x f$ the subsequence 
$\{ \nabla_x f(x_{n_k}(t), u_{n_k}(t), t) \}$ converges to $\nabla_x f(x(t), u(t), t)$ for a.e. $t \in (0, T)$.

The sequence $\{ u_{n_k} \}$ converges to $u$ in $L^m_q(0, T)$. Therefore, by the ``only if'' part of Vitali's theorem
characterising convergence in $L^p$ spaces (see, e.g. \cite[Theorem~III.6.15]{DunfordSchwartz}) for any 
$\varepsilon > 0$ there exists $\delta(\varepsilon) > 0$ such that for any Lebesgue measurable set $E \subset (0, T)$
with $\mu(E) < \delta(\varepsilon)$ (here $\mu$ is the Lebesgue measure) one has 
$\int_E |u_{n_k}|^q \, d \mu < \varepsilon$ for all $k \in \mathbb{N}$. Hence by applying the fact that $\nabla_x f$
satisfies the growth condition of order $(q/p, p)$ one obtains that there exist $C_R > 0$ and an a.e. nonnegative
function $\omega_R \in L^p(0, T)$ such that for any measurable set $E \subset (0, T)$ with 
$\mu(E) < \delta(\varepsilon)$ one has
$$
  \int_E |\nabla_x f(x_{n_k}(t), u_{n_k}(t), t)|^p d \mu(t) \le \int_E | C_R |u|^{q/p} + \omega_R |^p d \mu
  \le 2^p \Big( C_R^p \varepsilon + \int_E \omega_R^p d \mu \Big).
$$
Taking into account the absolute continuity of the Lebesgue integral and the fact that $\omega_R \in L^p(0, T)$, and
decreasing $\delta(\varepsilon) > 0$, if necessary, one can suppose that $\int_E \omega_R^p d \mu < \varepsilon$.
Therefore, choosing a sufficiently small $\varepsilon > 0$ one can make the integral
$\int_E |\nabla_x f(x_{n_k}(t), u_{n_k}(t), t)|^p d \mu(t)$ arbitrarily small for all $k \in \mathbb{N}$ and measurable
sets $E \subset (0, T)$ with $\mu(E) < \delta(\varepsilon)$. Consequently, by the ``if'' part of Vitali's theorem on
convergence in $L^p$ spaces the sequence $\{ \nabla_x f(x_{n_k}(\cdot), u_{n_k}(\cdot), \cdot) \}$ converges to 
$\nabla_x f(x(\cdot), u(\cdot), \cdot)$ in $L^{d \times d}_p(0, T)$, which contradicts \eqref{NonConvergenceInLp}.
\end{proof}

\end{document}